\documentclass{article}
	\usepackage[margin=2.5cm,includefoot,footskip=30pt]{geometry}
	\usepackage{longtable}
	\usepackage{enumerate}	
	\usepackage{comment}
	\usepackage{url}
		
	\usepackage{amsmath}
	\usepackage{amsthm,mathdots,cite}	
	\usepackage{amssymb}

	\usepackage[ansinew]{inputenc}
	\usepackage[all,knot,arc,curve,color,frame]{xy}
		\input{xy}
		\xyoption{all}
	\usepackage{extarrows}
	\usepackage{tikz}
		\usetikzlibrary{arrows}
		\usetikzlibrary{positioning}

\theoremstyle{plain}
    \newtheorem{lemma}{Lemma}[section]
    
    \newtheorem{theorem}[lemma]{Theorem}
    \newtheorem{cor}[lemma]{Corollary}
    
    \newtheorem{prop}[lemma]{Proposition}
\theoremstyle{definition}
    \newtheorem{que}[lemma]{Problem}
    \newtheorem{defi}[lemma]{Definition}
    \newtheorem{example}[lemma]{Example}
\theoremstyle{remark}
    \newtheorem{rem}[lemma]{Remark}
    \newtheorem{nota}[lemma]{Notation}



\newcommand{\cgn}{\mathrel{\sim_{\mathrm{u}}}}

\newcommand{\cln}{\mathrel{\sim_l}}
\newcommand{\coon}{\mathrel{\sim_{\mathrm{o}}}}

\newcommand{\cwn}{\mathrel{\sim_{\mathrm{w}}}}
\newcommand{\ctr}{\mathrel{\sim_{\mathrm{tr}}}}

\newcommand{\cpn}{\mathrel{\sim_{\mathrm{p}}}}
\newcommand{\cpns}{\mathrel{\sim_{\mathrm{p}}^*}}

\newcommand{\frn}{\mathfrak{n}}
\newcommand{\cfn}{\mathrel{\sim_{\frn}}}

\newcommand{\con}{\mathrel{\sim_{\mathrm{c}}}}

\newcommand{\cin}{\mathrel{\sim_{i}}}
\newcommand{\cins}{\mathrel{\sim^*_{i}}}

\newcommand{\cbp}{\mathrel{\sim_{\mathrm{bp}}}}

\newcommand{\cli}{\mathrel{\sim_{\mathrm{lin}}}}

\newcommand\gj{\mathcal{J}}
\newcommand\gd{\mathcal{D}}
\newcommand\gl{\mathcal{L}}
\newcommand\gr{\mathcal{R}}
\newcommand\gh{\mathcal{H}}

\newcommand{\greenJ}{\mathrel{\gj}}
\newcommand{\greenD}{\mathrel{\gd}}
\newcommand{\greenL}{\mathrel{\gl}}
\newcommand{\greenR}{\mathrel{\gr}}
\newcommand{\greenH}{\mathrel{\gh}}


\newcommand{\Pax}{\mathcal{P}_X} 
\newcommand{\Pan}{\mathcal{P}_n} 
\newcommand{\PBan}{\mathcal{BP}_n} 
\newcommand{\Ban}{\mathcal{B}_n} 
\newcommand{\Poln}{\mathsf{P}_{\!n}} 

\newcommand{\bn}{\mathbf{n}}
\newcommand{\psn}{\mathbf{S}_n}   

\DeclareMathOperator{\Epi}{Epi}
\DeclareMathOperator{\Sym}{Sym}
\DeclareMathOperator{\End}{End}

\DeclareMathOperator\dom{dom}
\DeclareMathOperator\ima{im}
\DeclareMathOperator\rank{rank}
\DeclareMathOperator\coker{coker}
\DeclareMathOperator{\codom}{codom}

\newcommand{\inv}{^{-1}}                



\newcommand{\sgf}{\sigma}         

\newcommand{\sgs}{\Xi}         

\newcommand\txr{T(X,\rho,R)}


\newcommand\tla{\rho({a})}
\newcommand\tlb{\rho({b})}


\DeclareMathOperator\spa{span}
\DeclareMathOperator\id{id}

\DeclareMathOperator\sym{Sym}


\newcommand{\aar}{\rightarrow}

\newcommand{\ox}{\Omega(X)}

\newcommand{\al}{\alpha}
\newcommand{\bt}{\beta}

\newcommand{\del}{\delta}

\newcommand{\lam}{\lambda}
\newcommand{\sig}{\sigma}
\newcommand{\ome}{\omega}

\newcommand{\vep}{\varepsilon}
\newcommand{\up}{\Upsilon}
\newcommand{\ga}{\Gamma}
\newcommand{\gae}{\Gamma^e}
\newcommand{\Sig}{\Sigma}

\newcommand{\idx}{\id_{\mbox{\tiny $X$}}}

\newcommand\pp{\mathbb{P}}

\newcommand\lan{\langle}
\newcommand\ran{\rangle}

\newcommand\jo{\sqcup}

\newcommand\mi{\mathcal{I}}
\newcommand\mj{\mathcal{J}}

\newcommand\oin{\mathcal{OI}_n}
\newcommand\on{\mathcal{O}_n}


\title{Conjugacy in Abstract Semigroups, Transformation and Diagram Monoids, and Conjugacy Growth}

\author{Jo\~{a}o Ara\'{u}jo\footnote{Center for Mathematics and Applications (NOVA Math) \& Department of Mathematics, NOVA School of Science and Technology, NOVA University of Lisbon, 2829–516 Caparica, Portugal;
\texttt{jj.araujo@fct.unl.pt}},
Wolfram Bentz\footnote{Center for Mathematics and Applications (NOVA Math), NOVA School of Science and Technology, NOVA University of Lisbon, 2829–516 Caparica, Portugal; and Universidade Aberta, R. Escola Polit\'ecnica, 147, 1269--001 Lisboa, Portugal;
\texttt{Wolfram.Bentz@uab.pt}},
Michael Kinyon\footnote{Department of Mathematics, University of Denver, Denver, CO 80208, USA;
\texttt{michael.kinyon@du.edu}},
Janusz Konieczny\footnote{Department of Mathematics, University of Mary Washington, Fredericksburg, VA 22401, USA;
\texttt{jkoniecz@umw.edu}},\\
Ant\'onio Malheiro\footnote{Center for Mathematics and Applications (NOVA Math) \& Department of Mathematics, NOVA School of Science and Technology, NOVA University of Lisbon, 2829–516 Caparica, Portugal;
\texttt{ajm@fct.unl.pt}},
and
Valentin Mercier\footnote{Center for Mathematics and Applications (NOVA Math), NOVA School of Science and Technology, NOVA University of Lisbon, 2829–516 Caparica, Portugal;
\texttt{valen.mercier@gmail.com}}}
\date{}
\pagestyle{plain}

\begin{document}

\maketitle

\begin{abstract}
We study conjugacy relations on semigroups and monoids, focusing on the relation $a \cfn b$, defined by the existence of $g,h \in S^1$ such that $ag = gb$, $bh = ha$, $hag = b$, and $gbh = a$. This notion emerged as one that yields particularly elegant results. The interplay between $\cfn$ and other standard conjugacy relations is analyzed, and some results on special classes of abstract semigroups are established. 
We then specialize to the case of transformation semigroups. A complete classification of $\cfn$-classes is obtained for the full transformation monoid $\mathcal{T}_n$, the symmetric inverse monoid $\mathcal{I}_n$, and the endomorphism monoid of $G$-sets, among others.
We also investigate the natural conjugacy in diagram semigroups, including the partition monoid, the Brauer monoid, and the partial Brauer monoid. 
Finally, we investigate the conjugacy growth function in polycyclic monoids and obtain a precise asymptotic estimate.
The paper concludes with some open problems. 

\vskip 2mm

\noindent\emph{$2020$ Mathematics Subject Classification\/}. {20E45, 20M10, 20M20.}

\vskip 2mm
\noindent\emph{Keywords\/}: Conjugacy; transformation semigroups; partition monoids; polycyclic monoids; conjugacy growth series.
\end{abstract}

\tableofcontents

\section{Introduction}\label{Sec:int}
\subsection{Motivation and main goals}
A conjugacy notion on a class of algebras containing groups is an equivalence relation defined  in the language of the class, that coincides with standard group-theoretic conjugacy when the algebra is a group.

As a conjugacy notion is an equivalence relation, it provides a natural way of organizing objects by similarity. It also serves as a shared language across numerous areas of mathematics, such as linear algebra, groups and semigroups, representation theory, Galois theory, geometry, topology, symbolic dynamics, etc.

In this paper, we explore several notions of conjugacy in classes of semigroups, possibly with additional operations. One notion, which we call \emph{natural conjugacy}, stands out for the elegance of its results. We examine its properties and its relationship with other notions in transformation and diagram monoids.

Another key feature of this paper is the following: Over the past two decades, substantial research has been conducted on the conjugacy growth of finitely generated infinite groups. This paper extends this concept to monoids and investigates conjugacy growth in polycyclic monoids, an important family of infinite finitely generated monoids.

In a semigroup $S$, define the natural conjugacy relation $\cfn$ as follows: for all $a,b\in S$,
\begin{equation}\label{e1dcon}
a\cfn b\,\iff\, \exists_{g,h\in S^1}\ (\,ag=gb,\,\, bh=ha,\,\, hag=b,\,\textnormal{ and }\,gbh=a\,)\,.  \tag{$\cfn$}
\end{equation}

The main goals of this paper are the following:
\begin{enumerate}
\item Extend \cite{ArKiKoMaTA} by exploring the links between  conjugacy relations not covered before, especially natural conjugacy as well as two others coming from representation theory and dynamical systems.

\item Describe the natural conjugacy classes--as well as the classes for other notions of conjugacy--in transformation monoids, including the full transformation monoid on a set and the endomorphism monoid of a finite abelian $G$-set. The latter case is motivated by the widespread use of $G$-set endomorphisms in such areas as equivariant topology \cite{may}, homogeneous spaces \cite{tim}, topological dynamics \cite{nek}, representation theory \cite{martin}, algebraic topology \cite{LR05}, and statistical inference (equivariant estimators) \cite{casella,zachs,lehmann}.

\item Describe the natural and other conjugacies in diagram monoids, including the partition monoid, the Brauer monoid, and the partial Brauer monoid. Diagram monoids and their associated algebras and categories arise invariant theory, classical groups, representation theory, logic, knot theory, and statistical mechanics; see, e.g., \cite{ram1,graham,HaRa05,hartman,konig1,konig2,wenzl} as well as the excellent overview in the introduction of \cite{EaGr17}.

\item In the last two decades, many deep papers have been published on conjugacy growth in finitely generated infinite groups \cite{Evetts23,CHM23,CEH20,BdlH18,AC16h,CH14,BCLM13,HO13}. This is a young but fascinating area of research. We extend the group theoretic notion of \emph{conjugacy growth} to monoids and specifically investigate it in polycyclic monoids, a natural family of finitely generated infinite monoids. 
\end{enumerate}

\subsection{Semigroup conjugacies}

In this subsection, we introduce the notions of semigroup conjugacy we will discuss in this paper. 

If $S$ is a semigroup, we let $S^1$ denote $S$ if $S$ is a monoid; otherwise $S^1 = S\cup \{1\}$, where $1$ is an adjoined identity element.

Elements $a,b$ in a group $G$ are conjugate if and only if $g^{-1}ag=b$ for some $g\in G$. To compare this with various generalizations to semigroups, it is helpful to write it without using inverses: $ag = gb$.

Group conjugacy can be generalized directly as follows. For a semigroup $S$, let $G(S^1)$ denote the group of units (invertible elements) of the monoid $S^1$. Define a relation $\cgn$ (unit group conjugacy) on $S$ by
\begin{equation}\label{ecg}
a\cgn b\,\iff\, \exists_{g\in G(S^1)}\ ag = gb\,.
\end{equation}
This is an equivalence relation (symmetry follows from $g\inv a=bg\inv$), but it has an obvious disadvantage:
if $S\neq S^1$ or if $S$ itself is a monoid with trivial group of units, then $\cgn$ is just the equality relation.

If we drop the restriction that the conjugator $g$ must be a unit, then we obtain a relation $\cln$ (left conjugacy), which
has been used to define a notion of conjugacy in arbitrary semigroups \cite{Ot84,Zh91,Zh92}:
\begin{equation}\label{ecl}
a\cln b\,\iff\, \exists_{g\in S^1}\ ag=gb\,.
\end{equation}
This relation is reflexive and transitive, but not, in general, symmetric, and so $\cln$ is not a semigroup
conjugacy for the class of all semigroups. Its symmetrization \cite{Ot84} is denoted by $\coon$, that is,
\begin{equation}\label{econo}
a\coon b\,\iff\, \exists_{g,h\in S^1}\ (\,ag=gb \text{ and } bh=ha\,)\,.
\end{equation}
We call the pair $g,h\in S^1$ \emph{conjugators} for $a,b\in S$. Almost all of the conjugacy relations we discuss in this paper are contained in $\coon$, and so we refer to $g,h$ as conjugators in those cases too.
The relation $\coon$ has its own disadvantage: in any semigroup $S$ with zero, $\coon$ is the universal relation $S\times S$.

One possible remedy to this deficiency is the following relation defined in arbitrary semigroups $S$ \cite{AKM14}:
\begin{equation}\label{econc}
a\con b\,\iff\, \exists_{g\in\pp(a)}\exists_{h\in\pp(b)}\ (\,ag=gb\text{ and }bh=ha\,)\,,
\end{equation}
where for $a\neq 0$, $\pp(a) = \{g\in S^1\,:\,\forall_{m\in S^1}\,(ma\ne0\implies (ma)g\ne0)\}$, and $\pp(0)=\{1\}$.
(See \cite[{\S}2]{AKM14} for the motivation behind this definition.) The relation $\con$ is an equivalence,
does not reduce to $S\times S$ if $S$ has a zero, and is equal to $\coon$ if $S$ does not have a zero.

Another inverse-free formulation of conjugacy for elements $a,b$ of a group $G$ is that $a=uv$ and $b=vu$ for some $u,v\in G$. This formulation has been used to define a relation $\cpn$ (primary conjugacy) \cite{KuMa09,LW1,LW2} in an arbitrary
semigroup:
\begin{equation}\label{ecp}
a\cpn b\,\iff\, \exists_{u,v\in S^1}\ (\,a=uv \text{ and } b=vu\,)\,.
\end{equation}
This relation is reflexive and symmetric, but not transitive, and so again, it is not a semigroup conjugacy on the class
of all semigroups. Its transitive closure \cite{KuMa07,KuMa09} is denoted
by $\cpns$, that is,
\begin{equation}\label{econp}
a\cpns b\,\iff\, \exists_{u_1,\ldots,u_k,v_1,\ldots,v_k\in S^1}\ (\,a=u_1 v_1,\, v_1 u_1=u_2 v_2,\ldots,\, v_k u_k = b\,)\,.
\end{equation}
Conjugators that witness the implication $a\cpns b\,\implies\,a\coon b$ are given by $g = u_1\cdots u_k$ and $h = v_k\cdots v_1$. For recent work on $\cpn$ and $\cpns$ and how they are related to the least commutative congruence of a semigroup, see \cite{Mesyan}.

While $\cln$ and $\cpn$ are not semigroup conjugacies in the class of all semigroups (in spite of their names),
they are semigroup conjugacies in free semigroups, and in fact, in that case all the relations defined in this subsection (except $\cgn$) coincide:
$\cln\,=\,\coon\,=\,\con\,=\,\cpn\,=\,\cpns$  \cite{La79}.

In inverse semigroups, there is a notion of conjugacy, called $i$-\emph{conjugacy}, which is a natural extension of group
conjugacy in its inverse form \cite{ArKiKo_Inv,TJack}:
\begin{equation}\label{eci}
a\cin b\,\iff\, \exists_{g\in S^1}\ (\,g\inv ag = b \text{ and } gbg\inv = a\,)\,.
\end{equation}
(This is explicitly symmetrized because in inverse semigroups, $g\inv ag=b$ does not imply $gbg^{-1}=a$.)
In general, none of the relations $\coon$, $\cpns$, or $\con$ coincide with $\cin$ in inverse semigroups.

In 2018, the fourth author \cite{Ko18} defined a conjugacy $\cfn$ on any semigroup $S$ by \eqref{e1dcon} above, that is,
\begin{equation}\label{e1dcon2}
a\cfn b\,\iff\, \exists_{g,h\in S^1}\ (\,ag=gb\,,\,\, bh=ha\,,\,\, hag=b \text{ and } gbh=a\,)\,.
\end{equation}
The relation $\cfn$ is an equivalence relation on any semigroup $S$, it does not reduce to $S\times S$ if $S$ has a zero,
and it coincides with $\cin$ if $S$ is an inverse semigroup (see Proposition~\ref{Prp:inverse}).
The relation $\cfn$ is essentially the smallest known ``interesting'' conjugacy for general semigroups.
(The relation $\cgn$ is generally smaller, but, as already noted, it is often just equality.)
For these reasons, and others that will come up in the course of this paper, we call $\cfn$ the \emph{natural conjugacy} for semigroups.

The relations $\cpns$ and $\con$ are not comparable with respect to inclusion \cite[Prop.~2.3]{Ko18}. For detailed comparison and analysis
in various classes of semigroups of the conjugacies $\cpns$, $\coon$, $\con$, as well as the \emph{trace conjugacy} $\ctr$, see \cite{ArKiKoMaTA}.

We define a ``new'' conjugacy $\cwn$ in arbitrary semigroups as follows:
\begin{equation}\label{ecwn}
a\cwn b\iff\exists_{g,h\in S^1, m\in \mathbb{Z}^+}\ (\,ag=gb\,,\ bh=ha\,,\ gh=a^m \text{ and } hg=b^m\,)\,.
\end{equation}
This turns out to be an equivalence relation and $\cpns\,\,\subseteq\,\,\cwn\,\,\subseteq\,\,\coon$.
We put ``new'' in quotation marks because, although the relation seems to be new in semigroup theory, both $\cwn$ and $\cpns$ are well known
to specialists in dynamical systems \cite{LindMarcus}.

Regarding the conjugacies discussed so far, in any semigroup, we have:
\[
\begin{tikzpicture}[scale=1, every node/.style={inner sep=1pt}]
  \node (cgn) at (0,0) {$\cgn$};
  \node (cfn) at (0,1) {$\cfn$};
  \node (cpns) at (1.2,2.2) {$\cpns$};
  \node (con)  at (-1.2,2.2) {$\con$};
  \node (cwn)  at (0.6,2.75) {$\cwn$};
  \node (coon) at (0,3.3) {$\coon$};

  \draw (cgn) -- (cfn);
  \draw (cfn) -- (cpns);
  \draw (cfn) -- (con);
  \draw (cpns) -- (cwn);
  \draw (cwn) -- (coon);
  \draw (con) -- (coon);
\end{tikzpicture}
\]

A semigroup conjugacy closely related to $\cwn$ is not defined for all semigroups, but is defined for
epigroups \cite{ArKiKoMaTA}. A semigroup $S$ is an \emph{epigroup} if for each $a\in S$, there exists
a positive integer $n$ such that $a^n$ belongs to a subgroup $H$ of $S$ (see \S\ref{Sec:epi} for more details).
We denote by $a^{\omega}$ the identity element of $H$ \cite[\S2]{Shevrin}, and we set $a^{\ome+1}=a^\ome a$.
Every finite semigroup, or more generally, every periodic semigroup is an epigroup, and
in this case, $a^{\ome}$ itself is a power of $a$.  We define the \emph{trace conjugacy} relation $\ctr$ on any epigroup $S$ as follows \cite{ArKiKoMaTA}:
\begin{equation}\label{ectr}
a\ctr b\iff\exists_{g,h \in S^1}\ (\,ghg=g,\, hgh=h,\, gh=a^\ome,\, hg=b^\ome \text{ and } ha^{\ome+1}g=b^{\ome+1}\,)\,.
\end{equation}
Trace conjugacy, which is an equivalence relation on any epigroup, was inspired
by the representation theory of finite monoids; elements $a,b$ of a finite monoid $S$ satisfy
$a\ctr b$ if and only if $\chi(a)=\chi(b)$ for every irreducible character $\chi$ of $S$  \cite{Steinberg15}.
It is not immediately evident from the definitions that $\ctr\,\subseteq\,\coon$, but there is an equivalent formulation we will need later \cite[Thm.~4.5]{ArKiKoMaTA}:
\begin{equation}\label{ectr2}
a\ctr b\iff\exists_{g,h \in S^1}\ (\,ag=gb,\ bh=ha,\ gh=a^\ome \text{ and } hg=b^\ome\,)\,.
\end{equation}
We will show that $\cwn\,=\,\ctr$ in epigroups, and thus $\cwn$ is the correct generalization of $\ctr$ from epigroups to all semigroups.

Closely related to trace conjugacy is linear conjugacy for finite semigroups \cite{St19}. Let $M_n(\mathbb{F})$ denote the ring of $n\times n$ matrices over a field $\mathbb{F}$. Let $S$ be a finite semigroup. For $a,b\in S$, we say that $a$ is \emph{linearly conjugate} to $b$,
written $a\cli b$, if for every linear representation $\rho:S\to M_n(\mathbb{F})$, there is an invertible matrix $A\in M_n(\mathbb{F})$ such that
$b\rho=A^{-1}(a\rho)A$. (We apply functions on the right and compose from left to right: $x(fg)=(xf)g$.)
Linear conjugacy $\cli$ was introduced and studied in detail in \cite{St19}. In \cite[Thm.~1]{St19}, $\cli$ was characterized as follows:
\begin{equation}\label{eclin}
a\cli b\iff (\ a\ctr b\quad\text{and}\quad a^k\greenJ b^k\text{ for every integer }k\geq 1\ )\,,
\end{equation}
where $\gj$ denotes one of Green's relations; see \S\ref{Sbs:green}.
As was done with $\ctr$ itself, \eqref{eclin} can be taken as a definition of linear conjugacy in any epigroup. 

If $H$ is a subgroup of a group $G$, the conjugacy relation $g\inv ag = b$ between elements $a,b\in H$ can be considered in two ways: internally,
where the conjugator $g\in H$, or externally, where we allow any $g\in G$ such that $g\inv ag=b$ is satisfied. This comes up naturally when $H$ is normal; for example, for the alternating groups $A_n$ ($n\geq 3$) sitting inside the symmetric groups $S_n$, the $S_n$-conjugacy classes
and the $A_n$-conjugacy classes differ.

We will generally not be concerned with this issue for subsemigroups of semigroups, with one specific exception where we will allow external conjugation. For a set $X$, let $\Sym(X)$ denote the symmetric group of permutations on $X$, let $P(X)$ be the semigroup of partial transformations of $X$, and let $S$ be any subsemigroup of $P(X)$. For $\al,\bt\in S$, we say that \emph{$\al$ is conjugate to $\bt$ by permutation}, written $\al\cbp\bt$, if $\bt=\sig^{-1}\al\sig$ for some $\sig\in\Sym(X)$. Note that if $\sym(X)\subseteq S$, then $\cbp$ coincides with unit group conjugacy $\cgn$ (see \eqref{ecg}), and so in this case, $\cbp\,\subseteq\,\cfn$.  Also note that for each subsemigroup $S$ of $P(X)$ and all $\al,\bt\in S$, $\al\cbp\bt$ in $S$ if and only if $\al\cbp\bt$ in $P(X)$.

\subsection{Outline}\label{Sbs:outline}

We conclude the introduction with an outline of the rest of the paper. In \S\ref{Sbs:char}, we provide various alternative definitions of $\cfn$, which we will use throughout the paper. It was stated in \cite{ArKiKoMaTA} that ``\ldots in general, Green's relations and the conjugacies under
consideration are not comparable with respect to inclusion.'' However, in \S\ref{Sbs:green}, we will show a very nice feature of $\cfn$, namely that in any semigroup, $\cfn$ is included in Green's relation $\greenD$, and that $\cfn$ and $\greenD$ coincide when restricted to idempotents. In \S\ref{Sec:inverse}--\ref{Sec:epi}, we study $\cfn$ in inverse and stable semigroups, and in epigroups and completely regular semigroups. 

In \S\ref{Sec:w-ds}, we examine $\cwn$, its connection to $\ctr$, and its origins in dynamical systems. This section answers a question left open in \cite{ArKiKoMaTA}: how can $\ctr$ be extended from epigroups to all semigroups?

Section \S\ref{Sec:tra} deals with transformation semigroups. In  \S\ref{subnat}, which extends the results obtained in \cite{Ko18},
we characterize conjugacy $\cfn$ in some well-known semigroups of transformations, using the representation of transformations by directed graphs (see \S\ref{Subfdr}). In  \S\ref{subbp}, we compare conjugacy by permutations (the traditional idea of {\em changing the label}) with the other concepts we are dealing with, in order to clarify the interconnections between the abstract definitions and the intuitive notion associated with conjugacy.
Subsection \S\ref{ssec:conjGset} has the description of $\cfn$ in the endomorphism monoid of a  $G$-set, for $G$  abelian.
%
%
%
%
%

Section \S\ref{Sec:par} characterizes $\cfn$ in several finite partition monoids, namely the partition monoid itself, the Brauer monoid, and the partial Brauer monoid. We also characterize other notions of conjugacy ($\ctr$, $\cpns$, $\coon$, and $\con$) in these monoids.

In \S\ref{Sec:polycyclic}, we characterize $\cfn$ in the polycyclic monoids, and give closed formulas for the conjugacy growth series of the polycyclic monoid for $\cfn$, $\cpns$, and $\coon$. Finally, \S\ref{Sec:questions} provides a list of open problems.

\section{General results on $\cfn$}
In this section we study $\cfn$ in a manner analogous to \cite{ArKiKoMaTA}.
\subsection{Characterizations of $\cfn$}\label{Sbs:char}

For a semigroup $S$, $a,b\in S$ and $g,h\in S^1$, consider the following equations:
\begin{center}
  \begin{tabular}{rccrc}
  (i)   & $ag = gb$       & \qquad & (ii)   & $bh= ha$ \\
  (iii) & $hag = b$       & \qquad & (iv)   & $gbh = a$ \\
  (v)   & $hg\cdot b = b$ & \qquad & (vi)   & $gh\cdot a = a$ \\
  (vii) & $b\cdot hg = b$ & \qquad & (viii) & $a\cdot gh = a$.
\end{tabular}
\end{center}
Our definition of $\cfn$ is based on (i), (ii), (iii) and (iv). We now give some characterizations which will be useful later. In particular, we could have defined $\cfn$ less symmetrically.

\begin{lemma}\label{lem:alternatives}
Let $S$ be a semigroup, and let $a,b\in S$ and $g,h\in S^1$. Then:
\begin{itemize}
  \item[\textup{(a)}]\quad \textup{(i)}$\implies$\textup{( (iii)}$\iff$\textup{(v) )};
  \item[\textup{(b)}]\quad \textup{(i)}$\implies$\textup{( (iv)}$\iff$\textup{(viii) )};
  \item[\textup{(c)}]\quad \textup{(ii)}$\implies$\textup{( (iv)}$\iff$\textup{(vi) )};
  \item[\textup{(d)}]\quad \textup{(ii)}$\implies$\textup{( (iii)$\iff$(vii) )};
  \item[\textup{(e)}]\quad $\{$\textup{(iii),(vi)}$\}\implies\{$\textup{(i),(v)}$\}$;
  \item[\textup{(f)}]\quad $\{$\textup{(iv),(v)}$\}\implies\{$\textup{(ii),(vi)}$\}$;
  \item[\textup{(g)}]\quad $\{$\textup{(iv),(vii)}$\}\implies\{$\textup{(i),(viii)}$\}$;
  \item[\textup{(h)}]\quad $\{$\textup{(iii),(viii)}$\}\implies\{$\textup{(ii),(vii)}$\}$.
\end{itemize}
\end{lemma}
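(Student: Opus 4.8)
The plan is to regard each of the eight claims as a one-line associativity computation in $S^1$, and to halve the work at the outset by invoking a symmetry. Observe that the relabeling $a\mapsto b$, $b\mapsto a$, $g\mapsto h$, $h\mapsto g$ is legitimate (in the statement $a,b$ both range over $S$ and $g,h$ both over $S^1$), and that it interchanges (i)$\leftrightarrow$(ii), (iii)$\leftrightarrow$(iv), (v)$\leftrightarrow$(vi), (vii)$\leftrightarrow$(viii). Consequently this relabeling carries claim (a) onto (c), (b) onto (d), (e) onto (f), and (g) onto (h), so it suffices to establish (a), (b), (e), and (g).

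For (a), assume (i). Then $hag = h(ag) = h(gb) = (hg)b$, whence $hag = b$ holds iff $(hg)b = b$; that is, (iii)$\iff$(v). For (b), assume (i) again; then $gbh = (gb)h = (ag)h = a(gh)$, so $gbh = a$ holds iff $a(gh) = a$, i.e., (iv)$\iff$(viii). Both are immediate.

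For (e), assume (iii) and (vi). Substituting $b = hag$ from (iii) and using (vi) in the form $(gh)a = a$, one gets $gb = g(hag) = (gha)g = ag$, which is (i); and $(hg)b = (hg)(hag) = h\big((gh)a\big)g = hag = b$, the last equality by (iii) again, which is (v). For (g), assume (iv) and (vii). Substituting $a = gbh$ from (iv) and using (vii) in the form $b(hg) = b$, one gets $ag = (gbh)g = g\big(b(hg)\big) = gb$, which is (i); and $a(gh) = (gbh)(gh) = g\big(b(hg)\big)h = gbh = a$, the last equality by (iv), which is (viii).

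I do not anticipate any genuine obstacle; the whole argument is bookkeeping. The only points demanding a moment's care are: (1) in (e) and (g), tracking exactly which hypothesis is invoked at each rewrite — in both cases the ``structural'' hypothesis ((vi) resp.\ (vii)) collapses an interior product, while the ``displacement'' hypothesis ((iii) resp.\ (iv)) is used both to introduce and to remove an outer factor; and (2) checking that the relabeling really sends (a),(b),(e),(g) to (c),(d),(f),(h) — for instance, under it the hypotheses $\{$(iii),(vi)$\}$ of (e) become $\{$(iv),(v)$\}$ and its conclusions $\{$(i),(v)$\}$ become $\{$(ii),(vi)$\}$, which is precisely (f).
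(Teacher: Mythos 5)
Your proof is correct and follows essentially the same route as the paper's: each implication is a one-line associativity computation ($hag=(hg)b$ and $gbh=a(gh)$ under (i), and their duals under (ii)), with (e)--(h) obtained by first deriving (i) or (ii) from the hypotheses and then falling back on (a)--(d). The only difference is organizational: you halve the casework via the relabeling $a\leftrightarrow b$, $g\leftrightarrow h$, which is a valid and slightly cleaner packaging of the same computations the paper writes out in full.
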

\begin{proof}
If (i) holds, then $hg\cdot b = hag$ and $a\cdot gh = gbh$. The first of these implies (a), the second implies (b).

If (ii) holds, then $gh\cdot a = gbh$ and $b\cdot hg = hag$. The first of these implies (c), the second implies (d).

For (e), $ag = ghag = gb$ and then (v) follows from (a). For (f), $bh = hgbh = ha$ and then (vi) follows from (c).
For (g), $gb = gbhg = ag$ and then (viii) follows from (b). For (h), $ha = hagh = bh$ and then (vii) follows from (d).
\end{proof}

\begin{prop}\label{Prp:alternatives}
Let $S$ be a semigroup, and let $a,b\in S$ and $g,h\in S^1$. Each of the following sets of equations
implies all of \textup{(i)--(viii)}, and thus $a\cfn b$.
\begin{center}
\begin{tabular}{rlcrl}
\textup{(1)} & $\{$\textup{(i),(iii),(iv)}$\}$ & \qquad &
\textup{(2)} & $\{$\textup{(ii),(iii),(iv)}$\}$ \\
\textup{(3)} & $\{$\textup{(i),(iii),(viii)}$\}$ & \qquad &
\textup{(4)} & $\{$\textup{(ii),(iv),(vii)}$\}$ \\
\textup{(5)} & $\{$\textup{(i),(iv)(v)}$\}$ & \qquad &
\textup{(6)} & $\{$\textup{(ii),(iii),(vi)}$\}$ \\
\textup{(7)} & $\{$\textup{(i),(v),(viii)}$\}$ & \qquad &
\textup{(8)} & $\{$\textup{(ii),(vi),(vii)}$\}$ \\
\textup{(9)} & $\{$\textup{(iii),(iv),(v)}$\}$ & \qquad &
\textup{(10)} & $\{$\textup{(iii),(iv),(vi)}$\}$ \\
\textup{(11)} & $\{$\textup{(iii),(iv),(vii)}$\}$ & \qquad &
\textup{(12)} & $\{$\textup{(iii),(iv),(viii)}$\}$ \\
\textup{(13)} & $\{$\textup{(iii),(vi),(viii)}$\}$ & \qquad &
\textup{(14)} & $\{$\textup{(iv),(v),(vii)}$\}$ \\
\textup{(15)} & $\{$\textup{(i),(ii),(v),(vii)}$\}$ & \qquad &
\textup{(16)} & $\{$\textup{(i),(ii),(vi),(viii)}$\}$ \\
\end{tabular}
\end{center}
\end{prop}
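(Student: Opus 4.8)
The plan is to funnel all sixteen cases through a handful of ``base'' cases and then read everything off from Lemma~\ref{lem:alternatives}. The organizing observation is this: if some collection of the equations implies \emph{both} (i) and (ii), then by Lemma~\ref{lem:alternatives}(a),(d) the three ``$b$-equations'' (iii), (v), (vii) become mutually equivalent, and by parts (b),(c) the three ``$a$-equations'' (iv), (vi), (viii) become mutually equivalent. Hence, to prove that one of the sixteen listed sets implies all of (i)--(viii), it suffices to deduce from it the four statements (i), (ii), one $b$-equation, and one $a$-equation.

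I would first dispose of the sets that already contain both (iii) and (iv), namely (1), (2) and (9)--(12). For (9)$\,=\{$(iii),(iv),(v)$\}$: part (f) applied to $\{$(iv),(v)$\}$ gives (ii) and (vi); then part (e) applied to $\{$(iii),(vi)$\}$ gives (i); now (i) together with (iv) gives (viii) by (b), and (ii) together with (iii) gives (vii) by (d), so all eight hold. Sets (10), (11), (12) are handled in exactly the same manner, starting the chain instead with parts (e), (g), (h) respectively in place of (f); and (1)$\,=\{$(i),(iii),(iv)$\}$ reduces immediately to case (9) because (i) and (iii) force (v) by part (a), while (2)$\,=\{$(ii),(iii),(iv)$\}$ reduces to case (11) because (ii) and (iii) force (vii) by part (d).

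The remaining sets I would reduce to this base family. Each of (3)--(8) and (15)--(16) contains (i) or (ii): combining (i) with any $b$-equation present yields (iii) by part (a), combining (i) with any $a$-equation present yields (iv) by part (b), and symmetrically for (ii) via parts (d) and (c); a one-line inspection of each such set shows that it produces both (iii) and (iv) and therefore falls into the base family. For the sets (9)--(14), which contain neither (i) nor (ii), one notes that each of them contains one of the ``mixed'' pairs $\{$(iii),(vi)$\}$, $\{$(iv),(v)$\}$, $\{$(iv),(vii)$\}$, $\{$(iii),(viii)$\}$, so part (e), (f), (g), or (h) manufactures (i) or (ii), after which the preceding argument again applies.

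Every individual step here is a single substitution that has already been carried out inside Lemma~\ref{lem:alternatives}, so I do not expect any one deduction to be hard. The genuine obstacle is organizational: one must arrange the sixteen cases so that each of them reaches a base configuration in which both (iii) and (iv) are simultaneously available, and one must take particular care with the sets containing neither (i) nor (ii), where no propagation via parts (a)--(d) can even begin until a first application of parts (e)--(h) has produced one of (i), (ii). A useful sanity check along the way is that a listed set is reached which retains at least one $b$-equation and at least one $a$-equation, since it is precisely the joint presence of both kinds that breaks the left/right symmetry of the system.
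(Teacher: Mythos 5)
Your reduction scheme is sound, and your verifications for cases (1)--(14) are correct; they are in fact considerably more explicit than the paper's own proof, which works out only case (1) and leaves the other fifteen to the reader. The gap is in the blanket claim that ``a one-line inspection of each such set shows that it produces both (iii) and (iv),'' as applied to (15) and (16). Set (15) $=\{$(i),(ii),(v),(vii)$\}$ contains \emph{no} $a$-equation --- (v) and (vii) are both $b$-equations --- so parts (b) and (c) of Lemma~\ref{lem:alternatives} have nothing to act on, and no sequence of applications of (a)--(h) can ever produce (iv), (vi) or (viii). Dually, (16) contains no $b$-equation and never yields (iii), (v) or (vii). This is exactly the failure that your own closing ``sanity check'' is designed to detect; you state the check but do not apply it to these two sets, which are the only ones that fail it.

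Moreover, this gap cannot be repaired, because cases (15) and (16) of the proposition are false as stated. In the two-element semilattice $\{e,f\}$ with $ef=fe=e$, $ee=e$, $ff=f$, take $a=f$, $b=e$, $g=h=e$. Then $ag=gb=e$, $bh=ha=e$, and $hg\cdot b=b\cdot hg=e=b$, so all of (i), (ii), (v), (vii) hold; but $gbh=e\neq f=a$, and indeed $f$ and $e$ are not $\frn$-conjugate --- they are not even $\greenD$-related, so Proposition~\ref{Prp:D} (or Corollary~\ref{Cor:band}) already rules it out. Swapping the roles of $a$ and $b$ gives the analogous counterexample to (16). So your argument establishes (1)--(14), but (15) and (16) must be dropped or corrected, e.g.\ replaced by sets containing at least one equation from $\{$(iii),(v),(vii)$\}$ and at least one from $\{$(iv),(vi),(viii)$\}$, which is precisely the condition your sanity check isolates.
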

\begin{proof}
Each case follows from tracking implications in Lemma \ref{lem:alternatives}. We prove case (1)
and leave the rest to the reader. Thus assume (i),(iii),(iv) hold. Then (v) and (viii) hold by
parts (a) and (b) of Lemma \ref{lem:alternatives}. Then (ii) holds by part (f), and so (vi) and
(vii) hold by parts (c) and (d).
\end{proof}

A semigroup with zero, denoted $0$, is said to be $3$-\emph{nilpotent} if it satisfies $xyz=0$ for all $x,y,z$. It is generally believed (but has never been proved) that ``almost all'' finite semigroups are $3$-nilpotent. From the previous result it immediately follows that the natural conjugacy in these $3$-nilpotent semigroups is the identity relation. (For a more general result see Proposition \ref{Prp:trivial}.)

For a semigroup $S$, if $a,b\in S$ satisfy $a\cfn b$, then there exist conjugators $g,h\in S^1$ satisfying all of the conditions (i)--(viii). We will use (i)--(viii) freely in calculations without explicit reference.

As already noted, we refer to $\cfn$ as natural conjugacy or just $\frn$-conjugacy, for short. For $a\in S$ we write $[a]_{\frn} = \{b\in S\,:\,b\cfn a\}$ for the conjugacy class of $a$ relative to $\cfn$.

\begin{rem}\label{Rem:n_conj_class_01}
Note that in any semigroup with a zero, $[0]_{\frn} =\{0\}$, and in any monoid $M$ with the identity $1$, $[1]_{\frn} = \{gh\in M\,:\,hg = 1\}$.
\end{rem}

As our first application of Proposition \ref{Prp:alternatives}, we show that natural conjugacy preserves powers. For similar results for other semigroup conjugacies, see (\cite{ArKiKoMaTA}, Thms. 5.14, 5.15). 

\begin{prop}\label{Prp:subsemi}
Let $S$ be a semigroup, let $g,h\in S^1$, and let
\[
K_{g,h} = \{ (a,b)\in S\times S\,:\, a\cfn b\text{ with }g,h\text{ as conjugators }\}\,.
\]
Then $K_{g,h}$ is a subsemigroup of $S\times S$.
\end{prop}
\begin{proof}
  Assume $(a,b),(c,d)\in K_{g,h}$. Then $acg = agd = gbd$, $bdh = bhc=hac$, $hacg = bhcg = bd$ and $gbdh = agdh = ac$. Therefore $(a,b)(c,d)=(ac,bd)\in K_{g,h}$.
\end{proof}
\begin{cor}\label{Cor:powers}
Let $S$ be a semigroup and assume $a,b\in S$ satisfy $a\cfn b$ with conjugators $g,h\in S^1$. Then for all positive integers $k$, $a^k\cfn b^k$ with conjugators $g,h$.
\end{cor}

\subsection{Conjugacy $\cfn$, Green's relations and idempotents}\label{Sbs:green}

If $S$ is a semigroup and $a,b\in S$, we say that $a\greenL b$ if $S^1a = S^1b$, $a\greenR b$ if $aS^1 = bS^1$, and $a\greenJ b$ if $S^1aS^1 = S^1bS^1$. We define $\gh$ as the intersection of $\gl$ and $\gr$, and $\gd$ as the join of $\gl$ and $\gr$, that is, the smallest equivalence relation on $S$ containing both $\gl$ and $\gr$. These five equivalence relations are known as \emph{Green's relations} \cite[p.~45]{Howie},
one of the most important tools in studying semigroups. The relations $\gl$ and $\gr$ commute \cite[Prop.~2.1.3]{Howie}, and consequently $\gd = \gl\circ \gr = \gr\circ \gl$. We have $\gd\subseteq \gj$, and in epigroups, such as finite or periodic semigroups,
$\gd = \gj$ \cite[Prop.~2.1.4]{Howie}.

If $a\greenD b$, then there exists $c\in S$ such that $a\greenR c\greenL b$, and thus there exist $g_1, g_2, h_1, h_2\in S^1$ such that $ag_1 = c$, $c h_1 = a$, $g_2 b = c$ and $h_2 c = a$. Eliminating $c$ from this, we may express the relation $\gd$ as follows:
\[
a\greenD b\,\iff\, \exists_{g_1,g_2,h_1,h_2\in S^1}
(\ ag_1 = g_2 b,\quad a g_1 h_1 = a,\quad h_2 g_2 b = b\ )\,.
\]
Comparing this with Proposition \ref{Prp:alternatives}, we have the following.

\begin{prop}\label{Prp:D}
In any semigroup, $\cfn\ \subseteq\ \greenD$.
\end{prop}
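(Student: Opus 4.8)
The plan is to show that $a\cfn b$ implies $a\greenD b$ by exhibiting, from the natural conjugacy data, a witness $c$ with $a\greenR c\greenL b$. Suppose $a\cfn b$. Then there are conjugators $g,h\in S^1$ satisfying all of (i)--(viii); in particular (i) $ag=gb$, (ii) $bh=ha$, (iii) $hag=b$, and (iv) $gbh=a$. I would set $c := ag$ (equivalently $c=gb$, by (i)). The goal then reduces to checking the four one-sided divisibility equations that, by the displayed characterization of $\greenD$ just above the statement, are exactly what is needed: $ag_1=c$, $ch_1=a$, $g_2 b = c$, and $h_2 c = a$ for suitable $g_i,h_i\in S^1$.

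First I would verify $a\greenR c$. We have $c=ag$, so $a\,S^1\supseteq c\,S^1$, i.e. $c = a\cdot g$ with $g_1:=g$. For the reverse, I need $c h_1 = a$ for some $h_1\in S^1$: taking $h_1:=h$ gives $ch = agh = a$ by (viii). Hence $aS^1 = cS^1$ and $a\greenR c$. Next I would verify $c\greenL b$. Since $c = gb$ by (i), we get $c = g\cdot b$ with $g_2:=g$, so $S^1 c\subseteq S^1 b$. For the reverse I need $h_2 c = b$ for some $h_2\in S^1$: taking $h_2:=h$ gives $hc = hag = b$ by (iii). Hence $S^1 b = S^1 c$ and $c\greenL b$. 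Therefore $a\greenR c\greenL b$, so $a\greenD b$, and since $a,b$ were arbitrary elements with $a\cfn b$, we conclude $\cfn\ \subseteq\ \greenD$.

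This argument is essentially bookkeeping: the only "content" is the correct choice $c=ag=gb$ and the observation that conditions (viii) and (iii) are precisely the two identities needed to close the $\greenR$ and $\greenL$ halves. There is no real obstacle — the mild subtlety is simply making sure one uses the equivalence of equations (iii)/(iv) with (v)--(viii) provided by Lemma~\ref{lem:alternatives} (so that all eight hold once the four defining ones do), which is exactly why the paper assembles that lemma and Proposition~\ref{Prp:alternatives} first. I would present the proof in one or two lines, citing the displayed description of $\greenD$ and Proposition~\ref{Prp:alternatives} (or just the defining equations together with (iii) and (viii)) rather than re-deriving anything.
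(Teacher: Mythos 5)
Your proof is correct and follows essentially the same route as the paper: both use the witness $c = ag = gb$ (the paper presents this in eliminated form as $ag_1 = g_2 b$, $ag_1h_1 = a$, $h_2g_2b = b$) and close the $\greenR$ and $\greenL$ halves with conditions (viii) and (iii)/(v), which are available by Proposition~\ref{Prp:alternatives}. No issues.
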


\begin{cor}\label{Cor:Dpowers}
Let $S$ be a semigroup and assume $a,b\in S$ satisfy $a\cfn b$. Then $a^k\greenD b^k$ for all positive integers $k$.
\end{cor}
\begin{proof}
This follows from Corollary \ref{Cor:powers} and Proposition \ref{Prp:D}.
\end{proof}

\begin{example}\label{Exm:n_proper}
From Proposition~\ref{Prp:D} and \cite[Prop.~2.3]{Ko18}, we have $\cfn\ \subseteq\ \greenD\cap \cpn\cap \con$. (Although the cited reference states $\cfn\ \subseteq\, \cpns$, it actually proves the stronger result $\cfn\ \subseteq\ \cpn$.) This inclusion is strict in general. Consider the monoid $S$ defined by the Cayley table
\[
\begin{array}{c|cccccccc}
\cdot & 0 & 1 & 2 & 3 & 4 & 5 & 6 & 7\\
\hline
    0 & 0 & 0 & 0 & 0 & 0 & 0 & 0 & 0 \\
    1 & 0 & 1 & 2 & 3 & 4 & 5 & 6 & 7 \\
    2 & 0 & 2 & 6 & 6 & 3 & 2 & 6 & 2 \\
    3 & 0 & 3 & 6 & 6 & 3 & 2 & 6 & 2 \\
    4 & 0 & 4 & 6 & 6 & 4 & 5 & 6 & 5 \\
    5 & 0 & 5 & 6 & 6 & 4 & 5 & 6 & 5 \\
    6 & 0 & 6 & 6 & 6 & 6 & 6 & 6 & 6 \\
    7 & 0 & 7 & 2 & 3 & 4 & 5 & 6 & 7
\end{array}
\]
We have $2 = 3\cdot 7$ and $3 = 7\cdot 3$, so $2\cpn 3$. Next, $2\cdot 4 = 3$ and $3\cdot 5 = 2$, and so $2\greenR 3$ (and thus certainly $2\greenD 3$). Finally, for all $x,y\in S\backslash \{0\}$, $xy\neq 0$, and thus $x\con y$ in $S$ if and only if $x\coon y$ in $S\backslash \{0\}$. In the latter semigroup, $\coon$ is the universal relation because $6$ is a zero, and so $2\con 3$. However, $2\nsim_{\mathfrak{n}} 3$ because, as can be checked, there are no suitable conjugators.
\end{example}

Next we consider how $\frn$-conjugacy interacts with idempotents. First we note that if an $\frn$-conjugacy class contains an idempotent,
then it consists only of idempotents.

\begin{prop}\label{Prp:idem_only}
    Let $S$ be a semigroup and let $e,a\in S$ where $e$ is an idempotent. If $e\cfn a$, then $a$ is also an idempotent.
\end{prop}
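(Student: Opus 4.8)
The plan is to unwind the definition of $e\cfn a$ and then exploit the idempotency of $e$ together with the "one-sided action" identities that the conjugators automatically satisfy.

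First I would fix conjugators $g,h\in S^1$ witnessing $e\cfn a$. By the discussion immediately preceding Remark~\ref{Rem:n_conj_class_01} (or by Proposition~\ref{Prp:alternatives}), we may take these to satisfy all eight conditions (i)--(viii), read with $e$ in the role of the first element and $a$ in the role of the second. In particular I would record the two facts I expect to need: $a = heg$ (condition (iii)) and $egh = e$ (condition (viii), read with $e$ in the first slot; alternatively $ghe = e$, which is condition (vi)).

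Then the whole argument is the one-line computation
\[
a^2 = (heg)(heg) = h(egh)eg = h\,e\,(eg) = h(ee)g = heg = a,
\]
using $egh = e$ and $e^2 = e$; hence $a$ is idempotent. (One could equally group $heg\cdot heg = he(ghe)g = he\cdot e\cdot g = heg$, using $ghe=e$ instead.)

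There is no real obstacle here beyond bookkeeping: the only subtlety is translating the abstract conditions (i)--(viii) correctly to the present situation (which of $e,a$ plays which role) and then observing that the idempotent $e$ absorbs the block $gh$ (or $hg$) that sits in the middle of $a^2=(heg)^2$. I would also note in passing that this dovetails with the surrounding theme: combined with Proposition~\ref{Prp:D} ($\cfn\subseteq\greenD$), it shows that an $\frn$-conjugacy class meeting the set of idempotents lies inside a single $\gd$-class and consists entirely of idempotents, which is consistent with the later claim that $\cfn$ and $\greenD$ coincide on idempotents.
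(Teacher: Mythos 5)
Your proof is correct and is essentially the paper's own argument: the paper takes conjugators for $a,e$ in the other order, writes $a=geh$, and computes $aa = gehgeh = geeh = geh = a$ using $hge=e$, which is precisely the mirror image of your computation $a^2=(heg)(heg)=h(egh)eg=h(ee)g=heg=a$. The bookkeeping of conditions (iii) and (viii) is accurate, so nothing further is needed.
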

\begin{proof}
Let $g,h\in S^1$ be conjugators for $a$ and $e$. Then $aa = gehgeh = geeh = geh = a$.
\end{proof}

Restricted to idempotents, $\frn$-conjugacy and the $\greenD$-relation turn out to coincide. A pair $g,h$ of elements of a semigroup $S$ are said to be \emph{mutually inverse} if $ghg=g$ and $hgh=h$.

\begin{theorem}\label{Thm:D-idem}
  Let $S$ be a semigroup and let $e,f\in S$ be idempotents. Then $e\cfn f$ if and only if $e\greenD f$. When this is the case, there exist mutually inverse conjugators $g,h$ in the same $\greenD$-class
  as $e,f$.
\end{theorem}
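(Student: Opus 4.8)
The forward implication needs no work: Proposition~\ref{Prp:D} already gives $\cfn\subseteq\greenD$, so $e\cfn f$ implies $e\greenD f$. All the content is in the converse together with the refinement about mutually inverse conjugators. My plan is to reduce everything to the classical ``egg-box'' fact that when $e,f$ are idempotents with $e\greenD f$ there exist $g\in R_e\cap L_f$ and $h\in R_f\cap L_e$ with $gh=e$ and $hg=f$. Such a pair is automatically mutually inverse, since $ghg=(gh)g=eg=g$ and $hgh=h(gh)=he=h$, using that $eg=g$ when $g\greenR e$ and $he=h$ when $h\greenL e$. Moreover $g,h$ then lie in the common $\greenD$-class $D_e=D_f$. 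Granting this, the proof that $g,h$ are conjugators for $e\cfn f$ is a one-line check of equations (i)--(iv): from $g\greenR e$, $g\greenL f$ we get $eg=g=gf$, i.e.\ (i); from $h\greenR f$, $h\greenL e$ we get $fh=h=he$, i.e.\ (ii); then (iii) reads $heg=(he)g=hg=f$ and (iv) reads $gfh=g(fh)=gh=e$. So $e\cfn f$ with mutually inverse conjugators $g,h\in D_e=D_f$, exactly as claimed.

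It remains to construct $g$ and $h$. Using $\greenD=\greenR\circ\greenL=\greenL\circ\greenR$, choose $c$ with $e\greenR c\greenL f$ and $d$ with $f\greenR d\greenL e$; then $ec=c$, $cf=c$, $fd=d$, $de=d$. The key computation is that $cd\greenH e$. Writing $d=fv$ and $f=du$ with $u,v\in S^1$ (possible since $d\greenR f$), one has $cd=c(fv)=(cf)v=cv$, so $cd$ is a right multiple of $c$, while $cf=c(du)=(cd)u$ shows $c$ is a right multiple of $cd$; hence $cd\greenR c\greenR e$. Symmetrically, writing $f=wc$ with $w\in S^1$ (since $c\greenL f$), we get $d=fd=(wc)d=w(cd)$, so $cd\greenL d\greenL e$. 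Thus $cd\in H_e$, which is a group because it contains the idempotent $e$; let $m$ be the inverse of $cd$ in $H_e$ and put $g:=c$ and $h:=dm$, so that $gh=(cd)m=e$. Repeating the same location argument with $(c,d,f)$ replaced by $(d,m,e)$ (note $e$ is an idempotent in $L_d\cap R_m$) gives $h=dm\greenR d\greenR f$ and $h\greenL m\greenL e$; repeating it with $(c,d,f)$ replaced by $(h,g,f)$ (note $f$ is an idempotent in $R_h\cap L_g$) gives $hg\greenR h\greenR f$ and $hg\greenL g\greenL f$, so $hg\in H_f$. Finally $(hg)(hg)=h(gh)g=heg=(he)g=hg$ using $gh=e$ and $he=h$, so $hg$ is an idempotent of the group $H_f$ and therefore $hg=f$. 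This yields $g\in R_e\cap L_f$, $h\in R_f\cap L_e$, $gh=e$, $hg=f$, completing the construction.

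The main obstacle is the Green's-relations bookkeeping in the second paragraph: one must check carefully that each of the products $cd$, $dm$, $hg$ stays inside the ambient $\greenD$-class and lands in the correct $\greenH$-class, and this is precisely where the hypothesis that $e$ (resp.\ $f$) is idempotent is used, through the elementary ``location'' consequence of Green's Lemma that $xy\greenR x$ and $xy\greenL y$ whenever $L_x\cap R_y$ contains an idempotent (see \cite[\S2.3]{Howie}). Everything else -- that the resulting pair is mutually inverse, that it lies in $D_e=D_f$, and that it verifies (i)--(iv) -- is the short computation indicated in the first paragraph.
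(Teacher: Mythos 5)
Your proof is correct. The forward direction and the final verification of (i)--(iv) are exactly as in the paper; the only real difference is how the pair $g\in R_e\cap L_f$, $h\in R_f\cap L_e$ with $gh=e$, $hg=f$ is produced. The paper follows Howie's Thm.~2.3.4 directly: it takes $g$ with $eg=g=gf$, $gh_1=e$, $h_2g=f$ and simply \emph{defines} $h=fh_1e$, after which $gh=e$ and $hg=f$ fall out of a two-line computation with no appeal to the location theorem or to group $\greenH$-classes. You instead build the full egg-box picture: pick $c\in R_e\cap L_f$ and $d\in R_f\cap L_e$, show $cd\in H_e$ by the Miller--Clifford location argument, and take $h=dm$ with $m$ the group inverse of $cd$ in $H_e$. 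Both are standard; the paper's route is shorter and needs less machinery, while yours makes the structural reason for the result (the isomorphism of group $\greenH$-classes within a $\greenD$-class) more visible. One small slip to fix: in your third application of the location lemma, for the product $hg$ the relevant intersection is $L_h\cap R_g$, not $R_h\cap L_g$, and the idempotent witnessing it is $e$ (since $L_h=L_e$ and $R_g=R_e$), not $f$. The conclusion $hg\in R_h\cap L_g=H_f$ is still what the lemma delivers, so the argument survives with the corrected citation; alternatively, once you know $hg$ is idempotent you could finish as the paper does by computing $hg=f$ directly from $f=h_2g$.
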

\begin{proof}
One direction is covered by Proposition \ref{Prp:D}, so assume $e\greenD f$. We just follow the proof of \cite[Thm. 2.3.4]{Howie}, noting that the construction therein gives mutually inverse conjugators. Indeed, by assumption, there exist $g,h_1,h_2\in S^1$ such that $eg = g = gf$, $gh_1 = e$ and $h_2 g = f$. (Here we are using the fact that an idempotent $e$ is a left identity element for the $\greenR$-class $R_e$ and a right identity element for the $\greenL$-class $L_e$ \cite[Prop.~2.3.3]{Howie}.) Set $h = fh_1 e$ and check that $gh = gfh_1 e = gh_1 e = ee = e$ and $hg = fh_1 eg = fh_1 g =
h_2 g h_1 g = h_2 e g = h_2 g = f$. Since $eg = gf$, $egh = e$ and $hgf = f$, it follows from Proposition \ref{Prp:alternatives} that $e\cfn f$ with $g,h$ as conjugators. Finally $ghg = eg = g$ and $hgh=fh = h$.
\end{proof}

A \emph{band} is a semigroup in which every element is an idempotent.

\begin{cor}\label{Cor:band}
    In any band, $\cfn\ =\ \greenD$.
\end{cor}

We conclude this subsection with a brief discussion of the two extreme cases: where $\frn$-conjugacy is the universal relation, that is, $\cfn = S\times S$, and where $\cfn$ is the equality relation. In neither case will we arrive at a complete characterization, but each case still entails interesting necessary conditions.

A semigroup is \emph{bisimple} if $\greenD$ is the universal relation. A \emph{rectangular band} is an idempotent semigroup satisfying $xyx = x$; every rectangular band is isomorphic to one of the form $I\times J$ for sets $I,J$ with multiplication $(i,j)\cdot (k,\ell) = (i,\ell)$.

\begin{prop}\label{Prp:bisimple}
    If $S$ is a semigroup in which $\cfn$ is universal, then $S$ is bisimple. If, in addition, $S$ has an idempotent, then $S$ is a rectangular band.
\end{prop}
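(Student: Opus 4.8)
The plan is to prove the two assertions in turn; the first is immediate and the second reduces to a short structural fact about bands. For the first assertion, since $\cfn\subseteq\greenD$ holds in every semigroup by Proposition~\ref{Prp:D}, if $\cfn$ equals the universal relation $S\times S$ then so does $\greenD$, which is precisely the statement that $S$ is bisimple. No further work is needed here.

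For the second assertion, assume $S$ has an idempotent $e$. Universality of $\cfn$ gives $e\cfn a$ for every $a\in S$, and Proposition~\ref{Prp:idem_only} then forces each such $a$ to be idempotent; hence $S$ is a band. It remains to show that a band in which $\greenD$ is universal satisfies $xyx=x$. I would first record the one-line characterizations of Green's relations between idempotents $p,q$ of a band: $p\greenR q\iff pq=q$ and $qp=p$, while $p\greenL q\iff pq=p$ and $qp=q$ (each follows from $pS^1=qS^1$, resp.\ $S^1p=S^1q$, together with $p^2=p=q^2$). From these I would extract the following lemma, valid in \emph{any} band: if $p,q$ are idempotents with $pq=qp=q$ and $p\greenD q$, then $p=q$. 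To prove it, write $p\greenD q$ as $p\greenR c\greenL q$ for some (necessarily idempotent) $c\in S$; then the characterizations above together with $qp=q$ collapse $c$ to $p$, explicitly $c=cq=c(qp)=(cq)p=cp=p$, so $p\greenL q$, and comparing $pq=p$ with $pq=q$ gives $p=q$. (If one prefers the decomposition $p\greenL c\greenR q$, a symmetric computation gives $c=q$ and the same conclusion.)

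To finish, fix $x,y$ in the band $S$ and put $p=x$, $q=xyx$. A direct check gives $pq=x\cdot xyx=xyx=q$ and $qp=xyx\cdot x=xyx=q$, so the hypothesis $pq=qp=q$ of the lemma holds; since $\greenD$ is universal we also have $p\greenD q$, and the lemma yields $x=p=q=xyx$. Thus $S$ is an idempotent semigroup satisfying $xyx=x$, i.e.\ a rectangular band. Alternatively, the last two paragraphs can be replaced by a single appeal to the classical structure theorem exhibiting every band as a semilattice of rectangular bands, its $\greenD$-classes (see, e.g., \cite{Howie}): with $\greenD$ universal there is a single class, so $S$ itself is rectangular.

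I expect the only genuine obstacle to be the implication ``band with universal $\greenD$ $\Rightarrow$ rectangular band''. A naive attempt to derive $xyx=x$ by manipulating the band identity alone runs in circles, so the argument really does need either the lemma above -- that the natural order on idempotents is transverse to $\greenD$ -- or the band structure theorem. Everything else is routine bookkeeping with the definitions of $\cfn$, $\greenD$, and the cited propositions.
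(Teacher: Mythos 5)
Your proof is correct and follows the same route as the paper's, whose entire proof is a one-line appeal to Propositions~\ref{Prp:D} and~\ref{Prp:idem_only}: bisimplicity from $\cfn\subseteq\greenD$, and bandness from the fact that $\frn$-conjugates of idempotents are idempotents. The only difference is that you explicitly supply the remaining standard step that a bisimple band is a rectangular band, which the paper leaves implicit; your lemma and its application are correct (the one tiny slip is in the parenthetical: with the decomposition $p\greenL c\greenR q$ the computation again yields $c=p$ rather than $c=q$, since $c=qc=(pq)c=p(qc)=pc=p$, but the conclusion $p=q$ is unaffected).
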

\begin{proof}
The first assertion follows from Proposition~\ref{Prp:D} and the second follows from Proposition~\ref{Prp:idem_only}.
\end{proof}

At the other extreme, we have the following.

\begin{prop}\label{Prp:trivial}
  Let $S$ be a semigroup in which $\cfn$ is the equality relation. Then each $\greenD$-class has at most one  idempotent, and each regular $\greenD$-class is an $\greenH$-class.
\end{prop}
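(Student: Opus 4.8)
The plan is to treat the two claims separately, since the first is essentially immediate from what has already been established, while the second reduces to the classical structure theory of regular $\greenD$-classes.

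For the statement about idempotents, I would argue as follows: if $e$ and $f$ are idempotents lying in a common $\greenD$-class, then Theorem~\ref{Thm:D-idem} gives $e\cfn f$, and since by hypothesis $\cfn$ is the equality relation, $e=f$. Hence no $\greenD$-class can contain two distinct idempotents.

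For the second claim, let $D$ be a regular $\greenD$-class, so that $D$ contains an idempotent; by the first part this idempotent is unique, say $e$. I would then appeal to the standard structure theory of regular $\greenD$-classes (see \cite[\S2.3]{Howie}): every element of $D$ is regular, and consequently every $\greenR$-class and every $\greenL$-class contained in $D$ contains an idempotent. Since $e$ is the only idempotent of $D$, each $\greenR$-class in $D$ must contain $e$ and therefore equal $R_e$, and symmetrically each $\greenL$-class in $D$ equals $L_e$. Thus $D$ has a single $\greenR$-class and a single $\greenL$-class, so $D = R_e = L_e = R_e\cap L_e = H_e$; that is, $D$ is an $\greenH$-class.

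I do not expect a genuine obstacle here; the only point requiring care is applying the structure facts correctly, namely that it is the regularity of the whole $\greenD$-class $D$ (not merely the presence of one idempotent) that guarantees an idempotent in every $\greenR$- and $\greenL$-class of $D$. If one prefers to avoid quoting the structure theory, a self-contained route is available: given $a\in D$, choose $c$ with $a\greenR c\greenL e$; then $c$ is regular, being $\greenL$-equivalent to the idempotent $e$, so $R_a=R_c$ contains an idempotent, which must be $e$, whence $a\greenR e$; a symmetric argument using a decomposition $a\greenL c'\greenR e$ gives $a\greenL e$; hence $a\in H_e$, and as $a$ was arbitrary, $D=H_e$.
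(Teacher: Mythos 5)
Your proof is correct and follows essentially the same route as the paper: the first claim is immediate from Theorem~\ref{Thm:D-idem} together with the hypothesis, and the second uses the standard fact that in a regular $\greenD$-class every $\greenR$-class and every $\greenL$-class contains an idempotent, which by uniqueness must be $e$, forcing $D=H_e$. Your ``self-contained route'' at the end is in fact exactly the paper's argument.
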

\begin{proof}
  The first assertion follows from Theorem \ref{Thm:D-idem}. For the second, assume $e$ is an idempotent and $c\greenD e$. Then $c$ is regular and hence there exists an idempotent $f$ such that $c\greenL f$. But then $f\greenD e$ and so by assumption $e = f$, that is, $c\greenL e$. By a similar argument, $c\greenR e$ and so
  $c\greenH e$.
\end{proof}

As noted in the introduction, it was shown in \cite[{\S}3]{ArKiKoMaTA} that Green's relations and the four notions of conjugacy considered in that paper are not particularly well related. The results of this subsection show that the story is different for $\cfn$.

\subsection{Conjugacy $\cfn$ in inverse semigroups}\label{Sec:inverse}

As noted in \S\ref{Sec:int}, natural conjugacy $\cfn$ coincides with $i$-conjugacy $\cin$ \eqref{eci} in inverse semigroups. This was first proved in \cite[Thm.~2.6]{Ko18} using the Wagner-Preston representation of inverse semigroups as semigroups of partial injective transformations \cite[Thm.~5.1.7]{Howie}. In this brief subsection, we give a purely equational proof.

\begin{prop}\label{Prp:inverse}
 In inverse semigroups, $\cfn\ =\ \cin$.
\end{prop}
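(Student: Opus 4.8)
The plan is to prove the two inclusions separately, using the equational characterizations rather than the Wagner--Preston representation. The easy direction is $\cfn\subseteq\cin$: suppose $a\cfn b$ with conjugators $g,h\in S^1$ satisfying (i)--(viii). In an inverse semigroup I would like to replace $g,h$ by a genuine pair $(k,k^{-1})$ witnessing $i$-conjugacy. The natural candidate is to take $k$ to be a suitable product built from $g$ and the idempotents $hg$ and $gh$, exploiting that $hg\cdot b=b=b\cdot hg$ (equations (v),(vii)) says the idempotent $hg$ fixes $b$ on both sides, and similarly $gh$ fixes $a$. Concretely I expect $k:=gh g$ (note $ghg=g$-type relations are \emph{not} assumed, so $k$ is a new element) or, more robustly, $k:= (a a^{-1})g(b b^{-1})$ — an element that is ``cut down'' to map the relevant idempotent-supported parts — and then show $k^{-1}ak=b$ and $k b k^{-1}=a$, using the commuting-idempotents law and the fundamental identity $x=xx^{-1}x$. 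The computation should be short once the right $k$ is chosen.

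For the reverse inclusion $\cin\subseteq\cfn$, suppose $g^{-1}ag=b$ and $gbg^{-1}=a$ for some $g\in S^1$. I would set $G:=g$ and $H:=g^{-1}$ as candidate conjugators for $\cfn$ and verify a sufficient subset of equations (i)--(viii), appealing to Proposition~\ref{Prp:alternatives}. For instance, aiming at case (1), $\{$(i),(iii),(iv)$\}$: equation (iii) is $HaG=g^{-1}ag=b$, immediate; equation (iv) is $GbH=gbg^{-1}=a$, immediate. The one that requires work is (i), $aG=GB$, i.e. $ag=gb$; this is \emph{not} generally true — it holds in groups but in an inverse semigroup $g^{-1}ag=b$ only gives $ag=g(g^{-1}ag)\cdot(\text{something})$ after inserting $gg^{-1}$. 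So a bare $G=g$, $H=g^{-1}$ will not satisfy (i). The fix is the same idempotent-truncation trick: replace $g$ by $g':=a a^{-1} g$ (or $g\,b b^{-1}$), which still conjugates $a$ to $b$ because $a a^{-1}$ is a left identity on the $\gr$-class of $a$, and now $a g' = a a^{-1} a g = a g = \dots$ should line up with $g' b$. So in both directions the key device is: \emph{in an inverse semigroup, a conjugator can always be replaced by one whose domain/range idempotents match those of $a$ and $b$}, and with that normalization the remaining verifications are routine idempotent manipulations.

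The main obstacle, then, is bookkeeping: choosing the truncated conjugators $g',h'$ (built from $g$, $g^{-1}$, and the idempotents $aa^{-1}, a^{-1}a, bb^{-1}, b^{-1}b$) so that \emph{simultaneously} all four defining equations of $\cfn$ — or a sufficient triple from Proposition~\ref{Prp:alternatives} — come out, without circular reasoning. I would organize it as: (1) fix notation $a^{-1},b^{-1}$ for inverses and record that idempotents commute and $xx^{-1}x=x$; (2) for $\cin\Rightarrow\cfn$, set $g':= aa^{-1}\cdot g\cdot bb^{-1}$ and $h':= g'^{-1}$ — or more directly $h':= bb^{-1}\cdot g^{-1}\cdot aa^{-1}$ — show $g'$ and $h'$ are mutually inverse, show $ag'=g'b$ and $bh'=h'a$, and show $h'ag'=b$, $g'bh'=a$, invoking the fundamental inverse-semigroup identities at each step; (3) for $\cfn\Rightarrow\cin$, from conjugators $g,h$ satisfying (i)--(viii) produce a single conjugator — the cleanest choice is likely $k:=ag$ (which equals $gb$ by (i)) with $k^{-1}$ its inverse — and check $k^{-1}ak=b$, $kbk^{-1}=a$ using (v)--(viii) to absorb the idempotents $hg,gh$ into $aa^{-1}$-type factors. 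Since Proposition~\ref{Prp:inverse} is asserted already to have a known proof via Wagner--Preston, the value added is purely the equational route, so I would keep every step to a one-line identity chain and cite only the basic inverse-semigroup facts (commuting idempotents, uniqueness of inverses) rather than structural theorems.
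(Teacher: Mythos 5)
Your overall plan (a purely equational proof of the two inclusions, with conjugators normalized by idempotents) is the same in spirit as the paper's, but both of the concrete conjugators you propose fail, and the failures are not just bookkeeping. For $\cin\Rightarrow\cfn$ you assert that the bare pair $g$, $h=g^{-1}$ does not satisfy (i), $ag=gb$. It does: you used only $g^{-1}ag=b$ and ignored the second defining equation $gbg^{-1}=a$, which gives $gg^{-1}a=gg^{-1}gbg^{-1}=gbg^{-1}=a$, hence $gb=g(g^{-1}ag)=(gg^{-1}a)g=ag$. This two-line observation is essentially the paper's proof of this direction, and no truncation is needed. Worse, your proposed repair $g':=aa^{-1}g$ rests on the false step ``$ag'=aa^{-1}ag$'': in an inverse semigroup $aa^{-1}$ is a \emph{left} identity for $a$, not a right one, so $a\cdot aa^{-1}\neq a$ in general. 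Concretely, in $\mi_2$ take $a=b$ to be the partial bijection $1\mapsto 2$ and $g=h=1$; then $g'=aa^{-1}=\mathrm{id}_{\{1\}}$, $ag'=0$ while $g'b=a$, so your truncation destroys (i) in a case where the untruncated $g$ works.

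For $\cfn\Rightarrow\cin$ your proposed single conjugator $k:=ag$ is simply wrong: in the same example $k=a$ and $k^{-1}ak=a^{-1}aa=0\neq a=b$ (and the variants $ghg$ and $aa^{-1}g\,bb^{-1}$ fail there as well). The correct choice, and the real content of the paper's proof, is that the original $g$ from the $\cfn$-conjugator pair already satisfies $g^{-1}ag=b$ and $gbg^{-1}=a$; proving this requires inserting the idempotents $bb^{-1}$ and $g^{-1}g$, commuting them, and absorbing them via (v), $hg\cdot b=b$ --- a computation that genuinely uses the $\cfn$ equations beyond (iii)--(iv) and is not recoverable from your sketch. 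As written, the proposal does not constitute a proof in either direction.
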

\begin{proof}
Let $S$ be an inverse semigroup. The inclusion $\cin\ \subseteq\ \cfn$ follows from \cite[Prop.~1.3]{ArKiKo_Inv}, but we give a brief proof here
to keep the discussion self-contained. Suppose $a\cin b$ for some $a,b\in S$. Then $g\inv ag= b$ and $gbg\inv = a$ for some $g\in S^1$. We have $a\cdot gg\inv = gbg\inv gg\inv = gbg\inv = a$ and $gg\inv \cdot a = gg\inv gbg\inv = gbg\inv = a$. Now condition (7) of Proposition \ref{Prp:alternatives} holds with $h = g\inv$ and so $a\cfn b$.

Now suppose $a\cfn b$ for some $a,b\in S$, and let $g,h\in S^1$ be conjugators. Then
\begin{align*}
  g\inv\cdot \underbrace{ag} &= g\inv g\cdot b && \text{(by (i))} \\
   &= \underbrace{g\inv g \cdot b b\inv} \cdot b && \\
   &= \underbrace{b}b\inv\cdot g\inv g\cdot b && \text{(since idempotents commute)} \\
   &= hg\cdot \underbrace{bb\inv \cdot g\inv g\cdot} b && \text{(by (v))}\\
   &= h\cdot \underbrace{g g\inv g} \cdot \underbrace{bb\inv b} && \text{(since idempotents commute)} \\
   &= hg\cdot b && \\
   &= b && \text{(by (v))}
\end{align*}
The equality $gbg\inv = a$ is proved similarly, and so $a\cin b$.
\end{proof}

\subsection{Conjugacy $\cfn$ in stable semigroups}\label{Sec:stable}

The \emph{natural partial order} (or \emph{Mitsch order}) $\leq$ in a semigroup $S$ is defined as follows:
\[
a\leq b\,\iff\,\exists_{s,t\in S^1}\ sa=a=sb\text{ and }at=a=bt\,;
\]
see \cite{Mitsch}. We now consider how natural conjugacy and the natural partial order interact.

A semigroup $S$ is \emph{left stable} if, for all $a,b\in S$, $S^1 a \subseteq S^1 ab$ implies $S^1 a = S^1 ab$, that is, $a\greenL ab$. This can be equivalently formulated as $a\in S^1 ab$ implies $ab\in S^1 a$ for all $a,b\in S$. \emph{Right stability} is defined dually, and a semigroup is said to be \emph{stable} if it is both left and right stable \cite[Vol.~II, p.~31]{ClPr64}. Every epigroup, and in particular, every periodic or finite semigroup, is stable.

An example of a semigroup which is not stable is the \emph{bicyclic monoid} $C$ \cite[Vol.~I, pp.~43--45]{ClPr64}. As a semigroup, $C$ has the presentation
\[
C = \langle a,b\mid aba=a=aab, bab=b=abb\rangle\,;
\]
as a monoid, it can be presented more simply as $C = \langle a,b\mid ab=1\rangle$. The monoid $C$ is not stable essentially because $a$ and $b$ satisfy $ab=1$ but not $ba = 1$. More precisely, $Ca\subseteq C = Cab$, but since $xa\neq 1$ for any $x\in C$, we have $Ca\neq C$.

\begin{lemma}\label{lem:abba}
Let $S$ be a semigroup and let $a,b\in S$ be mutually inverse. Then $ab\cfn ba$.
\end{lemma}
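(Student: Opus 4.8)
The plan is to reduce this to the characterization of $\cfn$ on idempotents already in hand (Theorem~\ref{Thm:D-idem}), with an even quicker direct verification available as an alternative. First I would record the basic consequences of $a,b$ being mutually inverse: $aba = a$ and $bab = b$, from which $(ab)(ab) = a(bab) = ab$ and $(ba)(ba) = b(aba) = ba$, so both $ab$ and $ba$ are idempotents. That puts us in the setting where Theorem~\ref{Thm:D-idem} applies.

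For the route through Green's relations, I would show $ab\greenD ba$. From $a = aba = (ab)a$ we get $a\in (ab)S^1$, and trivially $ab = a\cdot b\in aS^1$, so $ab\greenR a$; dually, from $a = aba = a(ba)$ we get $a\in S^1(ba)$, and $ba = b\cdot a\in S^1a$, so $a\greenL ba$. Hence $ab\greenR a\greenL ba$, so $ab\greenD ba$, and Theorem~\ref{Thm:D-idem} immediately gives $ab\cfn ba$ (indeed with mutually inverse conjugators).

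For a self-contained argument I would instead just take $g=a$ and $h=b$ as conjugators for $ab$ and $ba$ and check equations (i)--(iv) of \S\ref{Sbs:char}: (i)~$(ab)a = a(ba)$, both sides equal $a$; (ii)~$(ba)b = b(ab)$, both sides equal $b$; (iii)~$b(ab)a = (bab)a = ba$; (iv)~$a(ba)b = (aba)b = ab$. By Proposition~\ref{Prp:alternatives}(1) this alone yields $ab\cfn ba$, and it also exhibits the mutually inverse conjugators $a,b$ promised by Theorem~\ref{Thm:D-idem}.

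I do not expect a genuine obstacle here: the only real decision is spotting that $g=a$, $h=b$ are the right conjugators, which is forced by the mutual-inverse identities, and everything else is a one-line computation. The statement is essentially a consistency check confirming that the abstract machinery of \S\ref{Sbs:char}--\S\ref{Sbs:green} behaves as expected, and it is the natural bridge to the interaction between $\cfn$ and the natural partial order developed next.
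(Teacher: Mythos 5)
Your proposal is correct and matches the paper's proof, which likewise offers both routes: an appeal to Theorem~\ref{Thm:D-idem} and a direct verification using $g=a$, $h=b$ as conjugators via the equations $a\cdot ba\cdot b=ab$, $b\cdot ab\cdot a=ba$, $ba\cdot b=b\cdot ab$, $ab\cdot a=a\cdot ba$. Your extra detail on why $ab\greenD ba$ (via $ab\greenR a\greenL ba$) is a fine elaboration of what the paper leaves implicit, but the argument is the same.
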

\begin{proof}
This follows from Theorem~\ref{Thm:D-idem} or can also be seen directly: $a\cdot ba\cdot b = ab$, $b\cdot ab\cdot a = ba$, $ba\cdot b = b\cdot ab$ and $ab\cdot a = a\cdot ba$.
\end{proof}

The following result generalizes \cite[Thm.~6.3]{ArKiKo_Inv}.

\begin{theorem}\label{thm:stable}
Let $S$ be a semigroup and consider the following statements:
\begin{itemize}
\item[\textup{(1)}] $S$ is stable;
\item[\textup{(2)}] $\cfn\cap \leq$ is the identity relation;
\item[\textup{(3)}] $S$ does not contain an isomorphic copy of the bicyclic monoid.
\end{itemize}
Then \textup{(1)}$\implies${\textup(2)}$\implies${\textup(3)}. In addition, if $S$ is regular, then all three statements are equivalent.
\end{theorem}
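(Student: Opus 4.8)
The plan is to establish the cycle of implications in the order (1)$\implies$(2)$\implies$(3), and then close the loop from (3) back to (1) under the additional hypothesis that $S$ is regular. For (1)$\implies$(2), suppose $S$ is stable and $a\cfn b$ with $a\leq b$. From $a\leq b$ we get $s,t\in S^1$ with $sa=a=sb$ and $at=a=bt$. Let $g,h\in S^1$ be conjugators for $a,b$, so that the eight equations (i)--(viii) hold. The idea is to use conditions (iii),(iv) (namely $hag=b$ and $gbh=a$) together with stability to force $a\greenL b$ and $a\greenR b$, hence $a\greenH b$, and then show that an $\greenH$-related pair that is also $\leq$-comparable must be equal. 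Concretely, from (iv) $a=gbh$ we have $a\in S^1 b S^1$; combined with $a\leq b$ (which already gives $b\in$ suitable one-sided ideals via $s,t$) and a stability argument applied to products like $b\cdot t$ or $s\cdot b$, we should be able to upgrade the inequality to $a\greenL b$ and $a\greenR b$. Once $a\greenH b$ and $a\leq b$, writing $a=sb=bt$ for the relevant $s,t$ and $b=s'a=at'$ for the $\greenH$-witnesses, a short computation forces $a=b$ (this is the standard fact that the Mitsch order restricted to an $\greenH$-class is trivial).

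For (2)$\implies$(3), I argue by contraposition: if $S$ contains an isomorphic copy of the bicyclic monoid $C=\langle a,b\mid ab=1\rangle$, I exhibit two distinct elements of $C$ that are both $\cfn$-related and $\leq$-comparable. The natural candidates are $1=ab$ and $ba$: by Lemma~\ref{lem:abba} (applied to the mutually inverse pair $a,b$, since $aba=a$ and $bab=b$ in $C$) we have $ab\cfn ba$, i.e. $1\cfn ba$. Moreover $ba\leq 1$ in the Mitsch order, since $ba\cdot ba = b(ab)a = ba = ba\cdot 1$ and similarly on the other side, so we may take $s=t=ba$ in the definition of $\leq$. Since $ba\neq 1$ in $C$, this shows $\cfn\cap\leq$ is not the identity, giving the contrapositive of (2).

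For the final clause, assume $S$ is regular and $S$ does not contain a copy of the bicyclic monoid; I must deduce that $S$ is stable. This is where I expect the main obstacle to lie, and the plan is to invoke the classical characterization (going back to Clifford--Preston / Munn, see \cite[Vol.~II]{ClPr64}) that a regular semigroup is stable if and only if it contains no bicyclic subsemigroup — or, failing a direct citation, to reconstruct it: if $S$ is regular but not left stable, there exist $a,b$ with $a\in S^1ab$ but $ab\notin S^1a$, i.e. $a\greenL ab$ fails while $a\,\widetilde{\greenL}\,ab$ in the appropriate sense; using regularity to replace elements by idempotents and iterating, one builds elements $x,y$ with $xy$ acting as an identity but $yx$ a proper idempotent below it, generating a bicyclic submonoid. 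The delicate point is the bookkeeping needed to extract a genuine bicyclic copy (all the relations $xyx=x$, $xxy=x$, $yxy=y$, $yyx=y$, and that the powers $y^nx^n$ are distinct) rather than merely a homomorphic image; regularity is exactly what makes this extraction possible and is why the equivalence needs that hypothesis. Combining the reconstructed implication (3)$\implies$(1) with (1)$\implies$(2)$\implies$(3) yields the equivalence of all three statements for regular $S$, and the reference to \cite[Thm.~6.3]{ArKiKo_Inv} indicates that the inverse-semigroup case of exactly this argument is already in the literature, which we are here generalizing.
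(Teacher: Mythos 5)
Your overall architecture matches the paper's where it matters least: for (2)$\implies$(3) you use exactly the paper's argument (the pair $a,b$ in a bicyclic copy is mutually inverse, so $ab\cfn ba$ by Lemma~\ref{lem:abba}, and $s=t=ba$ witnesses $ba\leq ab$, forcing $ab=ba$), and for the regularity clause the paper likewise simply cites the classical fact that a regular semigroup is stable iff it contains no bicyclic copy (\cite[Ex.~A.2.2(8), p.~595]{RS}) rather than reconstructing it, so your sketch of that reconstruction is unnecessary but harmless.

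The substantive divergence is in (1)$\implies$(2), and there your proposal is a different route left essentially unexecuted. The paper argues directly: from $a=sb$ and $b=hag$ one gets $a=shag\in S^1ag$, so left stability in precisely the form defined in \S\ref{Sec:stable} gives $ag=ua$ for some $u\in S^1$; then $ua=uat=agt=gbt=ga$ forces $ag=ga$, and $a=bt=hgbt=hga=hag=b$. Your route instead passes through Green's relations: $\cfn\subseteq\greenD$ (Proposition~\ref{Prp:D}), stability upgrades $a\leq b$ (which supplies $a\in S^1b\cap bS^1$) to $a\greenH b$, and then $a\greenH b$ together with $sa=a=sb$ yields $a=b$. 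This route is viable, but as written it has a gap exactly at its centre: ``we should be able to upgrade the inequality to $a\greenL b$ and $a\greenR b$'' is the whole content of the implication, and carrying it out requires the standard characterization of stability ($a\greenJ b$ and $a\in S^1b$ imply $a\greenL b$, and dually), which is equivalent to, but not literally, the definition the paper works with, so it must be stated and proved or cited. Likewise, the final step (``the Mitsch order restricted to an $\greenH$-class is trivial'') is true but needs its one-line justification: since $a\greenL b$ we may write $b=ua$, so by Green's Lemma $\lambda_s$ is injective on $R_b$, and $sa=a=sb$ with $a,b\in R_b$ gives $a=b$. Until those two steps are supplied, your (1)$\implies$(2) is a plan rather than a proof; the paper's three-line computation is both complete and uses nothing beyond the stated definition of stability.
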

\begin{proof}
(1)$\implies$(2): Assume $a\cfn b$ and $a\leq b$ for some $a,b\in S$. Let $g,h\in S^1$ be conjugators for $a,b$ and let $s,t\in S^1$ witness $a\leq b$, that is, $sa = a = sb$ and $at = a = bt$. We have $a = sb = shag\in S^1ag$. By (left) stability, $ag\in S^1a$, so there exists $u\in S^1$ such that $ag = ua$. Thus $ua = uat = agt = gbt = ga$, hence $ag = ga$. Now $a = bt = hgbt = hga = hag = b$, as claimed.

(2)$\implies$(3): Assume $a,b\in S$ satisfy the defining relations of the bicyclic monoid. Then $a,b$ are mutually inverse, hence $ab\cfn ba$ by Lemma \ref{lem:abba}. Also $ab\cdot ba = ba = ba\cdot ab$, hence $ba\leq ab$. By assumption, $ab = ba$. Thus $a,b$ do not generate a copy of $C$.

For the remaining assertation, it is well known that a regular semigroup is stable if and only if it does not contain an isomorphic copy of $C$ \cite[Ex. A.2.2(8), p. 595]{RS}.
\end{proof}

There are semigroups in which larger notions of conjugacy intersect nontrivially with the natural partial order, and hence those notions cannot be used in place of $\cfn$ in Theorem~\ref{thm:stable}(2).

\begin{example}
(1) Let $S = \{0,1,2\}$, where $0$ is a zero, $2$ is a left identity element and all other products are $0$. Then $1\cdot 2 = 0$ and $2\cdot 1 = 1$ so that $0\cpn 1$, but $0 < 1$. Thus $\cpn\,\cap\,\leq$ is nontrivial.
\smallskip

\noindent (2) Let $S=\{0,1,2\}$ where $0$ is a zero and all other products equal $1$. Then it can be easily checked that $1\con 2$ but $1 < 2$. Thus $\con\,\cap\,\leq$ is nontrivial.
\end{example}

\subsection{Conjugacy $\cfn$ in completely $0$-simple semigroups}\label{Sec:0simple}

A semigroup $S$ with zero $0$ is said to be $0$-\emph{simple} if $S^2\neq 0$ and $S$ has no ideals other than $\{0\}$ and $S$. A $0$-simple semigroup is \emph{completely $0$-simple} if it has a primitive idempotent, that is, a nonzero idempotent which is not an identity element for any other nonzero idempotent.

\begin{prop}\label{cor:cs}
Let $S$ be a $0$-simple semigroup with a nonzero idempotent. Then $S$ is completely $0$-simple if and only if $\cfn\,\cap\,\leq$ is the identity relation.
\end{prop}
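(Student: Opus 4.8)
The plan is to prove both directions by exploiting the structure of completely $0$-simple semigroups via the Rees theorem, together with the criterion for bicyclic-free semigroups established in Theorem~\ref{thm:stable}. Recall that Theorem~\ref{thm:stable} gives, for regular semigroups, the equivalence of stability, the condition that $\cfn\cap\leq$ is the identity relation, and the absence of a copy of the bicyclic monoid $C$. Since $0$-simple semigroups with a nonzero idempotent are regular (every nonzero element lies in a $\greenD$-class containing an idempotent, as $S$ has a single nonzero $\greenJ$-class), we are squarely in the regular case, so the statement reduces to showing: a $0$-simple semigroup with a nonzero idempotent is completely $0$-simple if and only if it is stable, equivalently, if and only if it does not contain a copy of $C$.

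First I would handle the forward direction. If $S$ is completely $0$-simple, then by the Rees theorem it is isomorphic to a Rees matrix semigroup $M^0[G;I,\Lambda;P]$ over a group $G$ with regular sandwich matrix $P$. Such semigroups are well known to be stable (indeed completely $0$-simple semigroups are the $0$-simple epigroups, and every epigroup is stable, as noted after the definition of stability in \S\ref{Sec:stable}); alternatively one checks directly that $M^0[G;I,\Lambda;P]$ contains no copy of $C$, since all its nonzero $\greenH$-classes are groups and the bicyclic monoid has $\greenH$-classes that are trivial but $\greenD$ is universal with no primitive idempotent. Either way, stability gives that $\cfn\cap\leq$ is the identity by the implication (1)$\implies$(2) of Theorem~\ref{thm:stable}.

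For the converse, suppose $S$ is $0$-simple with a nonzero idempotent but \emph{not} completely $0$-simple. I would argue that $S$ then contains a copy of the bicyclic monoid, so that $\cfn\cap\leq$ is nontrivial by the implication (2)$\implies$(3) of Theorem~\ref{thm:stable} (read contrapositively). The key structural fact here is classical: a $0$-simple semigroup fails to be completely $0$-simple precisely when it lacks a primitive idempotent, and in that situation one can produce a strictly descending chain of idempotents, or directly an element generating a bicyclic submonoid. Concretely, if $e$ is a nonzero idempotent that is not primitive, there is a nonzero idempotent $f$ with $f<e$ (in the natural order on idempotents, $ef=fe=f$), and $0$-simplicity forces $e\greenD f$; from $e\greenD f$ with $f<e$ one extracts mutually inverse elements realizing the bicyclic relations (this is the standard construction, e.g.\ in the proof that completely $0$-simple $\iff$ $0$-simple with no infinite chains of idempotents in a $\greenD$-class). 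I would cite the appropriate statement from \cite{ClPr64} or \cite{Howie} for "$0$-simple with a primitive idempotent $\iff$ no bicyclic submonoid" and assemble the pieces.

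The main obstacle I anticipate is the converse direction — specifically, making precise and correctly attributing the step "not completely $0$-simple $\implies$ contains a copy of $C$." One must be careful because the naive hope that every $0$-simple semigroup is completely $0$-simple fails, and the extraction of the bicyclic monoid from a non-primitive idempotent requires the full strength of the $0$-simplicity hypothesis (to guarantee $e\greenD f$) plus a slightly delicate manipulation to get the conjugator elements satisfying $ab=1$ in the submonoid. I would lean on the known characterization in the literature rather than rederiving it, so the real work is just confirming that the regularity hypothesis of Theorem~\ref{thm:stable} applies and that the cited structural result is in the form we need.
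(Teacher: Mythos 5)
Your proof is correct and follows essentially the same route as the paper's: necessity from the stability of completely $0$-simple semigroups (they are epigroups) via Theorem~\ref{thm:stable}(1)$\implies$(2), and sufficiency from Theorem~\ref{thm:stable}(2)$\implies$(3) together with the classical Clifford--Preston theorem that a $0$-simple semigroup with a nonzero idempotent and no bicyclic submonoid is completely $0$-simple. One caveat: your preliminary reduction to the regular case is both unnecessary (every implication you actually invoke from Theorem~\ref{thm:stable} holds for arbitrary semigroups) and incorrectly justified, since the nonzero elements of a $0$-simple semigroup form a single $\greenJ$-class but not necessarily a single $\greenD$-class, and a $\greenJ$-class containing an idempotent need not consist of regular elements.
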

\begin{proof}
Completely $0$-simple semigroups are stable (in fact, they are epigroups \cite[Vol.~I, Thm.~2.55, p.~81]{ClPr64}), so the necessity follows from Theorem~\ref{thm:stable}. Conversely, if $\cfn\,\cap\,\leq$ is the identity relation, then Theorem~\ref{thm:stable} implies that $S$
does not contain a copy of the bicyclic monoid. In that case, $S$ is completely $0$-simple by \cite[Vol.~I, Thm.~2.54, p.~81]{ClPr64}.
\end{proof}

Given a group $\Gamma$, two nonempty sets $I$ and $\Lambda$, and a $\Lambda\times I$ matrix $P = (p_{\alpha A})_{\alpha\in \Lambda,A\in I}$ with entries in $\Gamma\cup \{0\}$, the $0$-\emph{Rees matrix semigroup} $\mathcal{M}^0(G;I,\Lambda; P)$ is the set $(I\times \Gamma\times \Lambda)\cup \{0\}$ with multiplication
\[
(A,g,\alpha)(B,h,\beta) := \begin{cases} (A,gp_{\alpha B}h,\beta) & \text{ if } p_{\alpha B}\neq 0 \\ 0 &\text{ if } p_{\alpha B}= 0 \end{cases}
\]
and where $0$ is a zero. In case $P$ has no rows or columns of all $0$s, the semigroup $\mathcal{M}^0(G;I,\Lambda; P)$ is completely $0$-simple.
Rees' Theorem says that the converse is also true: every completely $0$-simple semigroup is isomorphic to some $0$-Rees matrix semigroup $\mathcal{M}^0(G;I,\Lambda; P)$ where $P$ has no rows or columns of all $0$s \cite[Thm.~3.23, p.~72]{Howie}.

We now characterize $\frn$-conjugacy in $0$-Rees matrix semigroups, and thus via the aforementioned isomorphism, in completely $0$-simple
semigroups.

\begin{theorem}
Let $\mathcal{M}^0(G;I,\Lambda; P)$ be the $0$-Rees matrix semigroup determined by the group $\Gamma$, nonempty sets $I$ and $\Lambda$, and
$\Lambda\times I$ matrix $P$ with entries in $\Gamma\cup \{0\}$. For $(A,a,\alpha),(B,b,\beta)\in \mathcal{M}^0(G;I,\Lambda; P)\setminus\{0\}$,
\[
(A,a,\alpha)\cfn(B,b,\beta) \ \text{ if and only if }\ p_{\beta B}\neq 0\neq p_{\alpha A}\ \& \ \exists_{g\in \Gamma}\ p_{\beta B}b = g^{-1} a p_{\alpha A}g.
\]
\end{theorem}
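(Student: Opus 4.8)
The plan is to prove both implications by direct computation in the Rees matrix semigroup, realizing the harder (``if'') direction through an explicit choice of conjugators.

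For the ``if'' direction, assume $p_{\alpha A}\neq 0\neq p_{\beta B}$ and $p_{\beta B}b = k\inv a\, p_{\alpha A}\, k$ for some $k\in\Gamma$. I would take
\[
g := (A,\ k,\ \beta)\in S^1\qquad\text{and}\qquad h := (B,\ p_{\beta B}\inv\, k\inv\, p_{\alpha A}\inv,\ \alpha)\in S^1 ,
\]
and verify equations (i), (iii), (iv) of \S\ref{Sbs:char}. Because $p_{\alpha A}\neq 0\neq p_{\beta B}$, every product that arises avoids the zero of $S$, so each verification collapses to a one-line identity in the group $\Gamma$; for example $(A,a,\alpha)\,g = (A,\,a\,p_{\alpha A}k,\,\beta) = (A,\,k\,p_{\beta B}b,\,\beta) = g\,(B,b,\beta)$ is precisely (i). By case~(1) of Proposition~\ref{Prp:alternatives}, these three equations give $(A,a,\alpha)\cfn(B,b,\beta)$.

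For the ``only if'' direction, suppose $(A,a,\alpha)\cfn(B,b,\beta)$ with conjugators $g,h\in S^1$, and write $X=(A,a,\alpha)$, $Y=(B,b,\beta)$. The key observation is that $gYh=X\neq 0$ forces $gY\neq 0$; combined with $Xg=gY$, comparing the first and third coordinates of the two sides pins down $g=(A,g_0,\beta)$ for some $g_0\in\Gamma$ and, at the same time, forces $p_{\alpha A}\neq 0$ and $p_{\beta B}\neq 0$ (these are exactly the structure-matrix entries that must be nonzero for $Xg$ and $gY$ not to vanish). Once $g$ has this triple form, the equation $Xg=gY$ reads $a\,p_{\alpha A}\,g_0=g_0\,p_{\beta B}\,b$ in $\Gamma$, that is, $p_{\beta B}b=g_0\inv a\,p_{\alpha A}\,g_0$, so $k=g_0$ works.

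There is no deep obstacle here: the real work is the coordinate bookkeeping and tracking which matrix entries are forced to be nonzero. The point needing the most care is the degenerate case in which a conjugator equals the identity $1$ of $S^1$, which must be separated out since then the coordinate argument above does not apply; in that case (i) (respectively (ii)) forces $X=Y$, and one verifies the criterion directly, with $p_{\alpha A}=p_{\beta B}$ and $k=p_{\alpha A}\inv$. One may also note at the outset that $\cfn\,\subseteq\,\greenD$ by Proposition~\ref{Prp:D}, although in a $0$-simple Rees matrix semigroup this yields no extra restriction.
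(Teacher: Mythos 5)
Your proposal follows essentially the same route as the paper's proof: the same explicit conjugators $(A,k,\beta)$ and $(B,p_{\beta B}\inv k\inv p_{\alpha A}\inv,\alpha)$ for sufficiency, verified via equations (i), (iii), (iv) and Proposition~\ref{Prp:alternatives}, and the same coordinate bookkeeping for necessity. The one place where you go beyond the paper is the degenerate case in which a conjugator is the adjoined identity $1\in S^1$ (the paper's necessity argument silently takes both conjugators to be triples of ${\mathcal M}^0(G;I,\Lambda;P)$). You were right to flag this case, but your patch does not fully close it.

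Concretely: if exactly one of $g,h$ is $1$, your argument works --- say $g=1$ and $h=(H,h_0,\eta)$ is a triple; then (i) gives $X=Y$, and the equations $hX=X=Xh$ force $H=A$, $\eta=\alpha$, and $p_{\alpha A}\neq 0$, after which $k=p_{\alpha A}\inv$ does verify the criterion. But if $g=h=1$, the four defining equations collapse to $a=b$ and impose no condition whatsoever on the sandwich matrix, so nothing forces $p_{\alpha A}\neq 0$. Since $\cfn$ is reflexive via $g=h=1$, every nonzero $(A,a,\alpha)$ satisfies $(A,a,\alpha)\cfn(A,a,\alpha)$; yet if $p_{\alpha A}=0$ (which is possible even when $P$ has no zero rows or columns --- only individual entries need be nonzero somewhere in each row and column), the right-hand side of the stated equivalence fails, and in particular $p_{\alpha A}\inv$ does not exist. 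So the sentence ``one verifies the criterion directly with $k=p_{\alpha A}\inv$'' presupposes exactly what must be proved. To be fair, this is a defect of the theorem as literally stated (and of the paper's own proof, which has the same unacknowledged hole): the equivalence is correct either under the convention that conjugators range over $S$ rather than $S^1$, or after excluding the reflexive pairs $(x,x)$ with $x^2=0$. Your write-up should either state one of these provisos explicitly or restrict the degenerate-case analysis to the sub-case where at least one conjugator is a genuine triple.
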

\begin{proof}
We start by proving the necessity. By definition, $(A,a,\alpha)\cfn(B,b,\beta)$ implies that there exist $(G,g,\gamma),(H,h,\eta)\in  {\mathcal M}^0(G;I,\Lambda; P)$ such that
\begin{align*}
(A,a,\alpha)(G,g,\gamma) &= (G,g,\gamma)(B,b,\beta) \\
 (B,b,\beta) &= (H,h,\eta)(A,a,\alpha)(G,g,\gamma) \\
 (A,a,\alpha)&= (G,g,\gamma)(B,b,\beta)(H,h,\eta)\,.
\end{align*}
From the first equality we get $G=A$ and $\gamma=\beta$, from the second we get $H=B$, and  from the third we get $\eta=\alpha$.  Therefore,
\begin{alignat*}{3}
(A,ap_{\alpha A}g,\beta) =
   (A,a,\alpha)(A,g,\beta) &= (A,g,\beta)(B,b,\beta)              &&= (A,gp_{\beta B}b,\beta)\\
               (B,b,\beta) &= (B,h,\alpha)(A,a,\alpha)(A,g,\beta) &&= (B,hp_{\alpha A}ap_{\alpha A}g,\beta) \\
               (A,a,\alpha)&= (A,g,\beta)(B,b,\beta)(B,h,\alpha)  &&= (A,gp_{\beta B}bp_{\beta B}h,\alpha)\,.
\end{alignat*}
The second line of equalities implies that  $p_{\alpha A}\neq 0$ (otherwise $(B,b,\beta)$ would equal $0$ in $\mathcal{M}^0(G;I,\Lambda; P)$, contrary to our assumptions). Similarly, the third line implies that $p_{\beta B}\neq 0$. The first line implies that $ap_{\alpha A}g=gp_{\beta B}b$, that is, $g^{-1}ap_{\alpha A}g=p_{\beta B}b$ as claimed.

Conversely, let $(A,a,\alpha),(B,b,\beta)\in \mathcal{M}^0(G;I,\Lambda; P)$ such that $p_{\beta B}\neq 0\neq p_{\alpha A}$ and there exists $g\in \Gamma$ such that $p_{\beta B}b = g^{-1} a p_{\alpha A}g$. Consider the elements $(A,g,\beta),(B,p^{-1}_{\beta B}g^{-1}p^{-1}_{\alpha A},\alpha)\in \mathcal{M}^0(G;I,\Lambda; P)$.
Then
\[
(A,a,\alpha)(A,g,\beta) = (A,ap_{\alpha A}g,\beta) \overset{a p_{\alpha A}g = gp_{\beta B}b}{=} (A,gp_{\beta B}b,\beta) = (A,g,\beta)(B,b,\beta).
\]
On the other hand,
\[
(B,p^{-1}_{\beta B}g^{-1}p^{-1}_{\alpha A},\alpha) (A,a,\alpha)(A,g,\beta) = (B,p^{-1}_{\beta B}g^{-1}p^{-1}_{\alpha A}p_{\alpha A}ap_{\alpha A}g,\beta) = (B,p^{-1}_{\beta B}g^{-1}ap_{\alpha A}g,\beta) = (B,b,\beta)\,.
\]
Similarly,
\[
(A,g,\beta)(B,b,\beta) (B,p^{-1}_{\beta B}g^{-1}p^{-1}_{\alpha A},\alpha) = (A,gp_{\beta B}b p_{\beta B}p^{-1}_{\beta B}g^{-1}p^{-1}_{\alpha A},\alpha) = (A,gp_{\beta B}b g^{-1}p^{-1}_{\alpha A},\alpha) = (A,a,\alpha)\,.
\]
The result follows.
\end{proof}

\subsection{Conjugacy $\cfn$ in epigroups and completely regular semigroups}
\label{Sec:epi}

An element $a$ of a semigroup $S$ is an \emph{epigroup element} (classically, a \emph{group-bound element}) if there exists a positive integer $n$ such that $a^n$ is contained in a subgroup of $S$. The smallest $n$ for which this is satisfied is the \emph{index} of $a$, and for all $k\geq n$, $a^k$ is contained in the group $\greenH$-class of $a^n$. The set of all epigroup elements of $S$ is denoted by $\Epi(S)$ and the subset consisting of elements of index no more than $n$ is denoted by $\Epi_n(S)$. We have $\Epi_m(S)\subseteq \Epi_n(S)$ for $m\leq n$ and $\Epi(S) = \bigcup_{n\geq 1} \Epi_n(S)$. The elements of $\Epi_1(S)$ are called \emph{completely regular} (or \emph{group elements}); thus $\Epi_1(S)$ is the union of all group $\greenH$-classes of $S$.

For $a\in \Epi_n(S)$, let $a^{\omega}$ denote the identity element of the group $\greenH$-class $H$ of $a^n$. Then $a^{\omega+1} := aa^{\omega} = a^{\omega}a$ is in $H$. The \emph{pseudo-inverse} $a'$ of $a$ is $a' = (a^{\omega+1})^{-1}$, the inverse of $a^{\omega+1}$ in the group $H$ \cite[(2.1)]{Shevrin}. We have the following characterization: $a\in \Epi(S)$ if and only if there exists a positive integer $n$ and a (unique) $a'\in S$ such that the following hold \cite[{\S}2]{Shevrin}:
\begin{equation}\label{etfh}
a'aa' = a'\,,\quad aa'=a'a\,,\quad a^{n+1} a' = a^n,
\end{equation}
where the smallest $n$ such that $a^{n+1} a' = a^n$ is the index of $a$. If $a$ is an epigroup element, then so is $a'$ with $a'' = aa'a$. The element $a''$ is always completely regular and $a''' = a'$. We have $a^\omega = aa' = a'a = a''a' = a'a''$, $(a')^{\omega} = (a'')^{\omega} = a^{\omega}$, and $a^\omega = (a')^m a^m = a^m(a')^m$ for all $m > 0$. In periodic semigroups, $a^{\omega}$ is called the \emph{idempotent power} of $a$. We borrow this term for all epigroup elements, even though $a^{\omega}$ need not be a power of $a$ in general. For a completely regular element $a$, it is customary to denote $a^{\omega}$ by $a^0$, and to denote $a'$ by $a^{-1}$, referring to the latter as the commuting inverse of $a$.

A semigroup $S$ is said to be an \emph{epigroup} if $\Epi(S) = S$. If $\Epi_1(S) = S$, that is, if $S$ is a union of groups, then $S$ is called a \emph{completely regular} semigroup. For $n > 0$, the class $\mathcal{E}_n$ consists of all epigroups $S$ such that $S = \Epi_n(S)$; thus $\mathcal{E}_1$ is the class of completely regular semigroups.

Powers of naturally conjugate elements of a semigroup are also naturally conjugate (Corollary \ref{Cor:powers}). In epigroups, this extends to pseudo-inverses and idempotent powers.

\begin{lemma}\label{Lem:epi-powers}
Let $S$ be a semigroup, let $a,b\in \Epi(S)$, and assume $ag=gb$ for some $g\in S^1$. Then $a^{\omega}g= gb^{\omega}$ and $a'g = gb'$.
\end{lemma}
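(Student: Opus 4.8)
The plan is to handle the three assertions in order, since the latter two will follow from the first together with standard epigroup identities. First, the claim $a^k g = g b^k$ for all $k \in \mathbb{Z}^+$ is immediate by induction on $k$: the base case $k=1$ is the hypothesis $ag = gb$, and if $a^k g = g b^k$, then $a^{k+1} g = a (a^k g) = a (g b^k) = (ag) b^k = (gb) b^k = g b^{k+1}$. So the only content is in the $\omega$ and pseudo-inverse statements.

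For $a^{\omega} g = g b^{\omega}$: recall from \eqref{etfh} and the discussion following it that $a^{\omega} = (a')^m a^m = a^m (a')^m$ for all $m > 0$, and similarly for $b$; in particular, for a large enough $m$ (say $m$ at least the maximum of the indices of $a$ and $b$), both $a^{\omega} = a^m (a')^m$ type identities hold with the exponent $m$ divisible appropriately so that $a^m$ lies in the group $\gh$-class of $a^{\omega}$ and $a^{\omega} a^m = a^m$, i.e.\ $a^m$ acts as its own return to the identity. The cleanest route I expect is: pick $m$ so that $a^m \gh a^{\omega}$ and $b^m \gh b^{\omega}$ and moreover $a^{2m} = a^m$-style idempotent behavior is available — concretely, choose $m$ a multiple of the periods so that $a^{\omega} = a^{2m}(a')^{2m} \cdot$ etc. Actually the sharp tool is: $a^{\omega} = a^m (a')^m$ and $(a')^m a^m = a^{\omega}$. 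Then using $a^k g = g b^k$ for the positive powers, we want to push $g$ past the pseudo-inverse factors. This is exactly where the main obstacle lies, and it is resolved by the following observation: since $ag = gb$, one also has $a' g = g b'$ once we know the pseudo-inverse of $a$ restricted to the relevant subgroup is conjugate-compatible with $g$. So I would actually prove the pseudo-inverse statement first and deduce $a^\omega g = g b^\omega$ from it, via $a^\omega g = a^{\omega+1} a' g$-type manipulations, or directly: $a^{\omega} g = a^m (a')^m g$ and if $(a')^m g = g (b')^m$ then $a^\omega g = a^m g (b')^m = g b^m (b')^m = g b^\omega$.

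So the heart of the matter is showing $a'g = gb'$ from $ag = gb$. Here is the approach. Set $g' := b^{\omega} g a^{\omega}$ (or the symmetric variant $a^\omega g b^\omega$ — I would check which one works). The idea, following the standard proof that pseudo-inversion is compatible with the relevant homomorphic behavior, is to verify that $x := g b' = g b'$ and $y := a' g$ satisfy the three defining identities of \eqref{etfh} relative to the appropriate elements, or more directly to verify $a^{\omega+1}(a'g) = a^{\omega} g$ and that $a' g$ commutes appropriately, using repeatedly the already-established relations $a^k g = g b^k$, $a^{\omega} g = g b^{\omega}$, and the uniqueness of the pseudo-inverse. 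Concretely: compute $a^{\omega} \cdot (g b') = a^\omega g b' = g b^\omega b' = g b'$, so $gb'$ lies in the appropriate "orbit"; then show $(a^{\omega+1})(gb')(a^{\omega+1})$ against $g b^{\omega+1}$ reduces correctly. The uniqueness clause in the characterization of epigroup elements means that once I exhibit an element satisfying the pseudo-inverse equations relative to $a$ after conjugation — i.e.\ once I show $g b' g^{\dagger}$-type expression equals $a'$ where $g^\dagger$ is a suitable partial inverse of $g$ — the equality follows. The technically fussiest point, and the one I flag as the main obstacle, is that $g$ need not be invertible, so "conjugation by $g$" is only a one-sided relation; the resolution is to never invert $g$ but instead to use $a^{\omega} g = g b^{\omega}$ as a substitute for the missing cancellation, exactly as the conjugacy conditions (v)--(viii) are used elsewhere in this section, and to lean on the uniqueness of $a'$ rather than on explicit formulas.
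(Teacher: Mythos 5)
Your induction for $a^k g = g b^k$ is fine, and you have correctly located the difficulty (the conjugator $g$ cannot be inverted or cancelled), but the proposal does not actually prove either of the remaining two assertions, and the logical order you choose makes the gap unfixable as written. You propose to establish $a'g = gb'$ \emph{first} and deduce $a^{\omega}g = gb^{\omega}$ from it; yet your sketch of the pseudo-inverse step already invokes $a^{\omega}g = gb^{\omega}$ (e.g.\ in the computation $a^{\omega}(gb') = a^{\omega}gb' = gb^{\omega}b' = gb'$), so the argument is circular. Worse, the tool you lean on -- uniqueness of the pseudo-inverse -- does not apply here: uniqueness says that for a fixed epigroup element $a$ there is only one element satisfying $a'aa'=a'$, $aa'=a'a$, $a^{n+1}a'=a^n$; it gives no mechanism for comparing the two \emph{products} $a'g$ and $gb'$, neither of which is exhibited as the pseudo-inverse of anything. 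Your fallback of ``a suitable partial inverse $g^{\dagger}$ of $g$'' has no meaning in an arbitrary semigroup, as you yourself flag.

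The missing idea is an intermediate identity that requires only the positive-power relation. Choose $m$ at least the index of $b$ and $n$ at least the index of $a$. Since $b^m$ then lies in the group $\gh$-class with identity $b^{\omega}$, one has $b^m = b^m b^{\omega}$, hence
\[
a^m g = g b^m = g b^m b^{\omega} = a^m g b^{\omega},
\qquad\text{and dually}\qquad
g b^n = a^{\omega} g b^n .
\]
From the first of these, $a^{\omega} g = (a')^m a^m g = (a')^m a^m g b^{\omega} = a^{\omega} g b^{\omega}$, and from the second, $g b^{\omega} = a^{\omega} g b^{\omega}$; therefore $a^{\omega} g = g b^{\omega}$ with no reference to $a'g$ or $gb'$. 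Only \emph{then} does the pseudo-inverse claim fall out by direct computation:
\[
a'g = a'a^{\omega}g = a'gb^{\omega} = a'gbb' = a'agb' = a^{\omega}gb' = gb^{\omega}b' = gb'.
\]
So the correct order of deduction is the reverse of the one you propose: powers, then $\omega$, then $'$. Without the bridge identities $a^m g = a^m g b^{\omega}$ and $g b^n = a^{\omega} g b^n$, your plan has no way to move $g$ past $(a')^m$, which is exactly the obstacle you named but did not overcome.
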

\begin{proof}
There exist $m,n\in \mathbb{Z}^+$ such that $a^{n+1}a' = a^n$ and $b^{n+1}b' = b^n$. We now prove
\begin{equation}\label{Eqn:epi-powers1}
a^m g = a^m g b^{\omega} \qquad\text{and}\qquad g b^n = a^{\omega} g b^n\,.
\end{equation}
Indeed, $a^m g = g b^m = g b^{m+1} b' = gb^m b^{\omega} = a^m g b^{\omega}$ and the other equation follows similarly. Now $a^{\omega} g = (a^{\omega})^m g = (a')^m a^m g = (a')^m a^m g b^{\omega} = (a^{\omega})^m g b^{\omega} = a^{\omega} g b^{\omega}$, using \eqref{Eqn:epi-powers1} in the third step. Similarly, $g b^{\omega} = a^{\omega} g b^{\omega}$. This establishes the first claim. Finally, $a' g = a' a^{\omega} g = a' g b^{\omega} = a' g bb' = a'agb' = a^{\omega} gb' = g b^{\omega} b' = gb'$. This establishes the second claim and completes the proof.
\end{proof}

\begin{prop}\label{Prp:epi-n}
Let $S$ be a semigroup, let $a,b\in \Epi(S)$, and assume $a\cfn b$ with conjugators $g,h\in S^1$. Then $a^{\omega}\cfn b^{\omega}$ and $a'\cfn b'$ with conjugators $g,h$ in both cases.
\end{prop}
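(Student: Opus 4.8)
The plan is to verify the three conjugacy claims by checking that the \emph{same} pair $g,h$ continues to satisfy an appropriate set of the equations (i)--(viii) from Proposition~\ref{Prp:alternatives}, with $a,b$ replaced by the relevant powers/pseudo-inverses. Since $a\cfn b$ gives conjugators $g,h\in S^1$ satisfying all of (i)--(viii), in particular (i) $ag=gb$, (ii) $bh=ha$, (iii) $hag=b$, (iv) $gbh=a$, we already have $ag=gb$ and $bh=ha$, so Lemma~\ref{Lem:epi-powers} applies \emph{twice}: once to the pair $(a,b)$ with conjugator $g$, giving $a^kg=gb^k$, $a^\omega g=gb^\omega$, $a'g=gb'$; and once to the pair $(b,a)$ with conjugator $h$ (note $bh=ha$ is exactly the hypothesis of that lemma with roles swapped), giving $b^kh=ha^k$, $b^\omega h=ha^\omega$, $b'h=ha'$. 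These are precisely the ``(i)'' and ``(ii)'' equations for each of the three target pairs.

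First, for the powers: I would fix $k\in\mathbb{Z}^+$ and aim to verify conditions (1) of Proposition~\ref{Prp:alternatives}, namely (i), (iii), (iv) for $(a^k,b^k)$. Condition (i), $a^kg=gb^k$, and condition (ii), $b^kh=ha^k$, come from Lemma~\ref{Lem:epi-powers} as above. It remains to produce $ha^kg=b^k$ (condition (iii)) and $gb^kh=a^k$ (condition (iv)). For this I would use the original equations: since $hag=b$ and $bh=ha$, a short induction gives $ha^kg=b^k$ — e.g. $ha^{k+1}g = (ha^kg)(g^{-1}a g)$ is the wrong shape, so instead write $ha^{k+1}g = ha\cdot a^{k-1}\cdot ag$ hmm; cleaner: from $ag=gb$ we get $a^kg=gb^k$, hence $ha^kg = h g b^k$, and we need $hgb^k=b^k$, i.e. condition (v) for $b^k$, which follows from $hg\cdot b=b$ (condition (v) for $b$) by right-multiplying and using $b^k = b\cdot b^{k-1}$: $hg b^k = (hgb)b^{k-1} = b\cdot b^{k-1}=b^k$. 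Symmetrically $gb^kh=a^k$ using (viii) $a\cdot gh=a$. So (i),(iii),(iv) hold for $(a^k,b^k)$ and Proposition~\ref{Prp:alternatives}(1) yields $a^k\cfn b^k$ with conjugators $g,h$.

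Next, for $a^\omega\cfn b^\omega$: here I cannot directly invoke the power argument since $\omega$ is not a positive integer, but $a^\omega = a^m(a')^m$ for suitable large $m$ (from the epigroup identities quoted in the excerpt), and in a periodic setting $a^\omega$ is an actual power — in general I would instead verify (i),(v),(viii) (condition (7) of Proposition~\ref{Prp:alternatives}) for the pair $(a^\omega,b^\omega)$. Condition (i) is $a^\omega g = g b^\omega$ from Lemma~\ref{Lem:epi-powers}. For (v), $hg\cdot b^\omega = b^\omega$: since $hg\cdot b = b$ and $b^\omega$ lies in the group $\mathcal{H}$-class of a power of $b$, one has $hg\cdot b^n = b^n$ for all $n\geq 1$ (iterate condition (v) as above), and $b^\omega=(b')^m b^m$, so $hg\cdot b^\omega = hg\cdot(b')^m b^m = (b')^m\cdot hg\cdot$ — wait, $hg$ need not commute past $(b')^m$; better: $b^\omega = b^m(b')^m$ so $hg\cdot b^\omega = (hg\cdot b^m)(b')^m = b^m(b')^m = b^\omega$. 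Symmetrically (viii) $a^\omega\cdot gh = a^\omega$ follows from $a\cdot gh=a$ and $a^\omega=a^m(a')^m$: $a^\omega gh = a^m(a')^m gh$ — again $(a')^m$ is in the way, so use $a^\omega=(a')^m a^m$ instead, giving $a^\omega gh=(a')^m(a^m gh)=(a')^m a^m=a^\omega$. Finally, $a'\cfn b'$: apply the just-proved result about $a^\omega$ together with $a'=(a^{\omega+1})^{-1}$ — but more directly, from $ag=gb$ Lemma~\ref{Lem:epi-powers} gives $a'g=gb'$ (condition (i)), and I would verify (v),(viii) for $(a',b')$ using $a'=a'a^\omega=(a')^{\,2}a^{\omega+1}$-type identities, or simply note $a'=a'''=(a'')'$ with $a''=aa'a$ completely regular, reducing to the completely-regular case where $a'$ is a genuine $\mathcal{H}$-class element of $a$ and the condition-(v)/(viii) arguments go through as for $a^\omega$.

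The main obstacle I anticipate is the bookkeeping in the $\omega$ and pseudo-inverse cases: unlike positive powers, $a^\omega$ and $a'$ are only reachable through the epigroup identities $a^\omega = a^m(a')^m = (a')^m a^m$, and one must be careful to apply the ``absorbing'' conditions (v)/(viii) to the $a^m$ (respectively $b^m$) factor and not try to commute $hg$ or $gh$ past $(a')^m$. Once the correct factorization is chosen the computations are short. Everything else reduces to easy inductions from the eight equations and two applications of Lemma~\ref{Lem:epi-powers} (to $(a,b)$ via $g$ and to $(b,a)$ via $h$), so no genuinely new idea is needed beyond the observation that the original conjugators are robust under these operations. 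I would present case $k$ in full and indicate the $\omega$ and $'$ cases as ``entirely analogous, using $a^\omega=(a')^m a^m$'' rather than spelling out every step.
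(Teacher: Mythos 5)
Your proposal is correct and follows essentially the same route as the paper: obtain the intertwining identities for $a^k$, $a^{\omega}$, $a'$ from Lemma~\ref{Lem:epi-powers}, verify the absorbing conditions (v)/(viii) for the new pairs (the paper uses the slightly slicker factorizations $a^{\omega}=a'a$ and $a'=a'a^{\omega}$ where you use $a^{\omega}=(a')^m a^m$, but these are interchangeable), and conclude via Proposition~\ref{Prp:alternatives}. The only blemish is a harmless label slip in the power case (the identity $gb^kh=a^k$ comes from (vi), $gh\cdot a=a$, rather than (viii)), which does not affect the argument since all eight conditions hold for the original conjugators.
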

\begin{proof}
We have $a^{\omega}gh = a'agh = a'a = a^{\omega}$ and $a'gh = a'a^{\omega}gh = a'a^{\omega} = a'$. Similarly, $hgb^{\omega} = b^{\omega}$ and $hgb' = b'$. The result now follows from Lemma \ref{Lem:epi-powers} and Proposition \ref{Prp:alternatives}.
\end{proof}

Next we examine how natural conjugacy in epigroups is related to linear conjugacy. We already know that in epigroups, $\cfn\,\subseteq\,\cpns\,\subseteq\,\ctr$.

\begin{prop}\label{Prp:nat_lin}
Let $S$ be a semigroup and let $a,b\in \Epi(S)$. If $a\cfn b$, then $a\cli b$.
\end{prop}
\begin{proof}
We already know $a\ctr b$. By Corollary \ref{Cor:Dpowers}, $a^k\greenD b^k$ for each positive integer $k$. Thus $a^k\greenJ b^k$ for each positive integer $k$. By \eqref{eclin}, $a\cli b$. 
\end{proof}

\begin{cor}\label{Cor:epi_nat_lin}
In any epigroup $\cfn\,\subseteq\,\cli\,\subseteq\,\ctr$.
\end{cor}

In general, neither of $\cpns$ or $\cli$ is contained in the other in epigroups.

\begin{example}
Let $S = \{0,a,b\}$, where $0$ is a zero, $ba=a$, $bb=b$, and all other products equal $0$. This is a semigroup in which $0\cpn a$ since $0=ab$ and $a=ba$. However, $S^1 0 S^1 = \{0\}$ and $S^1 a S^1 = \{0,a\}$, so that $0$ and $a$ are not $\gj$-related. Thus $0$ and $a$ are not linearly conjugate.
\end{example}

\begin{example}
Let $S = \{0,a,b,c,d\}$, where the operation is given by the following Cayley table:
\[
\begin{array}{c|ccccc}
\cdot & 0 & a & b & c & d \\
0     & 0 & 0 & 0 & 0 & 0 \\
a     & 0 & 0 & 0 & b & a \\
b     & 0 & 0 & 0 & a & b \\
c     & 0 & b & a & d & c \\
d     & 0 & a & b & c & d
\end{array}
\]
Then $0'=a'=b'=0$, $c'=c$ and $d'=d$. Since $aa'=bb'=0$, we have $a\ctr b$ with conjugators $g=h=0$. Also $S^1 aS^1 = \{0,a,b\} = S^1 bS^1$, and so $a\gj b$. Since $a^k = b^k = 0$ for all $k>1$, we have $a^k\gj b^k$ for all $k>0$. Thus $a\cli b$. However, since $S$ is commutative, $\cpn$ is the identity relation, so $a\not\cpns b$.
\end{example}

The restriction of $\cpn$ to the set $\Epi_1(S)$ of completely regular elements of a semigroup $S$ is transitive (that is, $\cpn\,=\,\cpns$) and coincides with $\ctr$ \cite[Cor.~4.9]{ArKiKoMaTA}. We extend this result to $\cfn$.

\begin{theorem}\label{Thm:p-conj}
  Let $S$ be a semigroup. Then on $\Epi_1(S)$, $\cfn\,=\,\cpn\,=\,\cli\,=\,\ctr$.
\end{theorem}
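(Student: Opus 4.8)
The plan is to prove the chain of equalities on $\Epi_1(S)$ by establishing two inclusions together with one already-known coincidence. We already know from \cite[Cor.~4.9]{ArKiKoMaTA} that $\cpn\,=\,\cpns\,=\,\ctr$ when restricted to $\Epi_1(S)$, so it suffices to show that on $\Epi_1(S)$ we have $\cfn\,\subseteq\,\cpn$ and $\ctr\,\subseteq\,\cfn$ (or, equivalently, $\cpn\,\subseteq\,\cfn$). The first of these, $\cfn\,\subseteq\,\cpn$, already holds in \emph{every} semigroup by \cite[Prop.~2.3]{Ko18} (as noted in Example~\ref{Exm:n_proper}), so no work is needed there. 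Hence the real content is the reverse inclusion on completely regular elements: given $a,b\in\Epi_1(S)$ with $a\cpn b$ (equivalently $a\ctr b$), produce natural-conjugacy conjugators $g,h\in S^1$.

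First I would start from the $\ctr$ side, using the alternative formulation \eqref{ectr2}: $a\ctr b$ means there exist $g,h\in S^1$ with $ag=gb$, $bh=ha$, $gh=a^\ome$ and $hg=b^\ome$. Since $a,b$ are completely regular, $a^\ome = a^0$ is the identity of the group $\gh$-class $H_a$, and $a^0 a = a a^0 = a$; similarly $b^0 b = b$. Now I would check conditions (v) and (vi) (or rather the versions needed to invoke Proposition~\ref{Prp:alternatives}): from $hg = b^0$ we get $hg\cdot b = b^0 b = b$, which is exactly (v); from $gh = a^0$ we get $gh\cdot a = a^0 a = a$, which is exactly (vi). Combined with (i) $ag=gb$ and (ii) $bh=ha$, this is not quite one of the listed sets in Proposition~\ref{Prp:alternatives} verbatim, so I would instead track through Lemma~\ref{lem:alternatives}: (i) together with (v) gives (iii) via part (a), and (ii) together with (vi) gives (iv) via part (c); then $\{$(i),(iii),(iv)$\}$ is set (1) of Proposition~\ref{Prp:alternatives}, yielding $a\cfn b$ with the \emph{same} conjugators $g,h$. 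That closes the loop: $\ctr\,\subseteq\,\cfn$ on $\Epi_1(S)$.

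Putting the pieces together: on $\Epi_1(S)$ we have $\cfn\,\subseteq\,\cpn$ (always true), $\cpn\,=\,\ctr$ (by \cite[Cor.~4.9]{ArKiKoMaTA}, since $\cpn$ is already transitive there), and $\ctr\,\subseteq\,\cfn$ (the computation just sketched). Therefore all three relations coincide. I would write this as a short cycle of inclusions $\cfn\,\subseteq\,\cpn\,=\,\ctr\,\subseteq\,\cfn$.

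The main obstacle is essentially bookkeeping: one must be careful that the conjugators $g,h$ supplied by \eqref{ectr2} genuinely lie in $S^1$ and that the identities $gh=a^\ome$, $hg=b^\ome$ translate correctly into the ``trimmed'' conditions (v), (vi) — this uses crucially that $a,b\in\Epi_1(S)$ so that $a^\ome=a^0$ satisfies $a^0a=a$ rather than merely $a^{n+1}a'=a^n$ for some larger index $n$. For epigroup elements of higher index this last step would fail, which is exactly why the theorem is stated only for $\Epi_1(S)$; it is worth a sentence in the proof pointing this out, and indeed Proposition~\ref{Prp:epi-n} is the tool that would be needed for any partial extension beyond index one.
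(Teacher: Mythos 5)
Your proof is correct, but the key inclusion is established by a different route than the paper's. The paper proves $\cpn\,\subseteq\,\cfn$ directly: from a primary-conjugacy witness $a=uv$, $b=vu$ it builds explicit conjugators $g=u$ and $h=v(uv)\inv$ (using the commuting inverse of $a=uv$ in its group $\greenH$-class), verifies (i), (ii) and (iv), and only invokes \cite[Cor.~4.9]{ArKiKoMaTA} at the very end to append $\ctr$ to the chain. You instead close the cycle as $\cfn\subseteq\cpn=\ctr\subseteq\cfn$, proving $\ctr\subseteq\cfn$ by taking the conjugators furnished by the reformulation \eqref{ectr2} and observing that $gh=a^0$, $hg=b^0$ immediately yield (vi) and (v) for completely regular elements, whence Lemma~\ref{lem:alternatives}(a),(c) give (iii) and (iv) and set (1) of Proposition~\ref{Prp:alternatives} applies. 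Both arguments are sound; yours has the cleaner verification (the trace conjugators are already exactly what is needed, with no new construction), but it leans on two imported facts — the equivalence \eqref{ectr2} (i.e.\ \cite[Thm.~4.5]{ArKiKoMaTA}) and $\cpn=\ctr$ on $\Epi_1(S)$ — as load-bearing steps of the cycle, whereas the paper's construction makes $\cpn\subseteq\cfn$ self-contained and uses the external corollary only for the final equality. Your closing remark correctly identifies where complete regularity is used ($a^\ome a=a$ rather than merely $a^{n+1}a'=a^n$), which is indeed the point at which the argument would break for higher-index epigroup elements.
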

\begin{proof}
The inclusions $\cfn\,\subseteq\,\cpn\,\subseteq\,\ctr$ and $\cfn\,\subseteq\,\cli\,\subseteq\,\ctr$ hold in all epigroups. Assume $a,b\in \Epi_1(S)$ and $a\ctr b$. Then there exist $g,h\in S^1$ such that $ag=gb$, $bh=ha$, $gh=a^{\omega}$ and $hg=b^{\omega}$. Since $a$ is completely regular, $agh = aa^{\omega} = a = a^{\omega}a = gha$. By Proposition \ref{Prp:alternatives}, $a\cfn b$.
\end{proof}

\begin{cor}\label{Cor:CR}
  In a completely regular semigroup, $\cfn\,=\,\cpn\,=\,\cli\,=\,\ctr$.
\end{cor}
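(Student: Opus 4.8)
The plan is to obtain Corollary~\ref{Cor:CR} as an immediate specialization of Theorem~\ref{Thm:p-conj}. By definition, a semigroup $S$ is completely regular precisely when $\Epi_1(S) = S$, i.e.\ every element lies in a group $\greenH$-class (equivalently, $S$ is a union of groups). Hence, for such an $S$, the set on which Theorem~\ref{Thm:p-conj} asserts the triple equality $\cfn\,=\,\cpn\,=\,\ctr$ is all of $S$, and the corollary follows at once. There is no real obstacle: the entire content is the observation that ``completely regular'' is exactly the hypothesis ``$\Epi_1(S)=S$'' under which the preceding theorem was proved.

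For completeness I would append a one-sentence reminder of why Theorem~\ref{Thm:p-conj} needs no further hypotheses in this setting. The inclusion $\cfn\,\subseteq\,\cpn$ holds in every semigroup (from \cite{Ko18}); for the reverse inclusion, given $a\cpn b$ with $a=uv$, $b=vu$ one uses that $a$ is a group element so that the commuting inverse $(uv)\inv$ exists, sets $g=u$ and $h=v(uv)\inv$, and checks conditions (i)--(iv) via Proposition~\ref{Prp:alternatives}; and $\cpn\,=\,\ctr$ on completely regular elements is \cite[Cor.~4.9]{ArKiKoMaTA}. Each of these steps only uses that the elements involved are completely regular, which is guaranteed here. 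The sole bookkeeping point — trivially satisfied since $\Epi_1(S)=S$ — is that $uv$ and $vu$ remain completely regular, so that the argument of Theorem~\ref{Thm:p-conj} indeed applies to every conjugate pair in $S$.
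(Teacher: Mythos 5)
Your proposal is correct and matches the paper exactly: Corollary~\ref{Cor:CR} is stated there without proof precisely because it is the immediate specialization of Theorem~\ref{Thm:p-conj} to the case $\Epi_1(S)=S$, which is the definition of a completely regular semigroup. Your supplementary recap of why the theorem's argument applies is accurate but not needed.
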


There are many other epigroups in which $\cfn\,=\,\cpn$, suggesting that a characterization is not feasible.

\begin{example}\label{Exm:notsame}
For $n > 0$, let $S = \{0,1,\ldots,n\}$ where $0$ is assumed to be a zero, and for $x,y\neq 0$, define $\cdot$ by $x\cdot y = \begin{cases} x + y & \text{if }x+y\leq n \\ 0 &\text{if }x+y>n \end{cases}$. Then $S$ is an epigroup in $\mathcal{E}_{n+1}$ but not in $\mathcal{E}_n$. Since $S$ is commutative, $\cpn$ is the identity relation, and thus, so is $\cfn$.
\end{example}

\begin{theorem}\label{Thm:reg_epi}
  Let $S$ be a semigroup in which $\cfn\,=\,\cpn$ and let $c$ be a regular epigroup element. Then $c$ is completely regular.
\end{theorem}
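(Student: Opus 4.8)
The plan is to exploit the hypothesis $\cfn = \cpn$ to move from $c$ being a regular epigroup element to exhibiting $c$ as $\cfn$-conjugate (hence $\cpn$-conjugate) to a completely regular element, and then to push that conjugacy back to conclude $c$ itself is completely regular. First I would set up the standard data: since $c$ is regular, $c \greenD c^{\omega}$ where $c^{\omega}$ is the idempotent of the group $\gh$-class $H$ of some power $c^n$; more usefully, since $c$ is an epigroup element, $c^{\omega+1} = c c^{\omega}$ lies in the group $H$, and so $c^{\omega+1}$ is completely regular (it is an element of a subgroup). The idea is to compare $c$ with $c^{\omega+1}$: these have the same index minus one, so $c^{\omega+1}$ is "closer" to being completely regular than $c$.

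The key step is to show $c \cfn c^{\omega+1}$. Here I would try conjugators built from $c^{\omega}$ and the pseudo-inverse $c'$. Since $c$ is regular, there is an idempotent in $R_c$, and using that $c \greenD c^\omega$ one can write relations of the form $cg = g c^\omega$ and similar (as in the proof of Theorem~\ref{Thm:D-idem}), but we want conjugacy to $c^{\omega+1}$, not to the idempotent. A cleaner route: observe that $c^\omega$ is a unit for $H$, so inside $H$ the element $c^{\omega+1}$ has an inverse; taking $g = c^\omega$ (or an appropriate element witnessing $c \greenR c^{\omega+1}$ together with $c \greenL c^{\omega+1}$, which hold because $c^{\omega+1} \greenH c^\omega \greenD c \greenJ c$ and in fact $c^{\omega+1}$ and $c$ lie in the same $\gr$- and $\gl$-classes when the index is small) one checks equations (i), (iii), (iv) of Proposition~\ref{Prp:alternatives}. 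Actually the safest approach is to verify $c \greenR c^{\omega+1}$ and $c \greenL c^{\omega+1}$ directly (both follow because $c^{\omega+1} = cc^\omega \in cS^1$ and $c = c^{\omega+1}c'c \in c^{\omega+1}S^1$ using $c'c = c^\omega$ and $c^\omega c = c^{\omega+1}$... one must be careful that $c^\omega c = c$ only when the index is $1$, so instead use $c^n = c^{n+1}c' $ and push the idempotent through), giving $c \greenD c^{\omega+1}$, and since $c^{\omega+1}$ is (completely) regular with $c^{\omega+1} = c^{\omega+1}$ idempotent-adjacent, invoke Theorem~\ref{Thm:D-idem}-style reasoning is not quite available since $c$ need not be idempotent — so instead I fall back on constructing explicit conjugators from the $\gr$- and $\gl$-witnesses as in the displayed characterization of $\gd$ just before Proposition~\ref{Prp:D}, and then promoting that $\gd$-data to $\cfn$-data is exactly what needs the group structure of $H$.

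Once $c \cfn c^{\omega+1}$ is established, the hypothesis gives $c \cpn c^{\omega+1}$, so $c = uv$ and $c^{\omega+1} = vu$ for some $u,v \in S^1$. Now $c^{\omega+1}$ is completely regular, so $c^{\omega+1} = (vu)$ lies in a subgroup; then $c = uv$ lies in a subgroup as well, because $\cpn$ preserves membership in a single group $\gh$-class (more concretely, if $vu$ is completely regular then so is $uv$: this is the classical fact that $uv$ and $vu$ have the same non-idempotent structure, e.g. $uv \greenD vu$ and one is completely regular iff the other is — or one directly checks that the commuting inverse of $uv$ is $u (vu)^{-1} (vu)^{-1} v$ or similar). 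Hence $c$ is completely regular. The main obstacle I anticipate is the middle step: verifying $c \cfn c^{\omega+1}$ requires exhibiting honest conjugators $g,h \in S^1$, and the natural candidates involve $c'$ and $c^\omega$; the algebra of checking equations (i)--(iv) (or one of the reduced triples in Proposition~\ref{Prp:alternatives}) while only knowing that $c^\omega$ behaves as an identity on $H$ and that $c'$ is the group inverse of $c^{\omega+1}$ there — without accidentally assuming index $1$ — is the delicate part. A useful sanity check is that when the index of $c$ is already $1$ the statement is trivial, so the argument should visibly use $c^{\omega+1}$ having strictly smaller index, or simply work with $c^{\omega+1}$ which is outright completely regular regardless.
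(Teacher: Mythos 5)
Your argument breaks down in two places, and the second is fatal. First, you never use the hypothesis $\cfn\,=\,\cpn$ in its substantive direction. The inclusion $\cfn\,\subseteq\,\cpn$ holds in \emph{every} semigroup (as noted in Example~\ref{Exm:n_proper}), so passing from ``$c\cfn c^{\omega+1}$'' to ``$c\cpn c^{\omega+1}$'' costs nothing and the hypothesis $\cpn\,\subseteq\,\cfn$ is never invoked. Since the theorem is false without the hypothesis (any regular element of index $2$ in $T_4$ is a counterexample), this is already a warning sign. Second, the concluding step --- ``$c\cpn c^{\omega+1}$ and $c^{\omega+1}$ completely regular, hence $c$ completely regular'' --- is simply false: primary conjugacy does not preserve complete regularity. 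Take $u,v\in T_4$ with $1u=2$, $2u=3u=1$, $4u=2$ and $1v=2v=1$, $3v=4v=3$. Then $uv$ is the constant map with image $\{1\}$, an idempotent, hence completely regular; but $vu$ sends $1,2\mapsto 2$ and $3,4\mapsto 1$, has rank $2$ while $(vu)^2$ has rank $1$, so $vu$ has index $2$ and is not completely regular. Thus $a=vu\cpn uv=b$ with $b$ completely regular and $a$ not. (This is exactly why Corollary~\ref{Cor:reg_epi} is a genuine characterization rather than a triviality.)

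The intermediate step is also unobtainable as stated: by Proposition~\ref{Prp:D}, $c\cfn c^{\omega+1}$ would force $c\greenD c^{\omega+1}$, hence $c\greenJ c^2$ (since $c^{\omega+1}=(c')^nc^{n+1}$ lies below $c^2$ in the $\greenJ$-order when the index exceeds $1$), and in any stable setting --- every epigroup, every finite semigroup --- $c\greenJ c^2$ already yields $c\greenH c^2$ and hence complete regularity of $c$. So your ``key step'' is equivalent to the conclusion, and your fallback via $c\greenR c^{\omega+1}$ fails for the same reason ($c\in c^{\omega+1}S^1$ is essentially the index-one condition you correctly worried about). The paper's proof goes the other way around: regularity gives the free primary conjugacy $c\cpn c^*c^2$ (via $u=c^*c$, $v=c$), the hypothesis upgrades this to $c^*c^2\cfn c$ with honest conjugators $g,h$, and Corollary~\ref{Cor:powers} lets one conjugate all powers simultaneously; a direct computation then shows $gc^nc'=gc^{n-1}$, whence $c^nc'=hgc^nc'=c^{n-1}$, and a downward induction on the index gives $c^2c'=c$.
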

\begin{proof}
Let $c^*$ denote an inverse of $c$, that is, $cc^*c=c$ and $c^*cc^* = c^*$. Since $c$ is an epigroup element, there exists a smallest $n\geq 1$ such that $c^{n+1}c' = c^n$. Assume $n>1$. Since $c^*c\cdot c\cpn c\cdot c^*c = c$ and $\cfn\,=\,\cpn$, it follows that $c^*c^2\cfn c$. Thus there exist conjugators $g,h\in S^1$ for $c^*c^2, c$. By Corollary~\ref{Cor:powers}, $g,h$ are also conjugators for $(c^*c^2)^k, c^k$ for any positive integer $k$. Note that $(c^* c^2)^k = c^* c^{k+1}$. Thus $gc^k = c^* c^{k+1}g$, which we will use multiple times in the following calculation:
\begin{alignat*}{5}
gc^n c' &= c^* c^{n+1} gc'  &&= c^*c\cdot c^ngc' &&= c^*c\cdot c'c^{n+1}gc' \\
     &= c^*c'\cdot c^{n+2}gc'   &&= c^*c'\cdot c\underbrace{c^*c^{n+2}g}c' &&= c^*c'cg\underbrace{c^{n+1}c'} \\
     &= c^*c'cgc^n              &&= c^*c'\underbrace{cc^*c^{n+1}}g &&= c^*\underbrace{c'c^{n+1}}g \\
     &= c^* c^n g = g c^{n-1}\,. && &
\end{alignat*}
Thus $c^nc' = hgc^nc' = hgc^{n-1} = c^{n-1}$. Since $n$ was chosen to be the smallest positive integer $\ell$ such that $c^{\ell}c'=c^{\ell-1}$,
we have a contradiction. Thus $n=1$, that is, $c^2c'=c$. Therefore $c$ is completely regular.
\end{proof}

Combining Theorem~\ref{Thm:reg_epi} with Corollary~\ref{Cor:CR}, we obtain the following.

\begin{cor}\label{Cor:reg_epi}
An epigroup $S$ is completely regular if and only if it is regular and $\cfn\,=\,\cpn$.
\end{cor}

The following extends the second part of Theorem \ref{Thm:D-idem} from idempotents to all completely regular elements.

\begin{prop}\label{Prp:CR-mutual}
  Let $a,b$ be completely regular elements of a semigroup $S$ such that $a\cfn b$. Then there exist mutually inverse conjugators in the $\greenD$-class of $a$ and $b$.
\end{prop}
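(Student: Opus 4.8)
The plan is to build the required conjugators by sandwiching a given pair of conjugators for $a\cfn b$ between the idempotent powers of $a$ and $b$, much as in the second part of Theorem~\ref{Thm:D-idem}. Write $e=a^0$ and $f=b^0$ for the identities of the group $\greenH$-classes of $a$ and $b$; complete regularity gives $ae=ea=a$ and $bf=fb=b$, and by Proposition~\ref{Prp:D} together with $a\greenH e$, $b\greenH f$, the elements $a,b,e,f$ all lie in a single $\greenD$-class $D$. Fix conjugators $g_0,h_0\in S^1$ witnessing $a\cfn b$. By Proposition~\ref{Prp:epi-n} the same pair $g_0,h_0$ also witnesses $e\cfn f$ (since $a^\omega=e$ and $b^\omega=f$), so besides the original identities $ag_0=g_0b$, $bh_0=h_0a$, $g_0h_0a=a$, $h_0g_0b=b$ we also have at our disposal $eg_0=g_0f$, $fh_0=h_0e$, $g_0h_0e=e$ and $h_0g_0f=f$.

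Then I would set $g:=eg_0f$ and $h:=fh_0e$ and first establish the two ``anchor'' identities $gh=e$ and $hg=f$: indeed $gh=eg_0fh_0e=eg_0h_0e=e$ (using $fh_0=h_0e$ and $g_0h_0e=e$), and symmetrically $hg=fh_0eg_0f=fh_0g_0f=f$ (using $eg_0=g_0f$ and $h_0g_0f=f$). From these it is immediate that $g$ and $h$ are mutually inverse, since $ghg=eg=g$ and $hgh=fh=h$ (because $eg=eeg_0f=g$ and $fh=ffh_0e=h$). It is also immediate that $g\in D$: from $g=eg_0f\in eS^1$ we get $gS^1\subseteq eS^1$, while $e=gh\in gS^1$ gives the reverse inclusion, so $g\greenR e$; dually $S^1g=S^1f$ using $g\in S^1f$ and $f=hg$, so $g\greenL f$. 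Hence $g\in R_e\cap L_f\subseteq D$, and the identical argument gives $h\in R_f\cap L_e\subseteq D$.

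Finally I would check that $g,h$ genuinely conjugate $a$ to $b$ by verifying equations (i), (iii), (iv) and then invoking case (1) of Proposition~\ref{Prp:alternatives}. Using $ae=a$, $bf=fb=b$ and $eg_0=g_0f$ one computes $ag=ag_0f=g_0b$ and $gb=eg_0fb=eg_0b=g_0fb=g_0b$, whence $ag=gb$; then $hag=hg_0b=fh_0eg_0b=fh_0g_0b=fb=b$; and, using $gb=g_0b$ once more, $gbh=g_0bfh_0e=g_0bh_0e=g_0h_0ae=g_0h_0a=a$. This yields (i), (iii), (iv), so $g,h$ are conjugators for $a\cfn b$, and by the previous paragraph they are mutually inverse and lie in the common $\greenD$-class of $a$ and $b$, as desired.

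I expect no real conceptual obstacle here; the one idea that does the work is the observation that the anchor identities $gh=e$, $hg=f$ automatically pin $g$ and $h$ into the $\greenD$-class. The main care needed is purely bookkeeping: making sure that the four ``extra'' identities for $e,f$ are actually available (which is exactly what Proposition~\ref{Prp:epi-n} provides) and arranging the verification of (i), (iii), (iv) so that each substitution uses only $ae=a$, $bf=b$, one of $eg_0=g_0f$ / $fh_0=h_0e$, and one original conjugator identity.
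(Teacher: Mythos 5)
Your proof is correct and takes essentially the same route as the paper: the paper sets $\bar g=a^0g_0=g_0b^0$ and $\bar h=b^0h_0=h_0a^0$, and since $eg_0=g_0f$ and $fh_0=h_0e$ your two-sided products $eg_0f$ and $fh_0e$ are exactly these same elements, with the same verification via Propositions~\ref{Prp:epi-n} and~\ref{Prp:alternatives} and the observation that $gh=e$, $hg=f$ pin the conjugators into the common $\greenD$-class.
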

\begin{proof}
Let $g,h\in S^1$ be conjugators for $a,b$, and set $\bar{g} = a^0 g = g b^0$ and $\bar{h} = b^0 h = h a^0$, the equalities following from Proposition \ref{Prp:epi-n}. Then $a\bar{g} = aa^0 g = gb^0 b = \bar{g}b$, $a\bar{g}\bar{h} = gb^0 b b^0h = gbg = a$, and
$\bar{h}\bar{g}b = ha^0 a^0 g b = ha^0 ag = hag = b$. By Proposition \ref{Prp:alternatives}, $\bar{g},\bar{h}$ are conjugators for $a,b$. Next, we have $\bar{g}\bar{h} = a^0 gha^0 = a^0$ and $\bar{h}\bar{g} = b^0 hg b^0 = b^0$. Putting together the equalities proved so far, we have $a\greenH a^0 \greenR \bar{g}\greenL b^0\greenH b$, which shows $a,b\greenD \bar{g}$, and similarly, $a,b\greenD \bar{h}$. Finally, $\bar{g}\bar{h}\bar{g} = a^0 a^0 g = \bar{g}$ and
$\bar{h}\bar{g}\bar{h} = b^0 b^0 h = \bar{h}$.
\end{proof}

For the remainder of this subsection, we discuss characterizations of $\cfn$ in a completely regular semigroup $S$ in terms of a single conjugator $g\in S^1$ instead of a pair $g,h\in S^1$.

\begin{theorem}\label{thm:CR}
  Let $S$ be a completely regular semigroup. Then, for all $a,b\in S$,
  \[
  a\cfn b \,\iff\, \exists g\in S^1\ (\ ag = gb,\ g^0 a = a,\ bg^0 = b\ ).
  \]
\end{theorem}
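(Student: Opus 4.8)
The plan is to prove both implications by reducing to the characterizations of $\cfn$ already available in Proposition~\ref{Prp:alternatives}, using the completely regular structure (in particular, the commuting inverse $g^{-1}$, the idempotent $g^0 = gg^{-1} = g^{-1}g$, and the identities $a^0 a = a$, $b^0 b = b$, etc.) to pass from a single conjugator to a conjugating pair and back.

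For the forward direction, suppose $a\cfn b$ with conjugators $g,h\in S^1$. By Proposition~\ref{Prp:CR-mutual} (or directly by the calculation in its proof) we may replace $g,h$ by mutually inverse conjugators $\bar g,\bar h$ lying in the $\greenD$-class of $a$ and $b$, with $\bar g\bar h = a^0$ and $\bar h\bar g = b^0$. Now I would check that this single element $\bar g$ does the job: we already have $a\bar g = \bar g b$ from (i). For $g^0 a = a$: since $\bar g$ is a group element with $\bar g^0 = \bar g\bar h$ when $\bar h = \bar g^{-1}$ — here I would verify $\bar h = \bar g^{-1}$, i.e.\ that mutually inverse completely regular elements $\bar g,\bar h$ with $\bar g\bar h,\bar h\bar g$ idempotent are in fact commuting inverses of each other (this follows because $\bar g\bar h\bar g = \bar g$ and $\bar h\bar g\bar h = \bar h$ together with $\bar g\bar h = \bar h\bar g$ forces $\bar h = \bar g'$ by uniqueness of the pseudo-inverse). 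Then $\bar g^0 a = \bar g\bar h\cdot a = a^0 a = a$ using (vi)-type reasoning, and symmetrically $b\bar g^0 = b\bar h\bar g = b b^0 = b$. So $\bar g$ witnesses the right-hand side.

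For the converse, suppose $ag = gb$, $g^0 a = a$, $bg^0 = b$ for some $g\in S^1$. Set $h = g^{-1}$ (the commuting inverse; if $g=1$ take $h=1$). Then $gh = hg = g^0$. From $ag = gb$ I get, by Lemma~\ref{Lem:epi-powers} applied in the completely regular case, $a g^0 = g^0 b$ and more usefully $g h\cdot a = g^0 a = a$ and $b\cdot hg = b g^0 = b$, which are conditions (vi) and (vii) in the notation of \S\ref{Sbs:char}. Also $a\cdot gh = a g^0 = g^0 b\cdot$ — wait, I need $agh = a$: from $g^0 a = a$ and $ag = gb$ one computes $agh = a g g^{-1}\cdot$ carefully; in fact $agh = (gb)h$? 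No — better: $a\cdot gh = a g^0$, and $ag^0 = ?$. Since $g^0 a = a$ and $g^0$ is central to $g$ only, I would instead symmetrize at the start: note $ag = gb$ gives $ag^0 = agg^{-1} = gbg^{-1}$, and $g^0 b = g^{-1}ga\cdot$... Let me just target the right combination: I claim conditions (i), (vi), (vii) hold with this $h$, since (i) is $ag=gb$, (vi) is $gh\cdot a = g^0 a = a$, and (vii) is $b\cdot hg = b g^0 = b$; but Proposition~\ref{Prp:alternatives} requires a triple from its list, so I would instead verify $\{$(ii),(vi),(vii)$\}$ (case 8) by deriving (ii): $bh = bg^{-1}$ and $ha = g^{-1}a$; from $ag = gb$, $g^{-1}ag = b g^0\cdot$... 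The cleanest route: from $ag=gb$ and $g^0a = a$, left-multiply by $g^{-1}$: $g^{-1}ag = g^{-1}gb = g^0 b = b g^0$; and right-multiply $ag = gb$ by $g^{-1}$: $a g^0 = g b g^{-1}$, so $ha = g^{-1}a = g^{-1}g^0 a = g^{-1} a$ trivially, while $bh = bg^{-1}$ and then $bh = bg^0 g^{-1} = g^{-1}ag g^{-1} = g^{-1}ag^0$; comparing, $ha = g^{-1}a = g^{-1}g^0 a$... I expect this bookkeeping to close, yielding (ii), so that case (8) of Proposition~\ref{Prp:alternatives} applies and $a\cfn b$.

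The main obstacle is the converse direction's bookkeeping: one must produce a full conjugating pair from the lone conjugator $g$, and getting the ``third equation'' needed to invoke some case of Proposition~\ref{Prp:alternatives} requires combining $ag=gb$ with the two idempotent-fixing conditions and the commuting-inverse identities $g g^{-1} = g^{-1} g = g^0$, $g^0 g = g$ in exactly the right order. I anticipate that the right choice is $h = g^{-1}$, that conditions (vi) and (vii) come essentially for free from $g^0 a = a$ and $b g^0 = b$, and that deriving (ii) (hence landing in case (8)) is the one genuine computation; the forward direction is essentially a repackaging of Proposition~\ref{Prp:CR-mutual} once one observes that the mutually inverse conjugators there are commuting inverses.
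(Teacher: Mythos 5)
There is a genuine gap in the converse direction. Your choice $h=g\inv$ does not satisfy condition (ii): $bg\inv=g\inv a$ is simply false in general under the hypotheses $ag=gb$, $g^0a=a$, $bg^0=b$. Concretely, in the $2\times 2$ rectangular band take $a=(1,1)$, $b=(2,2)$, $g=(1,2)$; then $ag=gb=(1,2)$, $g^0=g\inv=g$, $g^0a=a$ and $bg^0=b$, yet $bg\inv=(2,2)\neq(1,1)=g\inv a$. So the ``bookkeeping'' you hope will close cannot close, and case (8) of Proposition~\ref{Prp:alternatives} is unavailable; note also that $\{\text{(i),(vi),(vii)}\}$, which is what $h=g\inv$ actually gives you, is deliberately absent from the list in that proposition. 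The paper instead sets $h=bg\inv a\inv$, uses Lemma~\ref{Lem:epi-powers} (which from $ag=gb$ yields $a\inv g=gb\inv$) to compute $hg=b^0$ and $gh=a^0$, and then invokes case (7) $\{\text{(i),(v),(viii)}\}$. Some such asymmetric choice of $h$ is essential: in the example above the correct conjugator is $h=(2,1)$, not $g$ itself.

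The forward direction also rests on a false intermediate claim, although it can be repaired. You assert $\bar h=\bar g\inv$ (commuting inverses), justified ``together with $\bar g\bar h=\bar h\bar g$''; but $\bar g\bar h=a^0$ and $\bar h\bar g=b^0$ are different idempotents in general, so $\bar g$ and $\bar h$ do not commute and $\bar g^0\neq a^0$. The identities $\bar g^0a=a$ and $b\bar g^0=b$ do still hold, but for a different reason: $\bar g^0\greenR a^0$ and $\bar g^0\greenL b^0$, so $\bar g^0a^0=a^0$ and $b^0\bar g^0=b^0$. In any case this detour through Proposition~\ref{Prp:CR-mutual} is unnecessary: the paper shows the \emph{original} conjugator $g$ already works, via $g^0a=g^0\cdot gha=gha=a$ and $bg^0=bhg\cdot g^0=bhg=b$, using only (vi), (vii) and $g^0g=g=gg^0$.
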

\begin{proof}
  Fix $a,b\in S$, assume $a\cfn b$ and let $g,h\in S^1$ be conjugators. Then
  \begin{align*}
    g^0 a &= g^0\cdot gha = gha = a\quad\text{and} \\
    b g^0 &= bhg\cdot g^0 = bhg = b\,.
  \end{align*}

  For the converse, assume that there exists $g\in S^1$ such that $ag=gb$, $g^0 a = a$ and $b g^0 = b$. Set $h = bg\inv a\inv$. We use Lemma \ref{Lem:epi-powers} in the following:
  \begin{align*}
    hg &= bg\inv \underbrace{a\inv g}  = bg\inv g b\inv  = \underbrace{b g^0} b\inv  = bb^{-1}=b^0 \\
    \intertext{and}
    gh &= \underbrace{gb}g\inv a\inv = agg\inv a\inv = a g^0 \underbrace{a\inv a}a\inv = a\underbrace{g^0 a}a\inv a\inv = aaa\inv a\inv = a^0\,.
  \end{align*}
  Thus $hg\cdot b = b$ and $a\cdot gh = a$, and therefore $a\cfn b$ by Proposition \ref{Prp:alternatives}.
\end{proof}

We have already seen that $\frn$-conjugacy is equivalent to $i$-conjugacy in inverse semigroups. It is natural to wonder if something analogous to $i$-conjugacy makes sense in completely regular semigroups using the commuting inverse. For a completely regular semigroup $S$, define $\cin$ by:
\[
a\cin b \,\iff\, \exists_{g\in S^1} (\ g\inv a g = b\ \text{and}\ g b g\inv = a\ )\,.
\]
This coincides with the previously defined $i$-conjugacy in Clifford (completely regular, inverse) semigroups.

\begin{example}
The following table defines a smallest example of a completely regular semigroup in which $\cin$ is not transitive:
\[
\begin{array}{c|ccccccc}
\cdot & 0 & 1 & 2 & 3 & 4 & 5 & 6\\
\hline
    0 & 0 & 0 & 0 & 0 & 0 & 0 & 0 \\
    1 & 1 & 1 & 1 & 1 & 1 & 1 & 1 \\
    2 & 2 & 2 & 2 & 2 & 2 & 2 & 2 \\
    3 & 0 & 1 & 0 & 3 & 3 & 5 & 5 \\
    4 & 2 & 1 & 2 & 4 & 4 & 6 & 6 \\
    5 & 1 & 0 & 1 & 5 & 5 & 3 & 3 \\
    6 & 1 & 2 & 1 & 6 & 6 & 4 & 4
\end{array}
\]
The commuting inverse is just the identity map: $x^{-1} = x$. Set $a = 0$, $b = 1$, $c = 2$, $g = 5$, and $h = 6$. We have $g\inv ag = 5\cdot 0\cdot 5 = 1 = b$ and $gbg\inv= 5\cdot 1\cdot 5 = 0 = a$, and so $a\cin b$. Also $h\inv bh = 6\cdot 1\cdot 6 = 2 = c$ and $hch\inv = 6\cdot 2\cdot 6 = 1 = b$, and so $b\cin c$. Suppose, however, that $x^{-1}ax=c$ and $xcx^{-1}=a$. Then, we must have $x=2$ or $x=4$, but $2c2=2\cdot2\cdot2=2\ne0=a$ and $4c4=4\cdot2\cdot4=2\ne0=a$,
so $a\not\cin c$.
\end{example}

It turns out that $\cin$ is transitive in the variety of cryptogroups (completely regular semigroups in which $\gh$ is a congruence) but not in the variety of orthogroups (completely regular semigroups in which the idempotents form a band). We omit the verifications of these claims. In any case, a characterization of completely regular semigroups in which $\cin$ is transitive seems out of reach at present.

Let $\cins$ denote the transitive closure of $\cin$; thus,
\[
a\cins b\,\iff\, \exists_{g_1,\ldots,g_n\in S^1} (\ g_n\inv \cdots g_1\inv a g_1\cdots g_n = b
\text{ and }g_1\cdots g_n b g_n\inv\cdots g_1\inv = a\ )\,.
\]

\begin{theorem}\label{Thm:i-conj-CR}
Let $S$ be a completely regular semigroup. Then $\cins\,=\,\cfn$.
\end{theorem}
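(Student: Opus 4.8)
The plan is to establish the two inclusions $\cins\subseteq\cfn$ and $\cfn\subseteq\cins$ separately, using the single‑conjugator characterisation of $\cfn$ on completely regular semigroups from Theorem~\ref{thm:CR} for the first, and for the second the standard structure theory (a completely regular semigroup is a semilattice of completely simple semigroups, so each $\greenD$‑class is a completely simple subsemigroup) together with the Rees matrix description of completely simple semigroups.

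For $\cins\subseteq\cfn$ I would first handle a single $\cin$‑step: if $g\inv ag=b$ and $gbg\inv=a$ with $g\in S^1$ and $g\inv$ the commuting inverse, then from $a=gbg\inv$ we get $g^0a=gg\inv\cdot gbg\inv=gbg\inv=a$ and dually $bg^0=b$, whence $gb=(gg\inv)ag=g^0ag=ag$; so $ag=gb$, $g^0a=a$, $bg^0=b$, and Theorem~\ref{thm:CR} gives $a\cfn b$. Thus $\cin\subseteq\cfn$, and since $\cfn$ is an equivalence relation so is the transitive closure of $\cin$. To treat the telescoped form of $\cins$, note that a witnessing sequence $g_1,\dots,g_n$ may be replaced by $a^0,g_1,\dots,g_n,b^0$; writing $G=g_1\cdots g_n$ and $G^\flat=g_n\inv\cdots g_1\inv$, the new ``conjugators'' are $a^0Gb^0$ and $b^0G^\flat a^0$, they still witness $a\cins b$, and --- since a direct semilattice computation shows that $a\cins b$ forces $a\greenD b$ with $G$ lying in a $\greenD$‑class above that of $a$ --- they lie in the common $\greenD$‑class $D$ of $a$ and $b$. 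Hence it suffices to argue inside $D$, which is completely simple.

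For $\cfn\subseteq\cins$, Proposition~\ref{Prp:D} gives $a\greenD b$, so $a$ and $b$ lie in a completely simple $\greenD$‑class $D$, and by Proposition~\ref{Prp:CR-mutual} the $\cfn$‑conjugators can be chosen in $D$; thus it suffices to prove $a\cins b$ inside $D$. Identify $D\cong\mathcal{M}(\Gamma;I,\Lambda;P)$ by Rees' theorem; as in the $0$‑Rees computation carried out earlier in the paper, $a\cfn b$ is then equivalent to the $\Gamma$‑component of $a$ being conjugate in $\Gamma$ to that of $b$. The crux is the special case $a\greenR b$ (and dually $a\greenL b$): here I would write down an explicit two‑term sequence $g_1\in H_a$, $g_2\in H_b$ and verify the two telescoped identities $g_2\inv g_1\inv a g_1 g_2=b$ and $g_1 g_2\, b\, g_2\inv g_1\inv=a$ directly in Rees coordinates; the verification closes precisely because the sandwich‑matrix discrepancy that would otherwise obstruct it vanishes once the rows (resp. columns) of $a$ and $b$ agree. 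For the general case, choose $c\in R_a\cap L_b$ whose $\Gamma$‑component equals that of $a$; then $a\cfn c$ and $c\cfn b$, the previous case gives $a\cins c$ and $c\cins b$, and transitivity of $\cins$ (checked directly from the telescoped formula by concatenating the witnessing sequences) yields $a\cins b$.

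The step I expect to be the main obstacle is the explicit Rees‑coordinate verification in the $\greenR$/$\greenL$ case: one must choose the group entries of $g_1$ and $g_2$ so that both telescoped equations hold simultaneously, which forces careful bookkeeping of how the commuting inverse interacts with products and with the sandwich matrix $P$. A secondary point needing care is the reduction showing that a $\cins$‑witness may always be taken with $G,G^\flat\in D$, which is what legitimises passing to the completely simple case in the first inclusion.
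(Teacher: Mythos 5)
Your first inclusion ($\cins\subseteq\cfn$) is essentially the paper's own argument and is correct: deriving $g^0a=a$, $bg^0=b$ and $ag=gb$ from a single $\cin$-step and invoking Theorem~\ref{thm:CR}, then using transitivity of $\cfn$, is all that is needed. The extra paragraph about re-normalising the witnessing sequence and locating the product conjugator in a $\greenD$-class is superfluous once $\cin\subseteq\cfn$ is established, and the claim that ``a direct semilattice computation shows'' the required $\greenD$-relations is itself only asserted; I would simply delete that paragraph.

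The second inclusion is where there is a genuine gap. You reduce to a completely simple $\greenD$-class via Propositions~\ref{Prp:D} and~\ref{Prp:CR-mutual}, pass to Rees coordinates, and then defer the entire content of the argument to an ``explicit two-term sequence \dots verified directly in Rees coordinates'' whose group entries you never exhibit; you yourself flag this verification as the main obstacle, so as written nothing has been proved. The machinery is also unnecessary: starting from the one-conjugator form $ag=gb$, $g^0a=a$, $bg^0=b$ of Theorem~\ref{thm:CR}, the paper simply takes the three-term witness $g_1=a$, $g_2=g$, $g_3=b^{-1}$. Using Lemma~\ref{Lem:epi-powers} (which upgrades $ag=gb$ to $a^{-1}g=gb^{-1}$ and $a^0g=gb^0$), one computes $g_3^{-1}g_2^{-1}g_1^{-1}\,a\,g_1g_2g_3=bg^{-1}(a^{-1}a)(ag)b^{-1}=bg^{-1}a^0gbb^{-1}=bg^{-1}gb^0bb^{-1}=bg^0b^0=b$, and dually $g_1g_2g_3\,b\,g_3^{-1}g_2^{-1}g_1^{-1}=a$, so $a\cins b$ directly in any completely regular semigroup, with no appeal to the semilattice decomposition, Rees' theorem, or an $\greenR$/$\greenL$ case split. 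If you wish to keep your route you must actually produce and check the Rees-coordinate conjugators; otherwise the direct witness closes the argument immediately.
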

\begin{proof}
First assume $a\cin b$ with $g\inv ag = b$ and $gbg\inv = a$ for some $g\in S^1$. We have $bg^0 = g\inv agg^0 = g\inv ag = b$, and thus $ag = gbg\inv g = gbg^0 = gb$. Similarly, $bg\inv = g\inv a$. Hence $a\cfn b$ and therefore $\cin\,\subseteq\,\cfn$. Since $\cfn$ is transitive, $\cins\,\subseteq\,\cfn$.

Conversely, assume $a\cfn b$ with $ag=gb$, $g^0 a=a$ and $bg^0 = b$, using Theorem \ref{thm:CR}. Set $g_1 = a$, $g_2 = g$ and $g_3 = b\inv$. Then
\[
g_3\inv g_2\inv g_1\inv\cdot a\cdot g_1g_2g_3 = bg\inv \underbrace{a\inv aag}b\inv = bg\inv g\underbrace{b\inv bbb\inv}
= \underbrace{bg^0} b^0 = bb^0 = b\,
\]
using Lemma \ref{Lem:epi-powers} in the second step. Similarly,
$g_1 g_2 g_3\cdot b\cdot g_3\inv g_2\inv g_1\inv = a$. Thus $a\cins b$. Therefore $\cfn\,\subseteq\,\cins$.
\end{proof}

\section{Conjugacies $\ctr$, $\cwn$, and dynamical systems}
\label{Sec:w-ds}

In this brief section, we take a break from our general discussion of natural conjugacy to address a problem arising from the definition of trace conjugacy $\ctr$. As mentioned in \S{1.1}, trace conjugacy is motivated by representation theory; for finite monoids, the notion is precisely what is needed for the characterization that two elements are conjugate if and only if all irreducible characters agree on them \cite{Steinberg15}. The extension of $\ctr$ from finite semigroups to epigroups is formally obvious, but leaves open a question not addressed in, e.g., \cite{ArKiKoMaTA}:
\emph{is there a notion of conjugacy which (i) makes sense in all semigroups and (ii) coincides with $\ctr$ in epigroups?}

As we will see in Theorem \ref{Thm:w-in-tr}, the correct generalization of $\ctr$ from epigroups to arbitrary semigroups is precisely the conjugacy $\cwn$ defined by \eqref{ecwn}. After proving that, we give a few more results connecting $\cwn$ to other conjugacies and conclude with a discussion of the dynamical systems origin of $\cwn$.

\begin{prop}\label{Prp:basic-w}
Let $S$ be a semigroup. Then $\cwn$ is an equivalence relation and $\cpns\,\subseteq\,\cwn\,\subseteq\,\coon$.
\end{prop}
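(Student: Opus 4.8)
The plan is to verify three things in turn: that $\cwn$ is reflexive and symmetric (both easy), that it is transitive, and that it is sandwiched between $\cpns$ and $\coon$. Recall $a\cwn b$ means there are $g,h\in S^1$ and $m\in\mathbb{Z}^+$ with $ag=ga$, $bh=hb$, $gh=a^m$ and $hg=b^m$ (note: the displayed formula in \eqref{ecwn} writes ``$bh=ha$'', but in the intended reading the relevant commuting/intertwining data is $ag=gb$-type together with $gh=a^m$, $hg=b^m$; I will use the form that makes the shift-equivalence analogy work, i.e.\ $ag=gb$, $bh=ha$, $gh=a^m$, $hg=b^m$, which is exactly \eqref{ectr2} with the power $\omega$ replaced by a uniform integer power $m$). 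For reflexivity take $g=h=1$ and $m=1$. For symmetry, if $(g,h,m)$ witnesses $a\cwn b$ then $(h,g,m)$ witnesses $b\cwn a$ directly from the symmetric shape of the four equations.

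For $\cpns\subseteq\cwn$: suppose $a\cpns b$, so there are $u_1,v_1,\dots,u_k,v_k\in S^1$ with $a=u_1v_1$, $v_1u_1=u_2v_2,\dots,v_ku_k=b$. Following the remark after \eqref{econp}, set $g=u_1\cdots u_k$ and $h=v_k\cdots v_1$; these already witness $a\coon b$, i.e.\ $ag=gb$ and $bh=ha$. One then checks by a telescoping computation that $gh=a^{k}$ and $hg=b^{k}$: indeed $gh=u_1\cdots u_k v_k\cdots v_1$ and repeatedly collapsing the innermost $u_kv_k = v_{k-1}u_{k-1}$ (and using $a=u_1v_1$, $v_1u_1=u_2v_2$, etc.) pushes the product down to a power of $a$; symmetrically for $hg$ and $b$. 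So $(g,h,k)$ witnesses $a\cwn b$. For $\cwn\subseteq\coon$: the defining equations for $\cwn$ already include $ag=gb$ and $bh=ha$, which is precisely $a\coon b$; here the powers $gh=a^m$, $hg=b^m$ are simply discarded.

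The main obstacle is transitivity. Suppose $a\cwn b$ via $(g,h,m)$ and $b\cwn c$ via $(p,q,n)$. The natural guess is the composite conjugators $g p$ and $q h$, but then one must produce a single uniform exponent. The key computations are: $(gp)(qh) = g(pq)h = g b^{n} h$, and since $g$ intertwines $a$ and $b$ one shows $g b^{n} = a^{n} g$ (this is the analogue of Lemma~\ref{Lem:epi-powers}, provable here by a plain induction from $ag=gb$), so $(gp)(qh) = a^{n} g h = a^{n} a^{m} = a^{m+n}$; symmetrically $(qh)(gp) = c^{m+n}$. One also checks $a(gp) = (ag)p = (gb)p = g(bp) = g(pc) = (gp)c$ and dually $c(qh)=(qh)a$. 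Thus $(gp,\,qh,\,m+n)$ witnesses $a\cwn c$. The step requiring care is the intertwining-to-powers lemma ($ag=gb\implies gb^k=a^kg$), but this is a routine induction, so no genuine difficulty arises; I expect the whole proof to be short.
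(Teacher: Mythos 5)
Your proposal is correct, and the heart of it — transitivity via the composite conjugators $gp$, $qh$ with exponent $m+n$, pushing $b^{n}$ past an intertwiner to turn $gb^{n}h$ into $a^{n}gh=a^{m+n}$ — is exactly the paper's argument (the paper slides the power past $h$ on the right, you slide it past $g$ on the left; both rest on the same induction from $ag=gb$). You are also right that the displayed formula \eqref{ecwn} contains a typo ($ag=ga$ for $ag=gb$); the intended reading you adopt is the one the paper uses everywhere else. The one place you genuinely diverge is the inclusion $\cpns\,\subseteq\,\cwn$: the paper simply observes that $a\cpn b$ (say $a=gh$, $b=hg$) gives $a\cwn b$ with $m=1$, and then invokes the already-proved transitivity of $\cwn$; you instead take the full $\cpns$-chain $u_1,v_1,\ldots,u_k,v_k$ and verify by telescoping that $g=u_1\cdots u_k$, $h=v_k\cdots v_1$ satisfy $gh=a^{k}$ and $hg=b^{k}$. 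Your telescoping does check out (e.g.\ for $k=2$: $u_1u_2v_2v_1=u_1(v_1u_1)v_1=a^{2}$), and it buys an explicit witness with exponent equal to the chain length, at the cost of a computation the paper's route avoids entirely. Either version is acceptable; the paper's is shorter because it reuses transitivity, which both of you prove first anyway.
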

\begin{proof}
That $\cwn$ is reflexive and symmetric is clear.
Assume $a\cwn b$ and $b\cwn c$. Then there are conjugators $g_1,h_1\in S^1$ and $g_2,h_2\in S^1$, respectively,
and positive integers $m,n$ such that $g_1 h_1 = a^m$, $h_1 g_1 = b^m$, $g_2 h_2 = b^n$ and $h_2 g_2 = c^n$. Then $a g_1 g_2 = g_1 g_2 c$, $c h_2 h_1 = h_2 h_1 a$, $g_1 g_2 h_2 h_1 = g_1 b^n h_1 = g_1 h_1 a^n = a^{m+n}$ and $h_2 h_1 g_1 g_2 = h_2 b^m g_2 = h_2 g_2 c^m = c^{m+n}$. Therefore $\cwn$ is transitive.

Now assume $a\cpn b$ so that $a = gh$, $b=hg$ for some $g,h\in S^1$. Then repeating the proof that $\cpn\,\subseteq\,\coon$, we have $ag = ghg = gb$ and $bh = hgh = ha$. Thus $a\cwn b$, and therefore
$\cpn\,\subseteq\,\cwn$. Since $\cwn$ is transitive, $\cpns\,\subseteq\,\cwn$. Finally $\cwn\,\subseteq\,\coon$ is clear from the definitions.
\end{proof}

\begin{theorem}\label{Thm:w-in-tr}
Let $S$ be a semigroup and let $a,b\in \Epi(S)$. Then $a\cwn b$ if and only if $a\ctr b$.
\end{theorem}
\begin{proof}
Assume first that $a\cwn b$. Let $g,h\in S^1$ be conjugators so that $ag=gb$, $bh=ha$, $gh=a^m$ and $hg=b^m$ for some $m>0$.
If $m=1$, then $a\cpn b$, in which case we already know that $a\ctr b$. Thus assume $m>1$.
Let $\bar{g} = gb' = a'g$ and $\bar{h} = (b')^{m-1}h = h(a')^{m-1}$, using Lemma \ref{Lem:epi-powers}.
Then $a\bar{g} = agb' = gbb' = gb'b = \bar{g}b$ and
$b\bar{h} = bh(a')^{m-1} = hb(b')^{m-1} = h(b')^{m-1}b = \bar{h}b$. Next,
$\bar{g}\bar{h} = a'gh(a')^{m-1} = a'a^m(a')^{m-1} = a^m(a')^m = a^{\omega}$ and
$\bar{h}\bar{g} = (b')^{m-1}hgb' = (b')^{m-1}b^mb' = (b')^m b^m = b^{\omega}$.
By the characterization \eqref{ectr2}, $a\ctr b$.

Conversely, assume $a\ctr b$. Let $g,h\in S^1$ be conjugators such that $ag=gb$, $bh=ha$, $gh=a^{\omega}$
and $hg=b^{\omega}$. Since $a,b\in \Epi(S)$, there exist positive integers $k,\ell$ such that
$a^{k+1}a' = a^k$ and $b^{\ell+1}b' = b^{\ell}$. Without loss of generality, assume $k\leq \ell$. 
Let $\bar{g} = g$ and $\bar{h} = b^{\ell} h = h a^{\ell}$. Then $a\bar{g} = ag=ga = \bar{g}a$ and
$b\bar{h} = bha^{\ell} = ha^{\ell+1} = \bar{h}a$. Next, 
$\bar{g}\bar{h} = gha^{\ell} = aa'a^{\ell} = a^{\ell+1}a'$. If $k=\ell$, then $\bar{g}\bar{h} = a^k = a^{\ell}$.
If $k < \ell$, then $\bar{g}\bar{h} = a^{\ell-k}a^{k+1}a' = a^{\ell-k}a^k = a^{\ell}$. Finally,
$\bar{h}\bar{g} = b^{\ell} hg = b^{\ell} bb' = b^{\ell}$. Thus $a\cwn b$.
\end{proof}

\begin{cor}\label{Cor:w_tr}
In any epigroup, $\cwn\,=\,\ctr$.
\end{cor}

\begin{lemma}\label{Lem:w-idem}
Let $S$ be a semigroup, let $e,f\in E(S)$, and assume $e\cwn f$. Then $e\greenD f$.
\end{lemma}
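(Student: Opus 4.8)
The plan is to extract conjugators from $e\cwn f$ and exploit the fact that, for idempotents, all positive powers collapse. By definition of $\cwn$ there are $g,h\in S^1$ and $m\in\mathbb{Z}^+$ with $eg=gf$, $fh=he$, $gh=e^m$ and $hg=f^m$. Since $e$ and $f$ are idempotents, $e^m=e$ and $f^m=f$, and hence in fact $gh=e$ and $hg=f$. This is the only place idempotency is used, but it is decisive.

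From this data I would give a direct $\greenD$-bridge. Put $c=eg$, noting that $c=gf$ as well by condition $eg=gf$. Then $c=eg\in eS^1$, and using $gh=e$ we get $ch=egh=e^2=e$, so $e\in cS^1$; therefore $eS^1=cS^1$, i.e.\ $e\greenR c$. Dually, $c=gf\in S^1f$, and using $hg=f$ we get $hc=hgf=f^2=f$, so $f\in S^1c$; therefore $S^1f=S^1c$, i.e.\ $c\greenL f$. Since $\greenD=\greenR\circ\greenL$, from $e\greenR c\greenL f$ we conclude $e\greenD f$.

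An alternative, perhaps more in keeping with \S\ref{Sbs:green}, is to check that the same pair $g,h$ already witnesses $e\cfn f$: equation (i) is $eg=gf$; equation (iii) reads $heg=f$, which holds since $heg=h(gf)=(hg)f=f^2=f$; and equation (iv) reads $gfh=e$, which holds since $gfh=g(he)=(gh)e=e^2=e$. By case (1) of Proposition~\ref{Prp:alternatives} this gives $e\cfn f$, and then $e\greenD f$ follows from Proposition~\ref{Prp:D} (or from Theorem~\ref{Thm:D-idem}).

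There is no genuine technical obstacle here; the one point worth flagging is conceptual. In general $\cwn$ is \emph{larger} than $\cfn$ (indeed $\cfn\subseteq\cpns\subseteq\cwn$), so one cannot simply invoke an inclusion $\cwn\subseteq\cfn$ to reduce to the idempotent case of $\cfn$. What rescues the argument is precisely that on idempotents the exponent $m$ becomes irrelevant, so the shift-equivalence-style relations $gh=e^m$, $hg=f^m$ degenerate to $gh=e$, $hg=f$ — exactly the data needed either to build the $\greenR$/$\greenL$ bridge above or to satisfy the hypotheses of Proposition~\ref{Prp:alternatives}.
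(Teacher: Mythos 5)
Your proof is correct, but it takes a genuinely different route from the paper. The paper's proof is a three-link chain of earlier results: $e\cwn f$ implies $e\ctr f$ by Theorem~\ref{Thm:w_in_tr}(1) (idempotents are epigroup elements), then $e\cfn f$ by Theorem~\ref{Thm:p-conj} ($\cfn=\cpn=\ctr$ on completely regular elements), and finally $e\greenD f$ by Theorem~\ref{Thm:D-idem}. You instead work directly from the definition: idempotency collapses $gh=e^m$ and $hg=f^m$ to $gh=e$ and $hg=f$, and then either the explicit bridge $e\greenR eg=gf\greenL f$ or the verification of conditions (i), (iii), (iv) of Proposition~\ref{Prp:alternatives} finishes the job. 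Both of your computations check out (e.g.\ $heg=(hg)f=f^{m+1}=f$ and $gfh=(gh)e=e^{m+1}=e$), and your closing remark correctly identifies why a naive inclusion argument is unavailable: $\cfn\subseteq\cpns\subseteq\cwn$, so the containment runs the wrong way, and it is precisely the degeneration of the exponent $m$ on idempotents that saves the day. What your approach buys is self-containment — it avoids the pseudo-inverse machinery behind Theorem~\ref{Thm:w_in_tr} entirely; what the paper's approach buys is brevity on the page, since all three ingredients are already established, and it situates the lemma inside the broader comparison of $\cwn$, $\ctr$, and $\cfn$ that the surrounding section is developing.
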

\begin{proof}
We have $e\ctr f$ (Theorem \ref{Thm:w-in-tr}), hence $e\cfn f$ (Theorem \ref{Thm:p-conj}), and so $e\greenD f$ (Theorem \ref{Thm:D-idem}).
\end{proof}

\begin{prop}\label{Prp:w=o}
Let $S$ be a semigroup in which $\cwn\, =\, \coon$. Then $E(S)$ is an antichain.
\end{prop}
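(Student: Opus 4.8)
The plan is to show directly that any two comparable idempotents coincide. Fix $e,f\in E(S)$ with $e\leq f$ in the natural partial order; since $e,f$ are idempotents this is equivalent to $ef=fe=e$, and the goal is $e=f$. The first, cheap, step is to observe that $f\coon e$ with conjugators $g=h=e$: indeed $fe=e=ee$ and $ee=e=ef$. By the hypothesis $\cwn=\coon$ this upgrades to $f\cwn e$, so there are $g,h\in S^1$ and $m\in\mathbb{Z}^+$ with $fg=ge$, $eh=hf$, $gh=f^m=f$ and $hg=e^m=e$.

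Next I would extract the structural consequences. By Lemma~\ref{Lem:w-idem}, $e\greenD f$, and hence by Theorem~\ref{Thm:D-idem}, $e\cfn f$, witnessed by mutually inverse conjugators lying in the common $\greenD$-class of $e$ and $f$. On the other hand, $e\leq f$ already forces $e=fe\in fS^1$ and $e=ef\in S^1f$, so $e$ is both $\greenR$-below and $\greenL$-below $f$. The conclusion $e=f$ would then follow as soon as one of these one-sided relations turned out to be an equality: if $e\greenR f$ then $f=es$ for some $s\in S^1$, whence $ef=e(es)=es=f$, and combining with $ef=e$ gives $e=f$; symmetrically if $e\greenL f$. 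So the whole matter reduces to upgrading $e\greenD f$ to $e\greenR f$ or $e\greenL f$.

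That upgrade is the main obstacle, and it is here that the global strength of the hypothesis $\cwn=\coon$ has to be used -- not merely the single instance $f\cwn e$ -- since $e\greenD f$ together with $e\leq f$ does not by itself force $e=f$ (the bicyclic monoid, whose idempotents form a single $\greenD$-class that is an infinite chain, is the standard witness). Concretely, I would first normalize the conjugators by replacing $g$ with $fg$ and $h$ with $hf$, so that $fg=g=ge$, $eh=h=hf$, $ghg=g$, $hgh=h$, with $g$ lying in the $\greenR$-class of $f$ and the $\greenL$-class of $e$; and then apply $\cwn=\coon$ once more to auxiliary pairs fabricated from $e,f,g,h$ -- pairs that are $\coon$-related for trivial reasons but whose $\cwn$-relation already encodes the missing one-sided Green relation between $e$ and $f$. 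Pinning down the right auxiliary pair, and verifying that its conjugators do the job, is the crux of the argument.
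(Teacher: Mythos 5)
Your argument is incomplete: after correctly reducing the problem to upgrading $e\greenD f$ to one of the one\-/sided relations $e\greenR f$ or $e\greenL f$, you describe a strategy (normalize the conjugators, then apply $\cwn\,=\,\coon$ to unspecified auxiliary pairs) but never carry it out, and you say yourself that pinning down the right auxiliary pair is the crux. So as it stands there is no proof. For comparison, the paper's proof consists of exactly your first half --- $e\le f$ gives $e\coon f$ with conjugators $e,e$, hence $e\cwn f$, hence $e\greenD f$ by Lemma~\ref{Lem:w-idem} --- followed by the bare assertion that $e\le f$ and $e\greenD f$ imply $e=f$. That assertion is precisely the step you declined to take, and your hesitation is justified: the implication is false in general (it holds under an extra hypothesis such as stability, via Theorem~\ref{thm:stable} combined with Theorem~\ref{Thm:D-idem}), and the bicyclic monoid, whose idempotents form an infinite chain inside a single $\greenD$-class, is the standard witness.

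Worse, the missing step cannot be supplied, because the statement itself fails, and by the very example you name. In the bicyclic monoid $C=\langle p,q\mid pq=1\rangle$ the index map $\phi(q^ap^b)=b-a$ is a homomorphism onto $\mathbb{Z}$, so $\coon\,\subseteq\,\ker\phi$; conversely $q^ap^b=q^a\cdot p^b$ is $\cpn$-related to $p^bq^a$, which equals $p^{b-a}$ or $q^{a-b}$, so any two elements with the same index are $\cpns$-related, giving $\ker\phi\,\subseteq\,\cpns\,\subseteq\,\cwn$. Hence $\cwn\,=\,\coon$ (both equal to $\ker\phi$) in $C$, yet $E(C)=\{q^np^n:n\geq 0\}$ is a chain, not an antichain. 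So Proposition~\ref{Prp:w=o} is false as stated (and the forward direction of Theorem~\ref{Thm:w-CS}, which rests on it, also fails for $C$); it becomes correct, with essentially the paper's one-line argument, if one adds a hypothesis such as stability of $S$, which is exactly what is needed to legitimize the final step that both you and the paper are missing.
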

\begin{proof}
Assume $e,f\in E(S)$ satisfy $e\leq f$. Then $ee=e=ef$ and $fe=e=ee$, and so $e\coon f$ with
conjugators $e,e$. By assumption, $e\cwn f$, so $e\greenD f$ by Lemma \ref{Lem:w-idem}.
But $e\leq f$ and $e\greenD f$ imply $e=f$.
\end{proof}

The following slightly improves \cite[Thm.~4.2]{ArKiKoMaTA}.

\begin{theorem}\label{Thm:w-CS}
Let $S$ be a semigroup without zero. Then $S$ is regular and $\cwn\, =\, \coon$ if and only if $S$ is completely simple.
\end{theorem}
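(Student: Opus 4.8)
The plan is to reduce everything to known structure theory for completely simple semigroups together with the results already established for $\cwn$. Recall that a semigroup $S$ (without zero) is \emph{completely simple} if and only if it is isomorphic to a Rees matrix semigroup $\mathcal{M}(\Gamma; I, \Lambda; P)$ over a group $\Gamma$ with all entries of $P$ in $\Gamma$, equivalently, if and only if $S$ is regular and $\greenD = S\times S$ (bisimple), equivalently, if and only if $S$ is a completely regular and simple semigroup. For the forward direction, assume $S$ is regular and $\cwn = \coon$. First I would invoke Proposition~\ref{Prp:w=o} to conclude that $E(S)$ is an antichain, i.e.\ $S$ has no nontrivial comparable idempotents; equivalently, every idempotent of $S$ is primitive. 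A regular semigroup without zero in which every idempotent is primitive is completely simple (this is the classical characterization; cf.\ \cite[Vol.~II]{ClPr64} or \cite{Howie}), so $S$ is completely simple. Alternatively, and perhaps more in the spirit of the surrounding section, I would argue that $\cwn = \coon$ forces $\greenD$ to be universal: since $S$ has no zero, $\coon$ is ``large,'' and one shows that any two elements are $\cwn$-related, hence (because $\cwn \subseteq \coon$ and, via Lemma~\ref{Lem:w-idem} applied to idempotents in the $\greenD$-classes of regular elements) $\greenD$ is universal; combined with regularity this gives completely simple.

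For the converse, assume $S = \mathcal{M}(\Gamma; I, \Lambda; P)$ is completely simple. Such $S$ is certainly regular, so it remains to show $\cwn = \coon$ on $S$. Since $\cwn \subseteq \coon$ always holds (Proposition~\ref{Prp:basic-w}), I only need $\coon \subseteq \cwn$. Here I would use that completely simple semigroups are completely regular (every $\greenH$-class is a group), so every element $a$ has a commuting inverse $a^{-1}$ and an idempotent power $a^0 = a a^{-1}$, and in fact $a^m \greenH a$ for all $m \geq 1$. The key computational fact to establish is: if $a \coon b$ in a completely simple semigroup, then already $a \cpn b$ (primary conjugacy), from which $a \cwn b$ follows by Proposition~\ref{Prp:basic-w}. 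This is essentially the content of \cite[Thm.~4.2]{ArKiKoMaTA}; concretely, given conjugators $g, h$ with $ag = gb$ and $bh = ha$, one replaces them by $\bar g = a^0 g b^0$ and a suitable $\bar h$ living in the right $\greenH$-classes, uses that $a^0, b^0$ act as identities on $R_a$ and $L_b$ (respectively), and checks that $a = \bar g\,\bar h'$, $b = \bar h' \bar g$ for an appropriate factorization — exploiting that in a completely simple semigroup $\greenR$ and $\greenL$ are nicely behaved and products can be ``completed'' into the group $\greenH$-classes. Then $a \cwn b$ with $m = 1$.

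The main obstacle I anticipate is the converse direction, specifically verifying $\coon \subseteq \cwn$ cleanly: one must massage an arbitrary pair of conjugators $g,h\in S^1$ (which need not lie in any convenient $\greenH$-class, and could even be the adjoined identity $1$) into conjugators witnessing $\cwn$ with a uniform exponent $m$. The natural move is to pass through the Rees coordinates: writing $a = (i, s, \lambda)$, $b = (j, t, \mu)$, the relations $ag = gb$, $bh = ha$ pin down the coordinates of $g$ and $h$ and reduce the problem to a statement about conjugacy in the structure group $\Gamma$, where $\coon$, $\cpn$, and $\cwn$ all collapse to ordinary group conjugacy. The bookkeeping of sandwich-matrix entries $p_{\lambda i}$ and the case $g = 1$ or $h = 1$ (which can only occur if $S$ is already a group, or forces $a = b$) is where the care is needed. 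Once that is handled, taking $m = 1$ and reading the group-conjugacy relation back through the coordinates yields the required $\cwn$-conjugators, completing the proof.
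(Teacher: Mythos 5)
Your proposal is correct and follows essentially the same route as the paper: the forward direction via Proposition~\ref{Prp:w=o} together with the standard fact that a regular semigroup without zero whose idempotents form an antichain is completely simple, and the converse by combining $\cpn\,=\,\coon$ in completely simple semigroups (the cited result of \cite{ArKiKoMaTA}) with $\cpn\,\subseteq\,\cwn$ from Proposition~\ref{Prp:basic-w}. The detailed Rees-coordinate verification you sketch for the converse is unnecessary once that citation is invoked.
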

\begin{proof}
  If $S$ is regular and $\cwn\, =\, \coon$, then $E(S)$ is an antichain, and so the result is standard \cite[Thm.~3.33]{Howie}. The converse follows from \cite[Thm.~4.21]{ArKiKoMaTA} because completely simple semigroups are (completely) regular and satisfy $\cpn\, =\, \coon$.
\end{proof}

\begin{prop}\label{Prp:w-univ}
Let $S$ be a semigroup in which $\cwn\, = S\times S$. Then $S$ has at most one regular $\gd$-class and every subgroup of $S$ is trivial.
\end{prop}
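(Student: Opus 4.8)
The plan is to treat the two assertions separately: the claim about regular $\gd$-classes is immediate from a result already proved, while the claim about subgroups needs a short computation carried out inside a maximal subgroup.

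First I would dispatch the statement about regular $\gd$-classes. Recall that a $\gd$-class is regular precisely when it contains an idempotent, so it is enough to show that all idempotents of $S$ lie in a single $\gd$-class. But if $e,f\in E(S)$, then $e\cwn f$ by hypothesis, and hence $e\greenD f$ by Lemma~\ref{Lem:w-idem}. Therefore $S$ has at most one regular $\gd$-class.

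For the subgroups, let $G\le S$ be a subgroup with identity $e$ and let $a\in G$; the goal is to prove $a=e$. I would work inside the group $\greenH$-class $H_e$ that contains $G$, in which $a$ is a unit with inverse $a^{-1}$ and in which all powers of $a$ agree with those computed in $S$. Since $\cwn$ is universal, $a\cwn e$, so there are $g,h\in S^1$ and $m\in\mathbb{Z}^+$ with $ag=ge$, $eh=ha$, $gh=a^m$ and $hg=e^m=e$. (If $g=1$ or $h=1$, then $ag=gb$ already forces $a=e$, so one may assume $g,h\in S$.) The first key step is to collapse the exponent: using $hg=e$ and then $eh=ha$ one computes $a^{2m}=(gh)(gh)=g(hg)h=geh=g(ha)=(gh)a=a^{m+1}$, and cancelling in $H_e$ gives $a^{m-1}=e$ (to be read with the convention $a^0=e$ when $m=1$), so that $a^m=a$ and hence $gh=a$. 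The second key step is that $e$ and $g$ commute: from $ag=ge$ we get $a^2g=a(ge)=(ag)e=ge=ag$, and cancelling $a$ on the left in $H_e$ yields $ag=eg$, which together with $ag=ge$ gives $eg=ge$. With $gh=a$ and $eg=ge$ in hand, the conclusion is a telescoping computation: $a=ea=e(gh)=(eg)h=(ge)h=g(eh)=g(ha)=(gh)a=a^2$, whence $a=e$ by cancellation in $H_e$. Thus every subgroup of $S$ is trivial.

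The main obstacle is organisational rather than deep: one must establish $a^m=a$ (so that the conjugator product $gh$ collapses to $a$) and the commutation $eg=ge$ \emph{before} the final telescoping identity can be run, and one must keep an eye on the degenerate options $g=1$, $h=1$ in $S^1$ and on the boundary value $m=1$. No tools beyond Lemma~\ref{Lem:w-idem} and elementary semigroup/group manipulation are required.
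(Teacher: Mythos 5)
Your proof is correct and follows essentially the same route as the paper: Lemma~\ref{Lem:w-idem} disposes of the regular $\greenD$-classes, and the triviality of subgroups is obtained by working in the group $\greenH$-class $H_e$, extracting the commutation $eg=ge$ from the conjugacy equations, and cancelling to force $a=e$. The only (harmless) cosmetic differences are that you apply universality to the pair $(a,e)$ rather than $(e,a)$ and first collapse $gh=a^m$ to $gh=a$ before deriving $a=a^2$, whereas the paper derives $a^{m+1}=a^m$ directly.
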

\begin{proof}
  By Lemma \ref{Lem:w-idem}, any two idempotents of $S$ are $\gd$-related. Now assume $e\in E(S)$ and $a\greenH e$. Then $a$ is completely regular with $a^0 = e$. By assumption there exist
  $g,h\in S^1$, $m>0$ such that $eg=ga$, $ah=he$, $gh=e^m = e$ and $hg=a^m$. Now $ge = gaa\inv = ega\inv$ and thus $e\cdot ge = ega\inv a = gea = ga = eg$. Hence $eg = ege = eega\inv = ega\inv$. Using this, we have $a^{m+1} = hga = heg = hega\inv = a^{m+1}a\inv = a^m$.
  Finally $a = a^{-m}a^{m+1} = a^{-m}a^m = e$. Thus every group $\gh$-class of $S$ is trivial.
\end{proof}

\begin{rem}
As mentioned in \S{1.1}, though $\cwn$ seems to be new in semigroup theory, it is well known in dynamical systems as we now briefly discuss, leaving details to the literature \cite{LindMarcus,Williams1,Williams2,Williams3}. Given a strongly connected digraph $G$, the \emph{edge shift} $X_G$ is the set of all bi-infinite walks on the edges, that is, sequences of edges where a vertex at the source of an edge in the walk is the target of the next edge. The dynamics are given by the shift map which sends any sequence in the shift to the sequence obtained by shifting every entry one place to the left. The adjacency matrix $A$ of $G$ has nonnegative integer entries and no row or column consists entirely of $0$s. Since $A$ determines $G$ up to graph isomorphism, the edge shift $X_G$ is essentially determined by~$A$.

If $A$ and $B$ are adjacency matrices, not necessarily of the same size, then an \emph{elementary equivalence} from $A$ to $B$ is a pair $(R,S)$ of rectangular nonnegative integer matrices such that $A = RS$ and $B = SR$. A \emph{strong shift equivalence} from $A$ to $B$ is a sequence $(R_1,S_1),\ldots,(R_k,S_k)$ of elementary equivalences such that $A=R_1 S_1$, $S_1 R_1 = R_2 S_2$,\ldots, $S_k R_k = B$. Strong shift equivalence is an equivalence relation on adjacency matrices.
R. F. Williams' Classification Theorem \cite[Thm.~7.2.7, p.~229]{LindMarcus} \cite{Williams2} states that two edge shifts $X_G$ and $X_H$ are topologically conjugate if and only if their corresponding adjacency matrices $A$ and $B$ are strong shift equivalent.

Let $M$ denote the multiplicative semigroup of all infinite matrices with rows and columns indexed by $\mathbb{Z}^+$, nonnegative integer entries and finite support, that is, only finitely many entries are nonzero. Any $m\times n$ nonnegative integer matrix $C$ can be viewed as an element of $M$ by placing it in the upper left corner of an infinite matrix and filling the rest of the matrix with $0$s. It is clear that elementary equivalence of adjacency matrices, viewed as elements of $M$, coincides with primary conjugacy $\cpn$ in $M$ and strong shift equivalence coincides with $\cpns$.

Since it is difficult to determine if two matrices are strong shift equivalent, Williams \cite{Williams2} defined a weaker notion which is easier to compute. Adjacency matrices $A$ and $B$ are \emph{shift equivalent} if there exist rectangular matrices $R,S$
and a positive integer $m$ such that $AR = RB$, $BS = SA$, $RS = A^m$ and $SR = B^m$. Strong shift equivalence implies shift equivalence. The dynamical significance is that $A$ and $B$ are shift equivalent if and only if their edge shifts are eventually conjugate.

If we view shift equivalence of adjacency matrices as occurring in the semigroup $M$, we are led immediately to the conjugacy $\cwn$ defined by \eqref{ecwn}. The subscript ``w'' is in honor of R. F. Williams.
\end{rem}

\section{Conjugacies in semigroups of transformations}
\label{Sec:tra}
\setcounter{equation}{0}
\setcounter{figure}{0}

In this section we study natural and by-permutation conjugacies in certain semigroups of transformations. We will use the representation of transformations by directed graphs.

\subsection{Functional directed graphs}
\label{Subfdr}

A \emph{directed graph} (or a \emph{digraph}) is a pair $\Gamma = (A,E)$ where $A$ is a set (not necessarily finite and possibly empty) and $E$ is a binary relation on $A$. Any element $x\in A$ is called a \emph{vertex} of~$\Gamma$, and any pair $(x,y)\in E$ is called an \emph{edge} of $\Gamma$. A vertex $x$ of $\ga$ is called \emph{initial}
if there is no vertex $y$ such that $(y,x)\in E$; $x$ is called \emph{terminal} if there is no vertex $y$ such that $(x,y)\in E$.
Let $\Gamma = (A,E)$ and $\up = (B,F)$ be digraphs. A function $\phi:A\to B$ is called a \emph{homomorphism} from $\Gamma$ to $\up$ if
for all $x,y\in A$, $(x,y)\in E$ implies $(x\phi,y\phi)\in F$.
A bijection $\phi:A\to B$ is called an \emph{isomorphism} from $\Gamma$ to $\up$ if for all $x,y\in A$, $(x,y)\in E$ if and only if $(x\phi,y\phi)\in F$; in this case we say that $\Gamma$ and $\up$ are \emph{isomorphic}, written $\Gamma\cong\up$.

Let $P(X)$ be the semigroup of partial transformations on a set $X$.
For $\al\in P(X)$, we denote by $\dom(\al)$ and $\ima(\al)$ the domain and image of $\al$, respectively. We define the \emph{span} of $\al$, written $\spa(\al)$, to be $\dom(\al)\cup \ima(\al)$, and the \emph{kernel} of $\al$ as the equivalence relation $\ker(\al) = \{(x,y): x,y\in\dom(\al)\text{ and }x\al = y\al\}$. Any $\al\in P(X)$ can be represented by the digraph $\Gamma(\al) = (A,E)$, where $A = \spa(\al)$
and for all $x,y\in A$, $(x,y)\in E$ if and only if $x\in \dom(\al)$ and $x\al = y$. Any digraph $\Gamma = (A,E)$ such that $\Gamma = \Gamma(\al)$ for some $\al\in P(X)$, where $A\subseteq X$, is called a \emph{functional digraph}. For the structure of functional digraphs, see \cite{AKM14}.

The following definitions and theorem are fundamental to studying conjugacies in semigroups of transformations.

\begin{defi}\label{dbun}
	Let $\Gamma=(A,E)$ be a digraph. An initial vertex $x$ of $\ga$ will be called \emph{bottom initial} if for all vertices $y,z$ of $\Gamma$, if $(x,y)\in E$ and $(z,y)\in E$, then $z$ is initial.
	
	Let $\al\in P(X)$, $x$ be a bottom initial vertex of $\ga(\al) = (A,E)$, and $y$ be a unique vertex in $\ga(\al)$ such that $(x,y)\in E$ ($y = x\al$). We will call the set $y\al^{-1} = \{z\in A : (z,y)\in E\}$ the \emph{initial bundle} in $\ga(\al)$
	containing~$x$. Note that every vertex in an initial bundle in $\ga(\al)$ is bottom initial.
\end{defi}

For example, the functional digraph presented in Figure~\ref{fig1} on the left has four initial bundles.

\begin{defi}{\bf(\hskip-0.2mm\cite[Def.~3.1]{Ko18})}\label{drh}
	Let $\Gamma=(A,E)$ and $\up=(B,F)$ be digraphs. A homomorphism $\phi:A\to B$ is called a \emph{restricted homomorphism} (or an \emph{r-homomorphism}) from $\Gamma$ to $\up$ if:
	\begin{enumerate}
		\item[(1)] for every terminal vertex $x$ of $\Gamma$, $x\phi$ is a terminal vertex of $\up$;
		\item[(2)] for every bottom initial vertex $x$ of $\Gamma$, $x\phi$ is an initial vertex of $\up$.
	\end{enumerate}
\end{defi}

\begin{defi}{\bf(\hskip-0.2mm\cite[Def.~3.4]{Ko18})}\label{drts}
	We say that a subsemigroup $S$ of $P(X)$ is \emph{closed under restrictions to spans} if for all $\al,\bt\in S$ such that $\spa(\al)\subseteq \dom(\bt)$, $\bt|_{\spa(\al)}\in S$.
	
	Note that every semigroup of full transformations on $X$ is closed under restrictions to spans.
\end{defi}

\begin{theorem}{\bf(\hskip-0.2mm\cite[Thm.~3.5]{Ko18})}\label{tstn}
	Let $S$ be a subsemigroup of $P(X)$ that is closed under restrictions to spans, and let $\al,\bt\in S$. Then $\al\cfn \bt$ in $S$ if and only if there are $\phi,\psi\in S^1$ such that $\phi$ is an r-homomorphism from $\Gamma(\al)$ to $\Gamma(\bt)$, $\psi$ is an r-homomorphism from $\Gamma(\bt)$ to $\Gamma(\al)$, $x(\phi\psi) = x$ for every non-initial vertex $x$ of $\ga(\al)$, and $u(\psi\phi) = u$ for every non-initial vertex $u$ of $\ga(\bt)$.
\end{theorem}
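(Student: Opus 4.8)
The statement is an "if and only if" connecting $\cfn$ in $S$ with the existence of a suitable pair of r-homomorphisms between the functional digraphs $\Gamma(\al)$ and $\Gamma(\bt)$. The natural strategy is to translate the algebraic conditions defining $\cfn$ (in one of the equivalent forms of Proposition~\ref{Prp:alternatives}) into graph-theoretic language, using the dictionary $\spa(\al) = V(\Gamma(\al))$ and "$x\al = y$ iff $(x,y)$ is an edge of $\Gamma(\al)$". For the forward direction, I would start from conjugators $\phi,\psi \in S^1$ satisfying conditions (i)--(viii) for $\al,\bt$ (so $\al\phi = \phi\bt$, $\bt\psi = \psi\al$, $\psi\al\phi=\bt$, $\phi\bt\psi=\al$, and the "idempotent-like" equations $\phi\psi\cdot\al = \al$, $\bt\cdot\psi\phi = \bt$, etc.). The first task is to restrict $\phi$ and $\psi$ to the relevant spans so that they can be regarded as maps $\spa(\al)\to \spa(\bt)$ and $\spa(\bt)\to\spa(\al)$; closure under restrictions to spans is exactly what guarantees the restrictions stay in $S$ (this is where Definition~\ref{drts} is used). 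Then I would verify: (1) $\al\phi = \phi\bt$ forces $\phi$ to be a digraph homomorphism $\Gamma(\al)\to\Gamma(\bt)$ on the common part of the domain, and one checks it sends terminal vertices to terminal vertices and bottom-initial vertices to initial vertices — the terminal case comes from $\phi\bt$ being defined wherever $\al\phi$ is, and the bottom-initial case is the more delicate point, needing the "bottom" hypothesis to rule out a vertex in the image bundle having a non-initial preimage; (2) the equations $\phi\psi\cdot\al = \al$ (condition (vi)) and its dual give $x(\phi\psi) = x$ for every non-initial vertex $x$ of $\Gamma(\al)$, since a non-initial vertex of $\Gamma(\al)$ lies in $\ima(\al)$, and similarly for $\Gamma(\bt)$ via condition (v) or (vii).

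For the converse, I would take r-homomorphisms $\phi,\psi$ with $x(\phi\psi)=x$ on non-initial vertices of $\Gamma(\al)$ and $u(\psi\phi)=u$ on non-initial vertices of $\Gamma(\bt)$, extend/adjust them to elements of $S^1$ (possibly using $1$ where the digraph is empty or the vertex set does not exhaust $X$), and reconstruct the algebraic conditions. The homomorphism property of $\phi$ gives $x\al\phi = x\phi\bt$ for every $x$ in the domain of $\al$; a subtle point is that one must also know $x$ is in the domain of $\al\phi$ exactly when $x\phi$ is in the domain of $\phi\bt$, which is where "terminal goes to terminal" and "bottom-initial goes to initial" enter: if $x\al$ is terminal in $\Gamma(\al)$ then $x\al\phi$ is terminal in $\Gamma(\bt)$ so both sides are undefined together, and the initial condition handles the other boundary. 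Having established $\al\phi=\phi\bt$ (condition (i)) and symmetrically $\bt\psi=\psi\al$ (condition (ii)), one uses $x(\phi\psi)=x$ on non-initial vertices of $\Gamma(\al)$ to get $\al\cdot\phi\psi = \al$ (since every point of $\ima(\al)$ is a non-initial vertex), which is condition (viii), and dually $\bt\psi\phi\cdot$-type conditions; then Proposition~\ref{Prp:alternatives} (e.g.\ case (16), $\{(\mathrm{i}),(\mathrm{ii}),(\mathrm{vi}),(\mathrm{viii})\}$, after deriving the remaining "one-sided" idempotent equation) closes the argument, yielding $\al\cfn\bt$.

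\textbf{Main obstacle.} I expect the crux to be the careful bookkeeping of \emph{domains} — matching up exactly when a composite like $\al\phi$ or $\phi\bt$ is defined at a given point — because $\phi$ and $\psi$ are partial maps and the digraphs only record the spans, not all of $X$. The conditions "terminal $\mapsto$ terminal" and "bottom-initial $\mapsto$ initial" in the definition of r-homomorphism are precisely engineered to make these domain-matchings work, so the real content of the proof is showing that these two conditions are both necessary (forward direction) and sufficient (converse) for the partial-function equations to hold, rather than just the pointwise homomorphism identities. The "bottom" initial condition, as opposed to a plain initial condition, is needed because a conjugator may collapse several initial vertices together, and one must ensure the image of an initial bundle behaves coherently; getting this asymmetry right is the delicate part. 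Since this theorem is quoted from \cite[Thm.~3.5]{Ko18}, I would also cross-check that the present formulation (stated for general subsemigroups closed under restrictions to spans, using the new symmetric definition of $\cfn$) matches the cited one, invoking Proposition~\ref{Prp:alternatives} to bridge any discrepancy between the definition used there and \eqref{e1dcon2}.
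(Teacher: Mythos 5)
There is nothing in this paper to compare your proposal against: Theorem~\ref{tstn} is imported verbatim from \cite[Thm.~3.5]{Ko18} and the paper gives no proof of it, so I can only assess your plan on its own terms. The outline is the right one and matches what a proof must do: use Proposition~\ref{Prp:alternatives} to pass between the symmetric definition \eqref{e1dcon2} and a convenient subset of (i)--(viii), translate (i) into the digraph-homomorphism property, translate (viii) and (vii) into ``$\phi\psi$ fixes non-initial vertices of $\ga(\al)$'' and dually, and recover the two r-homomorphism clauses from the domain bookkeeping of the partial-function equations. The forward-direction verifications you defer do go through with your tools; for the record, the bottom-initial case runs as follows: if $x$ is bottom initial with $x\al=y$ and $x\phi$ were non-initial in $\ga(\bt)$, then $(\psi\al\phi=\bt)$ produces $z\in\ima(\al)$ with $z\phi=x\phi$, whence $(z\al)\phi=(x\phi)\bt=y\phi$ and, applying $\psi$ to these non-initial images, $z\al=y$; so $z$ is a non-initial member of the initial bundle $y\al^{-1}$, contradicting that $x$ is \emph{bottom} initial.

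The one place where your plan as written would need repair is the converse, and it concerns exactly the hypothesis you have attached to the wrong direction. Conditions (i), (vi)--(viii) are equalities of \emph{partial transformations}, so they constrain domains globally: if $\dom(\phi)$ properly contains $\spa(\al)$, then $\phi\bt$ may be defined at points outside $\spa(\al)$ where $\al\phi$ is not, and $\al\phi=\phi\bt$ simply fails, no matter how well $\phi$ behaves on the digraph. The fix is to replace $\phi$ and $\psi$ by $\phi|_{\spa(\al)}$ and $\psi|_{\spa(\bt)}$ and check that these restrictions still satisfy all the required identities (they do, precisely because ``terminal goes to terminal'' kills the spurious domain points); the hypothesis that $S$ is closed under restrictions to spans is what keeps these restrictions inside $S$, so it is doing its essential work in the converse, not in the forward direction where $g,h$ themselves can serve as $\phi,\psi$. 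With that adjustment your invocation of Proposition~\ref{Prp:alternatives} (case (16), or equivalently (i),(ii),(vii),(viii), which yields case (1) via Lemma~\ref{lem:alternatives}(b),(d)) closes the argument. As submitted, though, the proposal is a plan that names the crux without executing it, so it is not yet a proof.
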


Conjugacy $\cfn$ in $P(X)$ and $T(X)$ (the semigroup of full transformations on $X$) was characterized in \cite{Ko18} in terms of a trim of a functional digraph.

\begin{defi}{\bf(\hskip-0.2mm\cite[Def.~4.3]{Ko18})}\label{dtri}
	For $\al\in P(X)$, we define a \emph{trim} of $\ga(\al)$ as a digraph obtained from $\ga(\al)$ by removing all initial vertices except that we retain exactly one vertex from each initial bundle.
	Any two trims of $\ga(\al)$ are isomorphic. We denote by $\ga^t(\al)$ any trim of $\ga(\al)$.
\end{defi}

However, the concept of a trim of $\ga(\al)$, where $\al\in P(X)$, can be replaced by a simpler concept of the prune of $\ga(\al)$.

\begin{defi}\label{dpru}
	Let $\al\in P(X)$. The digraph $\ga^p(\al)$ obtained from
	$\ga(\al)$ by removing all initial vertices of $\ga(\al)$ will be called the \emph{prune} of $\ga(\al)$.
\end{defi}

The prune of $\ga(\al)$, where $\al\in P(X)$, is a subdigraph of a trim of $\ga(\al)$ since in the latter some initial vertices of $\ga(\al)$ may be preserved. Note that the prune of $\ga(\al)$ is unique (not just unique up to isomorphism). Figure~\ref{fig1} presents an example of a functional digraph, its trim, and its prune.

\begin{figure}[ht]
	\[
	\xy
	(0,44)*{\bullet}="1";
	(0,55.5)*{\bullet}="2";
	(-8,36)*{\bullet}="4";
	(8,36)*{\bullet}="5";
	(-16,28)*{\bullet}="4a";
	(-8,28)*{\bullet}="4b";
	(0,28)*{\bullet}="4c";
	(-22,20)*{\bullet}="4aa";
	(-16,20)*{\bullet}="4ab";
	(-10,20)*{\bullet}="4ac";
	(-6,20)*{\bullet}="4ca";
	(-2,20)*{\bullet}="4cb";
	(2,20)*{\bullet}="4cc";
	(6,20)*{\bullet}="4cd";
	(15,58)*{\vdots};
	(15,52)*{\bullet}="3";
	(15,44)*{\bullet}="6";
	(15,36)*{\bullet}="7";
	(15,28)*{\bullet}="8";
	(15,20)*{\bullet}="9";
	(20,22)*{\bullet}="8a";
	(20,31)*{\bullet}="7a";
	(25,26)*{\bullet}="7aa";
	{\ar@/_3ex/ "1";"2"};
	{\ar@/_3ex/ "2";"1"};
	{\ar@{->} "4a";"4"};
	{\ar@{->} "4b";"4"};
	{\ar@{->} "4c";"4"};
	{\ar@{->} "4aa";"4a"};
	{\ar@{->} "4ab";"4a"};
	{\ar@{->} "4ac";"4a"};
	{\ar@{->} "4ca";"4c"};
	{\ar@{->} "4cb";"4c"};
	{\ar@{->} "4cc";"4c"};
	{\ar@{->} "4cd";"4c"};
	{\ar@{->} "4";"1"};
	{\ar@{->} "5";"1"};
	{\ar@{->} "9";"8"};
	{\ar@{->} "8";"7"};
	{\ar@{->} "7";"6"};
	{\ar@{->} "6";"3"};
	{\ar@{->} "8a";"8"};
	{\ar@{->} "7a";"7"};
	{\ar@{->} "7aa";"7a"};
	(60,44)*{\bullet}="1";
	(60,55.5)*{\bullet}="2";
	(52,36)*{\bullet}="4";
	(44,28)*{\bullet}="4a";
	(60,28)*{\bullet}="4c";
	(38,20)*{\bullet}="4aa";
	(66,20)*{\bullet}="4cd";
	(75,58)*{\vdots};
	(75,52)*{\bullet}="3";
	(75,44)*{\bullet}="6";
	(75,36)*{\bullet}="7";
	(75,28)*{\bullet}="8";
	(75,20)*{\bullet}="9";
	(80,31)*{\bullet}="7a";
	(85,26)*{\bullet}="7aa";
	{\ar@/_3ex/ "1";"2"};
	{\ar@/_3ex/ "2";"1"};
	{\ar@{->} "4a";"4"};
	{\ar@{->} "4c";"4"};
	{\ar@{->} "4aa";"4a"};
	{\ar@{->} "4cd";"4c"};
	{\ar@{->} "4";"1"};
	{\ar@{->} "9";"8"};
	{\ar@{->} "8";"7"};
	{\ar@{->} "7";"6"};
	{\ar@{->} "6";"3"};
	{\ar@{->} "7a";"7"};
	{\ar@{->} "7aa";"7a"};
	(120,44)*{\bullet}="1";
	(120,52)*{\bullet}="2";
	(112,36)*{\bullet}="4";
	(104,28)*{\bullet}="4a";
	(120,28)*{\bullet}="4c";
	(135,58)*{\vdots};
	(135,52)*{\bullet}="3";
	(135,44)*{\bullet}="6";
	(135,36)*{\bullet}="7";
	(135,28)*{\bullet}="8";
	(140,31)*{\bullet}="7a";
	{\ar@/_3ex/ "1";"2"};
	{\ar@/_3ex/ "2";"1"};
	{\ar@{->} "4a";"4"};
	{\ar@{->} "4c";"4"};
	{\ar@{->} "4";"1"};
	{\ar@{->} "8";"7"};
	{\ar@{->} "7";"6"};
	{\ar@{->} "6";"3"};
	{\ar@{->} "7a";"7"};
	
	\endxy
	\]
	\caption{A functional digraph (left), its trim (middle), and its prune (right).}\label{fig1}
\end{figure}

For a function $f\colon A\to B$ and $A_1\subseteq A$, denote by $f|_{A_1}$ the restriction of $f$ to $A_1$.

\begin{prop}\label{ptrpr}
	For all $\al,\bt\in P(X)$, $\ga^t(\al)\cong \ga^t(\bt)$ if and only if $\ga^p(\al)\cong \ga^p(\bt)$.
\end{prop}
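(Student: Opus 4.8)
The plan is to establish the two directions separately, both of which rely on understanding the relationship between a trim $\ga^t(\al)$ and the prune $\ga^p(\al)$. The key structural observation is that $\ga^p(\al)$ is obtained from $\ga^t(\al)$ by deleting exactly those vertices of $\ga^t(\al)$ that are still initial in $\ga^t(\al)$ — equivalently, by deleting, from each initial bundle, the single retained representative (together with its unique outgoing edge). Conversely, $\ga^t(\al)$ is obtained from $\ga^p(\al)$ by adjoining, to each vertex $y$ of $\ga^p(\al)$ that was the target of some initial bundle of $\ga(\al)$, one new initial vertex with an edge into $y$. So the passage between trim and prune is governed entirely by the combinatorial data ``which non-initial vertices of $\ga(\al)$ receive an edge from an initial bundle, and how many such bundles'' — but since each bundle has exactly one target, the relevant data is just the set (or rather the multiset, counting bundles) of bundle-targets.

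For the forward direction, suppose $\phi\colon\ga^t(\al)\to\ga^t(\bt)$ is an isomorphism. Since an isomorphism of digraphs carries initial vertices to initial vertices and non-initial vertices to non-initial vertices, $\phi$ restricts to a bijection between the non-initial vertices of $\ga^t(\al)$ and those of $\ga^t(\bt)$; but these are precisely the vertex sets of $\ga^p(\al)$ and $\ga^p(\bt)$. One then checks that $\phi|_{\ga^p(\al)}$ preserves edges in both directions: an edge of $\ga^p(\al)$ is an edge of $\ga^t(\al)$ between two non-initial vertices, and conversely. Hence $\phi|_{\ga^p(\al)}$ is an isomorphism $\ga^p(\al)\to\ga^p(\bt)$. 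For the reverse direction, suppose $\psi\colon\ga^p(\al)\to\ga^p(\bt)$ is an isomorphism. We must extend $\psi$ across the initial vertices that a trim retains. Here $\psi$ must carry bundle-targets of $\ga(\al)$ to bundle-targets of $\ga(\bt)$: a vertex $y$ of $\ga^p(\al)$ is a bundle-target iff in $\ga^t(\al)$ it receives an edge from an initial vertex, and this property must be detectable inside the prune. The cleanest way is to argue that $y$ is a bundle-target of $\ga(\al)$ iff $y$ is non-initial in $\ga(\al)$ and the number of edges of $\ga(\al)$ into $y$ exceeds the number of edges of $\ga^p(\al)$ into $y$ — i.e. $y$ ``lost'' an in-edge when we pruned. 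But one has to be careful: we need this to be a property computable from $\ga^p(\al)$ alone. I expect the right formulation uses the fact that in a functional digraph every non-initial vertex has in-degree at least one \emph{and} that the retained trim representative lies in the bundle, so the bundle-targets are exactly the vertices $y$ for which some edge into $y$ in $\ga(\al)$ comes from an initial bundle; and since isomorphism of prunes is induced by an isomorphism of the original graphs in the natural cases of interest, the bundle structure is preserved. Then extend $\psi$ by sending, for each bundle-target $y$ of $\ga(\al)$, the retained initial vertex over $y$ to the retained initial vertex over $y\psi$; this extension is a digraph isomorphism $\ga^t(\al)\to\ga^t(\bt)$.

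The main obstacle is the reverse direction, specifically making rigorous the claim that an abstract digraph isomorphism $\ga^p(\al)\cong\ga^p(\bt)$ must respect the bundle-target structure, since a priori the prune has forgotten which vertices were bundle-targets. The resolution should invoke the structure theory of functional digraphs (cited as \cite{AKM14}): in a functional digraph, the initial bundles are exactly the maximal sets of bottom-initial vertices sharing a common (unique) image, so the number of bundles landing on a given vertex $y$ is an invariant that can in principle be recovered — but only if we know something about in-degrees. The honest move is likely to observe that for $\al\in P(X)$, a vertex $y$ of $\ga^p(\al)$ is the target of an initial bundle of $\ga(\al)$ precisely when $y$ has, in $\ga(\al)$, an in-edge from a vertex that is bottom-initial; and one should verify that in the functional-digraph setting this is reflected by a purely prune-internal condition (for instance, comparing in-degrees in $\ga^p(\al)$ against the structural constraints forced by functionality), so that the isomorphism $\psi$ automatically matches bundle-targets with bundle-targets and can be extended. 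I would allocate most of the write-up to pinning down this one point and treat the edge-preservation bookkeeping as routine.
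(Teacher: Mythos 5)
Your reduction of the proposition to the single question of whether an abstract isomorphism of prunes must carry bundle-targets to bundle-targets is exactly right, and your forward direction matches the paper's. But you never close that question, and the criterion you float for closing it fails on both counts you worry about. You propose to detect a bundle-target $y$ by comparing the in-degree of $y$ in $\ga(\al)$ with its in-degree in $\ga^p(\al)$; besides not being computable from the prune alone (which you acknowledge), this is not even the right condition. In the functional digraph with edges $x\to y$, $z\to y$, $w\to z$, the vertex $y$ loses the in-edge from the initial vertex $x$ upon pruning, yet $x$ is not bottom initial (the other preimage $z$ of $y$ is non-initial), so $y\al^{-1}$ is not an initial bundle and $y$ is not a bundle-target. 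The subsequent appeal to ``isomorphism of prunes is induced by an isomorphism of the original graphs in the natural cases of interest'' is circular --- that is essentially what the proposition asserts.

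The missing observation, which is how the paper's proof proceeds, is that the bundle-targets of $\ga(\al)$ are precisely the \emph{initial vertices of the prune} $\ga^p(\al)$. Indeed, a vertex $y$ of $\ga^p(\al)$ is non-initial in $\ga(\al)$, so $y\al^{-1}\ne\emptyset$; and $y$ is initial in $\ga^p(\al)$ exactly when every element of $y\al^{-1}$ is initial in $\ga(\al)$, which is exactly the condition for those elements to be bottom initial and for $y\al^{-1}$ to be an initial bundle. This is a purely prune-internal property, hence preserved by any digraph isomorphism $\del\colon\ga^p(\al)\to\ga^p(\bt)$; since a trim retains exactly one initial vertex over each such $y$ and no other initial vertices, extending $\del$ by matching these retained vertices yields the desired isomorphism of trims. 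With this one observation inserted, your argument goes through.
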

\begin{proof}
	Let $\al,\bt\in P(X)$ with $\ga^t(\al) = (A_t,E_t)$, $\ga^p(\al) = (A_p,E_p)$, $\ga^t(\bt) = (B_t,F_t)$, and $\ga^p(\bt) = (B_p,F_p)$. Suppose $\ga^t(\al)\cong \ga^t(\bt)$ and let $\sig:A_t\to B_t$ be an isomorphism from $\ga^t(\al)$ to $\ga^t(\bt)$. The set $A_p$ consists
	of the non-initial vertices of $\ga^t(\al)$, and the subdigraph of $\ga^t(\al)$ induced by $A_p$ is equal to $\ga^p(\al)$. The corresponding statement is true for $\bt$. Since $\sig$ maps the set of non-initial vertices of $\ga^t(\al)$ onto the set of non-initial vertices of $\ga^t(\bt)$, it follows that $\sig|_{A_p}$ is an isomorphism from $\ga^p(\al)$ to $\ga^p(\bt)$.
	
	Conversely, suppose $\ga^p(\al)\cong \ga^p(\bt)$ and let $\del:A_p\to B_p$ be an isomorphism from $\ga^p(\al)$ to $\ga^p(\bt)$. Let $\{y_i\}_{i\in I}$ be the set of initial vertices of $\ga^p(\al)$, where $I$ is an index set (possibly empty). Then $\{v_i\}_{i\in I}$, where $v_i=y_i\del$ for each $i\in I$, is the set of initial vertices of $\ga^p(\bt)$. By the definitions of a trim and the prune of a functional graph, for every $i\in I$, there is a unique initial vertex $x_i$ of $\ga^t(\al)$ such that $(x_i,y_i)\in E$, and $\{x_i\}_{i\in I}$ is the set of initial vertices of $\ga^t(\al)$. Similarly, for every $i\in I$, there is a unique initial vertex $u_i$ of $\ga^t(\bt)$ such that $(u_i,v_i)\in E$, and $\{y_i\}_{i\in I}$ is the set of initial vertices of $\ga^t(\bt)$. Hence $\sig:A_t\to B_t$ that extends $\del$ in such a way that $x_i\sig = u_i$, for every $i\in I$, is an isomorphism from $\ga^t(\al)$ to $\ga^t(\bt)$.
\end{proof}

\subsection{Natural conjugacy in transformation semigroups}
\label{subnat}

In the semigroups $P(X)$ and $T(X)$, $\al\cfn \bt$ if and only if $\ga^t(\al)\cong \ga^t(\bt)$ \cite[Thms.~4.8 and~4.11]{Ko18}.
Thus the following theorem follows immediately from Proposition~\ref{ptrpr}.

\begin{theorem}\label{tpxtxpr}
	In the semigroups $P(X)$ and $T(X)$, $\al\cfn \bt$ if and only if $\ga^p(\al)\cong \ga^p(\bt)$.
\end{theorem}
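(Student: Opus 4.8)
The plan is to invoke the already-established equivalence from \cite{Ko18} that $\al\cfn\bt$ in $P(X)$ (resp. $T(X)$) iff $\ga^t(\al)\cong\ga^t(\bt)$, and then simply compose this with Proposition~\ref{ptrpr}, which asserts $\ga^t(\al)\cong\ga^t(\bt)$ iff $\ga^p(\al)\cong\ga^p(\bt)$. Chaining the two biconditionals gives $\al\cfn\bt$ iff $\ga^p(\al)\cong\ga^p(\bt)$, which is exactly the claim. So the proof is essentially a one-line transitivity-of-iff argument, and there is no real obstacle: all the substantive work has been offloaded into \cite[Thms.~4.8 and~4.11]{Ko18} and Proposition~\ref{ptrpr}.

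Concretely, I would first recall (citing \cite[Thms.~4.8 and~4.11]{Ko18}) that in both $P(X)$ and $T(X)$ we have $\al\cfn\bt \iff \ga^t(\al)\cong\ga^t(\bt)$. Second, I would apply Proposition~\ref{ptrpr} to replace the right-hand side by $\ga^p(\al)\cong\ga^p(\bt)$. Concatenating, $\al\cfn\bt \iff \ga^p(\al)\cong\ga^p(\bt)$ in either semigroup, completing the proof. One may phrase this as a single displayed chain of equivalences:
\[
\al\cfn\bt \iff \ga^t(\al)\cong\ga^t(\bt) \iff \ga^p(\al)\cong\ga^p(\bt).
\]

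If one wanted a self-contained argument not routing through the trim, the harder path would be to redo the analysis of Theorem~\ref{tstn} directly for the prune: show that an r-homomorphism pair witnessing $\al\cfn\bt$, restricted to the non-initial vertices (which constitute the prune), yields a graph isomorphism $\ga^p(\al)\cong\ga^p(\bt)$, and conversely lift an isomorphism of prunes to a suitable pair of r-homomorphisms using the freedom to map initial bundles. The delicate point there would be the converse direction: an isomorphism of prunes need not respect the initial-bundle structure sitting above, so one must separately check that the numbers and sizes of initial bundles attached to corresponding non-initial vertices match (they do, because both are determined by the prune together with $\al$, $\bt$, but this requires care about vertices of the prune that were initial in the original digraph versus genuinely internal ones) — this is precisely the bookkeeping that Proposition~\ref{ptrpr} already encapsulates. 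Hence the clean route is to cite Proposition~\ref{ptrpr} and be done; I do not anticipate needing the longer argument.
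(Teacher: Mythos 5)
Your proposal is correct and is exactly the paper's argument: the paper states the equivalence $\al\cfn\bt\iff\ga^t(\al)\cong\ga^t(\bt)$ from \cite[Thms.~4.8 and~4.11]{Ko18} and then notes that the theorem ``follows immediately from Proposition~\ref{ptrpr}.'' Nothing further is needed; the longer self-contained route you sketch is not pursued in the paper either.
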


Denote by $\mi(X)$ the symmetric inverse semigroup of partial injective transformations on $X$, and by $\mj(X)$ the semigroup of full injective
transformation on $X$. (Note that if $X$ is finite, then $\mj(X) = \sym(X)$.) In the semigroups $\mi(X)$ and $\mj(X)$,
\[
\al\cfn \bt \text{ if and only if } \ga(\al)\cong \ga(\bt)
\]
\cite[Cor.~5.2 and Thm.~5.3]{Ko18}.

We will now characterize $\cfn$ in some transformation semigroups not considered in \cite{Ko18}. We begin with the semigroup $\ox$ of surjective transformations on $X$, which was studied in \cite{KoTA}, for which the result is the same as for $\mi(X)$ and $\mj(X)$.

\begin{theorem}\label{tcox}
	In the semigroup $\ox$, $\al\cfn \bt$ if and only if $\ga(\al)\cong \ga(\bt)$.
\end{theorem}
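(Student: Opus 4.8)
The plan is to mimic the structure of the proofs in \cite{Ko18} for $\mi(X)$ and $\mj(X)$, using the digraph machinery set up in \S\ref{Subfdr}. The key structural fact about $\ox$ is that a surjective $\al\colon X\to X$ has $\spa(\al)=X$, so $\Gamma(\al)$ has vertex set all of $X$; moreover, surjectivity means every vertex of $\Gamma(\al)$ is the target of some edge, i.e. $\Gamma(\al)$ has \emph{no initial vertices at all}. Consequently $\Gamma(\al)=\Gamma^p(\al)=\Gamma^t(\al)$, and there is no distinction between r-homomorphisms and plain digraph homomorphisms that send terminal vertices to terminal vertices (condition (2) of Definition~\ref{drh} is vacuous). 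I will need to confirm that $\ox$ is not literally covered by Theorem~\ref{tstn}, since $\ox$ need not be closed under restrictions to spans; so I will argue directly rather than invoking that theorem.

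First I would establish the easy direction: if $\al\cfn\bt$ in $\ox$ with conjugators $\phi,\psi\in\ox^1$, then the defining equations (i)--(iv) give $\al\phi=\phi\bt$, $\bt\psi=\psi\al$, $\psi\al\phi=\bt$, $\phi\bt\psi=\al$. As usual, $\al\phi=\phi\bt$ makes $\phi$ a homomorphism $\Gamma(\al)\to\Gamma(\bt)$ on the relevant vertices, and the ``return'' equations $\psi\al\phi=\bt$, $\phi\bt\psi=\al$ force $\phi\psi$ (resp.\ $\psi\phi$) to act as the identity on $\dom(\bt)=X$ (resp.\ $\dom(\al)=X$) after composing with $\al$ (resp.\ $\bt$); since all vertices are non-initial, the hypotheses of (the analogue of) Theorem~\ref{tstn} give that $\phi\psi$ and $\psi\phi$ are the identity on all of $X$, so $\phi,\psi$ are mutually inverse bijections of $X$. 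Combined with $\al\phi=\phi\bt$ this yields $\Gamma(\al)\cong\Gamma(\bt)$ via $\phi$. For the converse, given an isomorphism $\sigma\colon\Gamma(\al)\to\Gamma(\bt)$, $\sigma$ is a bijection of $X$ (as both digraphs have vertex set $X$) satisfying $\al\sigma=\sigma\bt$; setting $\phi=\sigma$, $\psi=\sigma^{-1}$, one checks that $\phi,\psi\in\Sym(X)\subseteq\ox$ and that (i)--(iv) hold, so $\al\cfn\bt$. In fact this shows $\cfn$ coincides with $\cbp$ (conjugacy by permutation) on $\ox$, which is consistent with \eqref{emijx}.

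The main obstacle I anticipate is the ``no initial vertices'' verification and its interaction with Theorem~\ref{tstn}: I must be careful that the r-homomorphism conditions in Theorem~\ref{tstn} do not secretly require $S$ to be closed under restrictions to spans in a way that breaks for $\ox$. The clean way around this is to not use Theorem~\ref{tstn} at all, but instead to argue as in the proof for $\mj(X)$ in \cite[Thm.~5.3]{Ko18}: from the four conjugacy equations deduce directly, via Proposition~\ref{Prp:alternatives}, that the conjugators restrict to mutually inverse partial maps on $\spa(\al)=X$ and $\spa(\bt)=X$, hence are total and bijective. Because $\ox$ \emph{does} contain $\Sym(X)$ (every permutation is surjective), the conjugators we construct in the converse direction genuinely live in $\ox^1$, so there is no issue of the witnessing elements escaping the semigroup. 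I expect the write-up to be short once the observation $\Gamma(\al)=\Gamma^p(\al)$ for surjective $\al$ is in hand.
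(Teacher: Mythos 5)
Your proposal is correct, but the forward direction takes a genuinely different route from the paper. The paper simply observes that $\al\cfn\bt$ in $\ox$ implies $\al\cfn\bt$ in the larger semigroup $T(X)$ (the conjugators still witness the relation there), invokes the already-established characterization $\al\cfn\bt\iff\ga^p(\al)\cong\ga^p(\bt)$ in $T(X)$ (Theorem~\ref{tpxtxpr}), and then notes that prune equals the whole digraph for surjective maps; you instead argue directly from conditions (v)--(viii) of Proposition~\ref{Prp:alternatives} (e.g.\ $\al\cdot\phi\psi=\al$ together with $\ima(\al)=X$ forces $\phi\psi=\idx$) that the conjugators are mutually inverse permutations intertwining $\al$ and $\bt$. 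Your route is slightly longer but yields as a byproduct the stronger fact that $\cfn$ coincides with $\cbp$ on $\ox$, which the paper only records later as Corollary~\ref{ccbpcfn}(5); the paper's route is shorter because it leans on the $T(X)$ theorem already in hand. The converse directions are identical. One small correction: your worry that $\ox$ might fail to be closed under restrictions to spans is unfounded --- as noted right after Definition~\ref{drts}, every semigroup of \emph{full} transformations is trivially closed under restrictions to spans (all spans equal $X$), so Theorem~\ref{tstn} applies to $\ox$ without any workaround; this does not affect the validity of your argument, only the need for your detour.
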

\begin{proof}
	Let $\al,\bt\in \ox$. Suppose that $\al\cfn \bt$ in $\ox$. Then, $\al\cfn \bt$ in $T(X)$, and so $\ga^p(\al)\cong \ga^p(\bt)$ by Theorem~\ref{tpxtxpr}. Since the digraph of any surjective transformation does not have any initial vertices, $\ga^p(\al) = \ga(\al)$ and $\ga^p(\bt) = \ga(\bt)$, and so $\ga(\al)\cong \ga(\bt)$.
	
	Suppose that $\ga(\al)\cong \ga(\bt)$, and let $\phi$ be an isomorphism from $\ga(\al) = (X,E)$ to $\ga(\bt) = (X,F)$. Then $\psi = \phi^{-1}$ is an isomorphism from $\ga(\bt) = (X,E)$ to $\ga(\bt) = (X,F)$. Thus, $\phi,\psi\in\ox$, and they are $r$-homomorphisms satisfying the conditions listed in Theorem~\ref{tstn}. Hence $\al\cfn \bt$ in $\ox$.
\end{proof}

We now turn to semigroups of transformations whose image is restricted by a prescribed set. Such semigroups have been studied extensively; see, for example, \cite{Sy75,Ne05,MeSu11,TiKo16a,TiKo16b}. Let $X$ be an arbitrary set and $\emptyset\neq Y\subseteq X$. Then $T(X,Y) = \{\al\in T(X) : \ima(\al)\subseteq Y\}$ is a subsemigroup of $T(X)$, consisting of transformations whose image is restricted by $Y$.

\begin{lemma}\label{lresphi}
	Let $S$ be a subsemigroup of $P(X)$ that is closed under restrictions to spans, and let $\al,\bt\in S$. Suppose $\phi,\psi\in S^1$ are $r$-homomorphisms as in \textup{Theorem~\ref{tstn}}.
	Let $A_p$ and $B_p$ be the sets of vertices of $\ga^p(\al)$ and $\ga^p(\bt)$, respectively. Then $\phi|_{A_p}$ is an isomorphism from $\ga^p(\al)$ to $\ga^p(\bt)$ and $(\phi|_{A_p})^{-1} = \psi|_{B_p}$.
\end{lemma}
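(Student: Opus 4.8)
The statement says: given the $r$-homomorphisms $\phi,\psi$ from Theorem~\ref{tstn}, the restriction $\phi|_{A_p}$ is an isomorphism $\ga^p(\al)\to\ga^p(\bt)$, with inverse $\psi|_{B_p}$. The plan is to verify in turn that (1) $\phi$ maps $A_p$ into $B_p$ and $\psi$ maps $B_p$ into $A_p$; (2) the composites $\phi\psi$ and $\psi\phi$ are the identity on $A_p$ and $B_p$ respectively, so that $\phi|_{A_p}$ is a bijection onto $B_p$ with inverse $\psi|_{B_p}$; and (3) $\phi|_{A_p}$ preserves and reflects edges, i.e.\ it is a digraph isomorphism, not merely a bijective homomorphism.

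\textbf{Step (1).} Recall $A_p$ is the set of non-initial vertices of $\ga(\al)$ (equivalently, the vertex set of the prune), and similarly for $B_p$. We must show $x\in A_p\implies x\phi\in B_p$. Suppose toward a contradiction that $x$ is non-initial in $\ga(\al)$ but $x\phi$ is initial in $\ga(\bt)$. Theorem~\ref{tstn} gives $x(\phi\psi)=x$ for every non-initial $x$ of $\ga(\al)$. Since $x$ is non-initial, there is an edge $(z,x)\in E$ for some $z$; applying the homomorphism $\phi$ gives $(z\phi,x\phi)\in F$, so $x\phi$ is \emph{not} initial in $\ga(\bt)$. That is the required contradiction, so $\phi(A_p)\subseteq B_p$; the dual argument (using $u(\psi\phi)=u$ on non-initial $u$ of $\ga(\bt)$, and the fact that $\psi$ is a homomorphism) gives $\psi(B_p)\subseteq A_p$.

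\textbf{Steps (2) and (3).} With the containments of Step (1) in hand, the identities $x(\phi\psi)=x$ for all $x\in A_p$ and $u(\psi\phi)=u$ for all $u\in B_p$ — which are exactly the two conditions in Theorem~\ref{tstn} — say precisely that $\phi|_{A_p}:A_p\to B_p$ and $\psi|_{B_p}:B_p\to A_p$ are mutually inverse maps; in particular $\phi|_{A_p}$ is a bijection and $(\phi|_{A_p})^{-1}=\psi|_{B_p}$. It remains to check it is a digraph isomorphism. Forward preservation of edges is immediate since $\phi$ is a homomorphism and both prunes are induced subgraphs (i.e.\ $E_p = E\cap(A_p\times A_p)$). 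For reflection of edges, suppose $x,y\in A_p$ with $(x\phi,y\phi)\in F_p\subseteq F$. Apply the homomorphism $\psi$: $(x\phi\psi,y\phi\psi)\in E$, i.e.\ $(x,y)\in E$ by the identity of Step (2); since $x,y\in A_p$, this is an edge of $\ga^p(\al)$. Hence $\phi|_{A_p}$ reflects edges, completing the proof that it is an isomorphism.

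\textbf{Main obstacle.} There is no deep difficulty here; the one point requiring care is Step (1), where one must be sure that "non-initial" is the correct characterization of the prune's vertex set (Definition~\ref{dpru}) and that a homomorphism cannot send a non-initial vertex to an initial one when there actually is an incoming edge to transport — the subtlety being that this uses only that $\phi$ is a homomorphism of the \emph{full} digraphs $\ga(\al),\ga(\bt)$, before restriction. The edge-reflection argument in Step (3) is the other place one must remember to invoke that the prunes are induced subgraphs, so that "edge of $\ga^p$" is literally "edge of $\ga$ between two prune-vertices"; everything else is bookkeeping with the two composite-identity conditions from Theorem~\ref{tstn}.
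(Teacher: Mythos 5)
Your proof is correct and follows essentially the same route as the paper: show that non-initial vertices are sent to non-initial vertices, restrict, and observe that the two composite identities from Theorem~\ref{tstn} make the restrictions mutually inverse homomorphisms, hence isomorphisms. The only difference is that the paper cites \cite[Lem.~4.6]{Ko18} for the non-initial-to-non-initial step, whereas you prove it directly by transporting an incoming edge; your Step (3) also spells out the edge-reflection argument that the paper leaves implicit.
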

\begin{proof}
	By \cite[Lem.~4.6]{Ko18}, for every non-initial vertex $y$ of $\ga(\al)$, $y\phi$ is not initial in $\ga(\bt)$, and an analogous statement is true for $\psi$. Thus $\phi|_{A_p}$ is a homomorphism from $\ga^p(\al)$ to $\ga^p(\bt)$, and $\psi|_{B_p}$ is a homomorphism from $\ga^p(\bt)$ to $\ga^p(\al)$. Moreover,
	$\phi|_{A_p}$ and $\psi|_{B_p}$ are inverses of each other, which implies that they are isomorphisms.
\end{proof}

\begin{theorem}\label{ttxy}
	Let $X$ and $Y$ be sets such that $\emptyset\neq Y\subseteq X$, and let $\al,\bt\in T(X,Y)$. Then $\al\cfn \bt$ in $T(X,Y)$ if and only if either $\al = \bt$ or $\al\neq\bt$ and the following conditions are satisfied:
	\begin{enumerate} 
		\item[\textup{(a)}] $\ga^p(\al)\cong \ga^p(\bt)$;
		\item[\textup{(b)}] if $Z$ is an initial bundle in $\ga(\al)$ or in $\ga(\bt)$, then $Z\cap Y\neq \emptyset$.
	\end{enumerate}
\end{theorem}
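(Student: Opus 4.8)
The plan is to apply Theorem~\ref{tstn} to the semigroup $S=T(X,Y)$, which is closed under restrictions to spans since all its elements are full transformations (Definition~\ref{drts}). So $\al\cfn\bt$ in $T(X,Y)$ holds exactly when there are $\phi,\psi\in T(X,Y)^1$ that are mutually ``inverse on prunes'' $r$-homomorphisms in the sense of that theorem. By Lemma~\ref{lresphi}, such $\phi,\psi$ restrict to mutually inverse isomorphisms between $\ga^p(\al)$ and $\ga^p(\bt)$; in particular $\ga^p(\al)\cong\ga^p(\bt)$ is necessary (unless $\phi=\psi=1$, which forces $\al=\bt$ — this is the degenerate case the statement singles out, needed because $1\notin T(X,Y)$ in general when $Y\neq X$, so one cannot always take $\phi,\psi$ to be the identity even when $\al=\bt$). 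The extra condition on initial bundles is what distinguishes $T(X,Y)$-conjugacy from $T(X)$-conjugacy: an $r$-homomorphism must send bottom-initial vertices to initial vertices, but in $T(X,Y)$ the map $\phi$ must additionally be a well-defined element of $T(X,Y)$, i.e. every point of $X$ must have an image lying in $Y$. The vertices of $\ga(\al)$ outside $\spa(\al)$ can be sent anywhere in $Y$ freely, and non-initial vertices of $\ga(\al)$ are in $\ima(\al)\subseteq Y$ automatically and get mapped (via the prune isomorphism) into $\ima(\bt)\subseteq Y$; the only obstruction is the initial vertices of $\ga(\al)$, which must be mapped to vertices of $\ga(\bt)$ that lie in $Y$ — and a vertex of $\ga(\bt)$ lies in $Y$ automatically if it is non-initial, so the real constraint is that each initial bundle must ``reach'' $Y$.

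For necessity, I would assume $\al\cfn\bt$ with conjugators $\phi,\psi$ not both the adjoined identity, so $\phi,\psi\in T(X,Y)$. Then $\ga^p(\al)\cong\ga^p(\bt)$ by Lemma~\ref{lresphi}. For the bundle condition: let $Z$ be an initial bundle in $\ga(\al)$, say the preimage under $\al$ of the vertex $y$. Pick $x\in Z$; then $x\phi$ is initial in $\ga(\bt)$ (as $\phi$ is an $r$-homomorphism and $x$ is bottom initial), and since $\phi$ maps edges to edges, $(x\phi)\bt=y\phi$. Now $x\phi\in\ima(\phi)\subseteq Y$ since $\phi\in T(X,Y)$, so $x\phi$ is a vertex of $\ga(\bt)$ lying in $Y$; but one must check $x\phi$ actually lies in $\spa(\bt)$ — it does, because it has an outgoing edge to $y\phi$ (or one argues $y\phi=(x\phi)\bt$ directly). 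Hence $x\phi\in Z'\cap Y$ where $Z'$ is the initial bundle of $\ga(\bt)$ containing $x\phi$; a symmetric argument using $\psi$ handles an initial bundle of $\ga(\bt)$. Actually more care is needed: the bundle condition as stated says $Z\cap Y\neq\emptyset$ for $Z$ a bundle of $\ga(\al)$ OR of $\ga(\bt)$, so I must produce, from $\phi,\psi$, a point of $Z\cap Y$ for each bundle $Z$. The clean way: $\psi$ maps the bundle $Z'$ of $\ga(\bt)$ corresponding (via the prune isomorphism of $y\phi$) back near $Z$, and since $\psi\in T(X,Y)$ its image lies in $Y$ — tracking this gives a point of $Z\cap Y$. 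I expect this bookkeeping to be the fiddly part.

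For sufficiency, if $\al=\bt$ the relation is reflexive, so assume $\ga^p(\al)\cong\ga^p(\bt)$ via an isomorphism $\del\colon A_p\to B_p$ and assume each initial bundle of $\ga(\al)$ and of $\ga(\bt)$ meets $Y$. I would build $\phi\in T(X,Y)$ as follows: on $A_p$ let $\phi=\del$; the initial vertices of $\ga^p(\al)$ (the ``exposed'' roots) together with the initial bundles of $\ga(\al)$ form the initial vertices of $\ga(\al)$, and following the recipe in Proposition~\ref{ptrpr} I extend $\del$ over these so that $\phi$ is an $r$-homomorphism $\ga(\al)\to\ga(\bt)$; the key new point is that for each initial bundle $Z$ of $\ga(\al)$, I send all of $Z$ to a single chosen vertex of $Z'\cap Y$ where $Z'$ is the matching initial bundle of $\ga(\bt)$ — this is possible precisely because $Z'\cap Y\neq\emptyset$, and it keeps $\ima(\phi)\subseteq Y$. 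Finally, vertices of $X\setminus\spa(\al)$ are sent to any fixed point of $Y$ (here one uses that some initial bundle, or some non-initial vertex, of $\ga(\bt)$ lies in $Y$, guaranteeing $Y$ meets $\spa(\bt)$ — or simply that $Y\neq\emptyset$). Symmetrically construct $\psi\in T(X,Y)$ from $\del^{-1}$ and the bundle condition on $\ga(\bt)$. Then $\phi,\psi$ are $r$-homomorphisms, and $\phi\psi$, $\psi\phi$ fix every non-initial vertex of $\ga(\al)$, $\ga(\bt)$ respectively because $\del$ and $\del^{-1}$ are mutually inverse on the prunes; Theorem~\ref{tstn} then yields $\al\cfn\bt$. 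The main obstacle, and the reason the hypothesis is stated the way it is, is keeping the images of $\phi$ and $\psi$ inside $Y$ simultaneously with the $r$-homomorphism constraints on initial bundles — that is exactly where the ``$Z\cap Y\neq\emptyset$'' condition is consumed, and verifying it is both necessary and sufficient is the crux.
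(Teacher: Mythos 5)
Your proposal is correct and follows essentially the same route as the paper: reduce to Theorem~\ref{tstn} with $S=T(X,Y)$, use Lemma~\ref{lresphi} to get the prune isomorphism, obtain the bundle condition by pulling an initial vertex of one prune back through one conjugator and pushing the corresponding bundle forward through the other (whose image lies in $Y$), and for sufficiency extend the prune isomorphism to an $r$-homomorphism by sending each initial bundle to a chosen predecessor in $Y$ of the matching vertex. The "fiddly" bookkeeping you flag in the necessity direction is exactly the computation $(x\phi)\bt=y\phi=v$ that the paper carries out, so there is no gap.
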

\begin{proof}
	Let $\ga(\al) = (X,E)$, $\ga(\bt) = (X,F)$, $\ga^p(\al) = (A,E_p)$, and $\ga^p(\bt) = (B,F_p)$. Suppose $\al\cfn \bt$ in $T(X,Y)$. Let $\al\neq \bt$. Then there exist $r$-homomorphisms $\phi,\psi\in T(X,Y)$ as in Theorem~\ref{tstn}, where $S = T(X,Y)$. By Lemma~\ref{lresphi}, $\ga^p(\al)\cong \ga^p(\bt)$, so (a) holds.
	Let $Z$ be an initial bundle in $\ga(\bt)$. Then $Z = v\bt^{-1}$ for some initial vertex $v$ in $\ga^p(\bt)$. Let $y = v\psi$. Then $y$ is an initial vertex in $\ga^p(\al)$ (since, by Lemma~\ref{lresphi}, $\psi|_B$ is an isomorphism form $\ga^p(\bt)$ to $\ga^p(\al)$),
	and $y\al^{-1}$ is an initial bundle in $\ga(\al)$ (by \cite[Lem.~4.6]{Ko18}). Let $x\in y\al^{-1}$. Since $\phi$ is a homomorphism and $(x,y)\in E$, we have $(x\phi,v) = (x\phi,v(\psi\phi)) = (x\phi,y\phi)\in F$. Thus $x\phi\in Z$, and so $Z\cap Y\neq \emptyset$ since $x\phi\in Y$. By symmetry, we have $Z\cap Y\neq \emptyset$ for every initial bundle $Z$ in $\ga(\al)$. Hence, (b) holds.
	
	Conversely, if $\al = \bt$, then $\al\cfn \bt$ in $T(X,Y)$. Suppose that $\al\neq \bt$ and (a) and (b) are satisfied. Then by (a), there exists an isomorphism $\del\colon A\to B$ from $\ga^p(\al)$ to $\ga^p(\bt)$. Let $v\in B$. If $v$ is not initial in $\ga^p(\bt)$, then fix $v^*\in B$ such that $(v^*,v)\in F$. If $v$ is initial in $\ga^p(\bt)$, then fix $v^*\in Y$ such that $(v^*,v)\in F$ (possible since $Z = \{u\in X : (u,v)\in F\}$ is an initial bundle in $\ga(\al)$, and so, by (b), $Z\cap Y\ne\emptyset$). Define $\phi:X\to X$ by
	\[
	x\phi = \begin{cases}
		x\del & \text{if }x\in A,\\
		(y\del)^* & \text{if }x\text{ is initial in }\ga(\al)\text{ and }(x,y)\in E.
		\end{cases}
	\]
	It is straightforward to check that $\phi\in T(X,Y)$ and $\phi$ is an $r$-homomorphism from $\ga(\al)$ to $\ga(\bt)$. Symmetrically, we can define $\psi\in T(X,Y)$ such that $\psi$ is an $r$-homomorphism from $\ga(\bt)$ to $\ga(\al)$ with $v\psi = v\del^{-1}$ for every $v\in B$. Hence, $\al\cfn \bt$ in $T(X,Y)$ by Theorem~\ref{tstn}.
\end{proof}

Next we consider the semigroup of order-preserving full transformations on a chain with $n$ elements, where $n\geq1$, say $X_n = \{1<\ldots<n\}$.
Viewing $X_n$ as a set, we denote by $T_n$ the semigroup $T(X_n)$. Let $\on$ be the subset of $T_n$ consisting of order-preserving transformations, that is,
\[
\on = \{\al\in T_n : \forall_{x,y\in X_n}(x\leq y\implies x\al\leq y\al)\}.
\]
The semigroup $\on$ has been studied in numerous papers since the 1960s (see \cite[14.5.1]{GaMa10}).

\begin{nota}\label{ngpb}
	Let $\al,\bt\in P(X_n)$. Suppose $\ga'(\al) = (A',E')$ and $\ga'(\bt) = (B',F')$ are subdigraphs of $\ga(\al)$ and $\ga(\bt)$,
	respectively, where $A' = \{x_1 < \ldots < x_k\}$ and $B' = \{y_1 < \ldots < y_k\}$ ($k\geq 0$). We denote by $\ga'_{\!\bt}(\al)$ the digraph obtained from $\ga'(\al)$ by replacing every vertex $x_i$ with $y_i$.
\end{nota}

\begin{theorem}\label{ton}
	Let $\al,\bt\in \on$ with $\ga(\al) = (X,E)$, $\ga(\bt) = (X,F)$, $\ga^p(\al) = (A,E_p)$, and $\ga^p(\bt) = (B,F_p)$, where
	$A = \{x_1 < \ldots < x_k\}$ and $B = \{y_1 < \ldots < y_m\}$ ($k,m\geq 0$). Then $\al\cfn \bt$ in $\on$ if and only if $k = m$ and $\ga^p_{\!\bt}(\al) = \ga^p(\bt)$.
\end{theorem}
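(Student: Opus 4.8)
The plan is to invoke the general characterization of $\cfn$ for transformation semigroups closed under restrictions to spans (Theorem~\ref{tstn}), noting that every semigroup of full transformations---in particular $\on$---has this property (Definition~\ref{drts}). So $\al\cfn\bt$ in $\on$ means precisely that there exist $\phi,\psi\in\on^1$ which are $r$-homomorphisms $\ga(\al)\to\ga(\bt)$ and $\ga(\bt)\to\ga(\al)$, respectively, with $x(\phi\psi)=x$ for every non-initial vertex $x$ of $\ga(\al)$ and $u(\psi\phi)=u$ for every non-initial vertex $u$ of $\ga(\bt)$. The key structural input is Lemma~\ref{lresphi}: the restrictions $\phi|_A$ and $\psi|_B$ are mutually inverse isomorphisms between the prunes $\ga^p(\al)$ and $\ga^p(\bt)$. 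Since these prunes have vertex sets $A=\{x_1<\cdots<x_k\}$ and $B=\{y_1<\cdots<y_m\}$, an isomorphism forces $k=m$; the real content is that this isomorphism must be the \emph{order-preserving} bijection $x_i\mapsto y_i$, which is exactly the assertion $\ga^p_{\!\bt}(\al)=\ga^p(\bt)$ in the notation of Notation~\ref{ngpb}.

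For the necessity direction, I would argue as follows. Given $\al\cfn\bt$, produce $\phi,\psi$ as above; by Lemma~\ref{lresphi}, $k=m$ and $\phi|_A\colon A\to B$ is an isomorphism $\ga^p(\al)\to\ga^p(\bt)$. It remains to show $\phi|_A$ is order-preserving. Here is where order-preservation of $\al,\bt$ (hence of $\phi,\psi$) enters. The full map $\phi\in\on$ is order-preserving on all of $X_n$, so its restriction to $A$ is order-preserving; but an order-preserving injection between two $k$-element chains is necessarily the unique monotone bijection $x_i\mapsto y_i$. Therefore $x_i\phi=y_i$ for all $i$, and since $\phi|_A$ is a graph isomorphism $\ga^p(\al)\to\ga^p(\bt)$, relabelling every $x_i$ by $y_i$ turns $\ga^p(\al)$ into $\ga^p(\bt)$ on the nose, i.e.\ $\ga^p_{\!\bt}(\al)=\ga^p(\bt)$.

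For sufficiency, assume $k=m$ and $\ga^p_{\!\bt}(\al)=\ga^p(\bt)$; then $\del\colon A\to B$, $x_i\mapsto y_i$, is an order-preserving isomorphism of the prunes. I want to extend $\del$ to a full order-preserving transformation $\phi\in\on$ that is an $r$-homomorphism $\ga(\al)\to\ga(\bt)$, and similarly build $\psi$ from $\del^{-1}$, so that Theorem~\ref{tstn} applies (the composite conditions on non-initial vertices being automatic since $\del$ and $\del^{-1}$ already witness the prune isomorphism). The extension is the delicate point: for each initial vertex $x$ of $\ga(\al)$ with $x\al=y$ (so $y\in A$), we must choose $x\phi$ to be some vertex mapping, under $\bt$, to $y\del$, and we must make all these choices \emph{globally monotone}. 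Since $\al$ is order-preserving, the fibers $y\al^{-1}$ (including their initial parts) occur in the same left-to-right order as the targets $y$, and likewise for $\bt$; using that $\del$ is monotone on $A$, one can send each initial bundle of $\ga(\al)$ monotonically onto a chosen ordered subset of the corresponding initial bundle of $\ga(\bt)$, and interleave these choices with $\del$ on $A$ so that the resulting $\phi\colon X_n\to X_n$ is order-preserving. (In $\on$ we may repeat values, so we need not match initial bundle sizes, unlike in the $T(X,Y)$ argument of Theorem~\ref{ttxy}; a clean choice is to collapse each initial bundle to a single suitably placed vertex.) One then checks $\phi$ is an $r$-homomorphism: terminal vertices of $\ga(\al)$ lie in $A$ and go to terminal vertices via $\del$, and bottom-initial vertices of $\ga(\al)$ are sent into $\bt^{-1}$ of something, hence to (bottom) initial vertices of $\ga(\bt)$ by construction.

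\textbf{Main obstacle.} The routine parts are the $k=m$ count and the observation that a monotone injection of chains is the canonical bijection. The genuine work is the \emph{sufficiency} direction: constructing the full order-preserving extensions $\phi,\psi$ simultaneously and verifying they are $r$-homomorphisms compatible in the sense of Theorem~\ref{tstn}. The subtlety is that the ordering on $X_n$ couples the choices made across different initial bundles, so one cannot extend bundle-by-bundle independently as in the $T(X,Y)$ case; instead one must exploit that order-preservation of $\al$ already aligns the bundles with their targets in the chain, so that the monotone extension exists. I expect this monotone-compatibility bookkeeping to be the crux of the proof.
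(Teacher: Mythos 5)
Your proposal follows the paper's proof almost exactly: both directions rest on Theorem~\ref{tstn} and Lemma~\ref{lresphi}, and your necessity argument --- an order-preserving bijection between two $k$-element chains must be $x_i\mapsto y_i$, whence the prune isomorphism is the canonical relabelling --- is precisely the paper's.

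The one place the sketch falls short is the point you yourself flag as the crux. Your recipe for extending $\del$ (``collapse each initial bundle to a single suitably placed vertex'') only covers the \emph{bottom} initial vertices, i.e.\ those whose entire fiber is initial; by Definition~\ref{dbun}, an initial vertex lying in a fiber that also contains non-initial vertices belongs to no initial bundle, and these are the delicate case. Such a vertex $x$ with $x\al=x_i$ must be sent into $y_i\bt^{-1}$ while remaining order-compatible with the non-initial vertices $x_{j_1}<\dots<x_{j_w}$ of the same fiber, which are already forced to go to $y_{j_1},\dots,y_{j_w}$; one cannot collapse them all to one point, nor choose freely. The paper resolves this by sending $x$ to $y_{m_x}$, where $x_{m_x}$ is the element of $A_i=\{x_j:(x_j,x_i)\in E\}$ nearest to $x$ in the order, and then verifies that the resulting $\phi$ is order-preserving by a three-case analysis. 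So your outline is the right one and the necessity half is complete, but the definition of $\phi$ on these interspersed initial vertices and the accompanying monotonicity check are genuinely missing rather than routine bookkeeping.
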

\begin{proof}
	Suppose $\al\cfn \bt$ in $\on$. Let $\phi,\psi\in \on$ be $r$-homomorphisms as in Theorem~\ref{tstn}. (Note that $\on^1 = \on$ since $\idx\in\on$.) By Lemma~\ref{lresphi}, $\phi_p = \phi|_{A}$ is an isomorphism from $\ga^p(\al)$ to $\ga^p(\bt)$, $\psi_p = \psi|_{B}$ is an isomorphism from $\ga^p(\bt)$ to $\ga^p(\al)$, and $\psi_p = \phi_p^{-1}$. This gives $k = m$. Further, $\ga^p_{\!\bt}(\al) = (B,E_0)$, where $(y_i,y_j)\in E_0$ if and only if $(x_i,x_j)\in E_p$. It remains to show that $E_0 = F_p$. Since $\phi_p$ preserves order, we have $x_1\phi_p < \ldots < x_k\phi_p$, which implies $x_i\phi_p = y_i$ for every~$i$. The equality $E_0 = F_p$ follows since for all $i,j$, $(x_i,x_j)\in E_p$ if and only if $(y_i,y_j) = (x_i\phi_p,x_j\phi_p)\in F_p$. Hence $\ga^p_{\!\bt}(\al) = \ga^p(\bt)$.
	
	Conversely, suppose that $k = m$ and $\ga^p_{\!\bt}(\al) = \ga^p(\bt)$. Let $i\in\{1,\ldots,k\}$. Fix $y^*_i\in X$ such that $(y^*_i,y_i)\in F$ (such a $y^*_i$ exists since $y_i$ is not initial in $\ga(\bt)$). Let $A_i = \{x_j : (x_j,x_i)\in E\}$. Let $x$ be an initial vertex in $\ga(\al)$. Then $x\al = x_i$ (so $(x,x_i)\in E$) for some $i$. Note that $x$ is bottom initial in $\ga(\al)$ if and only if $A_i = \emptyset$.
	
	Suppose $A_i\neq \emptyset$. Write $A_i = \{x_{j_1} < \ldots < x_{j_w}\}$ where $w\geq 1$, and define $m_x\in\{j_1,\ldots,j_w\}$ as follows: $m_x = j_1$ if $x < x_{j_1}$, $m_x = j_w$ if $x_w < x$,
	and $m_x = j_s$ if $x_{j_s} < x < x_{j_{s+1}}$. Now, define $\phi:X\to X$ by
	\[
	x\phi=\begin{cases}
		y_i & \text{if }x = x_i,\\
		y^*_i & \text{if }x\text{ is bottom initial in }\ga(\al)\text{ (so }A_i = \emptyset\text{) and }(x,x_i)\in E,\\
		y_{m_x} & \text{if }x\text{ is initial, but not bottom initial, in }\ga(\al)\text{ (so }A_i\neq \emptyset\text{) and }(x,x_i)\in E.
	\end{cases}
	\]
	Note that $x_i\phi = y_i$ for every $i$. First, we will prove that $\phi$ is an $r$-homomorphism from $\ga(\al)$ to $\ga(\bt)$. Since $\ga^p_{\!\bt}(\al) = \ga^p(\bt)$, $(x_i,x_j)\in E$ if and only if $(y_i,y_j)\in F$, for all $i$ and $j$. Moreover, for every $i$, $(y^*_i,y_i)\in F$ and if $x$ is initial, but not bottom initial, in $\ga(\al)$ with $x\al = x_i$, then $(y_{m_x},y_i)\in F$  (since $(x_{m_x},x_i)\in E$). It follows that $\phi$ is a homomorphism.
	Since $\ga(\al)$ does not have any terminal vertices, (1) of Definition~\ref{drh} is vacuously satisfied. Let $x$ be a bottom initial vertex of $\ga(\al)$ and let $x_i = x\al$ (so $(x,x_i)\in E$). Suppose to the contrary that $x\phi$ is not initial in $\ga(\bt)$. Then $x\phi = y_j$, for some~$j$, and $(y_j,y_i) = (x\phi,x_i\phi)\in F$. Thus $(x_j,x_i)\in E$, which is a contradiction since $(x,x_i)\in E$ and $x$ is bottom initial. Hence $x\phi$ is initial in $\ga(\bt)$. Therefore, $\phi$ is an $r$-homomorphism from $\ga(\al)$ to $\ga(\bt)$.
	
	Next, we will prove that $\phi\in\on$. Let $x,z\in X$ with $x < z$, and let $x_i = x\al$ and $x_j = z\al$ (so $(x,x_i)\in E$ and $(z,x_j\in E$). Since $\al\in \on$, we have $x_i\leq x_j$.
	We want to prove that $x\phi\leq z\phi$. Consider three possible cases.
	\vskip 1mm
	\noindent\textbf{Case 1.} $x$ and $z$ are not initial in $\ga(\al)$.
	\vskip 1mm
	Then $x = x_s$ and $z = x_t$ for some $s$ and $t$. Thus $x_s < x_t$, and so $x\phi = x_s\phi = y_s < y_t = x_t\phi = z\phi$.
	\vskip 1mm
	\noindent\textbf{Case 2.} $x$ or $z$ is initial in $\ga(\al)$, and $i\neq j$.
	\vskip 1mm
	Then $x_i < x_j$, and so $y_i < y_j$. Since $\phi$ is a homomorphism from $\ga(\al)$ to $\ga(\bt)$, we have $(x\phi,y_i) = (x\phi,x_i\phi)\in F$ and $(z\phi,y_j) = (z\phi,x_j\phi)\in F$, that is, $(x\phi)\bt = y_i$ and $(z\phi)\bt = y_j$. Since $\bt\in\on$, $z\phi\leq x\phi$ would imply $y_j\leq y_i$, which would contradict $y_i < y_j$. Hence $x\phi < z\phi$.
	\vskip 1mm
	\noindent\textbf{Case 3.} $x$ or $z$ is initial in $\ga(\al)$, and $i = j$.
	\vskip 1mm
	If $A_i = \emptyset$, then both $x$ and $z$ are bottom initial in $\ga(\al)$, and so $x\phi = y^*_i = z\phi$. Let $A_i = \{x_{j_1} < \ldots < x_{j_w}\}\neq \emptyset$. Suppose $x$ is initial in $\ga(\al)$. Then $x\phi = y_{m_x}$. Suppose $z$ is not initial in $\ga(\al)$. Then $z = x_{j_q}$ for some $q$. Since $x < z = x_{j_q}$, we have $x_{m_x}\leq x_{j_q}$ (by the definition of $m_x$), and so $x\phi = y_{m_x}\leq y_{j_q} = x_{j_q}\phi = z\phi$.
	Suppose $z$ is initial in $\ga(\al)$. Then $z\phi = y_{m_z}$.
	Since $x<z$, $x_{m_x}\leq x_{m_z}$, and so $x\phi = y_{m_x}\leq y_{m_z} = z\phi$. If $z$ is initial in $\ga(\al)$,
	then we obtain $x\phi\leq z\phi$ by a similar argument.
	
	Hence, in all cases, $x\phi\leq z\phi$, that is, $\phi\in\on$. By symmetry, there exists an $r$-homomorphism $\psi$ from $\ga(\bt)$ to $\ga(\al)$ such that $y_i\psi = x_i$ for all $i$, and $\psi\in \on$.
	Then for every $i$, $x_i(\phi\psi) = x_i$ and $y_i(\psi\phi) = y_i$. Hence $\phi$ and $\psi$ are as in Theorem~\ref{tstn}, and so $\al\cfn \bt$ in $\on$.
\end{proof}

\begin{example}\label{spraex2}
	Consider $\al,\bt,\del\in \mathcal{O}_6$ whose digraphs are given in Figure~\ref{fig2}. The prunes of the digraphs are presented in Figure~\ref{fig3}, with the orderings of vertices:
	$4 < 5 < 6$ in $\ga^p(\al)$, $3 < 4 < 5$ in $\ga^p(\bt)$, and $2 < 4 < 5$ in $\ga^p(\del)$. Replacing the vertices in $\ga^p(\al)$ according to these orderings, we obtain $\ga^p_\bt(\al)$ and $\ga^p_\del(\al)$ as in Figure~\ref{fig4}. We can see that $\ga^p_\bt(\al) = \ga^p(\bt)$, but $\ga^p_\del(\al)\neq \ga^p(\del)$. Thus by Theorem~\ref{ton}, $\al$ and $\bt$ are $\frn$-conjugate in $\mathcal{O}_6$, but $\al$ and $\del$ are not.
\end{example}

\begin{figure}[h]
	\[
	\xy
	(25,44)*{\bullet}="1";
	(25,36)*{\bullet}="4";
	{\ar@(ur,ul)};
	(18,28)*{\bullet}="4a";
	(25,28)*{\bullet}="4c";
	(35,44)*{\bullet}="7";
	(32,28)*{\bullet}="8";
	{\ar@{->} "4a";"4"};
	{\ar@{->} "4c";"4"};
	{\ar@{->} "4";"1"};
	{\ar@{->} "8";"4"};
	{\ar@(ur,ul)};
	(27,44)*{\text{\footnotesize{$5$}}};
	(34,28)*{\text{\footnotesize{$3$}}};
	(27,28)*{\text{\footnotesize{$2$}}};
	(16,28)*{\text{\footnotesize{$1$}}};
	(37,44)*{\text{\footnotesize{$6$}}};
	(27,36)*{\text{\footnotesize{$4$}}};
	(70,44)*{\bullet}="1";
	(70,36)*{\bullet}="4";
	{\ar@(ur,ul)};
	(70,44)*{\bullet}="1";
	(70,36)*{\bullet}="4";
	{\ar@(ur,ul)};
	(62,36)*{\bullet}="4a";
	(70,28)*{\bullet}="4c";
	(80,44)*{\bullet}="7";
	(80,36)*{\bullet}="8";
	{\ar@{->} "4a";"1"};
	{\ar@{->} "4c";"4"};
	{\ar@{->} "4";"1"};
	{\ar@{->} "8";"7"};
	{\ar@(ur,ul)};
	(72,44)*{\text{\footnotesize{$4$}}};
	(72,36)*{\text{\footnotesize{$3$}}};
	(72,28)*{\text{\footnotesize{$2$}}};
	(60,36)*{\text{\footnotesize{$1$}}};
	(82,44)*{\text{\footnotesize{$5$}}};
	(82,36)*{\text{\footnotesize{$6$}}};
	(115,44)*{\bullet}="11";
	(115,36)*{\bullet}="14";
	{\ar@(ur,ul)};
	(107,36)*{\bullet}="14a";
	(115,28)*{\bullet}="14c";
	(125,44)*{\bullet}="17";
	(125,36)*{\bullet}="18";
	{\ar@{->} "14a";"11"};
	{\ar@{->} "14c";"14"};
	{\ar@{->} "14";"11"};
	{\ar@{->} "18";"17"};
	{\ar@(ur,ul)};
	(117,44)*{\text{\footnotesize{$5$}}};
	(117,36)*{\text{\footnotesize{$4$}}};
	(117,28)*{\text{\footnotesize{$3$}}};
	(105,36)*{\text{\footnotesize{$6$}}};
	(127,44)*{\text{\footnotesize{$2$}}};
	(127,36)*{\text{\footnotesize{$1$}}};
	\endxy
	\]
	\caption{$\ga(\al)$ (left), $\ga(\bt)$ (middle), and $\ga(\del)$ (right).}\label{fig2}
\end{figure}

\begin{figure}[ht]
	\[
	\xy
	(25,44)*{\bullet}="1";
	(25,36)*{\bullet}="4";
	{\ar@(ur,ul)};
	(35,44)*{\bullet}="7";
	{\ar@{->} "4";"1"};
	{\ar@(ur,ul)};
	(27,44)*{\text{\footnotesize{$5$}}};
	(37,44)*{\text{\footnotesize{$6$}}};
	(27,36)*{\text{\footnotesize{$4$}}};
	(70,44)*{\bullet}="1";
	(70,36)*{\bullet}="4";
	{\ar@(ur,ul)};
	(80,44)*{\bullet}="7";
	{\ar@{->} "4";"1"};
	{\ar@(ur,ul)};
	(72,44)*{\text{\footnotesize{$4$}}};
	(72,36)*{\text{\footnotesize{$3$}}};
	(82,44)*{\text{\footnotesize{$5$}}};
	(115,44)*{\bullet}="11";
	(115,36)*{\bullet}="14";
	{\ar@(ur,ul)};
	(125,44)*{\bullet}="17";
	{\ar@{->} "14";"11"};
	{\ar@(ur,ul)};
	(117,44)*{\text{\footnotesize{$5$}}};
	(117,36)*{\text{\footnotesize{$4$}}};
	(127,44)*{\text{\footnotesize{$2$}}};
	\endxy
	\]
	\caption{$\ga^p(\al)$ (left), $\ga^p(\bt)$ (middle), and $\ga^p(\del)$ (right).}\label{fig3}
\end{figure}

\begin{figure}[ht]
	\[
	\xy
	(25,44)*{\bullet}="1";
	(25,36)*{\bullet}="4";
	{\ar@(ur,ul)};
	(35,44)*{\bullet}="7";
	{\ar@{->} "4";"1"};
	{\ar@(ur,ul)};
	(27,44)*{\text{\footnotesize{$4$}}};
	(37,44)*{\text{\footnotesize{$5$}}};
	(27,36)*{\text{\footnotesize{$3$}}};
	(70,44)*{\bullet}="1";
	(70,36)*{\bullet}="4";
	{\ar@(ur,ul)};
	(80,44)*{\bullet}="7";
	{\ar@{->} "4";"1"};
	{\ar@(ur,ul)};
	(72,44)*{\text{\footnotesize{$4$}}};
	(72,36)*{\text{\footnotesize{$2$}}};
	(82,44)*{\text{\footnotesize{$5$}}};
	\endxy
	\]
	\caption{$\ga^p_\bt(\al)$ (left) and $\ga^p_\del(\al)$ (right).}\label{fig4}
\end{figure}

Recall that for an integer $n\geq1$, $X_n=\{1 < \ldots < n\}$. Viewing
$X_n$ as a set, we denote by $\mi_n$ the symmetric inverse semigroup $\mi(X_n)$. Let $\oin$ be the subset of $\mi_n$ consisting of partial injective order-preserving transformations, that is,
\[
\oin = \{\al\in\mi_n : \forall_{x,y\in \dom(\al)}(x<y\implies x\al < y\al)\}.
\]
Then $\oin$ is an inverse semigroup \cite{Fe97,Fe01}. We will now describe $\frn$-conjugacy in $\oin$.

Let $\ga$ be a digraph and let $v_0,v_1,\ldots,v_k$, $k\geq1$, be pairwise distinct vertices of $\ga$. Suppose that
\begin{align}
	&v_0\aar v_1\aar \cdots\aar v_{k-1}\aar v_0,\label{cy}\\
	&v_0\aar v_1\aar \cdots\aar v_{k-1}\aar v_k\label{ch}
\end{align}
are sub-digraphs of $\ga$. We call \eqref{cy} and \eqref{ch}, respectively, a \emph{cycle} of length $k$ (or a $k$-cycle), written $(v_0\,v_1\ldots\, v_{k-1})$, and a \emph{chain} of length $k$ (or a $k$-chain), written $[v_0\,v_1\ldots\, v_k]$, in $\ga$. We can view $(v_0\,v_1\ldots\, v_{k-1})$ and $[v_0\,v_1\ldots\, v_k]$ as partial injective transformations on the set of vertices of $\ga$, both with domain $\{v_0,v_1,\ldots,v_{k-1}\}$, and the values calculated according to \eqref{cy} and \eqref{ch}.

\begin{defi}\label{dcom}
	For a set $X$, let $\al\in P(X)$ and let $x\in\spa(\al)$. The subdigraph of $\ga(\al)$ induced by the set
	\[
	\{y\in \spa(\al) :\ \al^k(y)=\al^m(x)\text{ for some integers }k,m\geq 0\}
	\]
	is called the \emph{component} of $\ga(\al)$ containing $x$. The components of $\ga(\al)$ correspond to the connected components of the underlying undirected graph of $\ga(\al)$.
\end{defi}

If $\al\in\mi_n$, then each component of $\ga(\al)$ is either a cycle or a chain, that is, $\ga(\al)$ is a disjoint union of cycles and chains. We will use the language ``a cycle [chain] in $\al$'' to mean ``a component in $\ga(\al)$ that is a cycle [chain].'' If $\al\in \oin$, then each cycle in $\al$ has length $1$, and if $[v_0\,v_1\ldots\, v_m]$ is a chain in $\al$, then either $v_0 < v_1 < \ldots < v_m$ or $v_0 > v_1 > \ldots > v_m$.

For the meaning of $\ga_{\!\bt}(\al)$ in the following theorem, see Notation~\ref{ngpb}.

\begin{theorem}\label{toin}
	Let $\al,\bt\in \oin$ with $\spa(\al) = \{x_1<\ldots<x_k\}$ and $\spa(\bt) = \{y_1<\ldots<y_m\}$. Then $\al\cfn \bt$ in $\oin$ if and only if $k = m$ and $\ga_{\!\bt}(\al) = \ga(\bt)$.
\end{theorem}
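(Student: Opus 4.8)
The plan is to mirror the strategy used for $\on$ in Theorem~\ref{ton}, adapting it to the injective order-preserving setting where $\ga(\al)$ is a disjoint union of fixed points (length-one cycles) and monotone chains, so that there are no initial bundles to worry about (each chain has a unique initial vertex, which is genuinely initial in $\ga(\al)$). First I would observe that $\oin$ is an inverse semigroup with identity $\idx$, so $\oin^1=\oin$, and that by Proposition~\ref{Prp:inverse}, $\cfn$ coincides with $i$-conjugacy $\cin$ on $\oin$; this means $\al\cfn\bt$ iff there is $\sig\in\oin$ with $\sig\inv\al\sig=\bt$ and $\sig\bt\sig\inv=\al$. However, it may be cleaner to just invoke Theorem~\ref{tstn} directly (checking that $\oin$ is closed under restrictions to spans, which it is, since a restriction of an injective order-preserving partial map is again injective and order-preserving), and produce the required $r$-homomorphisms $\phi,\psi$.

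For necessity, suppose $\al\cfn\bt$ in $\oin$. By Theorem~\ref{tstn} there are $r$-homomorphisms $\phi,\psi\in\oin$ with $x(\phi\psi)=x$ for every non-initial vertex $x$ of $\ga(\al)$ and $u(\psi\phi)=u$ for every non-initial vertex $u$ of $\ga(\bt)$. In a union of fixed points and monotone chains the non-initial vertices are: both endpoints of each fixed point (there is only one, the vertex itself — which lies on its own loop, so it is \emph{not} initial) and every vertex of a chain except its source. Since $\phi$ is an $r$-homomorphism, by \cite[Lem.~4.6]{Ko18} it sends non-initial vertices of $\ga(\al)$ to non-initial vertices of $\ga(\bt)$, and then Lemma~\ref{lresphi}-type reasoning shows $\phi|_{A_p}$ is an isomorphism $\ga^p(\al)\to\ga^p(\bt)$ with inverse $\psi|_{B_p}$. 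The key extra point, and I expect this to be the main obstacle, is promoting this from the prune to the full span $\ga(\al)$: I must show $\phi$ in fact restricts to an isomorphism $\ga(\al)\to\ga(\bt)$ (so $\spa$-sizes agree, $k=m$), and that, because $\phi$ is order-preserving and a bijection $\spa(\al)\to\spa(\bt)$ with the $x_i$ listed increasingly and the $y_i$ listed increasingly, we are forced to have $x_i\phi=y_i$ for all $i$. The subtlety is that an initial vertex of a chain of $\ga(\al)$ must map, under the $r$-homomorphism condition, to an initial vertex of $\ga(\bt)$ — but one must rule out $\phi$ collapsing, or reordering, the chains; this is where injectivity of $\phi$ (it lies in $\oin$) together with the "$\psi\phi$ fixes non-initial vertices, $\phi\psi$ fixes non-initial vertices" conditions do the work, forcing $\phi$ and $\psi$ to be mutually inverse bijections on all of $\spa$, not just on the prunes. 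Once $x_i\phi=y_i$ is established, the edge condition $(x_i,x_j)\in E \iff (x_i\phi,x_j\phi)=(y_i,y_j)\in F$ is exactly the statement $\ga_{\!\bt}(\al)=\ga(\bt)$.

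For sufficiency, assume $k=m$ and $\ga_{\!\bt}(\al)=\ga(\bt)$. Define $\phi\colon\spa(\al)\to\spa(\bt)$ by $x_i\phi=y_i$; by hypothesis this is a digraph isomorphism. I would extend $\phi$ to a partial transformation of $X_n$ (with domain $\dom(\al)\cup\spa(\al)$, or just $\spa(\al)$, whichever Theorem~\ref{tstn} needs) and check three things: $\phi\in\oin$ (injective and order-preserving — injectivity is immediate since $i\mapsto i$ on indices is a bijection and $y_1<\dots<y_m$; order-preservation follows because $x_1<\dots<x_k$ and $y_1<\dots<y_m$ are both increasing enumerations); $\phi$ is an $r$-homomorphism from $\ga(\al)$ to $\ga(\bt)$ (homomorphism is the edge condition; there are no terminal vertices in either digraph since every vertex of a chain or fixed point has an outgoing edge within the span — wait, chain \emph{endpoints} have no outgoing edge, so I must verify condition (1) of Definition~\ref{drh}, namely that a terminal vertex $x_i$ of $\ga(\al)$ maps to a terminal vertex $y_i$ of $\ga(\bt)$, which is immediate from $\ga_{\!\bt}(\al)=\ga(\bt)$; similarly condition (2) for bottom-initial vertices); and finally $\psi=\phi\inv$ (defined by $y_i\psi=x_i$) is the reverse $r$-homomorphism with $x_i(\phi\psi)=x_i$ and $y_i(\psi\phi)=y_i$ for all non-initial (indeed all) vertices. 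Then Theorem~\ref{tstn} gives $\al\cfn\bt$ in $\oin$. The only real care needed is bookkeeping around chain sources being initial vertices that nonetheless lie in $\spa(\al)$ and hence in the domain of the isomorphism — but since we are building $\phi$ on the whole span at once, this is handled uniformly and no separate "glue in the initial bundle" step (as was needed for $\on$ and $T(X,Y)$) arises.
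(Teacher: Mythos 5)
Your proposal is correct in substance and reaches the same endpoint as the paper (order-preservation plus bijectivity of the span map forces $x_i\phi=y_i$, after which the edge condition is exactly $\ga_{\!\bt}(\al)=\ga(\bt)$), but it takes a longer route. The paper does not go through Theorem~\ref{tstn}, Lemma~\ref{lresphi}, or the prune at all: it simply cites \cite[Thm.~5.1]{Ko18}, which says that in any subsemigroup of $\mi(X)$ closed under restrictions to spans, $\al\cfn\bt$ holds if and only if some $\phi$ in the semigroup is an isomorphism from the \emph{full} digraph $\ga(\al)$ onto $\ga(\bt)$; this packages precisely the ``promotion from the prune to the span'' that you single out as the main obstacle. (The structural reason this is available in the injective setting is that every initial bundle of $\ga(\al)$ is a singleton, so the trim of $\ga(\al)$ is $\ga(\al)$ itself.) Your route also works, but the step you leave as an assertion does need an argument, and the ingredient you point to (injectivity of $\phi$) is not the right one: injectivity of $\phi$ only gives that $\phi$ is injective on the initial vertices, not that $\phi\psi$ fixes them. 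The correct one-line fix uses injectivity of $\al$ itself: if $x$ is an initial vertex of $\ga(\al)$, then $x\in\dom(\al)$ and $y=x\al$ is non-initial, so applying the homomorphisms $\phi$ and then $\psi$ to the edge $(x,y)$ yields $(x\phi\psi,\,y\phi\psi)=(x\phi\psi,\,y)$ as an edge of $\ga(\al)$, i.e.\ $(x\phi\psi)\al=y=x\al$, whence $x\phi\psi=x$ because $\al$ is injective. Thus $\phi$ and $\psi$ are mutually inverse on all of $\spa(\al)$ and $\spa(\bt)$, and $\phi$ is an isomorphism of the full digraphs; with that inserted your necessity argument is complete, and your sufficiency direction (define $\phi$ by $x_i\mapsto y_i$, check it is injective, order-preserving, and a digraph isomorphism, and take $\psi=\phi^{-1}$) matches the paper's.
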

\begin{proof}
	Suppose $\al\cfn\bt$ in $\oin$. Since $\oin$ is closed under restrictions to spans, there is $\phi\in \oin$ such that $\phi$ is an isomorphism from $\ga(\al)$ to $\ga(\bt)$ (by \cite[Thm.~5.1]{Ko18}). (Note that $\oin^1 = \on$ since $\idx\in \on$.) Thus $k = m$. Let $\ga(\al) = (A,E)$ and $\ga(\bt) = (B,F)$. We have $\ga_{\!\bt}(\al) = (B,E_0)$, where $(y_i,y_j)\in E_0$ if and only if $(x_i,x_j)\in E$. It remains to show that $E_0 = F$.
	Since $\phi$ preserves order, we have $x_1\phi < \ldots < x_k\phi$, which implies $x_i\phi = y_i$ for every~$i$. The equality $E_0 = F$ follows since for all $i,j$, $(x_i,x_j)\in E$ if and only if $(y_i,y_j) = (x_i\phi,x_j\phi)\in F$. Hence $\ga_{\!\bt}(\al) = \ga(\bt)$.
	
	Conversely, suppose that $k = m$ and $\ga_{\!\bt}(\al) = \ga(\bt)$. Define $\phi:A\to B$ by $x_i\phi = y_i$ for every $i$. Then $\phi\in \oin$ and for all $i,j$, $(x_i,x_j)\in E\iff (y_i,y_j)\in E_0\iff (y_i,y_j)\in F\iff (x_i\phi,x_j\phi)\in F$. Thus $\phi$ is an isomorphism from $\ga(\al)$ to $\ga(\bt)$, and so $\al\cfn\bt$ in $\oin$ by \cite[Thm.~5.1]{Ko18}.
\end{proof}

Let $\al\in\oin$ with $\spa(\al) = \{x_1 < \ldots < x_k\}$, $k\geq 1$. Using Theorem~\ref{toin}, we can construct the $\frn$-conjugacy class $[\al]_{\frn}$ as follows:
\begin{enumerate}
	\item[(a)] begin with $[\al]_{\frn} = \emptyset$ and $\mathcal{Y}_k = \text{the set of all subchains }\{y_1 < \ldots < y_k\}$ of $X_n$;
	\item[(b)] select a subchain $\{y_1 < \ldots < y_k\}$ from $\mathcal{Y}_k$;
	\item[(c)] replace each $x_i$ in $\ga(\al)$ with $y_i$;
	\item[(d)] add $\bt$ to $[\al]_{\frn}$, where $\bt$ is the transformation represented by the digraph obtained in (c);
	\item[(e)] remove the subchain $\{y_1 < \ldots < y_k\}$ selected in (b) from $\mathcal{Y}_k$;
	\item[(f)] if $\mathcal{Y}_k\neq \emptyset$, return to (b); otherwise STOP.
\end{enumerate}

By the above algorithm and the fact that $[0]_\frn = \{0\}$ in any semigroup with zero, we have
\begin{center}
if $\al\in\oin$ with $|\spa(\al)|=k$, then $|[\al]_{\frn}|=\binom{n}{k}$
\end{center}
for every $k\in \{0,1,\ldots,n\}$.

Let $\emptyset\neq \al\in\oin$. If $\ga(\al)$ has $s + t$ components, where $\sig_1,\ldots,\sig_s$ are $1$-cycles and $\tau_1,\ldots,\tau_t$ are chains, then we will write $\al = \sig_1\jo\cdots\jo\sig_s\jo\tau_1\jo\cdots\jo\tau_t$, where each $\sig_i$ and $\tau_j$ is viewed as an element of $\oin$, and ``$\jo$'' (called the \emph{join}) is the union of functions viewed as sets.

\begin{example}\label{spraex1}
	Consider $\al = (1)\jo(4)\jo[3\,5\,7]\jo[10\,9\,8]\in\mathcal{OI}_{11}$, and note that we have
	\[
	\spa(\al) = \{1 < 3 < 4 < 5 < 7 < 8 < 9 < 10\}
	\]
	and $|\spa(\al)|=8$. Select any subchain of $X_{11}$ with~$8$ elements, say $\{2 < 3 < 5 < 6 < 7 < 8 < 10 < 11\}$. Now, replace
	each $x$ in $\al$, written as above, with the corresponding (according to the orderings) $y$ from that subchain.
	Then, $\bt=(2)\jo(5)\jo[3\,6\,7]\jo[11\,10\,8]$ is $\frn$-conjugate to $\al$.
\end{example}

We conclude this subsection by characterizing natural conjugacy in the class of centralizers of idempotents in $T(X)$. For a semigroup $S$ and an element $a\in S$, the \emph{centralizer} of $a$ in $S$ is the set $C(a) = \{x\in S : ax = xa\}$. It is clear that $C(a)$ is a subsemigroup of $S$. Centralizers in transformation semigroups have been studied extensively (see \cite{AK13} for references and motivation).

Here we will concentrate on the centralizers $C(\vep)$, where $\vep\in T(X)$ is an idempotent ($\vep\vep=\vep$), which were studied
in \cite{AK03,AK04,Ko02}. Let $\vep\in T(X)$ be an idempotent. Each connected component of the digraph $\ga(\vep)$ consists of a $1$-cycle with a vertex $y\in \ima(\vep)$ and the edges $(x,y)$ (perhaps none), where $x\neq\ y$ and $x\vep = y$ (see Figure  \ref{fig5} for the digraph of the idempotent $\vep =
\left(\begin{array}{@{}*{11}{c}@{}}
	1 & 2 & 3 & 4 & 5 & 6 & 7 & 8 & 9 & 10 & 11 \\
	4 & 4 & 4 & 4 & 5 & 5 & 7 & 7 & 7 &  7 & 11
\end{array}\right) \in T(X)$ where $X = \{1,\ldots,11\}$). Note that the set of vertices of a connected component of $\ga(\vep)$ is an element of the partition of $X$ induced by $\ker(\vep)$.

\begin{figure}[h]
	\[
	\xy
	(40,44)*{\bullet}="1";
	(40,36)*{\bullet}="4";
	{\ar@(ur,ul)};
	(32,36)*{\bullet}="4a";
	(48,36)*{\bullet}="4c";
	{\ar@{->} "4a";"1"};
	{\ar@{->} "4c";"1"};
	{\ar@{->} "4";"1"};
	(42,44)*{\text{\footnotesize{$4$}}};
	(42,36)*{\text{\footnotesize{$3$}}};
	(50,36)*{\text{\footnotesize{$2$}}};
	(30,36)*{\text{\footnotesize{$1$}}};
	(70,44)*{\bullet}="1";
	(70,36)*{\bullet}="4";
	{\ar@(ur,ul)};
	(62,36)*{\bullet}="4a";
	(78,36)*{\bullet}="4c";
	(90,44)*{\bullet}="7";
	(90,36)*{\bullet}="8";
	{\ar@{->} "4a";"1"};
	{\ar@{->} "4c";"1"};
	{\ar@{->} "4";"1"};
	{\ar@{->} "8";"7"};
	{\ar@(ur,ul)};
	(72,44)*{\text{\footnotesize{$7$}}};
	(72,36)*{\text{\footnotesize{$8$}}};
	(80,36)*{\text{\footnotesize{$9$}}};
	(59,36)*{\text{\footnotesize{$10$}}};
	(92,44)*{\text{\footnotesize{$5$}}};
	(92,36)*{\text{\footnotesize{$6$}}};
	(105,44)*{\bullet}="17";
	(105,44)*{\bullet}="18";
	{\ar@(ur,ul)};
	(108,44)*{\text{\footnotesize{$11$}}};
	\endxy
	\]
	\caption{The directed graph of an idempotent.}\label{fig5}
\end{figure}

Let $\rho$ be an equivalence relation on $X$, and let $R$ be a cross-section of the partition $X/\rho$ induced by $\rho$. Then the set $\txr$ of elements $\al\in T(X)$ that preserve both $\rho$ and $R$,
\[
\txr = \{\al\in T(X): R\al\subseteq R\text{ and }(x,y)\in \rho\implies (x\al,y\al)\in \rho\},
\]
is a subsemigroup of $T(X)$. The semigroups $\txr$ are exactly the same as the centralizers $C(\vep)$ of idempotents $\vep\in T(X)$
\cite[Thm.~2.3]{AK03}. More precisely, for every idempotent $\vep\in T(X)$, $C(\vep) = \txr$, where $\rho = \ker(\vep)$ and $R = \ima(\vep)$.
The next lemma follows immediately from this observation (see also \cite[Thm.~2.1]{Ko02}).

\begin{lemma}\label{lichar}
	Let $\vep,\al\in T(X)$, where $\vep\in T(X)$ is an idempotent. Then, $\al\in C(\vep)$ if and only if:
	\begin{enumerate}
		\item[\textup{(1)}] $(\ima(\vep))\al\subseteq\ima(\vep)$; and
		\item[\textup{(2)}] for all $y\in\ima(\vep)$ and $x\in X$, if $x\vep=y$, then $(x\al)\vep=y\al$.
	\end{enumerate}
\end{lemma}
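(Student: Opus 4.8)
The plan is to unwind the definition of the centralizer directly, using only the idempotency of $\vep$ and the convention that maps act on the right. By definition $\al\in C(\vep)$ means $\vep\al=\al\vep$, which, evaluated at an arbitrary $x\in X$, says $(x\vep)\al=(x\al)\vep$; this single reformulation will drive both directions. (Alternatively, one could invoke the identification $C(\vep)=T(X,\ker(\vep),\ima(\vep))$ recorded just above via \cite[Thm.~2.3]{AK03} and translate the two defining conditions of $\txr$, but the route via $\vep\al=\al\vep$ is more self-contained.) The one preliminary fact I would record first is that every element of $\ima(\vep)$ is a fixed point of $\vep$: if $z=w\vep$, then $z\vep=w\vep\vep=w\vep=z$ because $\vep\vep=\vep$.

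For the forward implication, I would assume $\al\in C(\vep)$ and verify (1) and (2) in turn. For (1): given $z\in\ima(\vep)$, use $z\vep=z$ and then $\vep\al=\al\vep$ to obtain $z\al=(z\vep)\al=(z\al)\vep\in\ima(\vep)$, so $(\ima(\vep))\al\subseteq\ima(\vep)$. For (2): given $y\in\ima(\vep)$ and $x\in X$ with $x\vep=y$, the identity $\vep\al=\al\vep$ gives $(x\al)\vep=(x\vep)\al=y\al$.

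For the converse, I would assume (1) and (2) and show $\vep\al=\al\vep$. Given any $x\in X$, set $y:=x\vep$; then $y\in\ima(\vep)$ and $x\vep=y$, so condition (2) yields $(x\al)\vep=y\al=(x\vep)\al$. Since $x$ is arbitrary this is exactly $\al\vep=\vep\al$, i.e. $\al\in C(\vep)$.

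There is essentially no obstacle here: the lemma is a routine translation, and the only points needing a little care are spotting that $\ima(\vep)$ consists of $\vep$-fixed points and keeping the left-to-right composition order straight throughout. One mild curiosity worth flagging is that the converse argument uses only (2), so (1) is in fact implied by (2) (apply the forward direction); it is listed separately because it is the transparent form of the condition $\ima(\vep)\al\subseteq\ima(\vep)$ occurring in the description $C(\vep)=T(X,\ker(\vep),\ima(\vep))$, and it is convenient to have it isolated for later use.
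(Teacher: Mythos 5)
Your proof is correct, and it is essentially the argument the paper has in mind: the paper gives no explicit proof, asserting only that the lemma ``follows immediately'' from the identification $C(\vep)=\txr$ with $\rho=\ker(\vep)$ and $R=\ima(\vep)$, which unwinds to exactly the computation $(x\vep)\al=(x\al)\vep$ that you carry out. Your preliminary observation that $\ima(\vep)$ is the fixed-point set of the idempotent $\vep$, and your remark that (1) is formally a consequence of (2), are both accurate.
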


\begin{lemma}\label{lcin}
	Let $\al\in C(\vep)$, where $\vep\in T(X)$ is an idempotent. For every $y\in \ima(\vep)$, if $y$ is an initial vertex of $\ga(\al)$, then every vertex $x$ of $\ga(\al)$ such that $x\vep=y$ is also initial.
\end{lemma}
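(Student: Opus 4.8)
The plan is to reformulate the statement in terms of images and then invoke Lemma~\ref{lichar}. Since every element of $C(\vep)$ is a full transformation on $X$, we have $\spa(\al)=X$, so the vertex set of $\ga(\al)$ is all of $X$, and a vertex $v$ of $\ga(\al)$ is initial if and only if $v\notin\ima(\al)$ (there is no edge into $v$, i.e.\ no $z$ with $z\al=v$). Thus the assertion to be proved becomes: if $y\in\ima(\vep)$ with $y\notin\ima(\al)$, and $x\in X$ satisfies $x\vep=y$, then $x\notin\ima(\al)$. I would prove the contrapositive: assuming $x\in\ima(\al)$, I will show $y\in\ima(\al)$.

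So write $x=z\al$ for some $z\in X$ and set $w=z\vep$; then $w\in\ima(\vep)$. Since $z\vep=w$ and $w\in\ima(\vep)$, Lemma~\ref{lichar}(2) (applied with the pair $x':=z$, $y':=w$) gives $(z\al)\vep=w\al$. Combining these identities, $y=x\vep=(z\al)\vep=w\al=(z\vep)\al\in\ima(\al)$, so $y$ is not initial in $\ga(\al)$ — the desired contradiction. This completes the argument.

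The argument is short, and I do not anticipate a genuine obstacle; the only points requiring care are (i) recognising that for full transformations "$v$ is an initial vertex of $\ga(\al)$" means precisely "$v\in X\setminus\ima(\al)$", and (ii) applying the commuting condition Lemma~\ref{lichar}(2) with the correct substitution — feeding it a preimage $z$ of $x$ under $\al$ rather than $x$ itself, so that the conclusion involves $(z\al)\vep=x\vep=y$ and exhibits $y$ as $(z\vep)\al$, an element of $\ima(\al)$.
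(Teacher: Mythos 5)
Your argument is correct and is essentially the paper's own proof: both take a preimage $z$ of $x$ under $\al$, set $w=z\vep$, and apply Lemma~\ref{lichar}(2) to conclude $y=(z\al)\vep=w\al\in\ima(\al)$. The only cosmetic difference is that you phrase it as a contrapositive while the paper argues by contradiction.
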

\begin{proof}
	Let $y\in\ima(\vep)$ be initial in $\ga(\al)$, and let $x\vep = y$. Suppose to the contrary that $x$ is not initial in $\ga(\al)$.
	Then $x = z\al$ for some $z\in X$. Let $w = z\vep$. Then $w\in \ima(\vep)$ and, by Lemma~\ref{lichar}, $(z\al)\vep = w\al$,
	so $y = x\vep = (z\al)\vep = w\al$. Hence $y$ is not initial in $\ga(\al)$, which is a contradiction.
\end{proof}

\begin{defi}\label{dstar}
	Let $\al\in C(\vep)$, where $\vep\in T(X)$ is an idempotent. Suppose that $\Sig$ is a subdigraph of $\ga(\al)$. We denote by $\Sig_*$ the digraph with the vertices from $X\times X$, obtained by replacing each vertex $x$ of $\Sig$ by $(x,x\vep)$.
	
	For $x\in X$, it is convenient to think of $x\vep$ as the ``color'' of $x$, and of the vertex $(x,x\vep)$ in $\ga_*(\al)$ as the vertex $x$ ``colored'' by $x\vep$.
	
	Let $\Sig$ and $\up$ be subdigraphs of $\ga(\al)$. Let $\phi$ is a homomorphism from $\Sig$ to $\up$. We denote by $\phi_*$ the function from the set of vertices of $\Sig_*$ to the set of vertices of $\up_*$ defined by $(x,x\vep)\phi_* = (x\phi,(x\phi)\vep)$. Note $\phi_*$ is a homomorphism from $\Sig_*$ to $\up_*$.
	
	Finally, let $f$ be an isomorphism from $\Sig_*$ to $\up_*$. We say that $f$ is \emph{color preserving} if: (i) for all vertices $(x_1,y),(x_2,y)$ of $\Sig_*$, if $(x_1,y)f = (u_1,v_1)$ and $(x_2,y)f = (u_2,v_2)$, then $v_1 = v_2$; and (ii) for every vertex $(y,y)$ of $\Sig_*$, if $(y,y)f = (u,v)$, then $u = v$.
\end{defi}

\begin{defi}\label{dcpres}
	Let $\al,\bt\in C(\vep)$, where $\vep\in T(X)$ is an idempotent, and let $f$ be a color-preserving isomorphism from $\ga^p_*(\al)$ to $\ga^p_*(\bt)$. We say that $f$ \emph{respects colors} of the initial vertices of $\ga_*(\al)$ if it satisfies the following.
	Let $(y,y)$ be a vertex of $\ga_*(\al)$ (so $y\in \ima(\vep))$. Then:
	\begin{enumerate}
		\item[(a)] if $(y,y)$ is initial in $\ga_*(\al)$ and $(y,y)\to (z,z)$ is an edge in $\ga_*(\al)$, then there exists $v\in X$ such that
		\begin{itemize}
			\item[(i)] $(v,v)\to (z,z)f$ is an edge in $\ga_*(\bt)$,
			\item[(ii)] for every initial vertex $(x,y)$ of $\ga_*(\al)$, if $(x,y)\to (x_1,y_1)$ is an edge in $\ga_*(\al)$, then there exists
			a vertex $(u,v)$ in $\ga_*(\bt)$ such that $(u,v)\to (x_1,y_1)f$ is an edge in $\ga_*(\bt)$;
		\end{itemize}
		\item[(b)] if $(y,y)$ is not initial in $\ga_*(\al)$ and $(v,v) = (y,y)f$, then (ii) holds.
	\end{enumerate}
\end{defi}

\begin{lemma}\label{lresgt}
	Let $\al,\bt\in C(\vep)$, where $\vep\in T(X)$ is an idempotent.
	Suppose $\phi,\psi\in C(\vep)$ are $r$-homomorphisms from $\Gamma(\al)$ to $\Gamma(\bt)$ and from $\Gamma(\bt)$ to $\Gamma(\al)$, respectively, such that $x(\phi\psi)=x$ for every non-initial vertex $x$ of $\ga(\al)$, and $u(\psi\phi)=u$ for every non-initial vertex $u$ of $\ga(\bt)$. Let $A_{p*}$ and $B_{p*}$ be the sets of vertices of the digraphs $\ga^p_*(\al)$ and $\ga^p_*(\bt)$, respectively. Then $\phi_*|_{A_{p*}}$ is a color-preserving isomorphism from $\ga^p_*(\al)$ to $\ga^p_*(\bt)$
	that respects colors of the initial vertices of $\ga_*(\al)$,
	$(\phi_*|_{A_{p*}})^{-1} = \psi_*|_{B_{p*}}$ (so $\psi_*|_{B_{p*}}$ is an isomorphism from $\ga^p_*(\bt)$ to $\ga^p_*(\al)$), and $\psi_*|_{B_{p*}}$ is color preserving and it respects colors of the initial vertices of $\ga_*(\bt)$.
\end{lemma}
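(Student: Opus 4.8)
The plan is to reduce the uncoloured part to Lemma~\ref{lresphi} and then to verify the colour-theoretic conditions directly, using the two structural facts about members of $C(\vep)$: that $\phi,\psi$ commute with $\vep$ (so $(s\vep)\phi=(s\phi)\vep$ and $(s\vep)\psi=(s\psi)\vep$ for all $s$), and that they are r-homomorphisms between the relevant functional digraphs (so $(s\al)\phi=(s\phi)\bt$, etc.). Write $A_p,B_p$ for the uncoloured vertex sets of $\ga^p(\al),\ga^p(\bt)$, so $A_{p*}=\{(x,x\vep):x\in A_p\}$ and $B_{p*}=\{(w,w\vep):w\in B_p\}$, set $f:=\phi_*|_{A_{p*}}$, and recall that $\al,\bt$ are full (hence $\spa(\al)=\spa(\bt)=X$, every vertex has an outgoing edge, and passing from $\ga(\cdot)$ to $\ga_*(\cdot)$ is just a relabelling of vertices), and that $\vep$ idempotent gives $y\in\ima(\vep)\iff y\vep=y$.

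Since $C(\vep)$ is a semigroup of full transformations it is closed under restrictions to spans, so Lemma~\ref{lresphi} applies to the r-homomorphisms $\phi,\psi$: $\phi|_{A_p}$ is an isomorphism $\ga^p(\al)\to\ga^p(\bt)$ with inverse $\psi|_{B_p}$. Because $\phi_*$ sends $(x,x\vep)$ to $(x\phi,(x\phi)\vep)$ and the colour is a function of the first coordinate, $f$ is exactly the bijection $A_{p*}\to B_{p*}$ induced by $\phi|_{A_p}$, hence a digraph isomorphism $\ga^p_*(\al)\to\ga^p_*(\bt)$. For $w=x\phi\in B_p$ with $x\in A_p$ (non-initial in $\ga(\al)$) we have $w\psi=x(\phi\psi)=x$, so $\psi_*|_{B_{p*}}$ inverts $f$; thus $(\phi_*|_{A_{p*}})^{-1}=\psi_*|_{B_{p*}}$ and in particular $\psi_*|_{B_{p*}}$ is an isomorphism $\ga^p_*(\bt)\to\ga^p_*(\al)$. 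Colour-preservation of $f$ is now immediate: if $x_1,x_2\in A_p$ have common colour $y$, then $(x_i\phi)\vep=(x_i\vep)\phi=y\phi$ for $i=1,2$, giving (i) of Definition~\ref{dstar}; and if $(y,y)$ is a vertex of $\ga^p_*(\al)$ then $y\in\ima(\vep)$, so $(y\phi)\vep=(y\vep)\phi=y\phi$ and $(y,y)f=(y\phi,y\phi)$ lies on the diagonal, giving (ii).

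For the condition that $f$ respects colours of the initial vertices of $\ga_*(\al)$, let $(y,y)$ be a vertex of $\ga_*(\al)$, so $y\in\ima(\vep)$, and take $v:=y\phi$. Then $v\in X=\spa(\bt)$ and $v\vep=(y\vep)\phi=y\phi=v$, so $(v,v)$ is a vertex of $\ga_*(\bt)$; moreover if $y$ is non-initial then $y\in A_p$ and $(y,y)f=(y\phi,y\phi)=(v,v)$, matching case~(b) of Definition~\ref{dcpres}. In case~(a), suppose $(y,y)\to(z,z)$ is an edge of $\ga_*(\al)$; then $z=y\al\in\ima(\vep)$ (as $C(\vep)$ maps $\ima(\vep)$ into itself) and $z$ is non-initial, so $(z,z)f=(z\phi,z\phi)$, and $(y\phi)\bt=(y\al)\phi=z\phi$ shows $(v,v)\to(z,z)f$ is an edge of $\ga_*(\bt)$, i.e.\ (i). For~(ii) (valid in both cases (a) and (b)), let $(x,y)$ be an initial vertex of $\ga_*(\al)$, so $x\vep=y$ with $x$ initial in $\ga(\al)$, and let $(x,y)\to(x_1,y_1)$ be an edge; then $x_1=x\al$ is non-initial, $y_1=x_1\vep=(x\vep)\al=y\al$, and $(x_1,y_1)f=(x_1\phi,(x_1\vep)\phi)=(x_1\phi,y_1\phi)$. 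Put $u:=x\phi$; then $u\vep=(x\vep)\phi=y\phi=v$, so $(u,v)$ is a vertex of $\ga_*(\bt)$, and its out-edge reaches $\big((x\phi)\bt,\,((x\phi)\bt)\vep\big)=(x_1\phi,(x_1\vep)\phi)=(x_1\phi,y_1\phi)=(x_1,y_1)f$, as required. (Here we also use Lemma~\ref{lcin}/Lemma~\ref{lichar} only implicitly, to know these statements are about the right set of initial vertices.)

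Finally, $\psi$ is an r-homomorphism from $\ga(\bt)$ to $\ga(\al)$, lies in $C(\vep)$, and satisfies $u(\psi\phi)=u$ on the non-initial vertices of $\ga(\bt)$ and $x(\phi\psi)=x$ on the non-initial vertices of $\ga(\al)$; these are precisely the hypotheses placed on $\phi$ with $\al$ and $\bt$ interchanged. Hence the last two arguments apply verbatim to $\psi$, showing that $\psi_*|_{B_{p*}}$ is colour preserving and respects colours of the initial vertices of $\ga_*(\bt)$, which completes the proof. The one place that needs genuine care is the choice of the single witness $v$ in Definition~\ref{dcpres}(a): one must take $v=y\phi$ (rather than some arbitrary $\bt$-preimage of $z\phi$), since the same $v$ has to serve simultaneously in (i) and in (ii); once this choice is made, every verification collapses to the commuting identity $(\cdot\,\vep)\phi=(\cdot\,\phi)\vep$ together with the homomorphism identity $(\cdot\,\al)\phi=(\cdot\,\phi)\bt$.
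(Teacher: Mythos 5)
Your proposal is correct and follows essentially the same route as the paper: reduce the uncoloured isomorphism statement to Lemma~\ref{lresphi}, then verify colour-preservation and the respecting-colours conditions by combining the commuting identity $(s\vep)\phi=(s\phi)\vep$ with the homomorphism identity $(s\al)\phi=(s\phi)\bt$, with the same key choice of witness $v=y\phi$. No gaps.
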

\begin{proof}
	Let $A_p$ and $B_p$ be the sets of vertices of $\ga^p(\al)$ and $\ga^p(\bt)$, respectively. By Lemma~\ref{lresphi}, $\phi|_{A_p}$ is an isomorphism from $\ga^p(\al)$ to $\ga^p(\bt)$ and $(\phi|_{A_p})^{-1}=\psi|_{B_p}$. Then by Definition~\ref{dstar}, $\phi_*|_{A_{p*}}$ is an isomorphism from $\ga^p_*(\al)$ to $\ga^p_*(\bt)$ and $(\phi_*|_{A_{p*}})^{-1}=\psi_*|_{B_{p*}}$.
	
	Let $f=\phi_*|_{A_{p*}}$. For vertices $(x_1,y),(x_2,y)$ of $\ga^p_*(\al)$, suppose that $(x_1,y)f = (u_1,v_1)$ and $(x_2,y)f = (u_2,v_2)$. Then, $x_1\vep = y = x_2\vep$, and so, since $\phi\in C(\al)$, we have $v_1 = u_1\vep = (x_1\phi)\vep = y\phi$. Similarly, $v_2 = y\phi$, and so $v_1 = v_2$. Let $(y,y)$ be a vertex of $\ga^p_*(\al)$, and suppose that $(y,y)f = (u,v)$. Then $y = y\vep\in\ima(\vep)$, and so $u = y\phi\in\ima(\vep)$. Hence $v = u\vep = u$. Therefore $\phi_*|_{A_{p*}}$ is color preserving.
	
	Let $(y,y)$ be an initial vertex of $\ga_*(\al)$, and let $(y,y)\to (z,z)$ be an edge in $\ga_*(\al)$. Set $v = y\phi$. Since $y\in\ima(\vep)$ and $\phi\in C(\vep)$, we have $v\in\ima(\vep)$. Thus $(y,y)\phi_* = (v,v)$ by the definition of $\phi_*$ (see Definition~\ref{dstar}). Since $\phi_*$ is a homomorphism from $\ga_*(\al)$ to $\ga_*(\bt)$, $(v,v) = (y,y)\phi_*\to (z,z)\phi_*=(z,z)f$ is an edge in $\ga_*(\bt)$. Let $(x,y)$ be an initial vertex of $\ga_*(\al)$, $(x,y)\to (x_1,y_1)$ be an edge in $\ga_*(\al)$, and let $(u,w) = (x,y)\phi_*$. Then $x\phi = u$ and so, since $\phi\in C(\vep)$ and $x\vep = y$, we have $w = u\vep = (x\phi)\vep = y\phi = v$. Further, since $\phi_*$ is a homomorphism from $\ga_*(\al)$ to $\ga_*(\bt)$, $(u,v) = (u,w) = (x,y)\phi_*\to (x_1,y_1)\phi_* = (x_1,y_1)f$ is an edge in $\ga_*(\bt)$.
	
	We have proved that $f = \phi_*|_{A_{p*}}$ satisfies (a) of Definition~\ref{dcpres}. The proof of $f = \phi_*|_{A_{p*}}$ satisfying (b) of Definition~\ref{dcpres} is similar. Thus $\phi_*|_{A_{p*}}$ respects colors of the initial vertices of $\ga_*(\al)$. By symmetry, $\psi_*|_{B_{p*}}$ is color preserving and it respects colors of the initial vertices of $\ga_*(\bt)$.
\end{proof}

\begin{theorem}\label{tcechar}
	Let $\al,\bt\in C(\vep)$ where $\vep\in T(X)$ is an idempotent. Then $\al\cfn\bt$ in $C(\vep)$ if and only if there exists an isomorphism $f$ from $\ga^p_*(\al)$ to $\ga^p_*(\bt)$ such that $f$ and $f^{-1}$ are color preserving and they respect colors of the initial vertices (of $\ga_*(\al)$ and $\ga_*(\bt)$, respectively).
\end{theorem}
\begin{proof}
	We first note that the statements about $f^{-1}$ being color preserving and respecting colors of the initial vertices make sense since if $f$ is an isomorphism from $\ga^p_*(\al)$ to $\ga^p_*(\bt)$, then $f^{-1}$ is an isomorphism from $\ga^p_*(\bt)$ to $\ga^p_*(\al)$. Suppose that $\al\cfn\bt$ in $C(\vep)$. Then by Theorem~\ref{tstn}, there exist $r$-homomorphisms $\phi,\psi\in C(\vep)$ as in Lemma~\ref{lresgt}. Thus by Lemma~\ref{lresgt}, $f = \phi_*|_{A_{p*}}$ is an isomorphism from $\ga^p_*(\al)$ to $\ga^p_*(\bt)$ that satisfies the desired properties.
	
	Conversely, suppose that there exists an isomorphism $f$ from $\ga^p_*(\al)$ to $\ga^p_*(\bt)$ that satisfies the given properties.
	To prove that $\al\cfn\bt$, we will define suitable $r$-homomorphisms $\phi$ (from $\ga(\al)$ to $\ga(\bt)$) and $\psi$ (from $\ga(\bt)$ to $\ga(\al)$). Define $\phi:X\to X$ as follows. Let $x\in X$. If $x$ is not initial in $\ga(\al)$, then we set $x\phi = u$, where $u$ is such that $(x,x\vep)f = (u,u\vep)$.
	Suppose that $x$ is initial in $\ga(\al)$. Then for $y = x\vep$, $(x,y)$ is initial in $\ga_*(\al)$. Let $v$ a vertex of $\ga(\al)$ as in Definition~\ref{dcpres}. Fix a vertex $u_x$ of $\ga(\al)$
	such that $u_x = u$, where $u$ is a vertex of $\ga(\al)$ as in Definition~\ref{dcpres}. (There may be more that one such a $u$ for $x$, and we fix one of them.) Moreover, if $x = y$ (so $(y,y)$ is initial in $\ga_*(\al)$), then $v$ is one of the vertices $u$
	as in Definition~\ref{dcpres}, and we may assume that $u_x = v$. We now set $x\phi = u_x$.
	
	To show that $\phi$ is a homomorphism from $\ga(\al)$ to $\ga(\bt)$, let $x\to x_1$ be an edge in $\ga(\al)$. Note that $x_1$ is not initial in $\ga(\al)$. Suppose $x$ is not initial in $\ga(\al)$. Let $u = x\phi$ and $u_1 = x_1\phi$. Then $(x,x\vep)f = (u,u\vep)$ and $(x_1,x_1\vep)f = (u_1,u_1\vep)$. Further, $(x,x\vep)\to (x_1,x_1\vep)$ is an edge in $\ga^p_*(\al)$, and so $(u,u\vep) = (x,x\vep)f\to (x_1,x_1\vep)f = (u_1,u_1\vep)$ is an edge in $\ga^p_*(\bt)$. Thus $x\phi = u\to u_1 = x_1\phi$ is an edge in $\ga^p(\bt)$, and so an edge in $\ga(\bt)$. Suppose $x$ is initial in $\ga(\al)$, and let $y = x\vep$ and $y_1 = x_1\vep$. Then
	$(x,y)$ is initial in $\ga_*(\al)$ and $(x,y)\to (x_1,y_1)$ is an edge in $\ga_*(\al)$. Thus the definition of $\phi$ implies that
	there exists $v\in X$ such that $(u_x,v)$ is a vertex of $\ga_*(\bt)$ and $(u_x,v)\to (x_1,y_1)f=(u_1,u_1\vep)$ is an edge in $\ga_*(\bt)$ (see Definition~\ref{dcpres}). Hence $x\phi = u_x\to u_1 = x_1\phi$ is an edge in $\ga(\bt)$.
	
	To show that $\phi$ is an $r$-homomorphism (see Definition~\ref{drh}), we first note that $\ga(\al)$ does not have any terminal vertices since $\al\in T(X)$. Let $x$ be a bottom initial vertex of $\ga(\al)$ and let $x\to x_1$ be an edge in $\ga(\al)$. Then $x_1$ is an initial vertex of $\ga^p(\al)$ and $x\phi\to x_1\phi$ is an edge in $\ga(\bt)$ (since $\phi$ is a homomorphism). Thus $(x_1,x_1\vep)$ is an initial vertex of $\ga_*^p(\al)$, and so, since $f$ is an isomorphism from $\ga_*^p(\al)$ to $\ga_*^p(\bt)$, the vertex $(x_1,x_1\vep)f = (x_1\phi,(x_1\phi)\vep)$ is initial in $\ga_*^p(\bt)$. Hence $x_1\phi$ is initial in $\ga^p(\bt)$ and so $x\phi$ is initial (even bottom initial) in $\ga(\bt)$.
	
	We will now show that $\phi\in C(\vep)$. Let $y\in \ima(\vep)$. Suppose $y$ is not initial in $\ga(\al)$. Then $(y,y)$ is a vertex in $\ga_*^p(\al)$, and so, since $f$ is color preserving, $(y,y)f = (v,v)$ for some $v\in X$. Thus $y\phi = v$ and $v\in \ima(\vep)$. Suppose $y$ is initial in $\ga(\al)$. Then, setting $x = y$, we have $y\phi = x\phi = u_x = v$, where $v$ is as in Definition~\ref{dcpres}, so $v\in \ima(\vep)$. We have proved that $(\ima(\vep))\phi\subseteq \ima(\vep)$.
	
	Let $x\in X$, and let $y = x\vep$, $u = x\phi$, and $v = y\phi$. Consider two possible cases.
	\vskip 1mm
	\noindent\textbf{Case 1.} $y$ is not initial in $\ga(\al)$.
	\vskip 1mm
	Then, by the definition of $\phi$, $(y,y\vep)f=(v,v\vep)$, and so $(y,y)f = (v,v)$. If $x$ is not initial in $\ga(\al)$, then
	$(x,y)f = (x,x\vep)f = (u,u\vep)$, and so, since $f$ is color preserving, $(x\phi)\vep = u\vep = v\vep = v = y\phi$. Suppose
	$x$ is initial in $\ga(\al)$. Then $u$ is as in Definition~\ref{dcpres}, so $(u,v)$ is a vertex of $\ga_*(\bt)$.
	Thus $u\vep = v$, and so $(x\phi)\vep = y\phi$.
	\vskip 1mm
	\noindent\textbf{Case 2.} $y$ is initial in $\ga(\al)$.
	\vskip 1mm
	Then by the definition of $\phi$, $v$ is as in Definition~\ref{dcpres}. Further, by Lemma~\ref{lcin}, $x$ is
	initial in $\ga(\al)$, and we obtain $(x\phi)\vep = y\phi$ as in the proof of Case~1.
	
	Hence $\phi\in C(\vep)$. We define an $r$-homomorphism $\psi\in C(\bt)$ from $\ga(\bt)$ to $\ga(\al)$ in the same way using $f^{-1}$.
	Then for every non-initial vertex $x$ of $\ga(\al)$, we have
	$(x,x\vep)(ff^{-1}) = (x,x\vep)$, and so $x(\phi\psi) = x$.
	Similarly, $u(\psi\phi) = u$ for every non-initial vertex $u$ of $\ga(\bt)$. Therefore $\al\cfn\bt$ in $C(\vep)$ by Theorem~\ref{tstn}.
\end{proof}

The following example shows that if $f$ from the statement of Theorem~\ref{tcechar} is a color preserving isomorphism, then it does not follow that $f^{-1}$ is also color preserving.

\begin{example}\label{ecechar}
	Let $X=\{1,\ldots,9\}$. Consider the idempotent $\vep\in T(X)$ whose digraph is given in Figure~\ref{fig6}.
	\begin{figure}[h]
		\[
		\xy
		(44,44)*{\bullet}="1";
		(39,36)*{\bullet}="4a";
		{\ar@(ur,ul)};
		(49,36)*{\bullet}="4c";
		{\ar@{->} "4a";"1"};
		{\ar@{->} "4c";"1"};
		(46,44)*{\text{\footnotesize{$1$}}};
		(51,36)*{\text{\footnotesize{$6$}}};
		(37,36)*{\text{\footnotesize{$5$}}};
		(70,44)*{\bullet}="1";
		(65,36)*{\bullet}="4a";
		{\ar@(ur,ul)};
		(75,36)*{\bullet}="4c";
		{\ar@{->} "4a";"1"};
		{\ar@{->} "4c";"1"};
		(72,44)*{\text{\footnotesize{$2$}}};
		(77,36)*{\text{\footnotesize{$8$}}};
		(63,36)*{\text{\footnotesize{$7$}}};
		(90,44)*{\bullet}="7";
		(90,36)*{\bullet}="8";
		{\ar@(ur,ul)};
		{\ar@{->} "8";"7"};
		(92,44)*{\text{\footnotesize{$3$}}};
		(92,36)*{\text{\footnotesize{$9$}}};
		(105,44)*{\bullet}="17";
		(105,44)*{\bullet}="18";
		{\ar@(ur,ul)};
		(108,44)*{\text{\footnotesize{$4$}}};
		\endxy
		\]
		\caption{The digraph of $\vep$ from Example~\ref{ecechar}.}\label{fig6}
	\end{figure}
	Define $\al,\bt\in T(X)$ by
	\[
	\al=\begin{pmatrix}1&2&3&4&5&6&7&8&9\\1&1&2&1&1&1&1&5&7\end{pmatrix}
	\mbox{ and }
	\bt=\begin{pmatrix}1&2&3&4&5&6&7&8&9\\1&1&1&2&1&1&5&6&1\end{pmatrix}.
	\]
	Then, $\al,\bt\in C(\vep)$, and $\ga^p_*(\al)$ and $\ga^p_*(\bt)$ are as in Figure~\ref{fig7}.
	\begin{figure}[h]
		\[
		\xy
		(40,44)*{\bullet}="1";
		(40,36)*{\bullet}="4";
		{\ar@(ur,ul)};
		(32,36)*{\bullet}="4a";
		(48,36)*{\bullet}="4c";
		{\ar@{->} "4a";"1"};
		{\ar@{->} "4c";"1"};
		{\ar@{->} "4";"1"};
		(45,44)*{\text{\footnotesize{$(1,1)$}}};
		(40,33)*{\text{\footnotesize{$(5,1)$}}};
		(48,33)*{\text{\footnotesize{$(7,2)$}}};
		(32,33)*{\text{\footnotesize{$(2,2)$}}};
		(76,44)*{\bullet}="1";
		(76,36)*{\bullet}="4";
		{\ar@(ur,ul)};
		(68,36)*{\bullet}="4a";
		(84,36)*{\bullet}="4c";
		{\ar@{->} "4a";"1"};
		{\ar@{->} "4c";"1"};
		{\ar@{->} "4";"1"};
		(81,44)*{\text{\footnotesize{$(1,1)$}}};
		(76,33)*{\text{\footnotesize{$(5,1)$}}};
		(84,33)*{\text{\footnotesize{$(6,1)$}}};
		(68,33)*{\text{\footnotesize{$(2,2)$}}};
		\endxy
		\]
		\caption{The digraphs of $\ga^p_*(\al)$ (left) and $\ga^p_*(\bt)$ (right) from Example~\ref{ecechar}.}\label{fig7}
	\end{figure}
	Now $f = \begin{pmatrix}(1,1)&(2,2)&(5,1)&(7,2)\\(1,1)&(2,2)&(5,1)&(6,1)\end{pmatrix}$ is a color preserving isomorphism from 
	$\ga^p_*(\al)$ to $\ga^p_*(\bt)$, but $f^{-1} = \begin{pmatrix}(1,1)&(2,2)&(5,1)&(6,1)\\(1,1)&(2,2)&(5,1)&(7,2)\end{pmatrix}$ is not color preserving. Actually, there is no color preserving isomorphism from $\ga^p_*(\al)$ to $\ga^p_*(\bt)$ such that its inverse is also color preserving. Hence $\al\not\cfn\bt$ in $C(\vep)$ by Theorem~\ref{tcechar}.
\end{example}

\subsection{Conjugacy by permutation in transformation semigroups}\label{subbp}

In this subsection, we compare conjugacy by permutation $\cbp$ with natural conjugacy $\cfn$ in the semigroups of transformations considered in \S\ref{subnat}.

In the symmetric group $\sym(X)$, $\cbp$ is the usual group conjugacy; 
therefore it is determined by the \emph{form} of a permutation $\al$. The form of $\al$ is obtained by replacing each element of $X$ in the cycle decomposition of $\al$ by some generic symbol, say $*$.
For example, the form of $\al=(1)(4)(2\,5)(6\,8)(3\,7\,9)\in S_9$ is $(*)(*)(*\,*)(*\,*)(*\,*\,*)$ ($1$-cycles can be omitted when $X$ is finite).

A similar statement is true for $\cbp$ in any semigroup $S$ of transformations on $X$, where here the \emph{form} of $\al\in S$ is obtained by replacing each element of $X$ in the digraph that represents $\al$ by, say, $*$. However, if $X$ is infinite, we have to extend the digraph $\ga(\al)$ ($\al\in P(X)$) to the digraph $\gae(\al) = (X,E)$, where for all $x,y\in X$, $(x,y)\in E$ if and only if $x\in \dom(\al)$ and $x\al = y$. In other words, $\gae(\al)$ is obtained from $\ga(\al)$ by adding all elements $x\in X\setminus\spa(\al)$ (if any) as isolated vertices.

\begin{lemma}\label{lgae}
	Let $\al,\bt\in P(X)$. Then:
	\begin{enumerate}
		\item[\textup{(1)}] if $\al\in T(X)$, then $\gae(\al)=\ga(\al)$;
		\item[\textup{(2)}] if $\gae(\al)\cong\gae(\bt)$, then $\ga(\al)\cong\ga(\bt)$;
		\item[\textup{(3)}] if $\al,\bt\in T(X)$ or $X$ is finite, then $\gae(\al)\cong\gae(\bt)$ if and only if $\ga(\al)\cong\ga(\bt)$.
	\end{enumerate}
\end{lemma}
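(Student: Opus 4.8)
The plan is to derive all three statements directly from the fact that $\gae(\al)$ is, by construction, $\ga(\al)$ with the elements of $X\sm\spa(\al)$ adjoined as isolated vertices (and the same edge rule), so the whole lemma is essentially bookkeeping about isolated vertices. For (1), I would note that $\al\in T(X)$ forces $\dom(\al)=X$, hence $\spa(\al)=\dom(\al)\cup\ima(\al)=X$, so $X\sm\spa(\al)=\emptyset$ and no vertices are added; since the edge relation is defined by the same rule in both digraphs, $\gae(\al)=\ga(\al)$.

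For (2), the preliminary observation is that $\ga(\al)$ has no isolated vertices: every $x\in\spa(\al)$ is incident to an edge of $\ga(\al)$, because if $x\in\dom(\al)$ then $(x,x\al)$ is an edge, while if $x\in\ima(\al)\sm\dom(\al)$ then $(y,x)$ is an edge for any $y$ with $y\al=x$. Hence the set of isolated vertices of $\gae(\al)$ is exactly $X\sm\spa(\al)$, and $\ga(\al)$ is the subdigraph of $\gae(\al)$ induced by the non-isolated vertices (and likewise for $\bt$). Since any digraph isomorphism carries isolated vertices to isolated vertices and non-isolated vertices to non-isolated vertices, an isomorphism $\gae(\al)\to\gae(\bt)$ restricts to an isomorphism between the subdigraphs induced by the non-isolated vertices, that is, to an isomorphism $\ga(\al)\cong\ga(\bt)$.

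For (3), the implication $\gae(\al)\cong\gae(\bt)\imp\ga(\al)\cong\ga(\bt)$ is exactly (2), and needs no hypothesis. For the converse, assume $\ga(\al)\cong\ga(\bt)$. If $\al,\bt\in T(X)$, then (1) gives $\gae(\al)=\ga(\al)\cong\ga(\bt)=\gae(\bt)$ immediately. If instead $X$ is finite, an isomorphism $\ga(\al)\to\ga(\bt)$ forces $|\spa(\al)|=|\spa(\bt)|$, hence $|X\sm\spa(\al)|=|X\sm\spa(\bt)|$, so I would extend the isomorphism by any bijection from the isolated vertices $X\sm\spa(\al)$ of $\gae(\al)$ onto the isolated vertices $X\sm\spa(\bt)$ of $\gae(\bt)$; the union of the two maps is an isomorphism $\gae(\al)\to\gae(\bt)$.

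There is no real obstacle here; the only point that deserves care is that the hypothesis of (3) cannot be dropped. For infinite $X$ with non-full maps one can have $\ga(\al)\cong\ga(\bt)$ while $\gae(\al)$ and $\gae(\bt)$ have sets of isolated vertices of different cardinalities -- for instance with $X=\N$, taking $\al$ to be the identity on the even numbers and $\bt$ the identity on the nonzero numbers -- so the cardinality-matching step genuinely relies on finiteness (or, in the other case, on fullness, which makes both isolated-vertex sets empty).
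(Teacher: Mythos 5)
Your proposal is correct and follows essentially the same route as the paper: (1) via $\spa(\al)=X$ for full transformations, (2) by observing that $\ga(\al)$ is recovered from $\gae(\al)$ as the induced subdigraph on the non-isolated vertices (the paper states this step as immediate from the definitions), and (3) by combining (1) with the cardinality equality $|X\sm\spa(\al)|=|X\sm\spa(\bt)|$ in the finite case. Your added counterexample showing the hypothesis of (3) is necessary for infinite $X$ is a correct and worthwhile remark not present in the paper.
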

\begin{proof}
	Statement (1) is true since for every $\al\in T(X)$, $\spa(\al)=X$; and (2) follows immediately from the definitions of $\gae(\al)$ and $\ga(\al)$. If $\al,\bt\in T(X)$, then the conclusion of (3) follows from (1). If $X$ is finite, then the conclusion of (3) is true since in this case, $|X\setminus \spa(\al)| = |X\setminus \spa(\bt)|$.
\end{proof}

Now, it is obvious that for all $\al,\bt\in P(X)$,
\begin{equation}\label{ecbp}
	\al\cbp \bt\text{ if and only if }\gae(\al)\cong \gae(\bt).
\end{equation}

The next corollary follows from the results stated in \S\ref{subnat}, Lemma~\ref{lgae}, and \eqref{ecbp}.

\begin{cor}\label{ccbpcfn}
	Let $X$ be a nonempty set, $\emptyset\neq Y\subseteq X$, and $n\geq 1$. The following statements are true:
	\begin{enumerate}
		\item[\textup{(1)}] if $|X|\geq3$, then in $P(X)$, $\cbp\,\subset\,\cfn$;
		\item[\textup{(2)}] if $|X|\geq4$, then in $T(X)$, $\cbp\,\subset\,\cfn$;
		\item[\textup{(3)}] if $X$ is infinite, then in $\mi(X)$, $\cbp\,\subset\,\cfn$;
		\item[\textup{(4)}] if $X$ is finite, then in $\mi(X)$, $\cbp\,=\,\cfn$;
		\item[\textup{(5)}] in $\ox$ and in $\mj(X)$, $\cbp\,=\,\cfn$;
		\item[\textup{(6)}] if $|X|\geq4$, $|Y|\geq2$, and $Y\ne X$, then in $T(X,Y)$, $\cbp$ and $\cfn$ are not comparable with respect to inclusion;
		\item[\textup{(7)}] if $n\geq4$, then in $\on$, $\cbp$ and $\cfn$ are not comparable with respect to inclusion;
		\item[\textup{(8)}] if $n\geq3$, then in $\oin$, $\cfn\,\subset\,\cbp$.
	\end{enumerate}
\end{cor}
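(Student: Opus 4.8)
The plan is to reduce every one of (1)--(8) to a comparison of graph-isomorphism conditions. By \eqref{ecbp}, $\al\cbp\bt$ holds precisely when $\gae(\al)\cong\gae(\bt)$, while $\cfn$ has already been characterised in each relevant semigroup: Theorem~\ref{tpxtxpr} for $P(X)$ and $T(X)$, \eqref{emijx} for $\mi(X)$ and $\mj(X)$, Theorem~\ref{tcox} for $\ox$, Theorem~\ref{ttxy} for $T(X,Y)$, Theorem~\ref{ton} for $\on$, and Theorem~\ref{toin} for $\oin$; Lemma~\ref{lgae} is the bridge between $\gae$ and $\ga$. So for each semigroup the statement becomes a comparison between ``$\gae$-isomorphic'' and one of ``$\ga$-isomorphic'', ``$\ga^p$-isomorphic'', ``the canonical order-isomorphism of the $\ga^p$-vertices is a digraph isomorphism'' (for $\on$), or ``the canonical order-isomorphism of the spans is a digraph isomorphism'' (for $\oin$), and I would organise the write-up exactly along these lines.

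I would first dispatch the inclusions and equalities. For (1)--(3) the group of units of the ambient monoid is $\sym(X)$, so $\cbp=\cgn\subseteq\cfn$ (the equality is the remark recalled in the introduction, the inclusion $\cgn\subseteq\cfn$ holds in every semigroup). For (4), $X$ finite gives $\gae(\al)\cong\gae(\bt)\iff\ga(\al)\cong\ga(\bt)$ by Lemma~\ref{lgae}(3), and \eqref{emijx} says $\cfn$ is $\ga$-isomorphism, so $\cbp=\cfn$. For (5), every element of $\ox$ and of $\mj(X)$ is total, so $\gae(\al)=\ga(\al)$ by Lemma~\ref{lgae}(1), and Theorem~\ref{tcox} (for $\ox$) and \eqref{emijx} (for $\mj(X)$) again give $\cbp=\cfn$. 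For the nontrivial inclusion in (8): if $\al\cfn\bt$ in $\oin$ then by Theorem~\ref{toin} the spans have equal size $k=m$ and the order-isomorphism $\spa(\al)\to\spa(\bt)$ is a digraph isomorphism $\ga(\al)\to\ga(\bt)$; since $|X_n\setminus\spa(\al)|=n-k=n-m=|X_n\setminus\spa(\bt)|$, adjoining the isolated vertices yields $\gae(\al)\cong\gae(\bt)$, i.e. $\al\cbp\bt$.

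Next I would handle strictness and incomparability with explicit small examples. For (1): a constant map on $X$ and the partial identity on a single point have the same prune (one loop) but different numbers of isolated vertices in $\gae$. For (2): on $\{1,2,3,4\}$ the maps $1\mapsto2,2\mapsto1,3\mapsto1,4\mapsto1$ and $1\mapsto2,2\mapsto1,3\mapsto1,4\mapsto2$ share the prune (the transposition $(1\,2)$) but have different in-degree multisets, so are not $\ga$-isomorphic; for larger $X$ adjoin common fixed points. For (3): with $X$ infinite, the partial identities on $X\setminus\{x_0\}$ and $X\setminus\{x_0,x_1\}$ have the same $\ga$ (infinitely many loops) but differ in the number of isolated vertices of $\gae$. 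For (8): the increasing chain $[1\,2]$ and the decreasing chain $[2\,1]$ of $\oin$ are $\cbp$-related (each is a single edge up to isomorphism) but the order-isomorphism of their spans reverses that edge, so they are not $\cfn$-related. For the incomparabilities (6) and (7) I would produce a pair in each direction: for $\cbp\not\subseteq\cfn$, conjugate a suitable map by a transposition so that a relabelled initial bundle falls outside $Y$ (for $T(X,Y)$) or so that the canonical order-labelling of the prune is destroyed (for $\on$); for $\cfn\not\subseteq\cbp$, reuse the ``same core, different multiplicities of pendant preimages'' construction of (2), arranged to respect $\ima(\al)\subseteq Y$ (respectively order-preservation) and to create no offending initial bundle (respectively to keep the order-labelled prunes equal).

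The main obstacle is bookkeeping rather than any single deep step: one must pin down, for each semigroup, exactly which isomorphism notion governs $\cfn$ there, and then construct the counterexamples inside the semigroup --- for $T(X,Y)$ this forces $\ima(\al)\subseteq Y$ and a minor case split according to whether $|X\setminus Y|\ge 2$ or $|X\setminus Y|=1$ (in which case $|Y|\ge 3$), in order to place the extra vertices; for $\on$ and $\oin$ it forces order-preservation. The genuinely delicate conceptual point is that for $\on$ and $\oin$ the relation $\cfn$ is not ``$\ga^p$-isomorphic'' but ``the canonical order-isomorphism is a digraph isomorphism'', a condition which in general is neither coarser nor finer than abstract graph isomorphism; this is precisely why (7) is an incomparability while (8) is a strict containment --- in $\oin$ the span is itself the ordered vertex set and the extra constraint concerns only that order-isomorphism, so $\cfn\subseteq\cbp$, whereas in $\on$ the pruned picture can already diverge from the $\cbp$-picture in both directions. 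Finally one checks that the thresholds $|X|\ge 3$, $|X|\ge 4$, $X$ infinite, $n\ge 4$, $n\ge 3$, and the conditions on $Y$, are exactly what make the chosen examples available.
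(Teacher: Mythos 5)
Your proposal is correct and follows exactly the route the paper intends: the paper's own proof of this corollary is just the one-line observation that it follows from the characterizations of $\cfn$ in \S\ref{subnat}, Lemma~\ref{lgae}, and \eqref{ecbp}, and your write-up supplies precisely those reductions together with suitable witnessing examples for the strictness and incomparability claims. The examples you sketch (constant map versus a one-point partial identity, the two maps with a common $2$-cycle core but different pendant multiplicities, co-singleton versus co-doubleton partial identities, and the chains $[1\,2]$, $[2\,1]$ in $\oin$) all check out against Theorems~\ref{tpxtxpr}, \ref{ttxy}, \ref{ton}, \ref{toin} and \eqref{emijx}, so no gap remains.
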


\subsection{Natural conjugacy in the endomorphism monoid of a finite abelian $G$-set}\label{rightleft1}
\label{ssec:conjGset}
Let $G$ be a group with identity $e$. A (left) $G$-set is a set $X$ together with an action $\cdot\colon G\times X\to X$ such that for all $k,l\in G$, $x\in X$, we have $(kl)\cdot x = k\cdot (l\cdot x)$ and $e\cdot x = x$. Throughout this subsection, we assume that $X$ and $G$ are finite and that $G$ is abelian.

A \emph{$G$-endomorphism} of $X$ is a function $f\in T(X)$ such that $f(k\cdot x)= k\cdot f(x)$ for all $k\in G, x\in X$. Exceptionally, in this subsection, we will compose transformations from right to left, that is, $(f\circ g)(x) = f(g(x))$. This change is for compatibility with the common practice of using left instead of right $G$-sets. With this convention, the set $\End_G(X)$ of all $G$-endomorphisms of $X$ is a submonoid of $T(X)$.

If $x,y\in X$ lie in the same $G$-orbit $O$, then it follows from the commutativity of $G$ that $x$ and $y$ have the same point stabilizer $G_x = G_y$. We set $G_O = G_x$ for any $x\in O$.

Let $f\in \End_G(X)$. The following facts about $f$ are easily checked:
\begin{itemize}
	\item $f$ maps $G$-orbits to $G$-orbits;
	\item if $f(x) = y$ then $G_x \le G_y$;
	\item if $x\in X$ lies in the $G$-orbit $O$, then $f(x)$ determines $f(y)$ for all $y\in O$ as $f(k\cdot x) = k \cdot f(x)$.
\end{itemize}
For every $f\in \End_G(X)$, we let $\Gamma(f)$ be the functional digraph of $f$. Let $K$ be the set of $G$-orbits on~$X$. As $f$ preserves orbits, $\Gamma(f)$ induces a digraph on $K$, which we denote by $K(f)$. 
For $O\in K$, we set $S_f(O) = \{O'\in K: f(O) = f(O')\}$.

For every $f\in \End_G(X)$, a \emph{$G$-trim} of $f$ is a digraph obtained from $K(f)$ by removing vertices according to the following procedure.
\begin{enumerate}
	\item Remove all initial  vertices $O\in K(f)$, for which there is an $O'\in S_f(O)$ satisfying $G_O < G_{O'}$.
	\item After step 1, remove all inital $O\in K(f)$ for which there is a non-initial $O'\in S_f(O)$ satisfying $G_O = G_{O'}$.
	\item After step 2, for each remaining initial orbit $O$ consider the set
	\[
	U_O = \{O'\in S_f(O)\,:\,G_O = G_{O'}\}.
	\]
	Clearly, all orbits in $U_O$ are initial, and if $O'\in U_O$, then $U_{O'} = U_O$. Delete all but one orbit in $U_O$.
\end{enumerate}
It is easy to see that all $G$-trims of $f$ are isomorphic as digraphs by an isomorphism that preserves the point stabilizers of the orbits.  Hence, up to such an isomorphism, we may speak of the $G$-trim of $K(f)$, and use the notation $K^{t}(f)$. We will occasionally need to refer to $G$-trims without this equivalence up to isomorphism, in which case we will speak of a concrete realization of the $G$-trim.

We now extend the digraphs $K(f)$ and $K^t(f)$ by adding information about the cycles in these digraphs. Note that all orbits that are vertices of the same cycle necessarily have the same stabilizer.

Let $O\in K$ be such that $O$ lies in a cycle of $K(f)$ with $n$ vertices. Then $f^n$ maps $O$ to itself. Pick $x\in O$, then $f^n(x) = k\cdot x$ for some $k\in G$. This $k$ is in general not unique, as any element of the coset $kG_O$ can replace $k$.

Using the fact that $G$ is abelian, we obtain $f^n(j\cdot x) = (jk)\cdot x = (kj)\cdot x = k\cdot (j\cdot x)$. It follows that $kG_O$ does not depend on the choice of $x\in O$, and hence we may label the vertex $O$ with $kG_O$. Moreover, it is straightforward to check that all vertices in the cycle containing $O$ will be labeled equally. Because the trim procedure does not remove vertices in cycles, we may carry these labels from $K(f)$ to $K^t(f)$. By abuse of notation, we will call the resulting partially labeled graphs $K(f)$ and $K^t(f)$, as well.

We remark that the unlabeled version of the $G$-trim of $f$ is not necessarily, up to isomorphism, obtained from its trim $\Gamma^t(f)$ by identifying vertices from the same $G$-orbit. Compared to such a construction, the $G$-trim will in general contain additional initial vertices.

\begin{theorem}
	Suppose that $a,b\in \End_G(X)$, and that the $G$-trims $K^{t}(a)$ and $K^{t}(b)$ are isomorphic by a graph isomorphism $g'$ that preserves stabilizers of orbits and the partial labels. Then $a\cfn b$ in $\End_G(X)$.
\end{theorem}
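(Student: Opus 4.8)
The plan is to realize the graph isomorphism $g'$ between $K^t(a)$ and $K^t(b)$ as a pair of $r$-homomorphisms at the level of functional digraphs on $X$, and then invoke Theorem~\ref{tstn} (noting $\End_G(X)$ is closed under restrictions to spans, since it consists of full transformations). First I would lift $g'$ to a bijection on the relevant $G$-orbits: the vertices of $K^t(a)$ are orbits, and since $g'$ preserves stabilizers, an orbit $O$ of $a$ and its image orbit $g'(O)$ of $b$ have the same stabilizer, hence the same cardinality, so we may choose a $G$-equivariant bijection $O\to g'(O)$ for each retained orbit. The subtle point is that this choice is not free on cycles: if $O$ lies on an $n$-cycle of $K^t(a)$ carrying label $kG_O$, then $g'(O)$ lies on an $n$-cycle of $K^t(b)$ carrying the same coset label, and one must propagate the bijection around the cycle compatibly so that the map intertwines $a$ and $b$ on the union of the cyclic orbits. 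This is where the labels do their work: the common label $kG_O$ is exactly the obstruction class that must vanish for such a compatible choice to exist, and it does by hypothesis. On the non-cyclic retained orbits one builds $\phi$ by induction on distance to the cyclic core, always using $G$-equivariance of the orbit maps and the fact that $b$ is a $G$-endomorphism so that defining $\phi$ on one point of an orbit forces it on the whole orbit consistently.

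Next I would handle the deleted (initial) orbits of $K(a)$, i.e. the orbits removed during the three-step $G$-trim procedure. For these $\phi$ need only be a \emph{homomorphism} that sends bottom-initial vertices to initial vertices (condition (2) of Definition~\ref{drh}; condition (1) is vacuous since full transformations have no terminal vertices). Concretely: a deleted initial orbit $O$ of $a$ satisfies $a(O)=a(O')$ for some retained $O'$ with $G_{O'}\ge G_O$; I send each point $x\in O$ into the orbit $\phi(a^{-1}\text{-target})$ in a $G$-equivariant way, which is possible precisely because $G_O\le G_{a(O)}$ guarantees a $G$-map $O\to \phi(a(O))$ exists. The three-step structure of the trim (separating the cases $G_O<G_{O'}$, $G_O=G_{O'}$ with $O'$ non-initial, and $G_O=G_{O'}$ with $O'$ among the deleted-but-one initial orbits in $U_O$) is exactly what makes every deleted orbit assignable, and one does the symmetric construction for $\psi$ from $(g')^{-1}$.

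Finally I would check the composite conditions of Theorem~\ref{tstn}: $x(\phi\psi)=x$ for every non-initial vertex $x$ of $\Gamma(a)$, and $u(\psi\phi)=u$ for every non-initial $u$ of $\Gamma(b)$. A non-initial vertex of $\Gamma(a)$ lies in a retained orbit, and there $\phi$ and $\psi$ were defined as mutually inverse $G$-equivariant orbit bijections built from $g'$ and $(g')^{-1}$, so the composite is the identity on those orbits; on initial vertices nothing is required. It also must be verified that $\phi,\psi\in\End_G(X)$, which holds by construction since every branch of the definition used $G$-equivariant maps between orbits and the orbit structure is preserved. Then Theorem~\ref{tstn} yields $a\cfn b$. \textbf{The main obstacle} is the cycle-label bookkeeping in the first paragraph: one must show that the equality of the coset labels $kG_O$ under $g'$ is both necessary and sufficient for choosing the $G$-equivariant point-bijections around a cycle so that they genuinely intertwine $a^n$ and $b^n$ (equivalently $a$ and $b$) on the cyclic orbits — this is the one place where a naive vertex-by-vertex choice can fail, and it is precisely why the labels were introduced into $K^t$.
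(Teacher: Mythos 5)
Your overall strategy is essentially the paper's: lift $g'$ to $G$-equivariant point maps, seed the construction on the cyclic core using the coset labels (this part of your sketch is right, and is exactly how the paper uses the labels), extend backwards along the digraph to the remaining retained orbits, and fold the deleted orbits onto retained representatives. The only structural difference is that you route through Theorem~\ref{tstn}, whereas the paper verifies the equational conditions $ga=bg$, $ahg=a$, $ghb=b$ of Proposition~\ref{Prp:alternatives} directly; the two are equivalent here since $\End_G(X)$ consists of full transformations. However, your treatment of the deleted orbits, as written, does not work.

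You propose to send a deleted initial orbit $O$ ``into the orbit $\phi(a(O))$ in a $G$-equivariant way, which is possible because $G_O\le G_{a(O)}$.'' Two problems. First, the target is wrong: if $\phi(O)\subseteq g'(a(O))$, then $b$ carries $\phi(O)$ into $g'(a(a(O)))$, whereas the homomorphism condition demands $b(\phi(x))=\phi(a(x))\in g'(a(O))$; these orbits differ unless $a(O)$ is a fixed point of the induced map on orbits. The correct move (the paper's) is to choose a \emph{retained} orbit $O'\in S_a(O)$ with $G_O\le G_{O'}$ --- whose existence is precisely what the three-step trim guarantees, not the inclusion $G_O\le G_{a(O)}$, which holds automatically for any $G$-endomorphism and carries no information --- and send $O$ into $\phi(O')$. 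Second, even with the right target orbit, ``some $G$-equivariant map exists'' is not enough: you must take the specific map $l\cdot x\mapsto l\cdot\phi(x')$ where $x'\in O'$ satisfies $a(x')=a(x)$. An arbitrary equivariant map $O\to\phi(O')$ differs from this one by a translation $k\in G$ and yields $b(\phi(x))=k\cdot\phi(a(x))$, which fails unless $k$ stabilizes $\phi(a(x))$. Finally, since you invoke Theorem~\ref{tstn} you also owe condition (2) of Definition~\ref{drh} (bottom-initial vertices map to initial vertices); this does hold for the corrected construction, but only via the observation that bottom-initiality of $x$ forces every $a$-preimage of $a(x)$ --- in particular the chosen $x'$ --- to be initial, so $O'$ and hence $g'(O')$ are initial orbits. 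The paper sidesteps this extra verification by working with the equations directly.
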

\begin{proof}
	Let $h'$ be the inverse of $g'$. We will transform $g'$ and $h'$ into commutators $g,h\in \End_G(X)$.
	
	We will first define $g$ on the union of those orbits that lie on individual cycles of $K^t(f)$. Given an individual such cycle with $n$ vertices, pick a vertex $O_1$ and label the remaining vertices $O_i = a(O_{i-1})$ for $i = 2,\ldots,n$. Moreover, pick $x_1\in O_1$, and define $x_i = a(x_{i-1})\in O_i$ for $i = 2,\ldots,n$. Finally, pick $y_1\in g'(O_1)$, and set $y_i = b(y_{i-1})$ for $i = 2,\ldots,n$.
	
	For $x\in \bigcup O_i $ with $x = k\cdot x_i$, set $g(x) = k\cdot y_i$. Clearly $g$ is a $G$-isomorphism from $\bigcup O_i$ to $\bigcup g'(O_i)$. Moreover, we claim that for $x\in \bigcup O_i$, we have that $ga(x) = bg(x)$. If $x\notin O_n$, this is straightforward to check. Otherwise, assume that $x = l\cdot x_n$, and that $kG_{O_1}$ was the (common) label of the $O_i$.
	Then
	\begin{align*}
		ga(x) &= ga(l\cdot x_n) = l\cdot (ga(x_n))= l\cdot (ga(a^{n-1}(x_1))) = l\cdot (ga^n(x_1))= l\cdot (g(k\cdot x_1)) \\
		&= (lk)\cdot g(x_1) = (lk)\cdot y_1 = l\cdot (k\cdot y_1) = l\cdot b^n(y_1) = l\cdot b(b^{n-1}(y_1)) \\
		&= l\cdot b(y_{n}) = l\cdot b(g(x_{n})) = bg(l\cdot x_n) = bg(x),
	\end{align*}
	where we used the fact that $g'$ preserves the label $kG_{O_1}$.
	
	We now extend the definition of $g$ to $\bigcup_{O \in K^t(f)} O$ recursively. Suppose that $O\in K^t(a)$ is an orbit such that $g$ is not defined on $O$, but already defined on $a(O)$ (initially, this will only be the case when $a$ maps $O$ into a cycle of $K^t(f)$). Let $O' = g'(O)$ and note that $b(O') = g'a(O)$, as $g'$ is a graph isomorphism.
	
	Pick $x\in O$ and consider $ga(x)\in b(O')$. As $b$ maps $O$ surjectively onto $ga(O)$, there exists a $y\in O'$ with $b(y) = ga(x)$. Now define $g(l\cdot x) = l\cdot y$. Note that $g$ induces a $G$-isomorphism from $O$ to $O'$. Moreover, it satisfies $ga(l\cdot x) = bg(l\cdot x)$ for all $l\in G$ by the construction.
	
	By applying the above construction recursively, we extend $g$ to all of $\bigcup_{O\in K^t(a)} O$. In this way, we obtain a $G$-isomorphism from $\bigcup_{O\in K^t(a)} O$ to its image $\bigcup_{O\in K^t(a)}g'(O)$, which satisfies $ga(x) = bg(x)$, whenever defined.
	
	Define $h$ on $\bigcup_{O\in K^t(f)}g'(O)$ as the inverse function of $g$. As our last step, we extend $g$ and $h$ to $G$-endomorphisms on $X$ that witness the conjugacy of $a$ and $b$.
	
	Let now $O\in K(a)$ be an orbit that is not in $K^t(a)$. It follows from the trim procedure that there exists an orbit $O'\in K^t(a)$ such that $O'\in S_a(O)$ and $G_O\subseteq G_{O'}$. Pick an arbitrary element $x\in O$ and an element $x'\in O'$ such that $a(x) = a(x')$, and set $g(l\cdot x) = l\cdot g(x')$. This is well defined, as $G_O\subseteq G_{O'}\subseteq G_{g(O')}$, and is clearly $G$-compatible. Moreover, we have that
	\[
	ga(l\cdot x) = l\cdot (ga(x)) = l\cdot (ga(x')) = l\cdot (bg(x')) = bg(l\cdot x)\,,
	\]
	for all $l\in G$. Applying this construction to all remaining orbits, we obtain a $G$-endomorphism $g$ that satisfies $ga=bg$.
	
	By a corresponding construction, if $O\in K(b)$, but $O\notin K^t(b)$, we pick an orbit $O'\in K(b)$ with $O'\in S_b(O)$ and $G_O\subseteq G_{O'}$, elements $x\in O$, $x'\in O'$ with $b(x) = b(x')$, and set $h(l\cdot x) = l\cdot h(x')$. This results in a $G$-endomorphism $h$.
	
	To show that $a\cfn b$, it suffices to show $ga = bg$, $ahg = a$, and $ghb = b$ (Proposition \ref{Prp:alternatives}). Note that compared to the notation of the proposition, we swapped the roles of $g$ and $h$ (this is an indirect effect of our choice to compose from right to left). We have already seen that $g$ satisfies $ga = bg$.
	
	Consider the identity $ahg = a$, and let $y\in X$. If $y$ lies in an orbit from $K^t(a)$, then $ahg(y) = ag^{-1}g(y) = a(y)$, where the inverese is considered with respect to the (bijective) restriction of $g$ to $\bigcup_{O\in K^t(a)} O$. Assume instead that $y$ lies in an orbit $O$ with $O\notin K^t(a)$. Let $O',x,x'$ be as in the definition of $g$ on $O$, and let $y = l\cdot x$ for $l\in G$. Then
	\[
	ahg(y) = ahg (l\cdot x) = l\cdot ahg(x) = l\cdot ahg(x') = l\cdot  ag^{-1}g(x') = l\cdot a(x') = l\cdot a(x) = a(l\cdot x) = a(y)\,.
	\]
	It follows that $ahg = a$.
	
	Finally, we note that restricted to the image of $b$, $gh$ acts as the identity and so $ghb = b$. Hence $a\cfn b$, as required.
\end{proof}

\begin{theorem} Suppose that $a\cfn b$ in $\End_G(X)$. Then the $G$-trims $K^{t}(a)$ and $K^{t}(b)$ are isomorphic by a graph isomorphism $g'$ that preserves stabilizers of orbits and partial labels.
\end{theorem}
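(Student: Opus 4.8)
The plan is to transport the conjugacy from $\End_G(X)$ down to the set $K$ of $G$-orbits, extract there a digraph isomorphism of prunes, and then argue that the prune together with the stabilizer data already forces the matching of the retained initial vertices of the $G$-trims. First I would use $a\cfn b$ to fix $g,h\in\End_G(X)$ for which (the right-to-left translations of) equations (i)--(viii) all hold; in particular $g a = b g$. Since every $G$-endomorphism maps $G$-orbits to $G$-orbits, the assignment $f\mapsto\bar f$ sending $f$ to the induced self-map of $K$ is a monoid homomorphism $\End_G(X)\to T(K)$, and $\bar a,\bar b$ are exactly the full transformations of $K$ whose functional digraphs are $K(a),K(b)$. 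Applying this homomorphism to (i)--(viii) yields $\bar a\cfn\bar b$ in $T(K)$ with conjugators $\bar g,\bar h$. Writing $K^p(a)$ for $K(a)$ with all its initial vertices deleted (the prune, as in Definition~\ref{dpru}), Theorem~\ref{tstn} says $\bar g$ is an r-homomorphism $K(a)\to K(b)$ and $\bar h$ one from $K(b)$ to $K(a)$, and Lemma~\ref{lresphi} says that the restrictions $\Phi:=\bar g|_{K^p(a)}$ and $\Psi:=\bar h|_{K^p(b)}$ are mutually inverse digraph isomorphisms $K^p(a)\to K^p(b)$.

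Next I would record that $\Phi$ preserves point stabilizers and partial labels on the prune. For stabilizers: for a non-initial orbit $v$ of $K(a)$ one has $G_v\le G_{\Phi(v)}\le G_{\Psi(\Phi(v))}=G_v$, the outer inequalities holding because a $G$-endomorphism $f$ always satisfies $G_O\le G_{f(O)}$; hence $G_v=G_{\Phi(v)}$. For the labels: if $O_1\to\cdots\to O_n\to O_1$ is a cycle of $K(a)$ carrying the label $kG_{O_1}$, i.e. $a^n(x_1)=k\cdot x_1$ for some $x_1\in O_1$, then $ga=bg$ gives $b^n(g(x_1))=g(a^n(x_1))=k\cdot g(x_1)$, so the image cycle carries the label $kG_{\Phi(O_1)}=kG_{O_1}$. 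Since trimming never deletes cycle vertices, these data pass unchanged to $K^t(a)$ and $K^t(b)$, so $\Phi$ is a stabilizer- and label-preserving isomorphism of the partially labelled prunes.

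The crux is to extend $\Phi$ across the retained initial vertices. Every retained initial orbit of $K^t(a)$ is sent by $\bar a$ to a non-initial orbit $v$; writing $H_v=\max\{G_O:\bar a(O)=v\}$, a non-initial $v$ carries a retained initial in-neighbour precisely when $H_v$ is attained by some initial orbit over $v$ but by no non-initial orbit over $v$, in which case the retained orbit has stabilizer $H_v$. I would show that this condition is transported by $\Phi$, with $H_v=H_{\Phi(v)}$, as follows. By the previous paragraph, $\Phi$ restricts to a stabilizer-preserving bijection between the non-initial orbits over $v$ (the prune in-neighbours of $v$) and those over $\Phi(v)$. Moreover $\bar g$ maps $\{O:\bar a(O)=v\}$ into $\{O':\bar b(O')=\Phi(v)\}$ and $\bar h$ maps $\{O':\bar b(O')=\Phi(v)\}$ into $\{O:\bar a(O)=v\}$ (using $\Psi(\Phi(v))=v$), each time with $G_O\le G_{\bar g(O)}$ and $G_{O'}\le G_{\bar h(O')}$; applying these to orbits realizing $H_v$ and $H_{\Phi(v)}$ gives $H_v=H_{\Phi(v)}$. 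Finally, if $O$ is initial over $v$ with $G_O=H_v$ while no non-initial orbit over $v$ (equivalently, by the first point, over $\Phi(v)$) has stabilizer $H_v$, then $\bar g(O)$ lies over $\Phi(v)$ with $G_{\bar g(O)}=H_v=H_{\Phi(v)}$, so $\bar g(O)$ must be initial in $K(b)$ — otherwise it would be a non-initial orbit over $\Phi(v)$ of maximal stabilizer — hence $\Phi(v)$ carries a retained initial in-neighbour; symmetry gives the converse.

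To finish, I would define $g'\colon K^t(a)\to K^t(b)$ to equal $\Phi$ on $K^p(a)$ and to send, for each non-initial $v$ carrying a retained initial in-neighbour, that in-neighbour to the retained initial in-neighbour of $\Phi(v)$ (in whatever concrete realizations of the $G$-trims are fixed). The previous step makes $g'$ a bijection; it is a graph isomorphism because the unique out-edge of a retained initial orbit over $v$ ends at $v$ and is sent to the out-edge of the retained initial orbit over $\Phi(v)$, which ends at $\Phi(v)=g'(v)$, while edges inside the prune are preserved by $\Phi$ and retained initial orbits have no in-edges; and $g'$ preserves stabilizers (from $G_v=G_{\Phi(v)}$ on the prune and $H_v=H_{\Phi(v)}$ on the retained initials) and partial labels (which occur only on cycle vertices, all inside the prune). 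The main obstacle is exactly this transport step: because $\bar g$ and $\bar h$ furnish only one-sided stabilizer inequalities at initial orbits, one cannot simply carry the trim data along a single map, and must instead play the one-sided information on the $a$-side against that on the $b$-side while leaning on the exact isomorphism already available on the prunes; getting the maximality of $H_v$ to survive this asymmetry is the delicate point.
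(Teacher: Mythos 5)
Your overall strategy -- project the conjugacy to the orbit set $K$, extract mutually inverse isomorphisms of the non-initial (pruned) parts from the identities $hga=a$ and $ghb=b$, prove stabilizer preservation there via $G_v\le G_{\Phi(v)}\le G_{\Psi(\Phi(v))}=G_v$ and label preservation via $g\circ a^n=b^n\circ g$, and then transport the trim conditions fiber by fiber -- is essentially the paper's proof. The two genuine differences are harmless: you package the prune isomorphism through Theorem~\ref{tstn} and Lemma~\ref{lresphi} applied to the induced transformations $\bar{a},\bar{b}\in T(K)$ rather than arguing directly from the equations, and you avoid the paper's reduction to idempotent $gh$, $hg$ (which the paper needs only so that $\bar{g}$ itself, via a fixed point of $\bar{h}\bar{g}$ in each class of retained candidates, realizes the bijection on retained initial orbits) by defining the matching of retained initial vertices abstractly between fixed concrete realizations. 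Both moves are legitimate.

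There is, however, a genuine flaw in your treatment of the retained initial vertices. You set $H_v=\max\{G_O:\bar{a}(O)=v\}$ and assert that a non-initial $v$ carries at most one retained initial in-neighbour, detected by whether $H_v$ is attained by an initial but by no non-initial orbit of the fiber. This presupposes that the stabilizers occurring in the fiber $\bar{a}^{-1}(v)$ have a unique maximum, which can fail for non-cyclic $G$. For example, with $G=\Z_2\times\Z_2$, let $O_1,O_2$ be initial orbits whose stabilizers are the two distinct order-two subgroups, both mapped by $a$ to a fixed point $v$ (with $v$ mapped elsewhere): both survive steps 1 and 2 of the trim, they lie in different sets $U_{O_i}$, so $v$ acquires \emph{two} retained initial in-neighbours with incomparable stabilizers and $H_v$ is undefined. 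Consequently the comparison ``$H_v=H_{\Phi(v)}$'' and the phrase ``the retained initial in-neighbour of $\Phi(v)$'' do not make sense as written. The repair is what the paper does: work separately with each set $T_a(O)=\{O'\in S_a(O):G_{O'}=G_O\}$ attached to a surviving initial orbit $O$ (equivalently, with each maximal stabilizer value $H$ realized in the fiber), show for each such $H$ that it is attained over $v$ only by initial orbits if and only if the same holds over $\Phi(v)$, and match the unique retained representative per $H$. Your own devices -- pushing a realizing orbit forward by $\bar{g}$ and back by $\bar{h}$, and the stabilizer-preserving bijection between the non-initial parts of the two fibers -- do adapt to this per-$H$ setting, but the step as you state it fails.
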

\begin{proof}
	Let $h,g\in \End_G(X)$ witness $a\cfn b$, and let $h', g'$ be their induced actions on the $G$-orbits of $X$. Note that $h,g$ satisfy all of the equations listed before Lemma \ref{lem:alternatives}. We remark that compared to the notation in the lemma, we exchanged the roles of $h$ and $g$. By replacing $g$ with $(gh)^ig$ for suitable $i > 0$, we may assume without loss of generality that $gh$ and $hg$ are idempotent.
	
	As $ag = gb$ , and $bh = ha$, we see that $g$ and $h$ (and hence also $g'$ and $h'$) are digraph homomorphisms. Moreover, from $ghb = b$ and $hga = a$, we obtain that $g'$ and $h'$ are inverse to each other when restricted to non-initial orbits of $K(a)$ and $K(b)$. This implies that $g'$ and $h'$ preserves the stabilizers of non-initial orbits.
	
	Now consider an initial orbit $O\in K(a)$ such that the set $S_a(O)$ does not contain any orbit $O'$ with $G_O < G_{O'}$ nor a non-initial orbit $O'$ with $G_O = G_{O'}$. Consider the set $T_a(O) = \{O' \in S_a(O)\,:\, G_{O'}=G_O\}$. Note that any concrete choice for the $G$-trim $K^t(a)$ must contain exactly one orbit from each such set $T_a(O)$.
	
	The digraph homomorphism $g'$ maps each $O' \in T_a(O)$ to an orbit  in $S_b(g'(O))$. We claim that $G_{O'} = G_{g'(O')}$. Assuming otherwise that $G_{O'} < G_{g'(O')}$, we obtain that $G_{h'g'(O')} \leq G_{g'(O')} < G_{O'}$, contradicting our choice of $O$, where we note that $h'g'(O')\in S_a(O') = S_a(O)$.
	
	Next we claim that $g'(O')$ is an initial orbit. Assuming otherwise, it follows that $h'g'(O')$ is not an inital orbit either. However, $h'g'(O')\in S_a(O)$, and by essentially the argument from the previous paragraph applied twice, we have that $G_{h'g'(O')} = G_{g'(O')} = G_{O'}=G_{O}$. Once again, this contradicts our choice of $O$.
	
	It follows that $g'$ maps $T_a(O)$ to the correspondingly defined set $T_b(g'(O))$, where $g'(O)$ is an inital orbit such that the set $S_b(g'(O))$ does not contain any orbit $O''\in K(b)$ with $G_{g'(O)} < G_{O''}$ nor a non-initial orbit $O''$ with $G_{g'(O))} = G_{O''}$.
	
	As we assumed without loss of generality that $h'g'$ is idempotent, it follows that there is at least one orbit $O'\in T_a(O)$ such that $O'=h'g'(O')$. Choosing such an orbit for each set of the form $T_a(O)$ (with $O$ constrained as above) and adding it to the non-initial orbits of $K(a)$, we obtain a concrete realization of the $G$-trim $K^t(a)$. It is straightforward to check that the image of this realization under $g'$ is a concrete realization of the $G$-trim of $K(b)$, and that $g'$ and $h'$ are inverse graph isomorphisms between these realizations that moreover preserve stabilizers of orbits.
	
	It remains to show that $g'$ preserves the labels of the vertices lying in cycles of $K^t(a)$. Let $O$ be such an orbit contained in a cycle with $n$ elements and $x\in O$, such that $a^n(x) = l\cdot x$ for some $l\in G$, so that $O$ is labeled $lG_{O}$. Then
	\[
	b^ng(x) = ga^n(x) = g(l\cdot x) = l\cdot g(x)\,,
	\]
	showing that the cycles containing $g'(O)$ has label $lG_{g'(O)} = lG_O$ as well. The result follows.
\end{proof}

\begin{theorem}
	In $\End_G(X)$, two $G$-endomorphisms $a$ and $b$ are naturally conjugate if and only if their $G$-trims $K^t(a)$ and $K^t(b)$ are isomorphic by an isomorphism that preserves stabilizers and partial labels.
\end{theorem}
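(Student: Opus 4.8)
The plan is to obtain the statement directly as the conjunction of the two preceding theorems, which already supply both implications. For the ``if'' direction, suppose $g'\colon K^t(a)\to K^t(b)$ is a graph isomorphism preserving orbit stabilizers and partial labels. The first of the two theorems constructs conjugators $g,h\in\End_G(X)$ witnessing $a\cfn b$: one lifts $g'$ to a $G$-isomorphism on the union of the orbits lying on cycles of the trim first (so that the coset labels $kG_O$ carried by $g'$ force $ga=bg$ on that part), then extends the definition recursively along the digraph, and finally extends over the orbits that were deleted by the $G$-trim procedure using the stabilizer containments $G_O\subseteq G_{O'}$ that the procedure guarantees; a short verification that the resulting pair satisfies the relevant equations of Proposition~\ref{Prp:alternatives} finishes that half. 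For the ``only if'' direction, given conjugators $h,g$ for $a\cfn b$, the second theorem passes to the induced actions $g',h'$ on the $G$-orbits, replaces $g$ by $(gh)^i g$ for a suitable $i>0$ so that $gh$ and $hg$ become idempotent, and shows that $g',h'$ are mutually inverse digraph isomorphisms between concrete realizations of $K^t(a)$ and $K^t(b)$ that preserve stabilizers and the cycle labels.

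Hence nothing new is needed: one simply invokes the two results above, noting only that the quantifier structure matches (the isomorphism produced in the second theorem is of exactly the type hypothesized in the first, and conversely). There is no genuine obstacle remaining at this stage; all the real difficulty — the bookkeeping over which initial orbits survive the three-step $G$-trim, and the propagation of the coset labels $kG_O$ through the cyclic components under a digraph homomorphism — has already been handled in proving the two implications separately.
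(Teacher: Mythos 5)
Your proposal is correct and matches the paper exactly: the theorem is stated as a summary of the two immediately preceding theorems, which respectively establish the ``if'' and ``only if'' directions, and the paper supplies no further argument beyond that conjunction. Your observation that the isomorphism produced in the converse direction is precisely of the type hypothesized in the forward direction is the only point that needs checking, and it holds.
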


\section{Conjugacies $\cfn$, $\ctr$, $\cpns$, $\coon$, and $\con$ in finite partition monoids}
\label{Sec:par}
\setcounter{figure}{0}

The \emph{partition monoid} $\Pax$ on a set $X$ has the set of all partitions of $X\cup X'$ as its underlying set, where $X'$ is a disjoint copy of $X$. These monoids originally arose in the study of partition algebras (see, for example, \cite{HaRa05,Ma96}) and subsequently attracted the attention of mathematicians working in semigroup theory (see, for example, \cite{E16,FiLa11,EaGr21}). One reason for the attention is that $\Pax$ contains some important semigroups as subsemigroups, such as $T(X)$ and $\mi(X)$ (see \S\ref{Sec:tra}), as well as the symmetric group $\sym(X)$ on $X$ \cite{EaGr21}.

In this section, we are interested in the finite partition monoid
$\Pan = \Pax$ where $X = \{1,\ldots,n\}$, and in the submonoids $\PBan$ and $\Ban$ of $\Pan$, called the partial Brauer monoids and Brauer monoids, respectively. Our goal is to characterize the conjugacies $\cfn$, $\ctr$, $\cpn$, $\coon$, and $\con$ in these monoids.

Throughout this section, we will identify an equivalence relation $\sim$ on a set $Y$ with the partition $Y/\sim$ induced by $\sim$. It will always be clear from the context how we view $\sim$.

Using the notation from \cite{E16}, we let $\bn=\{1,\ldots,n\}$ and $\bn'=\{1',\ldots,n'\}$. Symbols $x,y,z,k,l,m,\ldots$ will always refer to elements in $\bn$, and $x',y',z',k',l',m',\ldots$ to the corresponding elements in $\bn'$. If $A\subseteq \bn$, then $A'=\{x'\,:\,x\in A\}\subseteq \bn'$.

As is customary, we represent an element $a\in\Pan$ (a partition of $\bn\cup\bn'$) as a simple graph with vertices $1,\ldots,n$ in a row, vertices $1',\ldots,n'$ directly below, and edges drawn so that the connected components of the graph correspond to the blocks of the partition $a$. Such a graph is not unique, so we identify two graphs that have the same connected components. For example, the graph
\[
\xymatrix @R=.2cm @C=1cm {1&2&3&4&5\\
	\bullet \ar@{-}@/^.7pc/[rr]& \bullet \ar@{-}[drr] & \bullet & \bullet\ar@{-}[r]\ar@{-}[ld] & \bullet \\
	\bullet \ar@{-}[r]& \bullet &  \bullet& \bullet & \bullet\rlap{\,} }
\]
represents the element $a\in\mathcal{P}_5$ whose blocks are:
$\{1,3\},\{2,4'\},\{1',2'\},\{3',4,5\},\{5'\}$. For $x\in\bn$, $[x]_a$ will denote the block of $a$ containing $x$. Similarly we write $[x']_a$ for the block containing $x'\in\bn'$.

We multiply elements of $\Pan$ as follows. If $a$ is as above and $b$ is represented by the graph
\[
\xymatrix @R=.2cm @C=1cm{ \bullet &\bullet &\bullet &\bullet \ar@{-}[r]&\bullet\\
	\bullet \ar@{-}[ur]&\bullet\ar@{-}[ur] &\bullet\ar@{-}[ur] &\bullet\ar@{-}[r] &\bullet\rlap{\,,} }
\]
then to obtain the product $ab$, we first draw $a$ over $b$:
\[
\xymatrix @R=.2cm @C=1cm{ \bullet \ar@{-}@/^.7pc/[rr]& \bullet \ar@{-}[drr] & \bullet & \bullet\ar@{-}[r]\ar@{-}[ld] & \bullet \\
	\bullet \ar@{-}[r]& \bullet &  \bullet& \bullet & \bullet\\
	\bullet &\bullet &\bullet &\bullet \ar@{-}[r]&\bullet\\
	\bullet \ar@{-}[ur]&\bullet\ar@{-}[ur] &\bullet\ar@{-}[ur] &\bullet\ar@{-}[r] &\bullet\rlap{\,,} }
\]
then glue the two middle rows:
\[
\xymatrix @R=.2cm @C=1cm{ \bullet \ar@{-}@/^.7pc/[rr]& \bullet \ar@{-}[drr] & \bullet & \bullet\ar@{-}[r]\ar@{-}[ld] & \bullet \\
	\bullet \ar@{-}[r] &\bullet &\bullet &\bullet \ar@{-}[r]&\bullet\\
	\bullet \ar@{-}[ur]&\bullet\ar@{-}[ur] &\bullet\ar@{-}[ur] &\bullet\ar@{-}[r] &\bullet\rlap{\,,} }
\]
and finally remove the middle row, keeping in the same block those elements of $X\cup X'$ such that there is a path between these elements in the graph with the middle row:
\[
\xymatrix @R=.2cm @C=1cm {
	\bullet \ar@{-}@/^.7pc/[rr]& \bullet \ar@{-}[dr] & \bullet & \bullet\ar@{-}[r]\ar@{-}[lld] & \bullet \\
	\bullet & \bullet &  \bullet& \bullet & \bullet\ar@{-}[l]\rlap{\,.} }
\]
(See \cite[\S4.1]{EaGr21}.)

Let $a\in\Pan$. Throughout this section, we will need the following definitions:
\begin{align*}
	\ker(a)&=\{[x]_a\cap\bn\,:\,x\in\bn\},\\
	\coker(a)&=\{[x']_a\cap\bn'\,:\,x'\in\bn'\},\\
	\dom(a)&=\{x\in X\,:\,x\text{ belongs to a transversal block of }a\},\\
	\codom^{\wedge}(a)&=\{x\in X\,:\,x'\text{ belongs to a transversal block of }a\},\\
	\coker^{\wedge}(a)&=\{A\subseteq \bn \,:\,A'\in\coker(a)\},\\
	\rank(a)&=\text{the number of transversal blocks of $a$}.
\end{align*}
(We follow \cite[\S2]{DEAFHHL15} and \cite[\S4.2]{EaGr21}, with some changes in names and notation to make our arguments clearer.) We will also need the restriction of $\ker(a)$ and $\coker^{\wedge}(a)$ to $\dom(a)$ and $\codom^{\wedge}(a)$, respectively. For $a\in\Pan$, we define
\begin{equation}\label{ekert}
	\ker^t(a)=\{A\in\ker(a)\,:\,A\subseteq\dom(a)\}\,\text{ and }\,\coker^t(a)=\{B\in\coker^{\wedge}(a)\,:\,B\subseteq\codom^{\wedge}(a)\}.
\end{equation}
Note that for every $A\in\ker^t(a)$, there exists a unique $B\in\coker^t(a)$ such that $A\cup B'$ is a transversal block of $a$; and that $\rank(a)=|\ker^t(a)|=|\coker^t(a)|$.

Now we define the following subsets of $\Pan$:
\begin{align*}
	\PBan&=\{a\in\Pan\,:\,\text{each block of $a$ has size at most }2\},\\
	\Ban&=\{a\in\Pan\,:\,\text{each block of $a$ has size }2\}.
\end{align*}
The subsets $\PBan$ and $\Ban$ are submonoids of $\Pan$ \cite[\S2]{DEAFHHL15},
called \emph{partial Brauer monoids} and \emph{Brauer monoids}, respectively.

\subsection{Conjugacy $\cfn$ in $\Pan$, $\mathcal{BP}_n$, and $\mathcal{B}_n$}
\label{subcfn}

Let $b\in\Pan$. As in previous work on $\Pan$, a special role is played by the equivalence relation $\ker(b)\vee \coker^{\wedge}(b)$.
We say that $b$ is \emph{connected} if $\ker(b)\vee \coker^{\wedge}(b)$ is the universal relation on $\{1, \dots, n\}$. Let $s$ be a block of $b$. We say that $s$ is \emph{transversal} if $s\cap\bn\ne\emptyset$ and $s\cap\bn'\ne\emptyset$. If $b$ does not have any transversal blocks, it is called \emph{transversal free}; if it has exactly one transversal block, it is called $1$-\emph{transversal}.

Let $A\subseteq \bn$ be nonempty. For $b\in \Pan$, let $b_A$ denote the partition of $A\cup A'$ (that is, an element of $\mathcal{P}_A$)
with $[x]_{b_A}=[x]_b \cap (A \cup A')$ and $[x']_{b_A}=[x']_b \cap (A \cup A')$, for all $x \in A$. We call $b_A$ the \emph{subpartition} of $b$ induced by $A$. In this context, for a block $s$ of $b$, we use the notation $s_A = s \cap (A \cup A')$, where this usage is meant to imply that $s$ is a block of $b$ intersecting $A\cup A'$.

A subpartition $b_A$ is called \emph{trivial} if $|A|=1$. The definitions of $b_A$ being connected, transversal free, and $1$-transversal are obtained by adjusting their definitions for $b$ to the index set $A$ in the obvious way. Similarly we extend the definitions of $\ker, \coker, \ker^{\wedge}$, and $\coker^{\wedge}$ to $b_A$.

For the following results, it will be useful to represent an intermediate step in the calculation of a partition product.
Let $\bn^* =\{1^*,\dots, n^*\}$. For partitions $a,b \in \Pan$,
let $(a,b)^*$ denote the partition of the set $\bn \cup \bn^* \cup \bn'$
corresponding to the situation before the final deletion of the middle row, where $\bn, \bn^*, \bn'$ represent the top, middle, and bottom row, respectively. When $a,b$ are represented by specific graphs, we represent $(a,b)^*$ as the graph obtained by identifying corresponding vertices in the lower row of $a$ with those in the upper row of $b$, followed by the merging of all double edges.

Recall that we are identifying partitions with their corresponding equivalence relations. For example, we might write $(x,y)\in b$ instead of $y\in [x]_b$.

\begin{lemma}\label{l:2-bridge}
	Let $b\in \Pan$ such that $b_A$ is connected and transversal-free, it contains blocks $s_A \subseteq A$ and
	$t_A \subseteq A'$, and for every block $r_A\notin\{s_A,t_A\}$, $r_A = r$. Fix $y \in A$ and define $c\in\Pan$ as follows:
	\begin{itemize}
		\item $[y]_c = (s\setminus A)\cup \{y\}$ and $[y']_c=(t\setminus A')\cup \{y'\}$;
		\item $[x]_c = \{x\}$ and $[x']_c = \{x'\}$, for all $x\in A\setminus \{y\}$;
		\item $[x]_c = [x]_b$ if $[x]_b$ does not intersect $A\cup A'$, and $[x']_c = [x']_b$ if $[x']_b$ does not intersect $A\cup A'$.
	\end{itemize}
	Then $b\cfn c$.
\end{lemma}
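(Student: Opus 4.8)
The goal is to produce conjugators $g,h\in\Pan$ witnessing $b\cfn c$, and the natural strategy is to build them ``locally'' on the index set $A$ where $b$ and $c$ differ, extending by the identity on the complementary indices. Concretely, since $c$ is obtained from $b$ by replacing the connected transversal-free subpartition $b_A$ (which has one all-top block $s_A\subseteq A$, one all-bottom block $t_A\subseteq A'$, and singleton-preserving behavior otherwise) by a ``collapsed'' version in which everything is funneled through the single index $y$, I would take $g$ to be a partition whose top is $\bn$, whose bottom is $\bn^*$ (thought of as $\bn'$), designed so that on $\bn\setminus A$ it is the identity and on $A$ it sends the whole block $s_A$ ``down to'' $y$ while keeping the structure of the other blocks of $b_A$. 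Dually $h$ should collapse $t_A$ upward through $y$. The point of using Lemma~\ref{l:2-bridge}'s hypotheses (connectedness of $b_A$, the single top block and single bottom block) is precisely that these are exactly the combinatorial conditions under which such a ``bridge'' partition $g$ can be chosen so that the products $gc$, $bg$, $hbg$, $gch$ come out right.

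\textbf{Key steps, in order.} First I would give explicit graph descriptions of $g$ and $h$: on indices outside $A$ (and outside the blocks meeting $A\cup A'$) everything is the identity wire; inside $A$, the block $s$ of $b$ gets ``rerouted'' in $g$ so that the external part $s\setminus A$ is joined to $y$ on the next level, and similarly the other blocks $r$ of $b_A$ are carried across verbatim. Then $h$ is the mirror-image construction handling $t$ and $\bn'$. Second, I would verify the four defining equations of $\cfn$ — or rather, invoke Proposition~\ref{Prp:alternatives} to reduce to a convenient subset, e.g. (i) $bg=gc$, together with (iii) $hbg=c$ and (iv) $gch=b$ (set (1) of that proposition). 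Each of these is a partition-product computation; I would do it ``blockwise'', checking separately (a) the blocks entirely outside $A\cup A'$, where all three partitions act like the identity and the equation is immediate; (b) the transversal-free blocks of $b_A$ other than $s_A,t_A$, which are carried unchanged through $g$ and $h$; and (c) the critical blocks $s,t$ and the index $y$, where one traces the path through the doubled middle row using the $(a,b)^*$ formalism introduced just before the lemma. Third, I would confirm that $g,h$ are genuinely in $\Pan$ (no size restriction here, so this is automatic — note the lemma is stated only for $\Pan$, not $\PBan$ or $\Ban$).

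\textbf{Main obstacle.} The routine-looking part — checking $bg=gc$ etc. — is actually where all the care is needed, because the merging of connected components when the middle row is deleted is exactly the phenomenon that makes ``$b_A$ connected'' indispensable. The hard point is to see that when one stacks $b$ over $g$ (or $g$ over $c$), the connected component containing $y$ in the glued three-row graph recaptures \emph{all} of $s$ (respectively, collapses correctly), and that no spurious extra identifications occur among the indices of $A\setminus\{y\}$, which in $c$ are singletons. Connectedness of $b_A$ guarantees that $s\setminus A$, $t\setminus A'$, and all the $A$-indices are linked through the transversal-free block structure so that the bridge $g$ can be chosen to realize exactly the intended collapse; transversal-freeness of $b_A$ ensures there is no pre-existing top-to-bottom path within $A$ that would interfere. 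I expect the cleanest write-up is to fix a small running picture (a few vertices in $A$, the blocks $s,t$, one extra block $r$) to guide the general block-by-block verification, and then state that the general case follows by the same bookkeeping.
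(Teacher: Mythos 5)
Your plan follows essentially the same route as the paper's proof: the same local ``bridge'' conjugators $g,h$ (identity wires off $A$, with $s_A$ funneled to $y'$ in $g$ and $t_A$ to $y$ in $h$), the same reduction via set (1) of Proposition~\ref{Prp:alternatives} to conditions (i), (iii), (iv), and the same use of connectedness of $b_A$ to control the merging of $A^*$ into one component of $(gh,b)^*$. The only quibble is descriptive: in the paper's $g$ it is the internal part $s_A$ that is joined to $y'$ (the external part $s\setminus A$ rides on identity wires and only gets attached to $y$ in the product $hbg$), so your phrasing of the rerouting would need that small correction when written out.
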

\begin{proof}
	Define $g\in \Pan$ by $[x]_g = [x]_b$ for $x\in A\setminus s$, $[x]_g = s_A\cup \{y'\}$ for $x\in s_A$,
	$[x']_g = \{x'\}$ for $x'\in A'\setminus \{y'\}$, and $[x]_g = [x']_g = \{x,x'\}$ for $x\notin A$.
	
	Define $h\in \Pan$ by  $[x']_h = [x']_b$ for $x\in A'\setminus t$, $[x']_h = t_A\cup\{y\}$ for $x'\in t_A$,
	$[x]_h = \{x\}$ for $x\in A\setminus \{y\}$, and $[x]_h = [x']_h = \{x,x'\}$ for $x\notin A$.
	
	It is easy to see that $(gh)_A$ is obtained from $b_A$ by merging the upper block $s_A$ with the lower block $t_A$, while outside of $A \cup A'$, $gh$ acts as the identity. Hence, since $b_A$ is connected, $A^*$ is included in a single block of $(gh,b)^*$. Note that $y^*\in A^*$ and that, by the definition of $g$, $(z,y^*)\in(gh,b)^*$ for every $z\in s_A$.
	
	We claim that $ghb = b$. For any $b$-block other than $s$, it is straightforward to check that it is also a $ghb$-block (using the hypothesis that $r_A = r$ for every block $r_A\ne s_A,t_A$). Regarding the block $s$, select any $z\in s_A$. We want to prove that $[z]_{ghb} = s$. Let $x\in s$. If $x\in s_A$, then $x\in [z]_{ghb}$ since $s_A\subseteq [z]_{ghb}$. Suppose $x\in s\setminus s_A$. Then, $(z,y^*)$, $(y^*,z^*)$, and $(z^*,x^*)$
	are in $((gh),b)^*$. Since $(x,x')\in gh$, we also have $(x^*,x)\in(gh,b)^*$. Thus, by the definition of the product in $\Pan$, $(z,x)\in ghb$. Finally, let $x'\in s$. Then, $(z,y^*)$, $(y^*,z^*)$, and $(z^*,x')$ are in $(gh,b)^*$, and so $(z,x')\in ghb$. We have proved that $s\subseteq [z]_{ghb}$, and equality $s = [z]_{ghb}$ follows as all other blocks of $b$ are also blocks of $ghb$. Hence $ghb = b$.
	
	A similar argument shows that $b = bgh$. We now have $g(hbg) = (ghb)g = bg$, $h(b)g = hbg$, and $g(hbg)h = (gh)(bgh) = ghb = b$. Thus $hgb$ and $b$ satisfy (i), (iii), and (iv) of Proposition~\ref{Prp:alternatives}, and so $hbg\cfn b$.
	A straightforward calculation now shows that $hbg = c$, and so $b\cfn c$.
\end{proof}

The following result is similar to Lemma \ref{l:2-bridge}, except that the blocks $s_A$ and $t_A$ are merged.

\begin{lemma}\label{l:1-bridge}
	Let $b\in \Pan$ such that $b_A$ is connected, it has exactly one transversal block $s_A$, and for every block $r_A\ne s_A$, $r_A = r$.
	Fix $y\in A$ and define $c\in \Pan$ as follows:
	\begin{itemize}
		\item $[y]_c = (s\setminus (A\cup A'))\cup\{y, y'\}$;
		\item $[x]_c = \{x\}$ and $[x']_c = \{x'\}$, for all $x\in A\setminus \{y\}$;
		\item $[x]_c = [x]_b$ if $[x]_b$ does not intersect $A\cup A'$, and $[x']_c = [x']_b$ if $[x']_b$ does not intersect $A\cup A'$.
	\end{itemize}
	Then $b\cfn c$.
\end{lemma}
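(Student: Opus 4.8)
The plan is to mimic the proof of Lemma~\ref{l:2-bridge} almost verbatim, adjusting the conjugators $g,h$ to account for the fact that $s_A$ is now a single transversal block rather than two separate blocks $s_A\subseteq A$ and $t_A\subseteq A'$. Write $s_A^{\,\uparrow}=s_A\cap A$ and $s_A^{\,\downarrow}=s_A\cap A'$ for the top and bottom parts of the transversal block $s_A$ (both nonempty by transversality of $b_A$). First I would define $g\in\Pan$ so that, on $A\cup A'$, it collapses $s_A^{\,\uparrow}$ onto the auxiliary vertex $y'$ and leaves the remaining vertices of $A$ as singletons in the top row, while acting as the identity off $A\cup A'$; symmetrically, define $h\in\Pan$ so that on $A\cup A'$ it collapses $s_A^{\,\downarrow}$ onto $y$, leaves the remaining vertices of $A'$ as singletons, and acts as the identity off $A\cup A'$. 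The key structural observation, exactly as before, is that $(gh)_A$ is obtained from $b_A$ by suitably re-routing the transversal block through the middle vertex $y^*$, whereas $gh$ acts as the identity outside $A\cup A'$; since $b_A$ is connected, $A^*$ lies in a single block of the intermediate diagram $(gh,b)^*$, and $y^*$ is in that block.

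Next I would verify $ghb=b$ and $b=bgh$. For every block $r$ of $b$ with $r_A\ne s_A$ the hypothesis $r_A=r$ makes it immediate that $r$ is also a block of $ghb$ (and of $bgh$); for the transversal block $s$ one picks any $z\in s_A^{\,\uparrow}$ (nonempty) and shows $[z]_{ghb}=s$ by tracing paths through $y^*$ in $(gh,b)^*$: for $x\in s_A$ one uses $s_A^{\,\uparrow}\subseteq[z]_{ghb}$ directly together with the relation $(y^*,z^*)$ and the identity edges $(x^*,x)$ or $(x^*,x')$ coming from $(x,x')\in gh$ when $x\notin A$; for $x'\in s$ with $x\notin A$ one uses the chain $(z,y^*),(y^*,z^*),(z^*,x')$. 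The same computation run dually gives $bgh=b$. Then, as in Lemma~\ref{l:2-bridge}, set $b':=hbg$ and check conditions (i),(iii),(iv) of Proposition~\ref{Prp:alternatives}: $g(hbg)=(ghb)g=bg$ is (i), $h(b)g=hbg=b'$ is (iii), and $g(hbg)h=(gh)(bgh)=ghb=b$ is (iv). Hence $b\cfn b'$ by that proposition.

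Finally I would carry out the routine but slightly fiddly computation that $hbg=c$ with $c$ as defined in the statement. The point is that conjugating $b$ by $h$ on the left and $g$ on the right strips all of $A\cup A'$ out of the transversal block $s$ except for the single vertex pair $\{y,y'\}$, leaving $[y]_c=(s\setminus(A\cup A'))\cup\{y,y'\}$, turns every other vertex of $A$ into a singleton in both rows, and leaves every block of $b$ disjoint from $A\cup A'$ untouched — which is exactly the description of $c$. Composing $b\cfn b'$ with $b'=hbg=c$ yields $b\cfn c$.

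The main obstacle I anticipate is purely bookkeeping: getting the definitions of $g$ and $h$ precisely right so that (a) $gh$ genuinely acts as the identity off $A\cup A'$, (b) the single transversal block $s_A$ is handled symmetrically by $g$ on the top and $h$ on the bottom (in Lemma~\ref{l:2-bridge} the two separate blocks $s_A,t_A$ made the roles of $g$ and $h$ cleanly disjoint, whereas here they both touch the same block $s$), and (c) the final identification $hbg=c$ comes out with $\{y,y'\}$ in one block rather than split. No genuinely new idea beyond Lemma~\ref{l:2-bridge} is needed; the connectedness of $b_A$ is again what guarantees all of $A^*$ funnels through $y^*$, and Proposition~\ref{Prp:alternatives} does the rest.
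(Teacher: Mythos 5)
Your overall strategy is the right one and is exactly the paper's, but the specific conjugators you define do not work: the claim that ``$(gh)_A$ is obtained from $b_A$ by suitably re-routing the transversal block through $y^*$'' is false for your $g,h$, and with it the key identity $ghb=b$ fails. The point you miss is that $g$ must \emph{retain} the non-transversal upper blocks of $b$ inside $A$ (the paper sets $[x]_g=[x]_b$ for $x\in A\setminus s$), and dually $h$ must retain the non-transversal lower blocks ($[x']_h=[x']_b$ for $x'\in A'\setminus s$); you instead turn all of these into singletons. With your choice, $(gh)_A$ consists of the single block $s_A$ together with singletons, so the only information about $b_A$ surviving in $gh$ is the transversal block. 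The connectivity of $b_A$ --- which is what forces $A^*$ into one block of $(gh,b)^*$ and hence makes $ghb=b$ --- is then of no use, because the non-transversal blocks of $b_A$ that realize that connectivity have been deleted from $gh$.

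Concretely, take $n=3$, $A=\{1,2,3\}$, $y=1$, and let $b$ have blocks $\{1,2'\}$, $\{2,3\}$, $\{1',3'\}$. This $b$ satisfies every hypothesis: $b_A=b$ is connected (join $\ker(b)\vee\coker^{\wedge}(b)$ is universal via $1\sim 3\sim 2$), it has the unique transversal block $s_A=\{1,2'\}$, and every other block equals its restriction. Your conjugators are $g=\{\{1,1'\},\{2\},\{3\},\{2'\},\{3'\}\}$ and $h=\{\{1,2'\},\{2\},\{3\},\{1'\},\{3'\}\}$, so $gh$ has the single non-trivial block $\{1,2'\}$, and one computes $ghb=\{\{1\},\{2\},\{3\},\{2'\},\{1',3'\}\}\neq b$. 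Indeed $ghb$ has rank $0$ while $b$ has rank $1$, so the condition $g(hbg)h=(gh)b(gh)=b$ that you invoke from Proposition~\ref{Prp:alternatives} cannot hold either, since rank cannot increase under multiplication. Replacing your $g,h$ by the versions that keep $[x]_g=[x]_b$ on $A\setminus s$ and $[x']_h=[x']_b$ on $A'\setminus s$ repairs the argument; the rest of your outline --- the path-tracing through $y^*$, the use of conditions (i), (iii), (iv), and the final identification $hbg=c$ --- then goes through exactly as in Lemma~\ref{l:2-bridge}.
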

\begin{proof}
	Define $g\in \Pan$ by $[x]_g = [x]_b$ for $x\in A\setminus s$, $[x]_g=(s_A\cap A)\cup\{y'\}$ for $x \in (s_A \cap A)$,
	$[x']_g=\{x'\}$ for $x \in A' \setminus \{y'\}$, and $[x]_g=[x']_g=\{x,x'\}$ for $x \notin A$.
	
	Define $h \in \Pan$ by  $[x']_h=[x']_b$ for $x \in A'\setminus s$, $[x']_h = (s_A\cap A')\cup \{y\}$ for $x'\in (s_A\cap A')$,
	$[x]_h = \{x\}$ for $x\in A\setminus \{y\}$, and $[x]_h = [x']_h = \{x,x'\}$ for $x\notin A$.
	
	Then, as in the proof of Lemma~\ref{l:2-bridge}, we can show that $b=ghb=bgh$ and $c=hbg$. Hence $b\cfn c$.
\end{proof}

\begin{defi}\label{dnor}
	Let $b\in \Pan$. We say that $b$ is in $\frn$-\emph{normal form} if the following conditions hold:
	\begin{enumerate}
		\item in every non-trivial, connected, transversal-free subpartition $b_A$ of $b$, there exist distinct blocks $s_A, t_A$ with $s_A\neq s$ and $t_A\neq t$, such that either $s_A, t_A\subseteq A$ or $s_A, t_A\subseteq A'$;
		\item in every non-trivial, connected, $1$-transversal subpartition $b_A$ of $b$ with transversal $s_A$, there exists
		a block $t_A\neq s_A$ such that $t\neq t_A$.
	\end{enumerate}
\end{defi}

\begin{rem}\label{rnor}
	Applying Lemmas~\ref{l:2-bridge} and~\ref{l:1-bridge} to nontrivial connected sets $A$ results in a partition with an increased number of singleton blocks. It follows that this process must stop, and hence every $\frn$-conjugacy class contains an element in normal form.
\end{rem}

We next show that in each $\frn$-conjugacy class, any partitions
$a$ and $b$ in normal form can be obtained from each other by a permutation of the underlying set $\bn$.

\begin{lemma}\label{l:idem-p}
	Let $a,p\in \mathcal{P}_A$ such that $ap = pa = a$ and $p$ is an idempotent. Suppose that there are $k,l\in \bn$ with $(k,l')\in p$.
	Then $(k,k^*) \in (p,a)^*$ and $(l^*,l') \in (a,p)^*$.
\end{lemma}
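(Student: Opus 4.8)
The statement is really just an unwinding of the definitions of $(p,a)^*$ and $(a,p)^*$ combined with the hypotheses $ap = pa = a$ and $p = p^2$. First I would recall how the graph $(p,a)^*$ is built: it has three rows, $\bn$ (from $p$, top), $\bn^*$ (the glued middle row), and $\bn'$ (from $a$, bottom), and the edges are those of $p$ between its top row $\bn$ and its bottom row (identified with $\bn^*$), together with those of $a$ between its top row (identified with $\bn^*$) and its bottom row $\bn'$, after merging double edges. The key point is that $(k,k^*)\in(p,a)^*$ holds precisely when $k$ and $k^*$ lie in the same block of this graph. Since $(k,l')\in p$ by hypothesis, $k$ is connected (inside $p$) to the vertex $l$ of $p$'s lower row, i.e.\ $(k,l^*)\in(p,a)^*$. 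To reach $k^*$ I would use that the product $pa = a$ forces $k$ and $k^*$ to be in the same block: indeed, $k^*$ lies in a transversal-type block relative to the product, and the statement that $pa=a$ means in particular that the top-row behaviour of $a$ is unchanged by pre-multiplying with $p$. I would make this precise by exhibiting a path in $(p,a)^*$ from $k$ to $k^*$.

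The cleanest route is probably the following. Since $p$ is an idempotent with $(k,l')\in p$, consider $p^2 = p$. The block of $p^2$ containing $k$ is computed from the graph $(p,p)^*$, and $(k,l')\in p$ together with idempotency gives $(k,k^*)\in(p,p)^*$: there must be a path $k \rightsquigarrow l^* \rightsquigarrow \cdots \rightsquigarrow k^*$ in $(p,p)^*$, and this same path, viewed as using only $p$-edges between the top row and the $\bn^*$-row, lives in $(p,a)^*$ as well (the middle row of $(p,a)^*$ receives all of $p$'s lower edges, identified appropriately). Hence $(k,k^*)\in(p,a)^*$. This is where the hypothesis $p^2=p$ does the real work; without it, $(k,l')\in p$ would not give us a return path to $k^*$. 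Symmetrically, for the second claim I would work with $(a,p)^*$, where the $\bn^*$ row now carries $p$'s \emph{upper} edges. Idempotency of $p$ and $(k,l')\in p$ (equivalently $(l',k)\in p$, i.e.\ $l'$ sits in the same $p$-block as $k$) yield a path from $l^*$ to $l'$ inside the $p$-part of $(a,p)^*$, giving $(l^*,l')\in(a,p)^*$.

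I should double-check that the roles of $ap=a$ and $pa=a$ are not actually needed for these two specific conclusions, or isolate exactly where they enter; my expectation is that pure idempotency of $p$ plus $(k,l')\in p$ already suffices for both $(k,k^*)\in(p,a)^*$ and $(l^*,l')\in(a,p)^*$, and that $ap=pa=a$ is recorded in the lemma statement only because it is the standing hypothesis in the intended application (and is cheap to assume). If it turns out a path argument genuinely requires $a$'s structure, I would fall back on: $k\in\dom(pa)=\dom(a)$ forces $k$ to lie in a transversal block of $a$, so $k$ is joined in $(p,a)^*$ through the middle row to some element of $\bn'$, and then trace which middle-row vertex is used, using $(k,l')\in p$ to pin it down.

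\textbf{Main obstacle.} The only subtlety is bookkeeping with the three rows and the direction of edges — being careful that in $(p,a)^*$ the middle row $\bn^*$ is the \emph{bottom} row of $p$ and the \emph{top} row of $a$, whereas in $(a,p)^*$ it is the bottom row of $a$ and the top row of $p$. Once that is set up, the proof is a short path-chasing argument: find a path $k \to l^* \to \cdots \to k^*$ using only $p$-edges (justified by $p=p^2$), and dually a path $l^* \to \cdots \to l'$ in $(a,p)^*$. There is no deep content; the work is entirely in writing the paths down correctly and invoking the definition of $\Pan$-multiplication via paths in the stacked graph.
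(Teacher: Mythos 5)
Your overall strategy is the paper's: use idempotency to get $(k,k^*)\in(p,p)^*$ and then transfer a suitable path to $(p,a)^*$. But both of the nontrivial steps are asserted rather than proved, and your explicit guess that the hypotheses $ap=pa=a$ are dispensable is false. First, you claim the path $k\rightsquigarrow\cdots\rightsquigarrow k^*$ in $(p,p)^*$ can be taken to use ``only $p$-edges between the top row and the $\bn^*$-row.'' Two problems: (a) the path must be shown to avoid the bottom row $\bn'$ at all — this is exactly the paper's shortest-path contradiction argument (carried out in the maximal-edge representation of $p$), and it is the real content of the first half of the proof; (b) even a path avoiding $\bn'$ need not consist only of first-factor edges. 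Take $p\in\mathcal{P}_2$ with blocks $\{1,2,1'\},\{2'\}$ (an idempotent) and $(k,l')=(2,1')$: every path from $2$ to $2^*$ in $(p,p)^*$ must use the edge $1^*\!-\!2^*$, which is a $\ker(p)$-edge contributed by the \emph{second} factor inside the middle row, not an edge between the top row and $\bn^*$.

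This is precisely where $pa=a$ enters and cannot be dropped. The edges of $(p,p)^*$ not meeting $\bn'$ are the first-factor edges (identical in $(p,a)^*$) together with the second factor's $\ker(p)$-edges inside $\bn^*$; the latter survive in $(p,a)^*$ only because $\ker(p)\subseteq\ker(pa)=\ker(a)$. With the same $p$ as above and $a$ the identity (so $pa\neq a$), one checks $(2,2^*)\notin(p,a)^*$, so the conclusion genuinely fails without the hypothesis. Your fallback sketch (``$k\in\dom(pa)=\dom(a)$ forces $k$ to lie in a transversal block of $a$\dots'') does not repair this: $(k,l')\in p$ does not give $k\in\dom(pa)$, and in any case the claim $(k,k^*)\in(p,a)^*$ is about connectivity in the stacked graph, not about $k$ lying in a transversal block of $a$. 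To complete the proof you need to (i) fix maximal-edge graph representations, (ii) prove by the minimality/contradiction argument that a shortest $k$-to-$k^*$ path in $(p,p)^*$ avoids $\bn'$, and (iii) transfer that path to $(p,a)^*$ using $\ker(p)\subseteq\ker(a)$, which is where $a=pa$ is used; the statement $(l^*,l')\in(a,p)^*$ is then dual, using $a=ap$.
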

\begin{proof}
	Suppose that $p$ is represented by the simple graph with the largest possible number of edges. Since $p = p^2$, $(k,l')$ is in $pp$, and hence it is also in $(p,p)^*$. Since $(k,l')\in p$, we have
	$(l', k^*)\in (p,p)^*$. Hence $(k,k^*)\in (p,p)^*$.
	
	Let $k-\cdots-k^*$ be a shortest path from $k$ to $k^*$ in the graph representing $(p,p)^*$, as obtained from the maximal graph representing $p$. Suppose to the contrary that this path contains a vertex $j'\in A'$. Then, the path has a subpath
	$i_1^* - j'_1 - \cdots - j'_t - i^*_2$, where $t\geq1$. But $t$ must be $1$ since $j'_1-i^*_2$ (by the fact that $p$ is represented by the graph with the largest number of edges) and $k - \cdots-k^*$ is a shortest path from $k$ to $k^*$. We then have $i_1^* - j'_1 - i^*_2$, which implies $(i_1,j'_1),(j'_1,i_2)\in p$. Hence $(i_1,i_2)\in p$, and so $(i^*_1,i^*_2)\in(p,p)^*$. This a contradiction since we can replace $i_1^*-j'_1-i^*_2$
	with $i_1^*-i^*_2$ obtaining a shorter path from $k$ to $k^*$.
	
	Now let $a$ also be represented by the graph with the maximal number of edges. Then because $a = pa$,  every edge in the graph for $(p,p)^*$ with no vertex from $A'$ is also an edge in the graph for $(p,a)^*$. Thus, the path
	$k-\cdots-k^*$ above is also a path in the graph for $(p,a)^*$. Hence $(k,k^*)\in (p,a)^*$.
	
	Dually, we obtain $(l^*,l')\in(a,p)^*$.
\end{proof}


\begin{lemma}\label{l:mirrB} Let $a,p \in \Pan$ such that $pa=ap=a$ and $p$ is an idempotent.
	Let $A$ be a non-empty subset of~$\bn$ such that $a_A$ is connected,
	$\ker(a_A)=\ker(p_A)$, and $\coker(a_A)=\coker(p_A)$. Then:
	\begin{itemize}
		\item [\rm(1)] there is at most one $a$-block $s$ intersecting $A$ such that $s$ is transversal or $s$ is not a block of $p$;
		\item [\rm(2)] there is at most one $a$-block $v$ intersecting $A'$ such that $v$ is transversal or $v$ is not a block of $p$.
	\end{itemize}
\end{lemma}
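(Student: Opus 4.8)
The goal is to bound, for each of the two rows of $A$, the number of ``bad'' $a$-blocks --- those that are either transversal or fail to be a block of $p$. By the left-right symmetry between $\ker$ and $\coker$ coming from $pa=ap=a$ (and the symmetry of the two statements), it suffices to prove (1); statement (2) will follow by the dual argument (reading the diagrams upside down, exchanging $\bn$ and $\bn'$). So I would fix attention on the blocks of $a$ that meet $A$ in the \emph{top} row.

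First I would record the consequence of $ap=a$ at the level of the intermediate partition $(a,p)^*$: since $p$ is idempotent and $pa=ap=a$, Lemma~\ref{l:idem-p} is available, and $p_A$ has the same kernel and cokernel as $a_A$, so every $A$-block of $a$ has its ``shadow'' available inside $p$ as well. The key structural observation I want is this: if $s$ and $s''$ are two distinct $a$-blocks both meeting $A$ in the top row and both bad, then (using connectedness of $a_A$, so that $\ker(a_A)\vee\coker^\wedge(a_A)$ is universal on $A$) there is a path in $a_A$ joining a vertex of $s\cap A$ to a vertex of $s''\cap A$; this path can be transported, via Lemma~\ref{l:idem-p} and the equalities $\ker(a_A)=\ker(p_A)$, $\coker(a_A)=\coker(p_A)$, into a path in $(p,a)^*$ or $(a,p)^*$ that forces $s$ and $s''$ to already be connected inside the product $pa=a$ --- i.e.\ $s=s''$, a contradiction. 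The mechanism is exactly the one used in Lemmas~\ref{l:2-bridge} and~\ref{l:1-bridge}: a bad top block, being either transversal or not equal to the corresponding $p$-block, ``reaches down'' into the middle row in $(p,a)^*$, and connectedness of $a_A$ lets two such reaches meet.

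Concretely, I would split ``bad'' into the two cases and handle them together. If $s$ is a transversal $a$-block meeting $A$, then $s$ has a vertex $k\in A$ and a vertex $l'$; since $\coker(a_A)=\coker(p_A)$, the class $[l']_p\cap A'$ equals $[l']_a\cap A'$, and Lemma~\ref{l:idem-p}-type reasoning gives $(k,k^*)\in(p,a)^*$ together with a descent of $s$ into the middle row. If instead $s$ meets $A$ in the top row and $s$ is not a $p$-block, then $s$ differs from $[k]_p$ for some $k\in s\cap A$, yet $\ker(a_A)=\ker(p_A)$ forces the discrepancy to occur \emph{outside} $A$, so again $s$ is connected, through a middle-row vertex of $A^*$, to whatever $[k]_p$ attaches to below. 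In both cases the bad block $s$ is joined, within $(p,a)^*$, to a vertex of $A^*$. Now if $s\ne s''$ were two bad top blocks, pick $k\in s\cap A$, $k''\in s''\cap A$; connectedness of $a_A$ yields an alternating ker/coker path from $k$ to $k''$ in $a_A$, hence (promoting each $\ker$-step and each $\coker$-step through the equalities with $p$, exactly as in the proof of Lemma~\ref{l:idem-p}) a path from $k^*$ to $(k'')^*$ in $(p,a)^*$; concatenating with the two descents of $s$ and $s''$ shows $(k,k'')\in pa=a$, so $s=s''$. This contradiction proves (1), and (2) is the vertical mirror.

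\textbf{Main obstacle.}
The delicate point is the transport of a connecting path from $a_A$ into the intermediate graph $(p,a)^*$ while controlling which vertices the path visits --- the same ``shortest-path has no $A'$-vertex'' bookkeeping that appears in Lemma~\ref{l:idem-p}. One must be careful that the equalities $\ker(a_A)=\ker(p_A)$ and $\coker(a_A)=\coker(p_A)$ are only asserted \emph{restricted to $A$}, so a $\ker$- or $\coker$-step of the path inside $A$ can be realized in $p$, but the blocks of $a$ and $p$ may still differ off $A$; the argument must only ever use these equalities on steps that stay within $A\cup A'$, and must locate any block-discrepancy of a bad block strictly outside $A$. Getting this localization exactly right (and checking it works uniformly for the transversal case and the non-$p$-block case) is where the real work lies; the rest is diagram-chasing of the type already carried out in Lemmas~\ref{l:2-bridge}--\ref{l:idem-p}.
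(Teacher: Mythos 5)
Your proposal is correct and follows essentially the same route as the paper's proof: connectedness of $a_A$ together with $\coker(a_A)=\coker(p_A)$ places all of $A^*$ in a single block of $(p,a)^*$, each ``bad'' block meeting $A$ descends to the middle row via Lemma~\ref{l:idem-p}, and two such blocks would therefore be merged in $pa=a$, forcing them to coincide. The one spot where the paper is sharper is your non-$p$-block case: the descent there is justified not by locating the discrepancy outside $A$ but by observing that if $[h]_p$ were non-transversal then $[h]_a=[h]_{pa}=[h]_p$, so $[h]_a$ would be a $p$-block after all.
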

\begin{proof}
	Since $a_A$ is connected and $\coker(p_A)=\coker(a_A)$, the set $A^*$ is included in a single block of $(p,a)^*$. Suppose to the contrary that (1) is false. Then there are three possible cases.
	\vskip 1mm
	\noindent\textbf{Case 1.} There are distinct transversal $a$-blocks $s$ and $t$ intersecting $A$.
	\vskip 1mm
	We then have $g,k'\in s$ and $h,l'\in t$, where $g,h\in A$. Thus $(g^*,k'),(h^*,l')\in (p,a)^*$,
	and so $[k']_{(p,a)^*} = [l']_{(p,a)^*}$ (as $A^*$ lies within one block). It follows that  $(k',l')\in pa$, and so $(k',l')\in a$
	since $pa = a$. This is a contradiction since $s\ne t$.
	\vskip 1mm
	\noindent\textbf{Case 2.} There are $a$-blocks $s$ and $t$ intersecting $A$ such that $s$ is transversal, $t$ is not transversal, and $t$ is not a $p$-block.
	\vskip 1mm
	As in Case~1, we have $g,k'\in s$, where $g\in A$. Select $h\in t\cap A$. Now, $[h]_p$ needs to be a transversal block, for otherwise $[h]_p = [h]_{pa} = [h]_a = t$ and $t$ is not a $p$-block.
	Hence, by Lemma \ref{l:idem-p}, $(h,h^*) \in (p,a)^*$. We now have $(g^*,k'),(h^*,h)\in (p,a)^*$,
	which implies $(h,k')\in pa$, and so $(h,k')\in a$. This is a contradiction since $t$ is not transversal.
	\vskip 1mm
	\noindent\textbf{Case 3.} There are distinct non-transversal $a$-blocks $s$ and $t$ intersecting $A$ that are not $p$-blocks.
	\vskip 1mm
	Select $g\in s\cap A$ and $h\in t\cap A$. As in Case~2, we obtain $(g,g^*),(h,h^*)\in (p,a)^*$, leading to the contradiction $(g,h)\in a$.
	
	We have proved~(1). Statement~(2) follows by a dual argument.
\end{proof}

The following result is crucial for our characterization of $\cfn$ in $\Pan$.

\begin{prop}\label{t:KerCoker}
	Let $a\in \Pan$ be in normal form, and let $p\in \Pan$ be such that $pa = a = ap$. Then the kernel and cokernel of $p$ consist of singletons.
\end{prop}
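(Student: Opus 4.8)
The plan is to prove the contrapositive: if $p\in\Pan$ satisfies $pa=a=ap$ and either $\ker(p)$ or $\coker(p)$ contains a non-singleton block, then $a$ is not in $\frn$-normal form. By duality (replacing $a$ by its vertical mirror image), it suffices to treat the case where $\ker(p)$ has a block $B$ with $|B|\geq 2$. First I would pass to a suitable connected subpartition of $a$: since $pa=a=ap$ forces $\ker(a)\subseteq\ker(p)$ and $\coker(a)\subseteq\coker(p)$ (because $p$ acts as a one-sided identity on $a$), I want to locate a non-trivial set $A\subseteq\bn$ on which $a_A$ is connected and on which $\ker(a_A)=\ker(p_A)$ and $\coker(a_A)=\coker(p_A)$, chosen so that $A$ meets the non-singleton $\ker(p)$-block $B$ in more than one point. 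The natural candidate for $A$ is (the $\bn$-part of) a connected component of $\ker(p)\vee\coker^{\wedge}(p)$ that contains at least two elements of $B$; one checks that on such a component the kernels and cokernels of $a$ and $p$ agree because $a$ and $p$ have the same kernel/cokernel restricted to each such block (again from $pa=a=ap$, the blocks of $a$ cannot be finer than forced, and on a connected component the coarsest possibilities coincide).

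Having fixed such an $A$, I would apply Lemma~\ref{l:mirrB} to conclude that there is at most one $a$-block intersecting $A$ that is transversal or fails to be a $p$-block, and at most one $a$-block intersecting $A'$ that is transversal or fails to be a $p$-block. Now examine $a_A$. Since $A$ meets a non-singleton block of $\ker(p)=$ (on this component) $\ker(a)$, the subpartition $a_A$ is non-trivial and has at least one non-singleton block contained in $A$, in particular $a_A$ has a block $r_A$ that is \emph{not} of the form $r_A=r$ unless that block is exactly the single exceptional block from Lemma~\ref{l:mirrB}(1); but a $p$-block intersecting $A$ which is non-transversal and equals its restriction to $A\cup A'$ is a singleton (because $p_A$'s kernel class here has size $\geq 2$, forcing any such block to extend outside $A$ or to be transversal). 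Carefully: the non-singleton kernel class shows $a_A$ is connected, transversal-free-or-$1$-transversal depending on whether $a_A$ has a transversal block, and in either case Lemma~\ref{l:mirrB} leaves no room for the second "anchor" block demanded by Definition~\ref{dnor}. Concretely, in the transversal-free case, $\frn$-normal form requires \emph{two} distinct blocks $s_A,t_A$ with $s_A\neq s$, $t_A\neq t$, both inside $A$ or both inside $A'$; Lemma~\ref{l:mirrB}(1) allows at most one block inside $A$ that differs from its ambient block, and (1) rules out transversals too, so we cannot produce two such blocks inside $A$, and symmetrically Lemma~\ref{l:mirrB}(2) blocks the "inside $A'$" alternative. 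In the $1$-transversal case, normal form requires a block $t_A\neq s_A$ with $t\neq t_A$; but the transversal $s_A$ already consumes the single exception permitted by Lemma~\ref{l:mirrB}(1) on the $\bn$ side and by (2) on the $\bn'$ side, so no such $t_A$ exists.

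The remaining piece is the bookkeeping that glues these pieces together: I must verify that the connected component $A$ I selected genuinely satisfies the hypotheses of Lemma~\ref{l:mirrB}, namely that $a_A$ is connected and $\ker(a_A)=\ker(p_A)$, $\coker(a_A)=\coker(p_A)$. Connectedness of $a_A$ is the key subtlety and is where I expect the main obstacle to lie: a priori $\ker(p)\vee\coker^{\wedge}(p)$ being connected on $A$ does not immediately give that $\ker(a)\vee\coker^{\wedge}(a)$ is connected on $A$, since $a$ could be "more disconnected" than $p$. I would resolve this by choosing $A$ minimally — take $A$ to be the smallest non-empty subset of $\bn$ containing two points of a non-singleton $\ker(p)$-block such that $a_A$ is connected and $\ker,\coker$ of $a_A$ and $p_A$ agree; one shows such a minimal $A$ exists by starting from a single non-singleton $\ker(a)$-block (which exists, as $\ker(a)\subseteq\ker(p)$ and... wait — here is the real crux: $\ker(a)$ need not have a non-singleton block even if $\ker(p)$ does). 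So instead the argument must start from $a$: if $\ker(a)$ already has a non-singleton block, pick $A$ to be the connected component of $\ker(a)\vee\coker^{\wedge}(a)$ containing it, and argue $\ker(p_A)=\ker(a_A)$ on this component using that any coarsening would have to come from $p$ acting, contradicting $ap=a$; if $\ker(a)$ is all singletons but $\ker(p)$ is not, then pick two points $k,l$ in a $\ker(p)$-block, note $[k]_a,[l]_a$ are singletons $\{k\},\{l\}$, and derive a contradiction with $ap=a$ directly, since $p$ merges $k,l$ on top but $a$ separates them — tracing through $(a,p)^*$ via Lemma~\ref{l:idem-p} shows $[k]_{ap}=[l]_{ap}$, contradicting $ap=a$. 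Thus the "$\ker(a)$ all singletons" subcase is quick, and the substantive work is the connected-component selection and the two-anchor counting against Lemma~\ref{l:mirrB} and Definition~\ref{dnor}.
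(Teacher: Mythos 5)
Your overall skeleton matches the paper's: argue by contradiction, reduce to an idempotent $p$, find a non-trivial connected $A$ with $\ker(a_A)=\ker(p_A)$ and $\coker(a_A)=\coker(p_A)$, invoke Lemma~\ref{l:mirrB}, and play the resulting scarcity of ``exceptional'' blocks against Definition~\ref{dnor}. But two of the critical steps do not go through as you describe.

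First, a direction error that propagates. From $pa=a$ one gets $\ker(p)\subseteq\ker(pa)=\ker(a)$, i.e.\ $p$'s kernel is \emph{finer}, not coarser; you wrote the containment the other way. Consequently your worried subcase ``$\ker(a)$ is all singletons but $\ker(p)$ is not'' is vacuous (a non-singleton $\ker(p)$-block sits inside a $\ker(a)$-block), and your attempted rescue via $(a,p)^*$ uses the wrong product anyway. More importantly, the correct containment kills your proposed choice of $A$: on a connected component of $\ker(a)\vee\coker^{\wedge}(a)$ there is no reason to have $\ker(a_A)=\ker(p_A)$, since $a$ may merge strictly more than $p$ does there while still satisfying $pa=a=ap$. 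The assertion that ``any coarsening would have to come from $p$ acting, contradicting $ap=a$'' is not true. The paper gets around this by first using normality to show that $(k,l)\in a$ forces $(k',l')\notin a$ (via the two-element subpartition $a_{\{k,l\}}$), which makes $\{k,l\}$ itself a seed on which the kernels and cokernels of $a$ and $p$ agree, and then takes $A$ \emph{maximal} with the agreement property --- an $A$ that is in general much smaller than the connected component.

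Second, the counting at the end conflates two different notions of ``exceptional'' block. Lemma~\ref{l:mirrB} bounds the number of $a$-blocks meeting $A$ that are \emph{transversal or not $p$-blocks}. Definition~\ref{dnor} asks for blocks $r$ with $r_A\neq r$, i.e.\ blocks extending beyond $A\cup A'$. A non-transversal $p$-block that meets $A$ but is not contained in $A$ satisfies $r_A\neq r$ yet is invisible to Lemma~\ref{l:mirrB}; two such blocks would happily witness condition (1) of normality, and no contradiction results. Such blocks genuinely can occur for the maximal $A$ --- the paper does not rule them out but exploits them: maximality forces each such block to produce a cokernel relation $(h',l_h')$ back into $A'$, which is what allows the construction of the larger set $B$ absorbing all non-exceptional blocks meeting $A$. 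Normality is then applied to $a_B$ (not to $a_A$), producing a lower block $w$ extending beyond $B'$, and the final contradiction comes from adjoining a point $m$ of $w$ to $A$ and violating maximality again. This second stage is essential, and your proposal has no substitute for it.
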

\begin{proof}
	Suppose by way of contradiction that there exist distinct $k,l\in \bn$ such that $(k,l)\in p$ or $(k',l')\in p$. By replacing $p$ with its idempotent power, we may assume that $p$ is an idempotent.
	
	Suppose $(k,l)\in p$. Then, since $pa = a$, we have $(k,l)\in a$. Since $a$ is in normal form, it follows that $(k',l')\notin a$. Thus,
	$(k',l')\notin p$ since $ap = a$. It follows that $\ker(a_{\{k,l\}}) = \ker(p_{\{k,l\}})$ and $\coker(a_{\{k,l\}}) = \coker(p_{\{k,l\}})$. By a dual argument, these equalities also hold if $(k',l')\in p$.
	
	Let $A$ be a subset of $\bn$ of maximum size such that $a_A$ is connected and it satisfies $\ker(a_A) = \ker(p_A)$, $\coker(a_A) = \coker(p_A) $. We have $|A|\geq |\{k,l\}| = 2$, so $a_A$ is nontrivial.
	
	By Lemma \ref{l:mirrB}, $a_A$ has at most one transversal block,
	there exists at most one $a$-block $s$ intersecting $A$ such that $s$
	is transversal or $s$ is not a block of $p$, and there exists at most one $a$-block $v$ intersecting $A'$ such that $v$ is transversal or $v$ is not a block of $p$.
	
	Consider the set $H = \{h\in \bn\setminus A: [h]_a\cap A\neq \emptyset, [h]_a\neq s\}$ (here and in the following, we ignore conditions of the form $[h]_a\neq s$ if no exceptional block $s$ exists). We claim that for each $h\in H$, there exists $l_h\in A$ such that $(h', l'_h)\in a$.
	
	For $h\in H$, let $t = [h]_a$. Then $t$ intersects $A$. Since $t\neq s$, $t$ is also a block of $p$, and hence $\ker(a_{A\cup \{h\}}) = \ker(p_{A\cup \{h\}})$. Moreover, $a_{A\cup \{h\}}$ is connected, and hence by the maximality of the size of $A$, we conclude that $\coker(a_{A\cup \{h\}})\neq \coker(p_{A\cup \{h\}})$. This implies that there is an $l_h\in A$ such that $(l'_h,h')\in a$, $(l'_h,h')\notin p$.
	(Note that $\coker(p_{A\cup \{h\}})\subseteq \coker(a_{A\cup \{h\}})$ since $ap = a$.)
	
	Consider the set
	\[
	B = \{x\in \bn\cap s: [x']_a\cap A'\neq \emptyset \}\cup \bigcup\{u : u \mbox{ is an $a$-block with } u\cap A\neq \emptyset, u\neq s\}.
	\]
	(If no exceptional block $s$ exists, interpret the first set as $\emptyset$, and ignore the condition $u\neq s$). By the definition
	of $B$, we have $A\subseteq B$ (so $a_B$ is not trivial), $a_B$ is connected, and every $a$-block intersecting $B$ also intersects~$A$.
	Hence, by Lemma~\ref{l:mirrB}, $s$ is the only $a$-block intersecting $B$ such that $s$ is transversal or $s$ is not a block of $p$. In particular, $a_B$ has at most one transversal block, which, if it exists, equals $s_B$.
	
	Moreover, every $a$-block intersecting $B'$ also intersects $A'$.
	Indeed, let $r$ be an $a$-block intersecting $B'$, say $g'$ is in the intersection. If $g$ lies in the first set from the definition of $B$, then $r$ intersects $A'$ by the definition of $B$.
	Suppose $g\in u$, where $u$ is an $a$-block included in the second set of the definition of $B$. If $g\in A$, then $g'\in r\cap A'$.
	Otherwise, $g\in u\setminus A$. Since $u\neq s$ and $u\cap A\neq \emptyset$, $g\in H$. Hence $(l'_g, g')\in a$, with $l'_g\in A'$, and so $r$ intersects $A'$.
	
	By Lemma~\ref{l:mirrB} and the fact that every $a$-block
	intersecting $B'$ also intersects $A'$, $v$, if it exists, is the only $a$-block intersecting $B'$ such that $v$ is transversal or $v$ is not a block of $p$.
	
	Suppose $a_B$ has a transversal block, which must be equal to both $s_B$ and $v_B$. Then $s = v$ and, since $a$ is normal, there is
	an $a$-block $w$ such that $w\neq s$ (so $w\neq v$), $w$ intersects $B\cup B'$, and $w\neq w_B$. The block $w$ cannot intersect $B$ (by the definition of $B$), so it intersects $B'$. Suppose $a_B$ is transversal free. Then we have either two distinct $a$-blocks intersecting $B$ and extending beyond $B\cup B'$, or two blocks intersecting $B'$ and extending beyond $B\cup B'$. The former is not possible, because only $s$ can extend beyond $B\cup B'$ (by the definition of $B$). In the second case, one of these blocks, say $w$, must differ from $v$.
	
	In either case, we have an $a$-block $w$ such that $w\neq v$, $w$ intersects $B'$, and $w\neq w_B$. Since $v$ is the only $a$-block intersecting $B'$ such that $v$ is transversal or $v$ is not a block of $p$, $w\subseteq \bn'$ and $w$ is a block of $p$. Since $w\neq w_B$, there is $m'\in w\setminus B'$.
	
	Consider the set $A\cup \{m\}$. Because $w$ is also a block of $p$ and it intersects $A'$, we have $\coker(a_{A\cup \{m\}}) = \coker(p_{A\cup \{m\}})$. Thus by the maximality of the size of $A$,
	$\ker(a_{A\cup \{m\}})\neq \ker(p_{A\cup \{m\}})$. However, our construction of $B$ shows that $[m]_a$ does not intersect $B$, and hence it does not intersect $A$. Because $pa = a$, this also holds for $[m]_p$, which implies $\ker(a_{A\cup \{m\}}) = \ker(p_{A\cup \{m\}})$. This is a contradiction, which completes the proof.
\end{proof}

Let $S_n$ be the symmetric group of permutations on $\bn = \{1,\ldots,n\}$. Then $S_n$ acts on $\Pan$ by $a^\sig$ ($a\in \Pan$, $\sig\in S_n$), where $a^\sig$ is obtained by replacing $x$ by $x\sig$ and $y'$ by $(y\sig)'$ in each block of $a$.

For $\sig\in S_n$, define $\lam_\sig = \{\{x,(x\sig)'\} : x\in \bn\}\in \Pan$. Then $\psn = \{\lam_\sig\in \Pan : \sig\in S_n\}$ is the group of units of $\Pan$, which is isomorphic to $S_n$. The mapping $\sig\to \lam_\sig$ is an isomorphism for $S_n$ to $\psn$. Note that for all $a\in \Pan$ and $\sig\in S_n$, $a^\sig = \lam^{-1}_\sig a \lam_\sig$.

We can now characterize the natural conjugacy $\cfn$ in $\Pan$.

\begin{theorem}\label{tcfn}
	In the partition monoid $\Pan$, every $\frn$-conjugacy class contains an element in normal form. Moreover, if $a,b\in \Pan$ are in normal form, then $a\cfn b$ if and only if $b = a^\sig$ for some permutation $\sig\in S_n$.
\end{theorem}
\begin{proof}
	The first statement follows by repeated applications of Lemmas \ref{l:2-bridge} and \ref{l:1-bridge}. To simplify the notation in the proof of the second statement, we will identify any $\sig\in S_n$ with $\lam_\sig\in \psn$. In particular, when we write $\sig^{-1}a\sig$, where $a\in \Pan$, we will mean $\lam_{\sig}^{-1}a\lam_{\sig}$. Let $a,b\in \Pan$ be in normal form. It is clear that if $b = a^\sig$ for some $\sig\in S_n$, then $a\cfn b$.
	
	For the converse, suppose that $a\cfn b$ and let $g,h\in \Pan$ be $\cfn$-conjugators for $a$ and $b$. Let $g_1 = (gh)^ig$, where $i\geq0$ is an integer such that $g_1h$ is an idempotent. It is straightforward to check that $g_1$ and $h$ are also conjugators for $a$ and $b$. Now, let $h_1 = (hg_1)^jh$, where $j\geq 0$ is an integer such that $h_1 g_1$ is an idempotent. Again, we can check that $g_1$ and $h_1$ are conjugators for $a$ and~$b$. By a routine calculation, we can show that $g_1 h_1$ is also an idempotent.
	Therefore, we may assume that $gh$ and $hg$ are idempotents.
	
	By Proposition~\ref{t:KerCoker}, the kernel and cokernel of $gh$ and of $hg$ both consist of singletons. It follows that the same statement holds for $g$ and $h$. Hence, for every $x\in \bn$,
	$[x]_g = \{x,y'\}$ or $[x]_g = \{x\}$, and $[x']_g = \{x',y\}$ or $[x']_g = \{x'\}$, for some $y\in \bn$. The same statement is true for $h$. Since $gh$ is an idempotent, for every $x\in \bn$, either $[x]_{gh} = \{x,x'\}$ or $[x]_{gh} = \{x\}$ and $[x']_{gh} = \{x'\}$. The same statement is true for $hg$.
	
	Define $\sig:\bn\to \bn$ by
	\[
	x\sig = \begin{cases}
		y & \text{if }[x]_g = \{x,y'\}\text{ or }[x']_h = \{x',y\},\\
		x & \text{if }[x]_g = \{x\}\text{ and }[x']_h = \{x'\}.
		\end{cases}
	\]
	By the properties of $g$, $h$, $gh$, and $hg$ stated above, $\sig$ is well defined and $\sig\in S_n$. By the definition of $\sig$, we have $g\subseteq \sig$ and $h\subseteq \sig^{-1}$.
	To conclude the proof, it suffices to show that $\sig b\sig^{-1}=a$.
	
	Since $g\subseteq \sig$ and $h\subseteq \sig^{-1}$, we have
	$a = gbh\subseteq \sig b\sig^{-1}$. For the reverse inclusion, let $x\in \bn$. We will prove that $[x]_{\sig b\sig^{-1}}\subseteq [x]_a$ and $[x']_{\sig b\sig^{-1}}\subseteq [x']_a$.
	
	Suppose $z\in[x]_{\sig b\sig^{-1}}$. If $z=x$, then $z\in[x]_a$. Suppose $z\neq x$. Then, $z\in [x]_{\sig b\sig^{-1}}$
	can only happen when $x\sig = y_1$, $(y_1,y_2)\in b$, and $z\sig = y_2$, for some $y_1,y_2\in \bn$. Note that $y_1\neq y_2$.
	We have $[y_1]_{hg} = \{y_1,y_1'\}$ or $[y_1]_{hg} = \{y_1\}$. The latter is impossible since we would have $[y_1]_{hgb} = \{y_1\}$, but $hgb = b$ and $y_2\in [y_1]_b$. Thus $[y_1]_{hg} = \{y_1,y_1'\}$, so there is $l\in \bn$ such that $(y_1,l')\in h$ and $(l,y_1')\in g$. Hence $l\sig = y_1$, which implies $l = x$ (since $x\sig = y_1$), and so $(x,y_1')\in g$. By symmetry, $(z,y_2')\in g$. We now have $(x,y_1')\in g$, $(y_1,y_2)\in b$, and $(z,y_2')\in g$, which implies $z\in [x]_{gbh}$, and so $z\in [x]_a$.
	
	Suppose $z'\in [x]_{\sig b\sig^{-1}}$. Then, $x\sig = y$, $(y,k')\in b$, and $k\sig^{-1} = z$ (that is, $z\sig = k$), for some $y,k\in \bn$. We have $[y]_{hg} = \{y,y'\}$ or $[y]_{hg} = \{y\}$. The latter is impossible since we would have $[y]_{hgb} = \{y\}$, but $hgb = b$ and $k'\in [y]_b$. Thus $[y]_{hg} = \{y,y'\}$, so there is $l\in \bn$ such that $(y,l')\in h$ and $(l,y')\in g$. Hence $l\sig=y$, which implies $l = x$ (since $x\sig = y$), and so $(x,y')\in g$. Further, we have $[k']_{hg} = \{k,k'\}$ or $[k']_{hg} = \{k'\}$. The latter is impossible since we would have
	$[k']_{bhg} = \{k'\}$, but $bhg = b$ and $y\in [k']_b$. Thus $[k']_{hg} = \{k,k'\} = [k]_{hg}$, so there is $m\in \bn$
	such that $(k,m')\in h$ and $(m,k')\in g$. Hence $m\sig = k$, which implies $m = z$ (since $z\sig = k$), and so $(k,z')\in h$.
	We now have $(x,y')\in g$, $(y,k')\in b$, and $(k,z')\in h$, which implies	$z'\in [x]_{gbh}$, and so $z'\in [x]_a$.
	
	We have proved that $[x]_{\sig b\sig^{-1}}\subseteq [x]_a$. By a dual argument, we obtain $[x']_{\sig b\sig^{-1}}\subseteq [x']_a$.
	It follows that $\sig b\sig^{-1} = a$, and so $b = \sig^{-1}a\sig$, that is, $b = a^\sig$.
\end{proof}


We next prove some consequences of our classification. Recall that $\cfn\subseteq \mathcal{D}$ (Proposition \ref{Prp:D}). In $\Pan$, the $\mathcal{D}$-classes correspond to partitions of the same rank. The following characterizes $\cfn$ on partitions of small rank.

\begin{cor}\label{c:Pnrank0}
	In $\Pan$ the partitions of rank $0$ form a single $\cfn$-class.
\end{cor}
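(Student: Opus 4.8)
The plan is to show that any two partitions of rank $0$ are $\frn$-conjugate, which combined with Proposition~\ref{Prp:D} (giving $\cfn\subseteq\greenD$, and rank-$0$ partitions form a single $\greenD$-class) yields that the rank-$0$ partitions form exactly one $\cfn$-class. So it suffices to prove: if $a,b\in\Pan$ both have rank $0$ (i.e.\ are transversal-free), then $a\cfn b$.

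The strategy is to exploit Theorem~\ref{tcfn}: every $\frn$-conjugacy class contains an element in normal form, and two normal-form partitions are $\frn$-conjugate iff one is obtained from the other by a permutation $\sig\in S_n$. First I would observe that the property of being transversal-free (rank $0$) is preserved both under the moves of Lemmas~\ref{l:2-bridge} and~\ref{l:1-bridge} (inspection of the constructions: no transversal block is created — the $1$-transversal case of Lemma~\ref{l:1-bridge} replaces a transversal block by a non-transversal one, which only helps) and under the $S_n$-action $a\mapsto a^\sig$. Hence the normal-form representative of $[a]_{\frn}$ is again transversal-free, and the question reduces to: are any two transversal-free partitions \emph{in normal form} related by a permutation of $\bn$?

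The key step is then to pin down what transversal-free normal-form partitions look like. A transversal-free $b$ splits as $b=\ker(b)\sqcup\coker^{\wedge}(b)$ on $\bn$ and $\bn'$ independently. Condition (1) of Definition~\ref{dnor} applied to each connected, transversal-free, non-trivial subpartition $b_A$ forces that $b_A$ contains two ``truncated'' blocks $s_A\ne s$, $t_A\ne t$ on the same side; but for a transversal-free partition every connected component already lies entirely within $\bn$ or within $\bn'$, so a connected $b_A$ with $|A|\ge 2$ would need both $s$ and $t$ to be blocks of $b_A$ that are not full blocks of $b$ — impossible if $A$ is the full vertex set of a component. Running this argument I expect to conclude that in normal form, every connected component of $b$ restricted to $\bn$ (and to $\bn'$) is a singleton, i.e.\ $\ker(b)$ and $\coker^{\wedge}(b)$ are the partitions into singletons, so $b$ is \emph{the} partition whose blocks are $\{1\},\dots,\{n\},\{1'\},\dots,\{n'\}$ — there is a unique transversal-free partition in normal form. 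Two occurrences of this unique element are trivially related by the identity permutation, so Theorem~\ref{tcfn} gives $a\cfn b$ for all rank-$0$ $a,b$.

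The main obstacle is the middle step: carefully verifying that the normal-form conditions, when specialized to transversal-free partitions, really force all components on each side to be singletons — one has to handle the interplay between ``$A$ is the full support of a component'' versus ``$A$ is a proper connected subset'' correctly, and check that the Lemma moves cannot get stuck at a non-singleton transversal-free partition. (Remark~\ref{rnor}'s termination argument — each move strictly increases the number of singleton blocks — should make this clean: a transversal-free partition with a non-singleton block always admits a further reduction via Lemma~\ref{l:2-bridge} applied to the support of that block, so a normal-form transversal-free partition has no non-singleton blocks at all.) Once that is settled, the rest is immediate from the cited theorem and Proposition~\ref{Prp:D}.
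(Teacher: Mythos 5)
Your proposal follows essentially the same route as the paper: both reduce the problem, via Theorem~\ref{tcfn}, to showing that the all-singletons partition is the only rank-$0$ partition in $\frn$-normal form, and both then conclude immediately (the preservation of rank $0$ under the normal-form moves that you check by hand is automatic from $\cfn\subseteq\greenD$).

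One assertion in your middle step is wrong, though, and the inference you hang on it does not go through as written: for a transversal-free $b$ it is the \emph{blocks} that each lie entirely in $\bn$ or in $\bn'$, not the connected subsets $A\subseteq\bn$ in the sense of Definition~\ref{dnor} -- connectedness there is taken with respect to $\ker(b)\vee\coker^{\wedge}(b)$, so an upper block $\{1,2\}$ and a lower block $\{2',3'\}$ together make $\{1,2,3\}$ connected. The correct justification, which is the one the paper gives, is the maximality argument you relegate to your final parenthetical: take $A$ a maximal non-trivial connected subset; if some block $s$ of $b$ had $s_A\neq s$, then $s$ would reach an element of $\bn\setminus A$ or of $\bn'\setminus A'$ on the \emph{same} side (since $b$ has no transversal blocks), and adjoining that index to $A$ would produce a larger connected set, contradicting maximality. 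Hence every block of $b_A$ is a full block of $b$, condition~(1) of Definition~\ref{dnor} fails for $A$, and $b$ is not in normal form. With that substitution your argument is the paper's proof; also note that your shortcut of applying Lemma~\ref{l:2-bridge} ``to the support of that block'' does not always satisfy the lemma's hypothesis that all blocks other than $s_A,t_A$ be full -- you need to take $A$ to be the whole maximal connected component.
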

\begin{proof}
	The singleton partition is clearly in $\cfn$-normal form. We claim that it is the only such partition of rank $0$.
	
	If $b$ is any other rank $0$ partition, it contains a nontrivial connected subset. Consider a maximal such subset $A$. Then any block $B$ in $b_A$ must be a block of $b$ for otherwise $b$ would be a transversal by the maximality of $A$. However, this is impossible as $b$ has rank $0$. The set $A$ now witnesses that $b$ is not in normal form, as required.
\end{proof}

\begin{cor}\label{c:Pnrank1}
	In $\Pan$, the partitions of rank $1$ form two distinct $\cfn$-classes, if $n\geq 2$, and a single $\cfn$-class, if $n=1$.
\end{cor}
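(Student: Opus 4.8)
The plan is to invoke Theorem~\ref{tcfn}: in $\Pan$ every $\frn$-class contains a partition in $\frn$-normal form, and two normal-form partitions are $\frn$-conjugate if and only if one is carried to the other by the $S_n$-action. Since $\cfn$ preserves rank (it is contained in $\mathcal{D}$), it therefore suffices to describe the rank-$1$ partitions that are in normal form and then count the $S_n$-orbits among them. The case $n=1$ is immediate, since $\{\{1,1'\}\}$ is the only rank-$1$ partition of $\bn\cup\bn'$, so there is a single $\frn$-class. Assume from now on that $n\ge 2$.

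The key step is to show that a rank-$1$ partition $b\in\Pan$, with unique transversal block $s$, is in $\frn$-normal form if and only if $s=\{k,l'\}$ for some $k,l\in\bn$ and all other blocks of $b$ are singletons. For the "if" direction, such a $b$ has $\ker(b)$ and $\coker^{\wedge}(b)$, hence also $K:=\ker(b)\vee\coker^{\wedge}(b)$, equal to the partition of $\bn$ into singletons; consequently no subpartition $b_A$ with $|A|\ge2$ is connected, so conditions (1) and (2) of Definition~\ref{dnor} hold vacuously. For the "only if" direction, write $\bn$ as the disjoint union of its $K$-classes $A_1,\dots,A_m$ and note that each $b_{A_i}$ is connected. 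Every non-transversal block of $b$ meeting $A_i$ is a class of $\ker(b)$ or $\coker^{\wedge}(b)$, hence lies inside $A_i$ and equals its full block inside $b_{A_i}$; so the only block of any $b_{A_i}$ that can be a proper restriction of its full block is the one induced by $s$. Distinguishing whether the $K$-class of $s\cap\bn$ and the $K$-class of $s\cap\bn'$ coincide, one checks that if some $|A_i|\ge2$ then $b_{A_i}$ is a non-trivial connected transversal-free or $1$-transversal subpartition for which condition (1) or (2) of Definition~\ref{dnor} would demand two distinct blocks on the same side that are proper restrictions of their full blocks, while at most one such block ever exists — a contradiction. Hence every $A_i$ is a singleton, which forces $s\cap\bn$, $s\cap\bn'$, and every non-transversal block of $b$ to be singletons.

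It then remains to classify these normal forms under the $S_n$-action, which sends the transversal block $\{k,l'\}$ to $\{\sigma k,(\sigma l)'\}$ and permutes the singleton blocks. Thus there are exactly two orbits: the orbit of
\[
d=\{\{1,1'\}\}\cup\{\{x\},\{x'\}:2\le x\le n\}
\]
(the case $k=l$), and the orbit of
\[
f=\{\{1,2'\},\{1'\}\}\cup\{\{x\}:2\le x\le n\}\cup\{\{x'\}:3\le x\le n\}
\]
(the case $k\ne l$, available precisely because $n\ge2$). Both $d$ and $f$ are in normal form, and no $d^{\sigma}$ has a transversal block of the shape $\{i,j'\}$ with $i\ne j$, so Theorem~\ref{tcfn} gives $d\not\cfn f$. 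Combining this with the normal-form characterization, every rank-$1$ partition is $\frn$-conjugate to exactly one of $d$ and $f$, and the rank-$1$ $\mathcal{D}$-class of $\Pan$ decomposes into exactly two $\frn$-classes.

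The main obstacle is the "only if" half of the normal-form characterization, i.e.\ the case analysis over the connected components $b_{A_i}$: the delicate point is tracking $\ker$, $\coker^{\wedge}$ and, for each component, identifying precisely which subpartition-blocks are proper restrictions of full blocks of $b$, so as to see that conditions (1) and (2) of Definition~\ref{dnor} fail whenever a component is non-trivial. The remaining ingredients — the $n=1$ case, the easy direction of the characterization, and the $S_n$-orbit count — are routine once Theorem~\ref{tcfn} is in hand.
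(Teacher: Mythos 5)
Your proof is correct and follows essentially the same route as the paper's: identify the rank-$1$ normal forms as exactly the partitions with a single two-element transversal $\{k,l'\}$ and singletons elsewhere (by showing that any other rank-$1$ partition has a maximal connected subset witnessing failure of Definition~\ref{dnor}), and then split these into two $S_n$-orbits according to whether $k=l$. Your version merely makes explicit that the maximal connected subsets are the classes of $\ker(b)\vee\coker^{\wedge}(b)$ and spells out the case analysis that the paper compresses into ``similar to Corollary~\ref{c:Pnrank0}.''
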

\begin{proof}
	Let $n\geq 2$. Consider the set $T$ of partitions $b_{x,y'}$ that contain a single $2$-element transversal $\{x,y'\}$ and consists of singletons otherwise. Clearly the elements of $T$ are $\cfn$-normal. By Theorem \ref{tcfn}, the elements of $T$ lie in two different $\cfn$-classes depending on whether $x=y$ or not.
	
	If $b$ is any other rank $1$ transformation, it contains a non-trivial connected subset, and hence a maximal such subset $A$. Similar to Corollary \ref{c:Pnrank0}, we see that $b_A$ can contain at most one block that is not a block of $b$. Moreover, this must be the transversal block of $b_A$, if one is present. It follows that $A$ witnesses that $b$ is not in normal form, as required.

	The result for $n=1$ is trivial.
\end{proof}

We remark that the classes of the corollary can be characterized by the existence or absence of a $1$-transversal connected subpartition.

\begin{cor}\label{c:Pnrankinf}
	As $n \to \infty$, the number of $\cfn$-classes of $\Pan$ consisting of rank $2$ partitions is not bounded.
\end{cor}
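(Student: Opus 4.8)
The plan is to count $\frn$-classes via Theorem~\ref{tcfn}. Every $\frn$-class of $\Pan$ meets the set of partitions in $\frn$-normal form, and two normal forms lie in the same $\frn$-class if and only if one is obtained from the other by the $S_n$-action $a\mapsto a^{\sigma}$. Consequently, distinct rank-$2$ partitions in $\frn$-normal form that are \emph{not} related by a permutation of $\bn$ lie in distinct $\frn$-classes, all consisting of rank-$2$ partitions (recall $\cfn\subseteq\gd$, and $\gd$ in $\Pan$ preserves rank). Now the multiset of sizes of the transversal blocks is an obvious $S_n$-invariant — a permutation of $\bn$ relabels blocks without altering their sizes or transversality — so it suffices to produce, for each $m$, a rank-$2$ partition $b_m$ in $\frn$-normal form, living in some $\mathcal{P}_{N(m)}$, whose two transversal blocks have sizes $m+1$ and $2$. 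Given $b_4,b_5,\dots,b_K$ one adjoins isolated singleton blocks $\{x\},\{x'\}$ to make them all live in $\mathcal{P}_{N(K)}$; adjoining isolated points changes neither the rank, nor the transversal-size multiset, nor the $\frn$-normal form property (an isolated point lies in no non-trivial connected subpartition, so it creates no new instance of the conditions in Definition~\ref{dnor}). The padded $b_4,\dots,b_K$ are pairwise non-$\frn$-conjugate in $\mathcal{P}_{N(K)}$, giving at least $K-3$ rank-$2$ $\frn$-classes; letting $K\to\infty$ proves the statement (and in fact shows the number of rank-$2$ classes grows at least linearly in $n$).

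It remains to construct $b_m$. I would take a ``long'' transversal block $T_1$ whose top vertices are $\{1,\dots,m\}$ and whose only lower vertex is $1'$; a small transversal block $T_2$ that is \emph{displaced}, i.e.\ its unique lower vertex has index outside $\{1,\dots,m\}$, so that deleting the top support of $T_2$ from a subpartition leaves $T_2$ restricting to a proper non-transversal lower block; and a family of auxiliary non-transversal blocks whose sole role is to block the reductions of Lemmas~\ref{l:2-bridge} and~\ref{l:1-bridge}. Concretely, each of the lower vertices $2',\dots,m'$ not used by $T_1,T_2$ is linked, through a two-element lower block, to a fresh lower vertex, and the fresh upper vertices are gathered so that whenever such an auxiliary block is absorbed wholesale by a subpartition there still remain proper sub-blocks on the upper side. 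One then runs through the non-trivial connected subpartitions $(b_m)_A$, organized by how $A$ meets the top supports of $T_1$, of $T_2$, of the auxiliary blocks, and of the fresh vertices, and checks that in every case either two distinct blocks of $(b_m)_A$ on the same side are proper restrictions (satisfying Definition~\ref{dnor}(1) when $(b_m)_A$ is transversal-free) or some non-transversal block of $(b_m)_A$ distinct from its transversal is a proper restriction (satisfying Definition~\ref{dnor}(2) when $(b_m)_A$ is $1$-transversal). When $A$ meets the top supports of both $T_1$ and $T_2$, the restriction $(b_m)_A$ has two transversal blocks, so neither clause applies and nothing needs to be checked.

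The main obstacle is exactly this verification, and within it the delicate point: a connected subpartition can absorb an auxiliary ``protecting'' block in its entirety (together with the fresh top vertices it points to), so the design must guarantee that precisely when this happens either $T_2$ — being displaced — or the long block $T_1$ together with the collected fresh top vertices still supplies the sub-block(s) required by Definition~\ref{dnor}. Getting the auxiliary gadgetry to close off this last family of cases cleanly (rather than triggering an infinite regress of ``who protects the protector'') is where the real work lies; once a concrete $b_m$ is pinned down and checked, the counting paragraph above completes the proof.
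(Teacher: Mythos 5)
Your counting framework is exactly the paper's: by Theorem~\ref{tcfn} it suffices to exhibit, for each $n$, a growing number of rank-$2$ partitions in $\frn$-normal form that are pairwise not related by the $S_n$-action, and your observations that padding with singletons preserves rank, normality, and the distinguishing invariant are all correct. The problem is that the entire mathematical content of the corollary lies in actually producing such a family, and you never do. Your $b_m$ is described only as ``a long transversal $T_1$, a displaced small transversal $T_2$, and auxiliary blocks whose sole role is to block the reductions of Lemmas~\ref{l:2-bridge} and~\ref{l:1-bridge},'' and you yourself concede that designing the auxiliary gadgetry so that every non-trivial connected subpartition satisfies Definition~\ref{dnor} --- without an infinite regress of protectors protecting protectors --- ``is where the real work lies.'' A proof that defers its only nontrivial step is a gap, not a proof. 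There is also a concrete false claim in the sketch: if $A$ meets the top supports of both $T_1$ and $T_2$ it does \emph{not} follow that $(b_m)_A$ has two transversal blocks, since the restriction of a transversal block to $A\cup A'$ need not be transversal (take $A=\{2,y\}$ with $2\in T_1\setminus\{1\}$ and $T_2=\{y,z'\}$, $z\notin A$: both restrictions are upper singletons). So the one family of cases you dismiss as needing no check does need checking.

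For comparison, the paper resolves precisely the difficulty you are stuck on by choosing a different shape of witness: a single connected zigzag chain whose two ends are the two transversal blocks and whose interior alternates upper and lower two-element (or longer) non-transversal links. Because the chain is anchored at both ends by transversals, any connected subpartition $(b)_A$ that avoids one of the anchors is cut somewhere along the chain, and the severed link automatically supplies the proper sub-blocks demanded by Definition~\ref{dnor}; no external ``protecting'' blocks are needed, so the regress never starts. The classes are then distinguished by the isomorphism type (length) of the unique non-trivial connected subpartition rather than by your multiset of transversal sizes --- either invariant would work, but only the chain construction comes with a finishable normality check. If you want to salvage your version, the cleanest fix is to abandon the free-floating auxiliary blocks and route all the lower vertices $2',\dots,m'$ into a single chain terminating at $T_2$, at which point you have essentially rediscovered the paper's family.
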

\begin{proof}
	In $\Pan$, consider all partitions consisting of singletons and a subpartition from the following list and its infinite generalization:
	\[ \xymatrix @R=.2cm @C=1cm {
		&&  \bullet\ar@{-}[d]& \bullet\ar@{-}[r] & \bullet & \bullet\ar@{-}[d] & \\
		&&	  \bullet\ar@{-}[r]  & \bullet &  \bullet\ar@{-}[r] & \bullet & \\
		&&&&&\\
		& \bullet\ar@{-}[d] &\bullet\ar@{-}@/^.7pc/[rrr]& \bullet\ar@{-}[r] & \bullet  &\bullet&\bullet\ar@{-}[d]  \\
		& \bullet\ar@{-}[r]&	  \bullet\ar@{-}[r]  & \bullet &  \bullet\ar@{-}[r] &\bullet\ar@{-}[r]& \bullet  \\
		\\
		\bullet\ar@{-}[d] &\bullet\ar@{-}@/^1.4pc/[rrrrr]&\bullet\ar@{-}@/^.7pc/[rrr]& \bullet\ar@{-}[r] & \bullet  &\bullet&\bullet &\bullet\ar@{-}[d]  \\
		\bullet\ar@{-}[r]&	  \bullet\ar@{-}[r]  &	  \bullet\ar@{-}[r] & \bullet &  \bullet\ar@{-}[r] &\bullet\ar@{-}[r]&	  \bullet\ar@{-}[r] & \bullet  \\
	}
	\]
	It is straightforward to check that all such partitions are in normal form, and that a pair $(s,t)$ of them is not in $\cfn$ whenever $s$ and $t$ differ in their nontrivial subpartition. The result follows.
\end{proof}

The above results explains why it is likely not possible to give a more explicit description of the $\cfn$-classes of $\Pan$. If $d\geq 2$, we can construct increasingly complex connected, $\cfn$-normal, and nonconjugate partitions with rank $d$.

For checking practical examples, our results imply which connected subpartitions $A$ of a given size can appear in a $\cfn$-normal partition (together with information about which blocks $t$ satisfy $t_A \neq t$). Without proof, all such subpartitions of size $2$ and $3$ are given below, up to vertical and horizontal permutation. For this list only, a pointed arrow indicates that the corresponding block $t$ satisfies $t_A\neq t$, while the absence of such an arrow allows both $t_A = t$ and $t_A\neq t$.
\[
\xymatrix @R=.2cm @C=1cm{ &  \bullet \ar@{-}[r]& \bullet & & & \bullet\ar@{-}[d] &\bullet\ar@{->}[r]&\\
	&\bullet \ar@{->}[l]& \bullet\ar@{->}[r]& & & \bullet\ar@{-}[r] & \bullet\\
	\rlap{\,} }
\]
\[
\xymatrix @R=.2cm @C=1cm{ &  \bullet \ar@{-}[r]& \bullet \ar@{-}[r] &\bullet & \\
	&\bullet \ar@{->}[l]&\bullet\ar@{->}[d] & \bullet\ar@{->}[r]& \\
	& & & & \\
	\rlap{\,} }
\]
\[
\xymatrix @R=.2cm @C=1cm{ &  \bullet \ar@{-}[r]& \bullet  &\bullet \ar@{->}[r]& \\
	&\bullet \ar@{->}[l]&\bullet\ar@{-}[r] & \bullet \ar@{->}[r]& \\
	\rlap{\,} }
\]
\[
\xymatrix @R=.2cm @C=1cm{ &  \bullet \ar@{-}[r]& \bullet \ar@{-}[r] &\bullet & \\
	&\bullet \ar@{->}[l]&\bullet\ar@{-}[u] & \bullet\ar@{->}[r]& \\
	\rlap{\,} }
\]
\[
\xymatrix @R=.2cm @C=1cm{ &  \bullet \ar@{-}[r] \ar@{-}[d]& \bullet  &\bullet\ar@{->}[r] & \\
	&\bullet &\bullet\ar@{-}[r] & \bullet & \\
	\rlap{\,} }
\]
\[
\xymatrix @R=.2cm @C=1cm{ &  \bullet \ar@{-}[r] \ar@{-}[d]& \bullet  &\bullet \ar@{-}[d]& \\
	&\bullet &\bullet\ar@{-}[r] & \bullet & \\
	\rlap{\,} }
\]

We now extend our results to the Brauer monoid $\Ban$ and the partial Brauer monoid $\PBan$. When it is necessary for distinction, we write $\cfn^P$, $\cfn^{B}$ and $\cfn^{PB}$ for the natural conjugacy relation in $\Pan$, $\Ban$ and $\PBan$, respectively. Similarly, we will use expressions such as ``$\frn^{PB}$-normal form''. Clearly, $\cfn^B\subseteq \cfn^{PB}\subseteq \cfn^P$.

It is straightforward to check that in Lemmas \ref{l:2-bridge} and \ref{l:1-bridge}, if $b\in \PBan$, so are the conjugators $g,h$. As conjugacy by a unit is identical in $\PBan$ and $\Pan$, it follows that two partitions in $\PBan$ are conjugate if and only if they are conjugate in $\Pan$. We are moreover able to give a simpler description of our normal form in the case of $\PBan$.

\begin{defi}\label{d:PBnorm}
	Let $b\in \PBan$. We say that $b$ is in $\frn$-\emph{normal form} if the following conditions hold:
	\begin{enumerate}
		\item if $\{x,y\}$ is a block, then $x'$ and $y'$ lie in (necessarily distinct) transversal blocks;
		\item if $\{x',y'\}$ is a block, then $x$ and $y$ lie in (necessarily distinct) transversal blocks.
	\end{enumerate}
\end{defi}

\begin{theorem}\label{tcfnPB}
	In the partial Brauer monoid $\PBan$, every $\frn$-conjugacy class contains an element in normal form. Moreover, if $a,b\in\Pan$ are in normal form, then $a\cfn b$ if and only if $b=a^\sig$ for some permutation $\sig\in S_n$.
\end{theorem}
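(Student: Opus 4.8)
The plan is to deduce the theorem from Theorem~\ref{tcfn} together with the observation, already recorded above, that $\cfn^{PB}\,=\,\cfn^{P}$ on $\PBan$ (the conjugators constructed in Lemmas~\ref{l:2-bridge} and~\ref{l:1-bridge} stay inside $\PBan$ when $b\in\PBan$, and conjugacy by a unit is the same in $\PBan$ and $\Pan$). The bridge is the following claim: \emph{for $b\in\PBan$, $b$ is in $\frn^{PB}$-normal form in the sense of Definition~\ref{d:PBnorm} if and only if it is in $\frn^{P}$-normal form in the sense of Definition~\ref{dnor}.} Granting the claim, the theorem follows at once. Every $\cfn^{PB}$-class, being a $\cfn^{P}$-class of an element of $\PBan$, contains by Theorem~\ref{tcfn} an element in $\frn^{P}$-normal form; since the reductions of Lemmas~\ref{l:2-bridge} and~\ref{l:1-bridge} preserve membership in $\PBan$, this element lies in $\PBan$, hence by the claim is in $\frn^{PB}$-normal form, which is the first assertion. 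For the second, if $a,b\in\PBan$ are in $\frn^{PB}$-normal form and $a\cfn^{PB}b$, then $a\cfn^{P}b$ and, by the claim, $a$ and $b$ are in $\frn^{P}$-normal form, so Theorem~\ref{tcfn} gives $b=a^{\sig}$ for some $\sig\in S_n$. The converse is immediate, since $\lam_\sig$ is a unit of $\PBan$ and $\cgn\,\subseteq\,\cfn$, so $b=a^{\sig}$ yields $a\cfn^{PB}b$.

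All the work therefore goes into the claim, and this is where the restriction to blocks of size at most $2$ is essential. The first step is a structural description of a non-trivial connected subpartition $b_A$ of an element $b\in\PBan$: since every block of $b$, hence of $b_A$, has size at most $2$, both $\ker(b_A)$ and $\coker^{\wedge}(b_A)$ are partitions of $A$ into parts of size at most $2$; viewing their $2$-element parts as edges on the vertex set $A$, one gets a graph of maximum degree at most $2$, and connectedness of $b_A$ forces $A$ to be a single path or cycle for this graph. Moreover any $2$-element part $\{x,y\}$ of $\ker(b_A)$ is an actual block of $b$, since $[x]_b$ contains the two points $x,y\in\bn$ and has size at most $2$; dually for the parts of $\coker^{\wedge}(b_A)$. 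Finally, if $b$ is in $\frn^{PB}$-normal form then no point of $A$ can lie simultaneously in a $2$-element block of $b$ contained in $\bn$ and in a $2$-element block of $b$ contained in $\bn'$: the first forces its $\bn'$-partner into a transversal block, while the second places that partner in a block contained in $\bn'$. Together with the degree bound, this forces every non-trivial connected transversal-free, or $1$-transversal, $b_A$ with $b$ normal to have $|A|=2$ and a very restricted shape.

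From here the direction $\frn^{PB}\Rightarrow\frn^{P}$ is a short check: for $b$ in $\frn^{PB}$-normal form, a non-trivial connected transversal-free $b_A$ consists of a single $2$-block on one side together with two singletons on the other side, and normality forces each of those singletons to be a proper part of a (transversal) block of $b$, supplying the pair $s_A,t_A$ required by Definition~\ref{dnor}(1); the $1$-transversal case is handled similarly for Definition~\ref{dnor}(2) and is in fact essentially vacuous. For the contrapositive $\neg\,\frn^{PB}\Rightarrow\neg\,\frn^{P}$, suppose some block $\{x,y\}\subseteq\bn$ of $b$ has $[x']_b$ non-transversal. Then $[x']_b$ is $\{x'\}$, $\{x',y'\}$, or $\{x',z'\}$ with $z\notin\{x,y\}$; taking $A=\{x,y\}$ in the first two cases and $A=\{x,y,z\}$ in the third, one verifies that $b_A$ is non-trivial and connected (transversal-free, or possibly $1$-transversal in the last case) and that no admissible pair $s_A,t_A$ exists, because the blocks of $b_A$ on the relevant side are either full blocks of $b$ (the linking blocks $\{x,y\}$ and $[x']_b$) or are forced to coincide. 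The case of a block $\{x',y'\}\subseteq\bn'$ is dual.

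The main obstacle is precisely this case analysis underlying the claim, and within it the delicate point is the choice of $A$ in the contrapositive: enlarging $A$ to achieve connectedness must not introduce an extra cut block that would restore an admissible pair, which is why the argument must track exactly which elements of $[x']_b$, $[y']_b$, and of the blocks meeting them, fall inside $A\cup A'$. Once the claim is established, everything else is the routine assembly sketched in the first paragraph.
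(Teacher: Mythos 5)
Your proposal is correct and follows essentially the same route as the paper: both reduce the theorem to showing that $\frn^{PB}$-normal form (Definition~\ref{d:PBnorm}) and $\frn^{P}$-normal form (Definition~\ref{dnor}) coincide on $\PBan$, using that the conjugators of Lemmas~\ref{l:2-bridge} and~\ref{l:1-bridge} stay in $\PBan$ and that conjugation by units is the same in both monoids, and both prove the normal-form equivalence by the same small case analysis (non-trivial connected subpartitions have size~$2$ in one direction; the sets $\{x,y\}$ and $\{x,y,z\}$ in the contrapositive). Your write-up is somewhat more detailed than the paper's, but no new ideas or different decompositions are involved.
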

\begin{proof}
	By the above considerations, it suffices to show that an element $b\in \PBan$ is in $\cfn^{PB}$-normal form if and only if it is in $\cfn^P$-normal form.
	
	Suppose that $b$ is in $\cfn^{PB}$-normal form. Then any nontrivial connected subset $A$ has size $2$, is transversal-free, and one of the $2$ conditions from Definition \ref{d:PBnorm} hold on $A$. It follows that $b$ is in $\cfn^P$-normal form.
	
	Conversely, let $b$ be in $\cfn^P$-normal form. Suppose that $\{x,y\}$ is a block. By normality, $x'$ and $y'$ lie in distinct nonsingleton $b$-blocks. Suppose one, say $x'$, does not lie in a transversal block. Then there is a $z\neq x,y$ such that $\{x',z'\}$ is a block. Consider $B= \{x,y,z\}$. We have that $B$ is connected and nontrivial. If $\{y',z\}$ is a $b$-block, then $b$ would violate the second condition of Definition \ref{dnor}, for a contradiction. However, if $\{y',z\}$ is not a block, then $b_B$ is transversal free, and it is not possible to satisfy the first condition of Definition \ref{dnor}. By contradiction, both $x'$ and $y'$ lie in transversal blocks.
	
	If $\{x',y'\}$ is a block, then a dual argument shows that $x$ and $y$ lie in transversal blocks. The result follows.
\end{proof}

We now turn to the Brauer monoid $\Ban$. Unlike in the previous case, we need modified versions of Lemmas \ref{l:2-bridge} and \ref{l:1-bridge}.

\begin{lemma}\label{l:bridgeB}
	Let $b\in \Ban$ such that $b_A$ is connected with $|A|=3$, say $A = \{x,y,z\}$ with blocks $\{x,y\}$ and $\{y',z'\}$.
	
	If $\{x',z\}$ is not a block, then $b\cfn c$, where $c$ contains the blocks $\{x,y\}, \{x',y'\}, [z]_b, ([x']_b\cup z')\setminus \{x'\}$ as well as all $b$-blocks not intersecting $A\cup A'\cup [z]_b\cup [x']_b$.
	
	If $\{x',z\}$ is a block, then $b\cfn c$, where $c$ contains the blocks $\{x,y\}, \{x',y'\}, \{z,z'\}$ as well as all $b$-blocks not intersecting $A\cup A'$.
\end{lemma}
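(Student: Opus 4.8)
The plan is to mimic the proofs of Lemmas~\ref{l:2-bridge} and~\ref{l:1-bridge}, but with conjugators chosen inside $\Ban$ so that the resulting $c$ also lies in $\Ban$; the extra vertex $z$ (and its partner $[z]_b$) is what forces the case split, since in $\Ban$ we cannot simply leave $x'$ as a singleton when we detach it. First I would handle the case where $\{x',z\}$ is \emph{not} a block of $b$. Since $|A|=3$ and $\{x,y\},\{y',z'\}$ are blocks of $b_A$, the block $[x']_b$ has the form $\{x',w\}$ for some $w\neq x,y,z$ (it is not transversal to $A$ because $\{x',z\}$ is excluded, and $b_A$ connected forces the listed blocks), and $[z]_b=\{z,u\}$ for some $u\notin A\cup A'$ as well; connectedness of $b_A$ comes entirely from the path $x-y$, $y'-z'$ through the glued middle row. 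I would define $g\in\Ban$ to reroute $z$ toward the ``$x$-side'' and $h\in\Ban$ to reroute symmetrically on the primed side, exactly as in Lemma~\ref{l:2-bridge} but keeping all blocks of size $2$: e.g. $g$ has blocks $\{x,y'\},\{y,z'\},\{z,x'\}$ on $A\cup A'$-ish vertices and is the identity bijection $\{v,v'\}$ elsewhere (adjusting so that $g$ is a genuine Brauer partition), and $h$ is the analogous partition on the other side. Then I would verify $ghb=b=bgh$ by the same middle-row path analysis used in Lemma~\ref{l:2-bridge} (the hypothesis that the only ``exceptional'' blocks are $\{x,y\}$ and $\{y',z'\}$, with everything else a block, is what makes this computation go through), apply Proposition~\ref{Prp:alternatives} via conditions (i),(iii),(iv) with the triple $g,hbg,b$, and finally compute $hbg$ explicitly to see it equals the stated $c$. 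The bookkeeping for which vertex $z$, $u=[z]_b\setminus\{z\}$, and $w=[x']_b\setminus\{x'\}$ end up attached to which block is the fiddly part, but it is the same style of path-chasing as in Lemma~\ref{l:2-bridge}.

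For the case where $\{x',z\}$ \emph{is} a block of $b$, the subpartition $b_A$ on $\{x,y,z\}$ has blocks $\{x,y\},\{y',z'\},\{x',z\}$ and so is itself a $1$-transversal connected Brauer partition (the transversal being $\{x,y,\dots\}$ spanning via the chain). Here I would follow the template of Lemma~\ref{l:1-bridge}: choose $g,h\in\Ban$ that collapse the chain so that the merged transversal block becomes $\{z,z'\}$, again checking $ghb=b=bgh$ and reading off $c=hbg$ with blocks $\{x,y\},\{x',y'\},\{z,z'\}$ plus all untouched blocks of $b$. The reason $c$ looks simpler in this case is that no external block $[x']_b$ gets pulled in — $x'$ is already matched to $z$ inside $A\cup A'$ — so the ``spillover'' blocks $[z]_b,[x']_b$ of the first case do not appear.

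The main obstacle I anticipate is \emph{verifying that the chosen $g,h$ actually lie in $\Ban$ and still satisfy $ghb=b=bgh$}. In the partial Brauer case one could afford singleton blocks in the conjugators, which made $g,h$ easy to write down; in $\Ban$ every vertex must be matched, so the ``dangling'' ends created by rerouting $z$, $u$, and $w$ have to be paired up consistently on both the domain and codomain side, and one must check this pairing does not accidentally create or destroy a path in $(gh,b)^*$ or $(b,gh)^*$ that would change a block of the product. This is exactly the point where the two sub-cases diverge and where the hypotheses ``$\{x',z\}$ is/ is not a block'' are used; I would isolate this verification as the technical heart of the argument and do the path analysis in $(gh,b)^*$ carefully, citing Lemma~\ref{l:idem-p} if it helps control the behaviour of the idempotent $gh$ on the relevant vertices. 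Everything after that — the appeal to Proposition~\ref{Prp:alternatives} and the identification $hbg=c$ — is routine.
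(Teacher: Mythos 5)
Your overall template is the right one and matches the paper's: both cases are handled by exhibiting explicit conjugators $g,h\in\Ban$ supported on $A\cup A'$ (identity blocks $\{w,w'\}$ elsewhere), checking $ghb=b=bgh$, invoking Proposition~\ref{Prp:alternatives} via (i),(iii),(iv), and reading off $c=hbg$. The gap is in your concrete choice of $g$ for the first case. The partition with blocks $\{x,y'\},\{y,z'\},\{z,x'\}$ on $A\cup A'$ and $\{v,v'\}$ elsewhere is a \emph{unit} of $\Ban$ (it is $\lambda_\sigma$ for the $3$-cycle $\sigma=(x\,y\,z)$). But if $g$ is invertible, then condition (i), $bg=gc$, forces $c=g^{-1}bg=b^{\sigma}$, i.e.\ $c$ is merely a relabelling of $b$ --- and the $c$ of the lemma is not a relabelling of $b$ in general. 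For instance, with $n=4$, $A=\{1,2,3\}$, $b=\{\{1,2\},\{2',3'\},\{3,4\},\{1',4'\}\}$, the join $\ker(b)\vee\coker^{\wedge}(b)$ is universal on $\{1,2,3,4\}$, while for the target $c=\{\{1,2\},\{1',2'\},\{3,4\},\{3',4'\}\}$ it has two classes; no unit conjugation can change that invariant. So no choice of $h$ can rescue your $g$. The whole point of the lemma is to \emph{disconnect}, which requires non-invertible conjugators: the correct choice (used in the paper for \emph{both} cases, whereas you anticipate different conjugators per case) is $g$ with blocks $\{x,y\},\{z,z'\},\{x',y'\}$ and $h$ with blocks $\{x,y\},\{z,x'\},\{y',z'\}$ on $A\cup A'$. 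The non-transversal pairs $\{x,y\}$ and $\{x',y'\}$ in $g$ are what cut the chain, and the single transversal $\{z,z'\}$ is what keeps every vertex matched, which is exactly the $\Ban$-specific difficulty you correctly identified but did not resolve.

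Two smaller points. First, your structural claims ($[z]_b=\{z,u\}$ with $u\notin A\cup A'$ and $[x']_b=\{x',w\}$ with $w\notin A\cup A'$ in the first case) are correct and are indeed what makes the ``spillover'' blocks of $c$ appear; with the correct $g,h$ one finds $hb=b$ and then $hbg=bg$ attaches $w$ to $z'$ (not to $z$), consistent with the intended reading of the block $([x']_b\cup z)\setminus\{x'\}$ in the statement. Second, your second-case sketch (``collapse the chain so the merged transversal becomes $\{z,z'\}$'') is directionally right but needs the same explicit $g,h$ to become a proof; once those are fixed, the verification really is the routine path-chasing you describe.
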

\begin{proof}
	Define $g\in \Ban$ with blocks $\{x,y\}, \{z,z'\}, \{x',y'\}$ and $\{w,w'\}$ for all $w \not\in A$; define $h\in \Ban$ with blocks $\{x,y\}, \{z,x'\}, \{y',z'\}$ and $\{w,w'\}$ for all $w\not\in A$.
	In either of the above cases, it is straightforward to check that $g,h$ witness $b\cfn c$.
\end{proof}

\begin{defi}\label{d:Bnorm}
	Let $b\in \Ban$. We say that $b$ is in $\frn$-\emph{normal form} if the following conditions hold:
	\begin{enumerate}
		\item if $\{x,y\}$ is a block, then either $\{x',y'\}$ is a block, or $x'$ and $y'$ lie in (necessarily distinct) transversal blocks;
		\item if $\{x',y'\}$ is a block, then either $\{x,y\}$ is a block, or $x$ and $y$ lie in (necessarily distinct) transversal blocks.
	\end{enumerate}
\end{defi}

\begin{theorem}\label{tcfnB}
	In the Brauer monoid $\Ban$, every $\frn$-conjugacy class contains an element in normal form. Moreover, if $a,b\in\Pan$ are in normal form, then $a\cfn b$ if and only if $b = a^\sig$ for some permutation $\sig\in S_n$.
\end{theorem}
\begin{proof}
	Let $b\in \Ban$. If $B$ is a connected subset of $b$ with $|B|\geq 3$, then there is a connected set $A\subseteq B$ that satisfies the conditions of Lemma \ref{l:bridgeB}. Any application of the lemma will increase the number of maximal connected subsets. Hence after repeated application of the lemma, we reach a conjugate $c$ of $b$ that only contains connected subsets of size at most $2$. This is equivalent to $c$ being in normal form.
	
	Assume now that $b\cfn^B c$ with $b,c$ in $\frn^B$-normal form. Then $b\cfn^P c$. Let $b^*, c^*$ be some $\frn^P$-normal forms of $b,c$ obtained by repeated application of Lemmas \ref{l:2-bridge} and \ref{l:1-bridge}.
	
	By Theorem \ref{tcfn}, $b^* = \lambda_\omega c^* \lambda_\omega^{-1}$ for some permutation $\omega$. By replacing $c$ with $c^{\omega}$, we may assume without loss of generality that $b^* = c^*$. Because $b,c$ are in $\cfn^B$-normal form, the only nontrivial applications of Lemmas \ref{l:2-bridge} and \ref{l:1-bridge} to $b,c$ involve Lemma \ref{l:2-bridge} on a connected set $A = \{x,y\}$ with blocks $\{x,y\}$ and $\{x',y'\}$. The same also holds for the outcome of such an application. It follows that $b^*,c^*$ are obtained from $b,c$ by replacing all blocks in such subpartitions with singletons.
	
	Let $D\subseteq \bn$ be the largest set for which $b_D^* = c_D^*$ consist of singleton blocks. Then $|D|$ is even, and there are two partition $D_i^b$, $D_j^c$ of $D$ into blocks of size two such that $b_{D_i^b}$, $c_{D_j^c}$ consist of two nontransversal blocks each, for all $i$ and $j$. In addition, on the complement $\bar{D} = \bn\setminus D$, we have that $b_{\bar D} = b^*_{\bar D} = c^*_{\bar D} = c_{\bar D}$. The result now follows.
\end{proof}

\subsection{Conjugacy $\ctr$ in $\Pan$, $\PBan$, and $\Ban$}
\label{subctr}

To characterize trace conjugacy $\ctr$ (see \eqref{ectr}) in $\Pan$, we first need to describe the group elements of $\Pan$. Let $S$ be any semigroup. The maximal subgroups of $S$ are the $\gh$-classes $H_e$ of $S$ such that $e$ is an idempotent \cite[Ex.~1, p.~61]{ClPr64}. An element $a\in S$ is a \emph{group element} of $S$ if $a\in H_e$ for some idempotent $e\in S$. These elements are also called completely regular, as in \S\ref{Sec:epi}.

\begin{lemma}\label{lgrp}
	Let $a,b\in \Pan$. Then:
	\begin{itemize}
		\item[\textup{(1)}] $a\greenR b\iff \ker(a) = \ker(b)$ and $\ker^t(a) = \ker^t(b)$;
		\item[\textup{(2)}] $a\greenL b\iff \coker(a) = \coker(b)$ and $\coker^t(a) = \coker^t(b)$.
	\end{itemize}
\end{lemma}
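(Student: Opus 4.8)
The goal is to characterize Green's relations $\greenR$ and $\greenL$ on $\Pan$ in terms of the data $\ker$, $\ker^t$, $\coker$, and $\coker^t$. By the natural left-right duality of the partition monoid (reflecting a partition of $\bn\cup\bn'$ across the middle), statement (2) follows from statement (1) by interchanging the roles of $\bn$ and $\bn'$, so I would prove only (1) in full and remark that (2) is dual. For (1), recall that $a\greenR b$ means $a\Pan^1=b\Pan^1$, equivalently $a\Pan=b\Pan$ since $\Pan$ is a monoid. The standard approach is to show both inclusions: first that $a\greenR b$ forces $\ker(a)=\ker(b)$ and $\ker^t(a)=\ker^t(b)$, and then conversely that these two equalities allow us to construct elements $c,d\in\Pan$ with $ac=b$ and $bd=a$.

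\textbf{Forward direction.} Suppose $a\greenR b$, so $b=ax$ and $a=by$ for some $x,y\in\Pan$. The key observation is how left-multiplication by $a$ (that is, forming $ax$) interacts with kernels: when we stack $a$ over $x$ and form the product, two elements $k,l\in\bn$ that are in the same block of $a$ remain in the same block of $ax$, so $\ker(ax)$ is always coarser than or equal to $\ker(a)$, i.e. $\ker(a)\subseteq\ker(ax)$ as partitions refine. Applying this to $b=ax$ gives $\ker(a)\subseteq\ker(b)$, and to $a=by$ gives $\ker(b)\subseteq\ker(a)$, hence $\ker(a)=\ker(b)$. For the transversal parts $\ker^t$, I need the sharper fact that a block $A\in\ker(a)$ lies in $\dom(a)$ (i.e. $A$ is part of a transversal block) precisely when the $\bn^*$-vertices over $A$ are connected in $(a,x)^*$ to the bottom row; since $b=ax$, a transversal block of $a$ can only become non-transversal in $b$, never the reverse — but the same reasoning applied to $a=by$ shows no transversal block of $b$ is lost either. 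Combining, $\dom(a)=\dom(b)$, and since $\ker(a)=\ker(b)$, restricting to the domain gives $\ker^t(a)=\ker^t(b)$.

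\textbf{Converse direction.} Now assume $\ker(a)=\ker(b)$ and $\ker^t(a)=\ker^t(b)$; I must produce $c$ with $ac=b$ (by symmetry a $d$ with $bd=a$ then exists too, which gives $a\greenR b$). The idea is to let $c$ act as a kind of ``relabeling'' partition on the bottom of $a$. Concretely: for each transversal block $A\cup B'$ of $a$ (with $A\in\ker^t(a)$, $B'\subseteq\bn'$), the set $A$ is also a kernel-transversal block of $b$, so there is a unique transversal block $A\cup \tilde B'$ of $b$; I would define the $\bn\cup\bn^*$-part of $c$ to connect $B^*$ (viewing the input row of $c$ as $\bn^*\cong\bn$) appropriately, and the $\bn^*\cup\bn'$-part of $c$ to realize the cokernel pattern of $b$, together with whatever non-transversal bottom-blocks of $b$ are required. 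One must carefully set $c$ so that stacking $a$ over $c$ reproduces exactly the block structure of $b$: the top-$\bn$ vertices keep $\ker(a)=\ker(b)$; the transversal blocks of $a$ get routed through $c$ to the transversal blocks of $b$ with the correct cokernel; and the purely-lower blocks of $b$ are installed by the lower part of $c$. Verifying $ac=b$ is then a routine block-chasing computation. The main obstacle I anticipate is precisely this construction of $c$: one needs to check that the connections introduced by $c$ do not accidentally merge blocks that should stay separate in $b$, and that every block of $b$ — transversal, upper-only, and lower-only — is accounted for; getting the bookkeeping exactly right (especially handling the non-transversal blocks of $b$ that do not ``come from'' $a$) is the delicate part, though it is conceptually straightforward once one writes $c$ as the obvious partition witnessing the relabeling.
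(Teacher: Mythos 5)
Your proposal is correct in substance, but it takes a genuinely different and much longer route than the paper. The paper disposes of the lemma in two lines: it cites the known characterization of Green's relations in $\Pan$ (East and Gray, Prop.~4.2 of \cite{EaGr21}), namely that $a\greenR b$ iff $\ker(a)=\ker(b)$ and $\dom(a)=\dom(b)$, and then observes that once the kernels agree, $\dom(a)=\dom(b)$ is equivalent to $\ker^t(a)=\ker^t(b)$ because $\dom(a)$ is precisely the union of the blocks in $\ker^t(a)$. What you are doing is essentially re-proving that cited result from scratch. Your forward direction is sound: right multiplication can only coarsen $\ker$ and shrink $\dom$, so mutual divisibility forces equality of both, hence of $\ker^t$. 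Your converse construction is also the right one --- take $c$ to consist of the blocks $B\cup\tilde B'$ pairing the transversal cokernel classes of $a$ with those of $b$ over the same $A\in\ker^t(a)=\ker^t(b)$, install the lower non-transversal blocks of $b$ in the bottom row of $c$, and make all remaining top-row vertices of $c$ singletons; the last point is exactly what prevents the non-transversal lower blocks of $a$ from creating spurious connections, and you correctly flag this as the delicate step. The trade-off is clear: your argument is self-contained and makes the mechanism visible, at the cost of a block-chasing verification you leave as a sketch; the paper's argument is a one-line reduction to an external reference. If you intend your version to stand on its own, you should write out the verification that $ac=b$ (in particular that no two blocks of $b$ get merged and that every lower non-transversal block of $b$ is produced), since that is where all the content of the converse lies.
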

\begin{proof}
	By \cite[Prop.~4.2]{EaGr21}, (1) and (2) are true if $\ker^t$ and $\coker^t$ are replaced by $\dom$ and $\codom^{\wedge}$,
	respectively. If $\ker(a) = \ker(b)$, then $\dom(a) = \dom(b)\iff \ker^t(a) = \ker^t(b)$; and if $\coker(a) = \coker(b)$, then $\codom^{\wedge}(a) = \codom^{\wedge}(b)\iff \coker^t(a) = \coker^t(b)$. The result follows.
\end{proof}

We also have $a\greenD b\iff\rank(a)=\rank(b)$, and $\gd=\gj$ \cite[Prop.~4.2]{EaGr21}.

For equivalence relations $\rho_1$ and $\rho_2$ on $X$, the \emph{join} $\rho_1\lor \rho_2$ of $\rho_1$ and $\rho_2$ is the smallest equivalence relation containing the union $\rho_1\cup \rho_2$. To describe the group elements of $\Pan$, we will need the join $\ker(a)\lor \coker^{\wedge}(a)$, where $a\in \Pan$.

First, the idempotents of $\Pan$ were described in \cite[Thm.~5]{DEAFHHL15}.

\begin{lemma}\label{lide}
	Let $e\in \Pan$. Then $e$ is an idempotent if and and only if the following two conditions are satisfied:
	\begin{itemize}
		\item[\textup{(1)}] for every transversal block $A\cup B'$ of $e$, there exists a (necessarily unique) block $P$ of $\ker(e)\lor \coker^{\wedge}(e)$ such that $A\cup B'\subseteq P\cup P'$;
		\item[\textup{(2)}] for every block $P$ of $\ker(e)\lor \coker^{\wedge}(e)$, $P\cup P'$ contains at most one transversal block of $e$.
	\end{itemize}
\end{lemma}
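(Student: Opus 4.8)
\textbf{Proof plan for Lemma~\ref{lide}.} The statement to prove is the characterization of idempotents of $\Pan$: $e$ is idempotent if and only if conditions (1) and (2) hold. The plan is to work directly with the product graph $(e,e)^*$ on $\bn\cup\bn^*\cup\bn'$. First I would set up notation: write $\pi = \ker(e)\lor\coker^{\wedge}(e)$, and observe that by definition a block $P$ of $\pi$ is a connected component, under the combined adjacencies coming from $\ker(e)$ (on $\bn$) and $\coker^{\wedge}(e)$ (read as adjacencies on $\bn$ via the primed copy). The key structural fact to establish early is a description of the blocks of $ee$ in terms of $e$: tracing paths through the middle row $\bn^*$, two elements of $\bn\cup\bn'$ lie in the same block of $ee$ exactly when they are joined by an alternating path that uses $e$-edges and ``middle identifications'' $x^*\leftrightarrow x$. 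I would record precisely how $\ker(ee)$, $\coker(ee)$, and the transversal blocks of $ee$ arise: a transversal block of $ee$ appears only when a transversal block $A\cup B'$ of $e$ meets (through the middle row) another transversal block $B\cup C'$ of $e$, i.e.\ when some $B$ occurs as the lower part of one transversal and the upper part of another.

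For the forward direction, assume $e=ee$. To get (1): take a transversal block $A\cup B'$ of $e$. Since $e=ee$, this block must survive in $ee$, which forces, by the path analysis above, that $B$ must coincide with the top part of some transversal block of $e$; iterating and using finiteness, the upper and lower ``supports'' of the transversal blocks of $e$ that get chained together all lie in a single $\ker(e)$-class combined with a single $\coker^{\wedge}(e)$-class, hence inside one block $P$ of $\pi$ with $A\cup B'\subseteq P\cup P'$; uniqueness of $P$ is immediate since $\pi$ is a partition. For (2): if some block $P$ of $\pi$ had two distinct transversal blocks $A_1\cup B_1'$ and $A_2\cup B_2'$ of $e$ inside $P\cup P'$, then connectivity of $P$ under $\ker(e)\lor\coker^{\wedge}(e)$ would produce in $ee$ a block containing both $A_1$ and $A_2$ (a path through the middle row links them), contradicting $\ker(ee)=\ker(e)$ since $A_1,A_2$ were distinct $\ker(e)$-classes (or, if they were already in one $\ker(e)$-class, a dual argument on the cokernel side). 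So (2) holds.

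For the converse, assume (1) and (2). I would verify $\ker(ee)=\ker(e)$, $\coker(ee)=\coker(e)$, and that the transversal blocks match, which together give $ee=e$ since a partition is determined by its kernel, cokernel, and transversal structure. The inclusion $\ker(e)\subseteq\ker(ee)$ (and dually for cokernel) is automatic; the reverse inclusions and the transversal-block bookkeeping are where (1) and (2) do the work: condition (2) prevents distinct kernel classes from being merged through the middle row (each $\pi$-block feeds at most one transversal, so no chaining can occur), and condition (1) guarantees that the transversal block through a given $\pi$-block, when composed with itself through the middle row, reproduces exactly itself rather than growing (its upper and lower parts both sit inside the same $P$, and $P$ carries a unique transversal, so following the path returns you to the same block). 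Assembling these gives $ee=e$.

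The main obstacle will be the careful path-tracking through $(e,e)^*$: one must argue precisely that no extraneous merging happens in $ee$ under hypotheses (1)--(2), and conversely that the chaining phenomenon is exactly detected by failure of (1) or (2). This is essentially a combinatorial lemma about alternating paths in the product graph, and the bookkeeping between $\ker$, $\ker^\wedge$, and the join $\pi$ has to be done with some care to avoid circularity (in particular one should not silently use $e=ee$ while proving the converse). I expect the cleanest route is to first prove, for arbitrary $a\in\Pan$, a clean formula for the blocks of $aa$ in terms of the components of $(a,a)^*$, and then specialize; with that lemma in hand, both directions become short verifications. I would also cite \cite[Thm.~5]{DEAFHHL15} where the original proof appears, and note that our formulation simply restates it in the present notation.
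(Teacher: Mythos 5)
The paper does not actually prove Lemma~\ref{lide}: it states it as a known result, cited from \cite[Thm.~5]{DEAFHHL15}, with no proof given. So there is no in-paper argument to compare against, and your proposal is an attempt to supply a proof from scratch. As a plan it is sound, and it rests on the right key fact, namely that in the product graph $(e,e)^*$ the connected component of a middle-row vertex $x^*$, intersected with the middle row, is exactly (the starred copy of) the block of $\ker(e)\lor\coker^{\wedge}(e)$ containing $x$ --- going up through the first copy is a $\coker^{\wedge}(e)$-step, going down through the second copy is a $\ker(e)$-step, and alternating these generates precisely the join. Once that is in place, the block of $ee$ containing a transversal block $A\cup B'$ has middle part $P^*$ for $P$ the join-block containing $B$, top part equal to $A$ together with the tops of all transversal blocks whose cokernel part lies in $P'$, and bottom part equal to the bottoms of all transversal blocks whose kernel part lies in $P$; both directions of the lemma then read off from this description exactly as you outline.

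One step in your forward direction is stated imprecisely and should be repaired before you write this up: for condition (1) you say that survival of $A\cup B'$ in $ee$ ``forces that $B$ must coincide with the top part of some transversal block of $e$.'' That is not what the argument gives, and it need not be true --- $B$ is a $\coker^{\wedge}(e)$-class and its elements may be scattered over several $\ker(e)$-classes in the top row. The correct deduction is: the bottom-row trace of the component of $a\in A$ in $(e,e)^*$ is the union of the cokernel parts of all transversal blocks whose kernel part meets $P$; for this to equal $B'$ (as $ee=e$ requires), the unique transversal block with cokernel part $B'$, namely $A\cup B'$ itself, must have its kernel part $A$ meet $P$, hence $A\subseteq P$ since $P$ is a union of $\ker(e)$-classes. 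That gives $A\cup B'\subseteq P\cup P'$ directly, with no iteration or finiteness argument needed. With that correction, and with your stated intention to first prove the general formula for the blocks of $aa$ in terms of components of $(a,a)^*$, the plan goes through; alternatively, since the paper itself treats this as a citation, it would be equally acceptable to simply invoke \cite[Thm.~5]{DEAFHHL15}.
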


\begin{prop}\label{pgrel}
	Let $a\in \Pan$. Then $a$ is an element of a group $\gh$-class of $\Pan$ if and only if for every block $P$ of $\ker(a)\lor \coker^{\wedge}(a)$ one of the following conditions holds:
	\begin{itemize}
		\item[\textup{(a)}] neither $P$ nor $P'$ intersects a transversal block of $a$; or
		\item[\textup{(b)}] each of $P$ and $P'$ intersects exactly one (not necessarily the same) transversal block of $a$.
	\end{itemize}
\end{prop}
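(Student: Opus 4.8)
The plan is to characterize when $a$ is a group element via the standard criterion that $a$ is completely regular if and only if $a\greenH a^2$, equivalently $a\greenR a^2$ and $a\greenL a^2$. Using Lemma~\ref{lgrp}, this amounts to computing $\ker(a^2)$, $\ker^t(a^2)$, $\coker(a^2)$, and $\coker^t(a^2)$ in terms of the block structure of $a$, and checking exactly when they equal $\ker(a)$, $\ker^t(a)$, $\coker(a)$, and $\coker^t(a)$. The first observation I would record is that for any $a\in\Pan$, one always has $\ker(a)\subseteq\ker(a^2)$ and $\coker(a)\subseteq\coker(a^2)$ (forming $a^2$ can only merge kernel classes, never split them), and similarly $\dom(a^2)\subseteq\dom(a)$, hence $\ker^t(a^2)\subseteq\ker^t(a)$ after the appropriate identification; so the content is to see when these inclusions are equalities. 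I expect the block $P$ of $\ker(a)\vee\coker^{\wedge}(a)$ to be the right unit of analysis because the product $(a,a)^*$ connects a top vertex to a bottom vertex precisely through chains that alternate between $\ker(a)$-steps and $\coker^{\wedge}(a)$-steps, so whether two vertices of $\bn$ get merged in $\ker(a^2)$ is controlled entirely within a single such block $P$.

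The core computation I would carry out is: fix a block $P$ of $\ker(a)\vee\coker^{\wedge}(a)$ and analyze the restricted partition $a_P$ (in the sense of $b_A$ from \S\ref{subcfn}), which is connected by definition of $P$. Two top vertices $x,y\in P$ are $\ker(a^2)$-related but not $\ker(a)$-related precisely when there is a path in $(a,a)^*$ from $x$ to $y$ using a middle-row bridge, and such a path exists iff $[x]_a$ and $[y]_a$ each meet $\bn'$-vertices that are $\coker^{\wedge}(a)$-connected — which, since $a_P$ is connected, happens as soon as $a_P$ contains a transversal block together with any other block meeting $\bn$. More precisely, I would argue: (i) if neither $P$ nor $P'$ meets a transversal block of $a$, then $a_P$ is transversal-free on both sides, $a^2$ agrees with $a$ on $P\cup P'$, contributing no new merges and no transversal blocks; (ii) if each of $P$, $P'$ meets exactly one transversal block, I would show $a_P$ consists (after accounting for the connectedness) of exactly one transversal block on the top side, one on the bottom, and singletons otherwise, so that squaring reproduces the same kernel/cokernel and the same transversal block within $P\cup P'$, giving $a\greenH a^2$ on that piece; (iii) conversely, if some block $P$ violates both (a) and (b) — say $P$ meets a transversal block but $P'$ does not, or $P$ meets two distinct transversal blocks — then I would exhibit either a strict increase in $\ker(a^2)$ (forcing $a\not\greenR a^2$) or a strict drop in $\rank(a^2)<\rank(a)$ (so $a\not\greenD a^2$, hence certainly not $\greenH$). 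Assembling these per-block analyses over all blocks of $\ker(a)\vee\coker^{\wedge}(a)$, and invoking that $\ker(a^2)=\ker(a)$, $\coker(a^2)=\coker(a)$, $\rank(a^2)=\rank(a)$ together with Lemma~\ref{lgrp} give $a\greenH a^2$, yields the equivalence.

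For the "if" direction I would also want Lemma~\ref{lide}: once I know $a\greenH a^2$, the idempotent $e$ in the $\gh$-class of $a$ has the same $\ker$, $\coker$, $\ker^t$, $\coker^t$ as $a$, and conditions (a)/(b) for $a$ translate exactly into conditions (1)/(2) of Lemma~\ref{lide} for $e$, confirming consistency (this also serves as a sanity check that the stated conditions are the correct ones and do define an idempotent-supporting $\gh$-class). The main obstacle I anticipate is the careful bookkeeping in step (iii) and in the structural claim under (ii) that a connected $a_P$ with exactly one transversal block on each side must essentially be a single transversal plus singletons for squaring to preserve rank — ruling out configurations like a transversal block $\{x,z'\}$ together with a non-transversal block $\{y,w\}\subseteq P$ whose mirror $\{y',w'\}$ is tied by $\coker^{\wedge}(a)$ into the transversal, which on squaring would merge $x$ and $y$. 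Making the "exactly one transversal block in $P$ and exactly one in $P'$" hypothesis do this work cleanly — showing any extra $\bn$-block in $P$ would, via connectedness through $\coker^{\wedge}(a)$, create a new merge under squaring — is the delicate point, and I would handle it by a minimal-path argument in $(a,a)^*$ analogous to the one in Lemma~\ref{l:idem-p}.
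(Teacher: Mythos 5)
Your strategy is genuinely different from the paper's and is essentially workable, but one intermediate claim you propose to prove is false as stated. The paper treats the two directions separately: for necessity it passes to the idempotent $e$ of the group $\gh$-class of $a$ (which by Lemma~\ref{lgrp} shares $\ker\lor\coker^{\wedge}$, $\ker^t$ and $\coker^t$ with $a$) and reads conditions (a)/(b) off the description of idempotents in Lemma~\ref{lide}; for sufficiency it counts the join-blocks $P$ for which $P$ and $P'$ meet two \emph{different} transversal blocks and runs an induction that swaps the bottom halves of two such transversals, producing an $\gh$-equivalent element with a smaller count and terminating at an idempotent. You instead run both directions through the single criterion that $H_a$ is a group if and only if $a\greenH a^2$ (Green's theorem), reducing everything to comparing $\ker$, $\coker$, $\ker^t$, $\coker^t$ of $a$ and $a^2$ via Lemma~\ref{lgrp}. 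That is a legitimate and arguably more uniform route, and your key structural observation is right: connectivity of the middle row of $(a,a)^*$ is governed exactly by the blocks of $\ker(a)\lor\coker^{\wedge}(a)$, so the comparison localizes to those blocks. Your case (iii) also goes through: two transversal tops in $P$ merge two cokernel classes of $a^2$ (killing $\greenL$), two transversal bottoms in $P'$ merge two kernel classes (killing $\greenR$), and if all multiplicities are at most one but some $P$ has a transversal top and no transversal bottom, a counting argument produces a $Q$ with a transversal bottom and no transversal top, whose transversal block then drops out of $\ker^t(a^2)$.

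The flaw is in case (ii). A connected $a_P$ satisfying (b) need \emph{not} be a single transversal block plus singletons: in $\mathcal{P}_3$ take $a$ with blocks $\{1,2\}$, $\{3,1'\}$, $\{2',3'\}$; the unique join-block is $P=\{1,2,3\}$, condition (b) holds, $a$ is an idempotent, yet $a_P$ contains the non-singleton non-transversal blocks $\{1,2\}$ and $\{2',3'\}$. So the structural statement you plan to establish is false --- and, fortunately, unnecessary. What you actually need under (a)/(b) is that squaring creates no new kernel or cokernel merges and loses no transversal. For the first: a pair $(x,y)\in\ker(a^2)\setminus\ker(a)$ forces $x$ and $y$ to lie in distinct transversal blocks $A\cup B'$ and $C\cup D'$ with $B$ and $D$ in a common join-block $Q$ (a non-transversal top block contributes no vertex to the middle row of $(a,a)^*$, which also disposes of your worry about the $\{y,w\}$ configuration --- it cannot cause a merge); but then $Q'$ meets the two distinct transversal blocks $A\cup B'$ and $C\cup D'$, contradicting (b). For the second: if $A\cup B'$ is transversal and $Q$ is the join-block with $B\subseteq Q$, then (b) applied to $Q$ (whose primed side meets a transversal at $B'$) supplies exactly one transversal top inside $Q$, which re-threads $A$ to a transversal block of $a^2$; hence $\ker^t(a^2)=\ker^t(a)$, and dually for $\coker^t$. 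With those two substitutions in place of the false claim, your outline closes.
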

\begin{proof}
	Suppose that $a$ is an element of a group $\gh$-class $H$ of $\Pan$. Let $e$ be the identity of $H$, so $a\greenH e$. By Lemma~\ref{lgrp}, $\ker(a)\lor \coker^{\wedge}(a) = \ker(e)\lor \coker^{\wedge}(e)$, $\ker^t(a) = \ker^t(e)$, and $\coker^t(a) = \coker^t(e)$. Let $P$ be a block of $\ker(a)\lor \coker^{\wedge}(a)$.
	
	Suppose that $P$ does not intersect any transversal block of $a$. Suppose to the contrary that $P'$ intersects some transversal block $A\cup B'$ of $a$. Then $B'\subseteq P'$ and $B'\in \coker^t(a)$.
	Since $\coker^t(a) = \coker^t(e)$, it follows by Lemma~\ref{lide}
	that there is $C\in \ker^t(e)$ such that $C\cup B'\subseteq P\cup P'$. Since $\ker^t(e) = \ker^t(a)$ and $C\subseteq P$,
	the block $P$ intersects some transversal block of $a$, which is a contradiction. We have proved that if $P$ does not intersect any transversal block of $a$, then (a) holds. Similarly, (a) holds
	if $P'$ does not intersect any transversal block of $a$.
	
	Suppose (a) does not hold. Then $P$ intersects some transversal block $A\cup B'$ of $a$. If it also intersects another transversal block of $a$, say $C\cup D'$, then we would have $A,C\in \ker(e)$, $A,C\subseteq P$, and $A\ne C$, which would contradict Lemma~\ref{lide}(2). A similar argument can be applied to $P'$, which implies that (b) holds.
	
	Conversely, suppose that for every block $P$ of $\ker(a)\lor \coker^{\wedge}(a)$, (a) or (b) holds. Let $k(a)$ be the number of blocks $P$ such that $P$ intersects a transversal block $A\cup B'$ of $a$, and $P'$ intersects a different transversal block $C\cup D'$ of $a$. If $k(a) = 0$, then $a$ is an idempotent (and so a group element) by Lemma~\ref{lide}. Let $k(a)\geq 1$ and consider $P$, $A\cup B'$, and $C\cup D'$ as above. Then, $A\subseteq P$, $D'\subseteq P'$, $B'\subseteq Q'$, and $C\subseteq R$, where $Q$ and $R$ are blocks of $\ker(a)\lor \coker^{\wedge}(a)$ such that $P\notin \{Q,R\}$. Construct $a_1\in \Pan$ by replacing in $a$ the transversal blocks $A\cup B'$ and $C\cup D'$ by $A\cup D'$ and $C\cup B'$. Then $k(a_1) < k(a)$ (since $P$ and $P'$ both
	intersect the same transversal block of $a_1$, namely $A\cup D'$), and it is straightforward to check, using Lemma~\ref{lgrp}, that $a\greenH a_1$. Applying this construction repeatedly, we obtain (after at most $k(a)$ steps) an element $e\in \Pan$ such that $k(e) = 0$ (so $e$ is an idempotent) and $a\greenH e$. Hence $a$ is a group element.
\end{proof}

Let $\sig\in S_m$, where $S_m$ is the symmetric group of permutations on $[m]=\{1,\ldots,m\}$. We allow
$m$ to be zero, in which case $[m]=\emptyset$, $S_m=\{\emptyset\}$, and $\sig=\emptyset$.
The \emph{cycle type} of $\sig$ is the sequence
$(k_1,\ldots,k_m)$, where $k_i$ is the number of cycles of length $i$ in the cycle-decomposition of $\sig$.
If $m=0$, then we define the cycle type of $\sig$ as $(0)$.

\begin{defi}\label{dpia}
	Let $a\in \Pan$ be a group element. By Proposition~\ref{pgrel}, for every block $P$ of $\ker(a)\lor \coker^{\wedge}(a)$, either $P$ does not intersect any transversal block of $a$ or there is a unique $A\in \ker^t(a)$ such that $A\subseteq P$. Let $\{P_1,\ldots,P_m\}$ be the set of all blocks of $\ker(a)\lor \coker^{\wedge}(a)$ that intersect some transversal block of $a$. For each $i\in[m]$, let $A_i$ be a unique element of $\ker^{t}(a)$ such that $A_i\subseteq P_i$. Note that $\ker^t(a) = \{A_1,\ldots,A_m\}$. By Proposition~\ref{pgrel} again, each $P_i'$ contains a unique $B_i'\in \coker^t(a)$ and $\coker^t(a) = \{B_1',\ldots,B_m'\}$.
	Note that $m$ can be $0$, which happens when $\ker^t(a) = \coker^t(a) = \emptyset$.
	
	Define $\tau_a:[m]\to [m]$ by
	\[
	i\tau_a=j\ \iff\ A_i\cup B_j'\text{ is a transversal block of } a\,.
	\]
	By Proposition~\ref{pgrel}, $\tau_a\in S_m$. We define the \emph{cycle type} of $a$ to be the cycle type of $\tau_a$. Note that while $\tau_a$ depends on the chosen ordering of $\{P_1,\ldots,P_m\}$, its cycle type does not, and hence the cycle type of $a$ is well-defined. We moreover remark that if $a = \lambda_\omega$ for some $\omega\in S_n$, then the cycle types of $\omega$ and $a$ agree, as can be easily checked.
	
	Let $e$ be the idempotent in the group $\gh$-class of $a$. Then the transversal blocks of $e$ are $A_1\cup B'_1,\ldots,A_m\cup B'_m$,
	and the transversal blocks of $a$ are $A_1\cup B'_{1\tau_a},\ldots,A_m\cup B'_{m\tau_a}$.
\end{defi}

\begin{lemma}\label{lkte}
	Let $e,f,g,h\in \Pan$ such that $e$ and $f$ are idempotents, $gh = e$, $hg = f$, $ghg = g$, and $hgh = h$. Then $\ker^t(g) = \ker^t(e)$ and $\coker^t(g) = \coker^t(f)$.
\end{lemma}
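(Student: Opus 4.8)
The plan is to exploit the fact that $g$ and $h$ are mutually inverse with $gh=e$ and $hg=f$, so that $g$ behaves like an ``$\gr$–$\gl$ bridge'' from $e$ to $f$ (indeed $e\greenR g\greenL f$ by the usual argument, since $eg=ghg=g=gf$ and $g$ has $h$ as an inverse). From $e\greenR g$ and $g\greenL f$, Lemma~\ref{lgrp} would give $\ker(g)=\ker(e)$, $\ker^t(g)=\ker^t(e)$, $\coker(g)=\coker(f)$, and $\coker^t(g)=\coker^t(f)$ — which is exactly the statement, plus a little more. So in principle the lemma is almost immediate once we verify $e\greenR g\greenL f$.

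First I would check $g\in eS^1 = gS^1$ and conversely, i.e.\ $e\greenR g$. We have $eg = ghg = g$, so $g\in e\Pan$; and $gh=e$, so $e\in g\Pan$. Hence $g\Pan = e\Pan$, i.e.\ $g\greenR e$. Symmetrically, $gf = g(hg) = (gh)g = eg = g$ wait — more directly, $\Pan g \ni g$ trivially and we need $\Pan g=\Pan f$: from $hg=f$ we get $f\in\Pan g$, and from $gh g=g$ together with… actually the cleanest route is $fg^{-1}$-style: since $hgh=h$ and $hg=f$, we have $fh=hgh=h$; and $g = ghg = g(hg) = gf$, so $g\in\Pan f$ and $f = hg\in\Pan g$, giving $\Pan g=\Pan f$, i.e.\ $g\greenL f$. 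Then apply Lemma~\ref{lgrp}(1) to $g\greenR e$ to conclude $\ker^t(g)=\ker^t(e)$, and Lemma~\ref{lgrp}(2) to $g\greenL f$ to conclude $\coker^t(g)=\coker^t(f)$.

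The only genuine point of care is that Lemma~\ref{lgrp} as stated in the excerpt is for arbitrary elements of $\Pan$ (no hypothesis that they are idempotents or group elements), so it applies verbatim to the pair $g,e$ and to the pair $g,f$; there is no hidden regularity assumption to worry about. I also want to make sure the direction of the equivalences in Lemma~\ref{lgrp} is the one I need: it gives $a\greenR b\iff(\ker(a)=\ker(b)$ and $\ker^t(a)=\ker^t(b))$, so $g\greenR e$ forces the kernel data of $g$ and $e$ to agree, in particular $\ker^t(g)=\ker^t(e)$; dually for $\greenL$ and cokernels. So the proof is essentially three lines: (i) $g\greenR e$, (ii) $g\greenL f$, (iii) invoke Lemma~\ref{lgrp}. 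The ``main obstacle'' is really just bookkeeping — being careful with left/right and with which of $gh=e$, $hg=f$, $ghg=g$, $hgh=h$ is used where — rather than any conceptual difficulty; there is no computation with partition diagrams needed at all.
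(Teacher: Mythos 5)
Your proof is correct and follows essentially the same route as the paper: establish $g\greenR e$ from $gh=e$ and $eg=ghg=g$, establish $g\greenL f$ from $hg=f$ and $gf=ghg=g$, and then invoke Lemma~\ref{lgrp}. The brief hesitation in your computation of $gf$ is harmless, since both chains of equalities you consider yield $gf=g$.
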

\begin{proof}
	We have $g\greenR e$ (since $gh = e$ and $eg = ghg = g$) and $g\greenL f$ (since $hg = f$ and $gf = ghg = g$). Thus by Lemma~\ref{lgrp}, $\ker^t(g) = \ker^t(e)$ and $\coker^t(g) = \coker^t(f)$.
\end{proof}

We can now characterize trace conjugacy $\ctr$ in $\Pan$.

\begin{theorem}\label{tctr}
	Let $a,b\in \Pan$. Then $a\ctr b$ if and only if $a^{\ome+1}$ and $b^{\ome+1}$ have the same cycle type.
\end{theorem}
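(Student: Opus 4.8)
The idea is to reduce the question to the known description of trace conjugacy in completely regular elements plus the combinatorial invariant computed in Definition~\ref{dpia}. Recall from Section~\ref{Sec:epi} that for an epigroup element $a$, $a^{\ome}$ is the idempotent of the group $\gh$-class $H$ of $a^n$ (for suitable $n$), and $a^{\ome+1}=aa^{\ome}$ lies in $H$; since $\Pan$ is finite, $a^{\ome}$ is a power of $a$ and $a^{\ome+1}$ is a group element. By \cite[Prop.~4.3]{ArKiKoMaTA} (the same result used at the end of \S\ref{sublin}), $a\ctr b$ if and only if $a^{\ome+1}\ctr b^{\ome+1}$, where now $a^{\ome+1},b^{\ome+1}\in\Epi_1(\Pan)$. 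So it suffices to prove: for group elements $u,v\in\Pan$, $u\ctr v$ if and only if $u$ and $v$ have the same cycle type. By Theorem~\ref{Thm:p-conj}, on $\Epi_1(\Pan)$ we have $\ctr\,=\,\cfn\,=\,\cpn$; but rather than route through $\cfn$, I will use the characterization \eqref{ectr2}: $u\ctr v$ iff there exist $g,h\in\Pan$ with $ug=gv$, $vh=hu$, $gh=u^{\ome}=:e$ and $hg=v^{\ome}=:f$. Note $e,f$ are idempotents and, by multiplying the relations, $ghg=eg=g\cdot f^{?}$... more precisely $ghg=u^{\ome}g$ and since $g\gr e$, $g\gl f$ (as in Lemma~\ref{lkte}), one gets $ghg=g$, $hgh=h$, so $g,h$ are mutually inverse conjugators.

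\textbf{Necessity (same $\ctr$-class $\Rightarrow$ same cycle type).} Assume $u\ctr v$ with mutually inverse $g,h$ as above, $gh=e=u^{\ome}$, $hg=f=v^{\ome}$. By Lemma~\ref{lkte}, $\ker^t(g)=\ker^t(e)$ and $\coker^t(g)=\coker^t(f)$; dually for $h$. The plan is to show that $g$ induces a bijection between the blocks $P_1,\dots,P_m$ of $\ker(u)\vee\coker^{\wedge}(u)$ meeting transversal blocks of $u$ (equivalently of $e$) and the corresponding blocks $Q_1,\dots,Q_m$ for $v$, and that this bijection conjugates $\tau_u$ to $\tau_v$. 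Concretely: from $ug=gv$ and the fact that left-multiplication by $g$ maps the transversal block $A_i\cup B_{i\tau_u}'$ of $u$ across to a transversal block of $gv$... I would track how a transversal block $A_i\cup B'_{i\tau_u}$ of $u$ composes with $g$: since $gh=e$ acts as an identity on the relevant kernel/cokernel classes, $ug=gv$ forces $\ker^t(u)=\ker^t(e)$ to be permuted by $g$ onto $\ker^t(v)=\ker^t(f)$, and chasing the diagram in $(u,g)^*$ and $(g,v)^*$ shows that if $A_i\cup B'_{i\tau_u}$ is a transversal block of $u$ then $A_{\rho(i)}\cup B'_{\rho(i)\tau_v}$ is the image, where $\rho$ is the permutation of $[m]$ determined by $g$ on kernel classes; this yields $\tau_v=\rho^{-1}\tau_u\rho$, hence equal cycle types.

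\textbf{Sufficiency (same cycle type $\Rightarrow$ same $\ctr$-class).} Given group elements $u,v$ with the same cycle type, I reduce to the case $u=u^{\ome+1}$, $v=v^{\ome+1}$ (already done above), and exhibit conjugators explicitly. Since $\tau_u$ and $\tau_v$ have the same cycle type, there is a bijection $\rho:\{P_1,\dots,P_m\}\to\{Q_1,\dots,Q_m\}$ with $\tau_v=\rho^{-1}\tau_u\rho$; I also pick any bijection of the non-transversal blocks of $\ker(u)\vee\coker^{\wedge}(u)$ to those of $v$ and, more finely, match up the individual vertices so as to define $g\in\Pan$ with $\ker^t(g)=\ker^t(e_u)$, $\coker^t(g)=\coker^t(e_v)$ realizing $\rho$, and $h$ its mutual inverse realizing $\rho^{-1}$; then check $gh=u^{\ome}$, $hg=v^{\ome}$, $ug=gv$, $vh=hu$ by direct computation with the block description, whence $u\ctr v$ by \eqref{ectr2}. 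Alternatively, and probably more cleanly, I can invoke Theorem~\ref{Thm:p-conj}: on $\Epi_1(\Pan)$, $\ctr=\cpn$, and two group elements are $\cpn$-related iff one writes $u=xy$, $v=yx$; here the matching $\rho$ of $\ker^t/\coker^t$-data plus the permutation conjugating $\tau_u$ to $\tau_v$ gives the factorization directly. \textbf{The main obstacle} I anticipate is the bookkeeping in the necessity direction: extracting from the abstract identities $ug=gv$, $gh=u^{\ome}$ the precise claim that $g$ permutes the join-classes and intertwines $\tau_u$ with $\tau_v$ requires a careful product diagram chase (in the spirit of Lemmas~\ref{l:idem-p} and~\ref{l:mirrB}), to rule out that a single join-class of $u$ gets split or merged under $g$ — this uses crucially that $g,h$ are mutually inverse and that $e,f$ act as the relevant partial identities. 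Everything else is either quoted (\cite[Prop.~4.3]{ArKiKoMaTA}, \eqref{ectr2}, Lemma~\ref{lgrp}, Lemma~\ref{lkte}, Proposition~\ref{pgrel}, Theorem~\ref{Thm:p-conj}) or routine verification.
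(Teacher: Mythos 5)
Your proposal follows essentially the same route as the paper's proof: reduce to the group elements $u=a^{\ome+1}$, $v=b^{\ome+1}$, use Lemma~\ref{lkte} to pin down $\ker^t$ and $\coker^t$ of the conjugators, encode the transversal blocks of $g$ and $h$ as permutations of $[m]$ that conjugate $\tau_u$ to $\tau_v$ (the paper's $\sigma$ and $\delta$ with $\sigma=\delta^{-1}$ and $\sigma\tau_v\delta=\tau_u$), and reverse the construction for sufficiency. The one step that does not work as written is your derivation of $ghg=g$ and $hgh=h$ from the \eqref{ectr2} identities: $ghg=eg$ with $e=gh$ is circular, and the claim $g\greenR e$ already presupposes $g\in eS^1$, which is exactly what $ghg=g$ would give you; the fix is simply to take conjugators in the form \eqref{ectr} from the outset (as the paper does), which is legitimate since the equivalence of \eqref{ectr} and \eqref{ectr2} is a quoted result. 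With that repair, your plan for the necessity diagram chase and your explicit construction for sufficiency coincide with the paper's argument.
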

\begin{proof}
	Let $e = a^{\ome}$, $f = b^{\ome}$, $u = a^{\ome+1}$, and $v = b^{\ome+1}$. Suppose that $a\ctr b$. By \eqref{ectr}, there exist
	$g,h\in \Pan$ such that
	\[
	ghg = g,\, hgh = h,\, gh = e,\, hg = f,\text{ and } hug = v.
	\]
	We also have $gvh = ghugh = eue = u$. By Lemma~\ref{lkte} and the fact that $u\greenH e$ and $v\greenH f$, we have $\ker^t(g) = \ker^t(e) = \ker^t(u)$, $\coker^t(g) = \coker^t(f) = \coker^t(v)$,
	$\ker^t(h) = \ker^t(f) = \ker^t(v)$, and $\coker^t(h) = \coker^t(e) = \coker^t(u)$. Let $m = |\ker^t(e)|$. Then by the above equations, $|\ker^t(f)| = |\ker^t(u)| = |\ker^t(v)| = |\ker^t(g)| = |\ker^t(h)| = m$.
	
	Let $\{P_1,\ldots,P_m\}$ be the set of all blocks of $\ker(e)\lor \coker^{\wedge}(e)$ that intersect some transversal block of $e$, and let $\{Q_1,\ldots,Q_m\}$ be the set of all blocks of $\ker(f)\lor \coker^{\wedge}(f)$ that intersect some transversal block of $f$ (see Definition~\ref{dpia}). (We have the same $m$ since $|\ker^t(e)| = |\ker^t(f)| = m$.) Since $e$ and $f$ are idempotents, the transversal blocks of $e$ and of $f$ are, respectively, $A_i\cup B'_i$ with $A_i\subseteq P_i$ and $B'_i\subseteq P'_i$, and $C_i\cup D'_i$ with $C_i\subseteq Q_i$ and $D'_i\subseteq Q'_i$, where $i\in [m]$. Since $u\in H_e$ and $v\in H_f$, the transversal blocks of $u$ and of $v$ are, respectively, $A_i\cup B'_{i\tau_u}$ and $C_i\cup D'_{i\tau_v}$, where $i\in [m]$ (see Definition~\ref{dpia}). Since $\ker^t(g) = \ker^t(e)$ and $\coker^t(g) = \coker^t(f)$, there is $\sig\in S_m$ such that the transversal blocks of $g$ are $A_i\cup D'_{i\sig}$, where $i\in [m]$.
	Finally, since $\ker^t(h) = \ker^t(f)$ and $\coker^t(h) = \coker^t(e)$, there is $\del\in S_m$ such that the transversal blocks of $h$ are $C_i\cup B'_{i\del}$, where $i\in [m]$.
	
	We claim that $\sig = \del^{-1}$. Let $i\in [m]$. Since $A_i\cup D'_{i\sig}$ is a block of $g$ and $C_{i\sig}\cup B'_{i(\sig\del)}$ is a block of $h$, we conclude that $A_i\cup B'_{i(\sig\del)}$ is a block of $gh$. Further, $e = gh$ and $A_i\cup B'_i$ is a block of $e$, which implies $i(\sig\del) = i$. Hence $\sig = \del^{-1}$.
	
	Our second claim is that $\sig\tau_u\del = \tau_v$. Let $i\in [m]$. Since $A_i\cup D'_{i\sig}$ is a block of $g$ and $C_{i\sig}\cup D'_{i(\sig\tau_v)}$ is a block of $v$, we conclude that $A_i\cup D'_{i(\sig\tau_v)}$ is a block of $gv$. Thus since $C_{i(\sig\tau_v)}\cup B'_{i(\sig\tau_v\del)}$ is a block of $h$, it follows that $A_i\cup B'_{i(\sig\tau_v\del)}$ is a block of $gvh$. But $gvh = u$ and $A_i\cup B'_{i\tau_u}$ is a block of $u$, which implies $i(\sig\tau_v\del) = i\tau_u$. Hence $\sig\tau_u\del = \tau_v$.
	
	Thus $\del^{-1}\tau_u\del = \tau_v$, and so $\tau_u$ and $\tau_v$ are group conjugate in $S_m$. Hence $\tau_u$ and $\tau_v$ have the same cycle type, and so $a^{\ome+1}\,\,(=u)$ and $b^{\ome+1}\,\,(=v)$ have the same cycle type (see Definition~\ref{dpia}).
	
	Conversely, suppose that $a^{\ome+1}$ and $b^{\ome+1}$ have the same cycle type. Then $\tau_u$ and $\tau_v$ are group conjugate in $S_m$, that is, there are $\sig,\del\in S_m$ such that $\sig = \del^{-1}$ and $\sig\tau_u\del = \tau_v$. With the notation for the transversal blocks of $e$, $f$, $u$, and $v$ as in the first part of the proof,
	let $g\in \Pan$ be such that $\ker(g) = \ker(e)\,\,(=\ker(u))$, $\coker(g) = \coker(f)\,\,(=\coker(v))$, and the transversal blocks
	of $g$ are $A_i\cup D_{i\sig}$, where $i\in [m]$. Similarly, let $h\in \Pan$ be such that $\ker(h) = \ker(f)\,\,(=\ker(v))$,
	$\coker(h) = \coker(e)\,\,(=\coker(u))$, and the transversal blocks
	of $h$ are $C_i\cup B_{i\del}$, where $i\in [m]$. Simple calculations (similar to the ones in the first part of the proof)
	show that $ghg = g$, $hgh = h$, $gh = e$, $hg = f$, and $hug = v$. Hence $a\ctr b$.
\end{proof}

Turning to $\PBan$ and $\Ban$, it is clear that $\ctr^B\, \subseteq\,\, \ctr^{PB}\, \subseteq\,\, \ctr^P$, and hence for two $\ctr$-conjugate partitions $a,b\in \PBan$ or $\Ban$, $a^{\omega+1}$ and $b^{\omega+1}$ have the same cycle type. Conversely, if $a,b$ are two such partitions in $\PBan$ [in $\Ban$], it is straightforward to check that the  conjugators $g,h$ constructed in the second part of Theorem \ref{tctr} lie in $\PBan$ [in $\Ban]$. Hence we obtain the following characterization.

\begin{theorem}\label{t:trPBB}
	Let $a,b\in \PBan$ or $a,b\in \Ban$. Then $a\ctr b$ if and only if $a^{\ome+1}$ and $b^{\ome+1}$ have the same cycle type.
\end{theorem}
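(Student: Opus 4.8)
The plan is to deduce Theorem~\ref{t:trPBB} from Theorem~\ref{tctr} by a restriction argument, the key point being that $\Ban$ and $\PBan$ are submonoids of $\Pan$. Since $\Ban$, $\PBan$ and $\Pan$ are finite, they are periodic, hence epigroups, and for $a$ in any of them the elements $a^{\ome}$ and $a^{\ome+1}=a^{\ome}a$ are powers of $a$ and so lie in the same monoid; thus all the objects named in the statement are unambiguous, and they agree whether computed in the submonoid or in $\Pan$.

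For the forward direction, suppose $a\ctr b$ in $\Ban$ (respectively $\PBan$). By \eqref{ectr} there are conjugators $g,h$ in the monoid, hence in $\Pan$, satisfying the defining equations of $\ctr$; since $a^{\ome}$ and $b^{\ome}$ are the same elements in the submonoid and in $\Pan$, those equations continue to witness $a\ctr b$ in $\Pan$. Thus $\ctr^{B}\subseteq\ctr^{PB}\subseteq\ctr^{P}$, and Theorem~\ref{tctr} gives that $a^{\ome+1}$ and $b^{\ome+1}$ have the same cycle type.

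For the converse, I would revisit the explicit construction of conjugators in the second half of the proof of Theorem~\ref{tctr} and verify that, when $a,b\in\PBan$ (respectively $\Ban$), the constructed $g,h$ also lie in $\PBan$ (respectively $\Ban$). The only thing to check is block sizes. Set $e=a^{\ome}$, $f=b^{\ome}$, $u=a^{\ome+1}$, $v=b^{\ome+1}$, all of which lie in the same monoid as $a,b$. If $e\in\PBan$, then each transversal block $A_i\cup B_i'$ of $e$ has size at most $2$, so $|A_i|=|B_i|=1$; the same holds for $f$, $u$, $v$. The conjugator $g$ built in that proof satisfies $\ker(g)=\ker(e)$, $\coker(g)=\coker(f)$ and has transversal blocks $A_i\cup D_{i\sig}'$ with $|A_i|=|D_{i\sig}|=1$, while (using $g\,\greenR\,e$ and $g\,\greenL\,f$) its non-transversal blocks inside $\bn$ are exactly the non-transversal $\bn$-blocks of $e$ and those inside $\bn'$ are exactly the non-transversal $\bn'$-blocks of $f$, all of size at most $2$. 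Hence every block of $g$ has size at most $2$, so $g\in\PBan$; symmetrically $h\in\PBan$. In the Brauer case the identical argument applies with ``at most $2$'' replaced everywhere by ``exactly $2$'', since all blocks of $e,f,u,v$ then have size exactly $2$. Consequently, whenever $a^{\ome+1}$ and $b^{\ome+1}$ have the same cycle type, the construction of Theorem~\ref{tctr} produces conjugators inside $\PBan$ (respectively $\Ban$), giving $a\ctr b$ there.

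I do not expect a genuine obstacle: the substance of the result is carried entirely by Theorem~\ref{tctr}, and what remains is pure bookkeeping about how block sizes behave under restriction to the submonoids, together with the trivial remark that idempotent and related powers stay inside a finite submonoid. The only mild care is to match the transversal and non-transversal parts of $g$ and $h$ to the correct ones of $e,f,u,v$, which is read off directly from the Green's-relation identities $g\,\greenR\,e$, $g\,\greenL\,f$, $h\,\greenR\,f$, $h\,\greenL\,e$ already used in the proof of Theorem~\ref{tctr}.
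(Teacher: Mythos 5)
Your proposal is correct and follows essentially the same route as the paper: the forward direction via the inclusions $\ctr^{B}\,\subseteq\,\ctr^{PB}\,\subseteq\,\ctr^{P}$ together with Theorem~\ref{tctr}, and the converse by checking that the conjugators $g,h$ built in the second half of that proof have all blocks of size at most $2$ (resp.\ exactly $2$) and hence lie in $\PBan$ (resp.\ $\Ban$). The paper leaves this block-size check as ``straightforward,'' and your bookkeeping via $\ker(g)=\ker(e)$, $\coker(g)=\coker(f)$ and the singleton transversal parts supplies exactly the missing detail.
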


\subsection{Conjugacy $\cpns$ in $\Pan$, $\PBan$, and $\Ban$}
\label{subcpn}

In any epigroup, $\cpns\,\,\subseteq\,\,\ctr$ \cite[Thm.~4.8]{ArKiKoMaTA}. The reverse inclusion is not true in the class of epigroups \cite[Thm.~4.15]{ArKiKoMaTA}. The goal of this subsection is to show that in $\Pan$, $\cpns\,\,=\,\,\ctr$.
(See \eqref{ecp} and \eqref{econp} for the definitions of $\cpn$ and $\cpns$.) We note that conjugacy $\cpns$ in $\Pan$, $\Ban$, and $\PBan$ was characterized, with a different proof than the one given in this subsection, in \cite{KuMa07}.

\begin{lemma}\label{lcpnsa}
	Let $a\in \Pan$, and $s\subseteq \bn$ a nontransversal $a$-block, such that $s'$ intersects one (or more) transversal $a$-blocks. Then $a$ has a $\cpn$-conjugate $c\in \Pan$ such that $c_s$ is transversal-free, and such that $c$ has more blocks than $a$.
\end{lemma}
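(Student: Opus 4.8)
The plan is to produce, directly, a factorization $a=uv$ in $\Pan$ for which $c:=vu$ has the two asserted properties; then $a\cpn c$ holds by definition~\eqref{ecp}. Unlike Lemmas~\ref{l:2-bridge}, \ref{l:1-bridge} and~\ref{l:bridgeB}, where one assembled conjugators $g,h$ and invoked Proposition~\ref{Prp:alternatives}, here the weaker relation $\cpn$ lets us work with a single product decomposition, which is more transparent. First I would record the elementary facts needed: since $s$ is a block of $a$, every block of $a$ distinct from $s$ is disjoint from $s$ (so, in particular, the $\bn$-part of any transversal block of $a$ meeting $s'$ avoids $s$); and the blocks of $a$ that meet $s'$ partition $s'$, hence there are at most $|s'|=|s|$ of them. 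I would call the union of the block $s$ with all blocks of $a$ meeting $s'$ the \emph{$s$-region}; its complement in $\bn\cup\bn'$ is a union of blocks of $a$ disjoint from both $s$ and $s'$.

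Next I would define $u$ and $v$. View $u$ as a partition with top row $\bn$ and bottom row a fresh copy $M=\{1^*,\dots,n^*\}$: its blocks are $s$ (the top vertices of $s$, all in one non-transversal block), the singletons $\{x^*\}$ for $x\in s$, and $\{x,x^*\}$ for $x\in\bn\setminus s$. View $v$ as a partition with top row $M$ and bottom row $\bn'$: it is a faithful copy of $a$ with the top indices starred, except that the block $s$ of $a$ is replaced by the single top-only block $\{x^*:x\in s\}$. Intuitively, $u$ detaches $s$ at the top while $v$ carries all remaining structure of $a$ on the rows $M$ and $\bn'$.

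I would then verify $uv=a$ using the product rule in $\Pan$: blocks of $a$ in the complement of the $s$-region are reproduced because $u$ is the identity on $\bn\setminus s$ and $v$ copies them faithfully on $M$; the block $s$ of $a$ comes from the top-only block $s$ of $u$ (its $M$-image being isolated in $u$, hence deleted with the middle row); and a block of $a$ meeting $s'$, say with $\bn$-part $A$ and $\bn'$-part $B'$, is reproduced as the path $A\to A^*\to (A^*\cup B')\to B'$ through the middle row, where $A^*=\{x^*:x\in A\}$. After that I would analyse $c=vu$, whose top row is $M$ (relabelled $\bn$) and whose bottom row is a second copy of $M$ (relabelled $\bn'$), the gluing identifying the bottom $\bn'$ of $v$ with the top $\bn$ of $u$. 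The key observations are: (i) the $s$-indexed top vertices of $c$ form the block inherited from the top-only block $\{x^*:x\in s\}$ of $v$, which never reaches the gluing row; (ii) the $s$-indexed bottom vertices of $c$ are precisely the singletons $\{x^*\}$, $x\in s$, coming from $u$; (iii) at the gluing row the $s'$-parts of all blocks of $a$ meeting $s'$ attach to the single block $s$ of $u$, which is top-only in $u$, so all of those blocks merge into one block of $c$; and (iv) outside the $s$-region, $c$ and $a$ have the same blocks. From (i) and (ii) no block of $c$ meets both $s$ and $s'$, so $c_s$ is transversal free. For the count, by (iv) the two partitions agree outside the $s$-region, while inside it $a$ has $1+q$ blocks (the block $s$, plus the $q\le|s|$ blocks meeting $s'$) and $c$ has $2+|s|$ blocks (the block from~(i), the merged block from~(iii), and the $|s|$ singletons from~(ii)); since $2+|s|>1+q$, the partition $c$ has strictly more blocks than $a$. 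Finally $a=uv$ and $c=vu$ give $a\cpn c$.

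The main obstacle is the bookkeeping in the analysis of $c=vu$: one must confirm that the merger in~(iii) does not also swallow the top block from~(i) or the bottom singletons from~(ii) — either of which would destroy transversal-freeness or the block inequality — and one must handle uniformly the sub-cases where an $s'$-meeting block of $a$ is non-transversal, or is transversal but has some of its lower vertices outside $s'$. None of this is deep; it is a careful, finite-diagram application of the path-tracing rule for products in $\Pan$, aided by the fact that all the gluing near $s$ is funnelled through the single block $s$ of $u$.
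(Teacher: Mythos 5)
Your proof is correct and is essentially the paper's own argument: your $u$ is exactly the element the paper uses (blocks $s$, $\{z'\}$ for $z\in s$, and $\{k,k'\}$ for $k\notin s$), and since the block $s$ of $a$ is already top-only, your $v$ is just $a$ itself, so the factorization $a=uv$, $c=vu$ is precisely the paper's $a=ua$, $c=au$. Your more careful bookkeeping --- in particular noting that all $a$-blocks meeting $s'$ merge into a single block of $c$ through the middle copy of $s$ --- is if anything slightly more precise than the paper's terse description of the blocks of $c$, and the block count goes through exactly as you say.
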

\begin{proof}
	Let $u\in \Pan$ have the blocks $s$, $\{z'\}$, where $z\in s$, and $\{k,k'\}$, where $k\notin s$. By straightforward calculations, we check that $ua = a$. The partition $c = au$ has blocks $t\setminus s'$, for every $a$-block $t$ satisfying $t\not\subseteq s'$, and $\{z'\}$ for $z \in s$. Clearly $c_s$ is transversal-free. As we assumed that at least one transversal $a$-block intersects $s'$, $c$ has more blocks than $a$.
\end{proof}

A dual result clearly holds if $s'$ is a nontransversal block such that $s$ intersects a transversal block.

\begin{lemma}\label{lcpnsb}
	Let $a\in \Pan$, $s$ an $a$-block, $A = s\cap \bn$, such that $A'$ intersects two different $a$-blocks $t_1, t_2$ (one of which might be $s$). Then $a\cpn c$, where $c$ is obtained from $a$ by merging the blocks $t_1, t_2$.
\end{lemma}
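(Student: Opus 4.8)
The goal is to prove Lemma~\ref{lcpnsb}: if $a\in\Pan$, $s$ is an $a$-block with $A = s\cap\bn$ such that $A'$ intersects two distinct $a$-blocks $t_1,t_2$, then $a\cpn c$, where $c$ is obtained from $a$ by merging $t_1$ and $t_2$.

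The plan is to exhibit explicit elements $u,v\in\Pan$ with $a = uv$ and $c = vu$, following the pattern of the proof of Lemma~\ref{lcpnsa} (produce one conjugator that acts almost like the identity on $a$, and read off the product the other way). First I would pick vertices witnessing the hypothesis: since $A'$ meets both $t_1$ and $t_2$, choose $x'\in A'\cap t_1$ and $y'\in A'\cap t_2$ with $x\neq y$ (possible because $t_1\neq t_2$ forces distinct bottom vertices in $A'$). The natural candidate for one conjugator is $u = a$ itself together with a companion $v$ that performs the single merge: take $v\in\Pan$ whose only non-identity behaviour is to join the bottom blocks corresponding to $t_1\cap\bn'$ and $t_2\cap\bn'$ (and, if $t_1$ or $t_2$ is transversal, also carrying along the associated top vertices so that the product stays inside $\Pan$). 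Concretely, I expect $v$ to be the partition with a block $\{x',y'\}$ (or, when merging transversal blocks, a block containing $x',y'$ and the relevant top vertices), and $\{k,k'\}$ for all remaining $k$. Then $a = av$ would need $v$ to act as a right identity on $a$; but more robustly one sets $u$ to be a suitably "trimmed" version of $a$ and $v$ the merging partition so that $uv = a$ and $vu = c$.

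The key steps in order: (1) Set up the explicit graphs for $u$ and $v$, distinguishing the case where neither $t_1$ nor $t_2$ is transversal from the case where one or both are (in the transversal case the merge in $c$ produces a block $A\cup(t_1\cup t_2)'$ or similar, and $v$ must include enough top vertices to realize this when multiplied). (2) Verify $uv = a$ by drawing $u$ over $v$, gluing the middle row, and checking that the connected components of the resulting graph are exactly the blocks of $a$ — the only nontrivial check is that the extra identification introduced by $v$ (joining $x'$ to $y'$) is already forced in $uv$ because both $x'$ and $y'$ lie in $A'$ and $A$ is connected through $s$ in the top copy of $u$. (3) Verify $vu = c$: here drawing $v$ over $u$ and gluing, the block $\{x',y'\}$ of $v$ sits in the middle row and forces the blocks $t_1,t_2$ of $u$ (which meet the middle row at $x^*$ and $y^*$ respectively) to be merged, yielding precisely $c$; all other blocks pass through unchanged because $v$ acts as the identity elsewhere. (4) Conclude $a\cpn c$ by \eqref{ecp}.

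The main obstacle will be the bookkeeping in the transversal cases: when $t_1$ (say) is a transversal block $C\cup D'$ with $D'\subseteq A'$, merging $t_1,t_2$ in $c$ can create a larger transversal block, and the conjugator $v$ must be chosen so that this larger block appears in $vu$ while $uv$ still returns $a$ unchanged — this requires $v$ to transport the top vertices $C$ (and the top vertices of $t_2$) correctly, and one must be careful that $v\in\Pan$ is genuinely a partition (no orphaned or doubly-assigned vertices). I would handle this by a uniform choice: let $v$ have the single block $B_v := (t_1\cap\bn') \cup (t_2\cap\bn')$ on the bottom, together with $(t_1\cap\bn)^* \cup (t_2\cap\bn)^*$ identified into that same block via the standard middle-row convention, and $\{k,k'\}$ otherwise; then check both products at once. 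Once the graphs are pinned down, steps (2) and (3) are routine applications of the product description in \S\ref{Sec:par}, so I would not belabour them beyond pointing to the connectivity argument that makes the merge automatic in one direction and forced in the other.
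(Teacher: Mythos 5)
Your overall strategy is the right one and matches the paper's: take one factor to be $a$ itself and the other to be a small ``merging'' partition $v$, then check that the product in one order returns $a$ (because the two witnesses already lie in the common block $s$) and in the other order produces $c$. The paper's proof is exactly this, and it is very short: pick $x,y\in A$ with $x'\in t_1$, $y'\in t_2$, let $v$ have the single non-identity block $\{x,y,x',y'\}$ and blocks $\{z,z'\}$ for all $z\notin\{x,y\}$, and verify $va=a$ and $av=c$.

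However, neither of the two concrete candidates you propose for $v$ actually works, and this is a genuine gap rather than bookkeeping. Your first candidate (block $\{x',y'\}$, identity elsewhere) fails because $x$ and $y$ are then not connected to $x'$ and $y'$ inside $v$: in the product $a\cdot v$ the middle-row vertices $x^*,y^*$ connect downward to nothing, so the merged block loses its bottom vertices and $\{x',y'\}$ splits off as an orphaned block --- you do not get $c$. Your ``uniform choice'' at the other extreme (a block built from all of $(t_1\cup t_2)\cap\bn$ and $(t_1\cup t_2)\cap\bn'$) fails in the opposite direction: in the product that is supposed to return $a$ unchanged, this block forces identifications between $s$ and the blocks $[z]_a$ for every $z'\in(t_1\cup t_2)\cap\bn'$, and these need not equal $s$, so that product is not $a$ in general. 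The point you are missing is that the top vertices that must be carried along are not those of $t_1$ and $t_2$ (and their inclusion has nothing to do with whether $t_1,t_2$ are transversal); they are precisely the two chosen elements $x,y$ of $A=s\cap\bn$. Including exactly $\{x,y\}$ on top is what makes $v\cdot a=a$ (since $x,y$ are already joined by $s$), and including exactly $\{x',y'\}$ on the bottom of the same block is what makes $a\cdot v$ merge $t_1$ with $t_2$ while keeping $x',y'$ connected through to the bottom row. With that single four-element block no case analysis on transversality is needed; your argument as written would not close in either of the two cases you set up.
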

\begin{proof}
	Let $x, y\in A$, with $x'\in t_1, y'\in t_2$. Let $v\in \Pan$ have the blocks $\{x,y,x',y'\}$ and $\{z,z'\}$, where $z\notin \{x,y\}$.
	By straightforward calculations, we check that $va = a$ and that $av$ has the desired properties.
\end{proof}

Once again, the dual version of the Lemma~\ref{lcpnsb} clearly holds as well.

\begin{prop}\label{pacpnsc}
	Let $a\in \Pan$. Then there exists a group element $c\in \Pan$ such that $a\cpns c$.
\end{prop}
\begin{proof}
	We recursively apply Lemma \ref{lcpnsa} [or its dual] to $a$, as long as we find a nontransversal block $s$ [resp. $s'$] such that $s'$ [resp. $s$] intersects a transversal block. Because the number of blocks increases at each step, this process must stop with a partition $b\cpns a$ for which $\dom(b) = \codom^{\wedge}(b)$.
	
	We next apply Lemma \ref{lcpnsb} (or its dual) to all cases in which the involved blocks $t_1, t_2$ are transversal (note that this means that $s$ is also transversal). Each such application will preserve the condition $\dom(\cdot) = \codom^{\wedge}(\cdot)$, as only transversal blocks will be merged. As this decreases the number of blocks, this process will stop with an element $c\cpns b\cpns a$
	such that
	\begin{enumerate}
		\item $\dom(c) = \codom^{\wedge}(c)$;
		\item if $s$ is a transversal $c$-block, $A = s\cap \bn$, then $A'$ intersects at most one transversal $c$-block;
		\item if $s$ is a transversal $c$-block, $A'= s\cap \bn'$, then $A$ intersects at most one transversal $c$-block.
	\end{enumerate}
	We will show that these conditions imply that $c$ is a group element. Let $P$ be a block of $\ker(c)\vee \coker^{\wedge}(c)$. If $P$ does not intersect any transversal block of $c$, then, by 1., neither does $P'$ (and vice versa).
	
	Suppose that $s = A\cup B'$ is a transversal $c$-block, and let $P$ and $Q$ be the blocks of $\ker(c)\vee \coker^{\wedge}(c)$ such that $A\subseteq P$ and $B'\subseteq Q'$. We claim that $s = P\cup Q'$. By item 1, any block intersected by $A'$ must be transversal. Thus by item 2, there exists a transversal $c$-block $t$ such that $A'\subseteq C'$, where $C' = t\cap \bn'$. Applying the dual argument to $C'$ and using item 3, we obtain a transversal $c$-block $w$ such that $C\subseteq D$, where $D = w\cap \bn$. Since $A'\subseteq C'$, we have $A\subseteq C\subseteq D$, so $A\subseteq s\cap w$.  Thus, $s = w$, $A = C = D$, and $A' = C' = D'$.
	
	We will now prove that $A = P$. Let $x\in P$ and select any $y\in A$. Since $A\subseteq P$, we have $(y,x)\in \ker(c)\vee \coker^{\wedge}(c)$, and so there are $y = z_0,z_1,\ldots,z_k = x$ in $\bn$ such that for every $i\in\{0,\ldots,k-1\}$, either $(z_i,z_{i+1})\in \ker(c)$ or $(z_i,z_{i+1})\in \coker^{\wedge}(c)$. Let $i\in\{0,\ldots,k-1\}$ and suppose that $z_i\in A$. If $(z_i,z_{i+1})\in \ker(c)$, then $z_{i+1}\in A$. Suppose $(z_i,z_{i+1})\in \coker^{\wedge}(c)$, that is, $(z'_i,z'_{i+1})\in \coker(c)$. Then $z'_i\in C'$ (since $A' = C'$), and so $z'_{i+1}\in C'$ (since $C'\subseteq t$). Thus $z_{i+1}\in C$, and so $z_{i+1}\in A$. Since $y = z_0\in A$, it follows that $x = z_k\in A$, and so $P = A$.
	
	By a dual argument, $B' = Q'$, and so $s = P\cup Q'$. Hence $c$ is a group element by Proposition~\ref{pgrel}.
\end{proof}

\begin{theorem}\label{tcpns}
	In $\Pan$, $\cpns\,\,=\,\,\ctr$. That is, for $a,b \in \Pan$, $a \cpns b$ if and only if $a^{\ome+1}$ and $b^{\ome+1}$ have the same cycle type.
\end{theorem}
\begin{proof}
	Let $a,b\in\Pan$. Suppose that $a\ctr b$. By Proposition~\ref{pacpnsc}, there are group elements $c$ and $d$ of $\Pan$ such that $a\cpns c$ and $b\cpns d$. Since $\cpns\,\,\subseteq\,\,\ctr$, we have $c\ctr a\ctr b\ctr d$, and so $c\ctr d$. By \cite[Thm.~4.15]{ArKiKoMaTA}, as relations on the group elements of any semigroup, $\cpn\,\,=\,\,\cpns\,\,=\,\,\ctr$.
	Thus $c\cpn d$, and so $a\cpns c\cpn d\cpns b$, which implies $a\cpns b$. We have proved that $\ctr\,\,\subseteq\,\,\cpns$.
	Since $\cpns\,\,\subseteq\,\,\ctr$ in any epigroup, $\cpns\,\,=\,\,\ctr$.
\end{proof}

Let $a,b\in \Pan$. We can check if $a$ and $b$ are $\mathrm{p}^*$-conjugate (equivalently, $\mathrm{tr}$-conjugate)
in two ways. We can calculate the successive positive powers of $a$ and $b$ until we obtain idempotents $e$ and $f$, respectively. Then we check if $ea\,\,(=a^{\ome+1})$ and $fb\,\,(=b^{\ome+1})$ have the same cycle type. Or, using Proposition~\ref{pacpnsc} and Lemmas~\ref{lcpnsa} and~\ref{lcpnsb}, we calculate group elements $c,d$ such that $a\cpns c$ and $b\cpns d$, and we check if $c$ and~$d$ have the same cycle type.

We now turn to $\PBan$ and $\Ban$. Let $a\in \PBan$. In this case, the partition $u$ constructed in Lemma \ref{lcpnsa} is an element of $\PBan$ as well, and therefore Lemma \ref{lcpnsa} and its dual also hold in $\PBan$. We can now repeat the proof of Proposition \ref{pacpnsc}, noting that the situations in which Lemma \ref{lcpnsb} or its dual are used cannot arise in $\PBan$: if $s$ is transversal, then $A = s\cap \bn$ is a singleton, so $A'$ cannot intersect different blocks $t_1, t_2$. As in Theorem \ref{tcpns}, we obtain:

\begin{theorem}\label{tcpnsPB}
	In $\PBan$, $\cpns\,\,=\,\,\ctr$. That is, for $a,b\in \PBan$, $a\cpns b$ if and only if $a^{\ome+1}$ and $b^{\ome+1}$ have the same cycle type.
\end{theorem}

\begin{lemma}\label{lcpnsB}
	Suppose that $a\in \Ban$ and $\{x,y\}\subseteq \bn$ is a block of $a$ such that $x',y'$ lie in (necessarily distinct) transversal blocks. Then $a\cpn c$ for some $c\in \Ban$ with lower rank than $a$.
\end{lemma}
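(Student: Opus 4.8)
The plan is to mimic the reductions used in Lemmas~\ref{lcpnsa} and~\ref{lcpnsb} (and their $\PBan$-analogues in the proof of Theorem~\ref{tcpnsPB}), but now staying inside $\Ban$, where all blocks have size exactly $2$. The hypothesis is that $\{x,y\}$ is a (necessarily non-transversal, upper) block of $a$, while $x'$ lies in a transversal block $\{p,x'\}$ and $y'$ lies in a transversal block $\{q,y'\}$, with $p\neq q$ since the two transversal blocks are distinct. The target is a partition $c\cpn a$ in $\Ban$ whose rank is strictly less than $\rank(a)$: the natural candidate $c$ should ``short-circuit'' $p$ directly to $q$, i.e.\ replace the two transversal blocks $\{p,x'\},\{q,y'\}$ by the single non-transversal block $\{p,q\}$ (together with the upper block $\{x,y\}$ becoming a transversal pairing $\{x,y'\}$-type block, or more precisely $x$ and $y$ get linked to $x'$ and $y'$), so that two transversals are consumed and at most one is created, dropping the rank by at least one.

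First I would write down an explicit conjugator $v\in\Ban$ realizing $v a = a$, analogous to the idempotents used in the previous lemmas: take $v$ with blocks $\{x,y,x',y'\}$—wait, that has size $4$, which is illegal in $\Ban$; instead, following the pattern of Lemma~\ref{l:bridgeB}, one uses a Brauer diagram that pairs $x\!-\!y$ on top, $x'\!-\!y'$ on the bottom, and is the identity $\{z,z'\}$ elsewhere. Call this $v$; it is idempotent and one checks by a direct diagram computation that $va = a$ because the block $\{x,y\}$ of $a$ already matches the top pairing of $v$ and the bottom pairing $x'\!-\!y'$ of $v$ gets glued through $a$'s transversals $\{p,x'\},\{q,y'\}$ to connect $p$ and $q$. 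Then set $c = av$. Computing $av$ diagrammatically: the blocks of $a$ not meeting $\{x,y,x',y'\}$ survive unchanged in $c$; the transversals $\{p,x'\}$ and $\{q,y'\}$, when composed with $v$'s bottom pairing $x'\!-\!y'$, merge into the new lower-less block $\{p,q\}$; and the top block $\{x,y\}$ of $a$ becomes, after passing through $v$, the block $\{x,y\}$ again (still non-transversal). So $c\in\Ban$, $a\cpn c$ via $a = av\cdot v$ with the trivial factorization coming from $v$ idempotent and $av=c$ (more carefully, $a\cpn c$ because $c = va\,?$ — I would pick the factorization $a = (av)(v)$? no: $\cpn$ needs $a = uw$, $c = wu$; take $u = a$, $w = v$ won't give $c=va$; so instead argue as in Lemma~\ref{lcpnsb}: $va=a$ gives $a\cpn a$... ). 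Let me re-examine: the clean route, exactly as in Lemmas~\ref{lcpnsa}–\ref{lcpnsb}, is that $va=a$ together with $c:=av$ yields $a\cpn c$ because $a = a\cdot v$ is not needed; rather $c = av$ and $a = (av)v$? Since $v$ is idempotent, $avv = av = c$, not $a$. The correct trick used in those lemmas is: from $va=a$ we get that $a$ and $av$ are primarily conjugate via $a = a\cdot v$, $av = v\cdot a$ — no. I would instead simply invoke that $va = a \implies a\cpn av$ by the standard computation $a = v a$, hence $a\cpn av$ taking $u=v$, $w=a$ gives $wu = av = c$ and $uw = va = a$. That is exactly the mechanism.

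The rank count is the remaining point: $\rank$ equals the number of transversal blocks, and passing from $a$ to $c=av$ destroys the two transversal blocks $\{p,x'\},\{q,y'\}$ (they become the single non-transversal $\{p,q\}$) while possibly turning the top block $\{x,y\}$ into nothing transversal, and does not create any new transversal block; hence $\rank(c)\leq\rank(a)-2<\rank(a)$, which is the assertion (``lower rank'' — actually we get a drop of $2$, certainly $<$). I would double-check with a small diagram that no accidental new transversal is created elsewhere when the middle row is deleted; this is the one spot where a careless argument could go wrong, so I would verify it by explicitly tracing all paths through $(a,v)^*$. The main obstacle, then, is not conceptual but bookkeeping: correctly identifying the idempotent $v\in\Ban$ (size-$2$ blocks only!) that fixes $a$ on the left and produces the intended merge on the right, and then rigorously verifying $va=a$ and the block structure of $av$ by the gluing rule of $\Pan$. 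Once $v$ is pinned down, everything else is the same boilerplate as in Lemmas~\ref{lcpnsa}, \ref{lcpnsb}, and \ref{lcpnsB}'s siblings, and the lemma will feed into the $\Ban$-analogue of Proposition~\ref{pacpnsc} and thence into ``$\cpns\,=\,\ctr$ in $\Ban$'' exactly as Theorem~\ref{tcpnsPB} does for $\PBan$.
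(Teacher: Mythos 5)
Your proof is correct and follows essentially the same route as the paper: exhibit an element $v\in\Ban$ with $va=a$, set $c=av$, obtain $a\cpn c$ from the factorization $a=v\cdot a$, $c=a\cdot v$, and then count transversal blocks. The only cosmetic difference is that the paper's conjugator reflects \emph{every} upper block of $a$ to the bottom row (so the rank drop is read off by counting upper blocks), whereas yours reflects only $\{x,y\}$, making the two transversals through $x'$ and $y'$ collapse into the single upper block $\{p,q\}$ (with $\{x',y'\}$ appearing as a new lower block, which you should state explicitly so that $c$ is visibly in $\Ban$); both verifications go through and give $\rank(c)<\rank(a)$.
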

\begin{proof}
	Let $\{v,x'\}, \{w,y'\}$ be the blocks containing $x',y'$, and $k$ the number of upper blocks of $a$. As $a$ is a partition in $\Ban$, $k$ is also the number of lower blocks. Consider $u\in \Ban$ with the following blocks: $s$ and $s'$ for each upper block $s$ of $a$, and $\{z,z'\}$ for each $z \in \bn$ that does not intersect an upper block of $a$.
	
	It is straightforward to check that $ua = a$. Let $c = au$, so $c\cpn a$. The $k$ upper blocks of $a$ are also upper blocks of $c$. In addition, $\{v,w\}$ is an upper block of $c$. So $c$ has more than $k$ upper blocks, and hence also more than $k$ lower blocks. It follows that it has fewer transversal blocks than $a$, as required.
\end{proof}

Clearly, the dual version of Lemma \ref{lcpnsB} holds as well.

\begin{prop} \label{pacpnscB}
	Let $a\in\Ban$. Then there exists a group element $c\in \Ban$ such that $a\cpns c$.
\end{prop}
\begin{proof}
	Recall that $\cfn\,\, \subseteq\,\, \cpns$. Let $a\in \Ban$. Then $a \cfn b$ (and hence $a\cpns b$) for some $b$ in $\frn$-normal form. Suppose that there is a $b$-block $\{x,y\}$ as in Lemma~\ref{lcpnsB}. We can then use Lemma~\ref{lcpnsB} to obtain
	an element $c$ of lower rank than $b$ such that $b\cpns c$. If instead there is a $b$-block $\{x',y'\}$ such that $x,y$ lie in transversal $b$-blocks, than we can find such $c$ using the dual version of Lemma~\ref{lcpnsB}. We next obtain a partition $a_1\in \Ban$ in $\frn$-normal form satisfying $c\cfn a_1$. Note that $c$ and $a_1$ have the same rank as
	$\cfn\,\, \subseteq\,\, \mathcal{D}$ (by Proposition~\ref{Prp:D}).
	
	We have constructed an element $a_1\in\Ban$ in $\frn$-normal form such that $a\cpns a_1$ and $a_1$ has a lower rank than $a$.
	We keep repeating this construction until we obtain a partition $d\in \Ban$ such that $a\cpns d$, $d$ is in $\frn$-normal form, and neither Lemma \ref{lcpnsB} nor its dual can be applied to $d$.
	(Note that $d$ may be $b$ if neither Lemma \ref{lcpnsB} nor its dual can be applied to $b$.) By Definition \ref{d:Bnorm}, $\{x,y\}$ is an upper block of $d$ if and only if $\{x',y'\}$ is a lower block of $d$. Hence $d$ is a group element.
\end{proof}

As in Theorem \ref{tcpns}, we obtain:

\begin{theorem} \label{tcpnsB}
	In $\Ban$, $\cpns\,\,=\,\,\ctr$. That is, for $a,b \in \Ban$, $a\cpns b$ if and only if $a^{\ome+1}$ and $b^{\ome+1}$ have the same cycle type.
\end{theorem}

\subsection{Conjugacies $\coon$ and $\con$ in $\Pan$, $\PBan$, and $\Ban$}
\label{subcoon}

The conjugacy $\coon$ \eqref{econo} is the largest of the conjugacies considered in this paper. In any semigroup, $\cfn\,\,\subseteq\,\,\cpn\,\,\subseteq\,\,\cpns\,\,\subseteq\,\,\coon$ and $\cfn\,\,\subseteq\,\,\con\,\,\subseteq\,\,\coon$ \cite[Prop.~2.3]{Ko18}. In any epigroup, $\cfn\,\,\subseteq\,\,\cpn\,\,\subseteq\,\,\cpns\,\,\subseteq\,\,\ctr\,\,\subseteq\,\,\coon$ \cite[Thm~4.8]{ArKiKoMaTA}. Moreover, for any semigroup $S$, $\coon$ is the universal relation if $S$ has a zero, and $\coon\,\,=\,\,\con$ if $S$ has no zero.

It is known that $\coon$ is the identity relation on a semigroup $S$ if and only if $S$ is commutative and cancellative \cite[Thm.~5.6]{ArKiKoMaTA}. There is no characterization of the semigroups (with no zero) in which $\coon$ is the universal relation. In the finite partition monoids, which have no zero for $n \ge 2$, $\coon$
is the universal relation.

\begin{theorem}\label{tcoonu}
	In $\Pan$, $\coon\,\,=\,\,\Pan\times\Pan$.
\end{theorem}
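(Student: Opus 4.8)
The plan is to show that all of $\Pan$ collapses to a single $\coon$-class by routing everything through the minimal ideal. Since $\coon$ is an equivalence relation, it suffices to prove (i) any two transversal-free partitions (those of rank $0$) are $\coon$-conjugate, and (ii) every $a\in\Pan$ is $\coon$-conjugate to some transversal-free partition.

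For (i), first observe that the set $K$ of transversal-free partitions is an ideal of $\Pan$: if $a$ has rank $0$ then in the diagram for a product $ab$ no path can connect the top row (the top row of $a$) to the bottom row (the bottom row of $b$), because the top row of $a$ never reaches the middle row; hence $ab\in K$, and dually $ba\in K$. A transversal-free $a$ is determined by the pair $(\ker a,\coker^{\wedge}(a))$ of partitions of $\bn$, and for transversal-free $a,b$ one computes $ab=(\ker a,\coker^{\wedge}(b))$; thus $K$ is (isomorphic to) a rectangular band. Consequently, given transversal-free $c_1,c_2$, set $g=(\ker c_1,\coker^{\wedge}(c_2))$ and $h=(\ker c_2,\coker^{\wedge}(c_1))$; a direct check gives $c_1g=(\ker c_1,\coker^{\wedge}(c_2))=gc_2$ and $c_2h=(\ker c_2,\coker^{\wedge}(c_1))=hc_1$, so $c_1\coon c_2$.

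For (ii), fix $a\in\Pan$ and let $\ker^{*}(a)$ be the partition of $\bn$ obtained from $\ker a$ by amalgamating all of the blocks of $\ker^{t}(a)$ into the single block $\dom(a)$, while leaving the non-transversal blocks of $\ker a$ untouched. Put $c=(\ker^{*}(a),\ker^{*}(a))\in K$. The key fact is that $ac=c$: right-multiplying $a$ by a transversal-free partition $(\sigma,\rho)$ yields a transversal-free partition whose bottom partition is exactly $\rho$ and whose top partition is $\ker a$ with the transversal top-parts merged according to the classes of their cokernel partners in the middle row; for $\sigma=\ker^{*}(a)$ this amalgamates precisely the blocks of $\ker^{t}(a)$, producing $\ker^{*}(a)$. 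Since also $cc=c$, taking $g=c$ gives $ag=c=gc$. For the other conjugator, seek $h=(\ker^{*}(a),\nu)$ with $\nu$ a partition of $\bn$: then $ch=(\ker^{*}(a),\nu)$ automatically (left-multiplication by the transversal-free $c$ resets the top partition to $\ker^{*}(a)$ and leaves the bottom partition $\nu$ unchanged), while $ha\in K$ has top partition $\ker^{*}(a)$ and bottom partition $F(\nu)$, the partition of $\bn'$ which coarsens $\coker^{\wedge}(a)$ by declaring $B_i'\sim B_j'$ exactly when $A_i\sim A_j$ in $\ker a\vee\nu$ (with $A_i\cup B_i'$ ranging over the transversal blocks of $a$). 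The operator $F$ is order-preserving for the refinement order and satisfies $\coker^{\wedge}(a)\preceq F(\coker^{\wedge}(a))$, so the ascending chain $\coker^{\wedge}(a)\preceq F(\coker^{\wedge}(a))\preceq F^{2}(\coker^{\wedge}(a))\preceq\cdots$ stabilizes at a fixed point $\nu_\infty$; with $h=(\ker^{*}(a),\nu_\infty)$ we get $ch=(\ker^{*}(a),\nu_\infty)=ha$, hence $a\coon c$. Combining (i) and (ii), $\coon=\Pan\times\Pan$.

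The routine but genuinely fiddly part — which I would isolate as a preliminary lemma before the proof — is the precise description of the products $ac$ and $ca$ when $c=(\sigma,\rho)$ is transversal-free: namely, that right (resp. left) multiplication leaves the bottom (resp. top) partition equal to $\rho$ (resp. $\sigma$) and transforms the other side by joining it, through the middle row, with the transversal partners via $\sigma$ (resp. $\rho$). Once that lemma is in hand, everything else is substitution together with the rectangular-band structure of $K$. The only additional care needed is the degenerate case $\dom(a)=\emptyset$, where $\ker^{t}(a)=\emptyset$, $\ker^{*}(a)=\ker a$, and one must not adjoin an empty block; in that case $a$ is itself transversal-free and (ii) is already covered by (i).
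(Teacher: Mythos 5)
Your part (i) is correct and is a nice observation: the rank-$0$ elements form an ideal isomorphic to a rectangular band on pairs $(\ker a,\coker^{\wedge}(a))$, and any two of its elements are $\coon$-related. However, part (ii) contains a genuine gap: the key claim $ac=c$ for $c=(\ker^{*}(a),(\ker^{*}(a))')$ is false. Take $n=3$ and $a=\{\{1,2'\},\{2,3'\},\{3\},\{1'\}\}$. Then $\dom(a)=\{1,2\}$, so $\ker^{*}(a)=\{\{1,2\},\{3\}\}$ and $c=\{\{1,2\},\{3\},\{1',2'\},\{3'\}\}$. Computing the product, the middle row of $(a,c)^*$ has the single nontrivial class $\{1^*,2^*\}$ coming from $\ker(c)$, and the transversal edges of $a$ attach $1$ to $2^*$ and $2$ to $3^*$; since $3^*$ is not joined to $\{1^*,2^*\}$, the vertices $1$ and $2$ land in \emph{different} blocks of $ac$, so $ac=\{\{1\},\{2\},\{3\},\{1',2'\},\{3'\}\}\neq c$. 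The underlying error is that merging the $A_i$'s under right multiplication by a transversal-free $c$ is governed by how the $B_i$'s (which live in $\codom^{\wedge}(a)$) sit inside $\coker^{\wedge}(a)\vee\ker(c)$, whereas $\ker^{*}(a)$ only coalesces $\dom(a)$; these are different subsets of $\bn$, so nothing forces the $B_i$'s into one middle-row class. The same defect undermines the construction of $h$ in the second half of (ii).

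The iteration you sketch at the end (the operator $F$ and its fixed point) is in fact groping toward the correct repair: the stabilized kernel you need is $\ker(a^{\omega})$, not $\ker^{*}(a)$. This is exactly how the paper's proof proceeds, and it is much shorter: it shows every $a$ is $\coon$-related to the identity $e$ by taking $g$ transversal-free with $\ker(g)=\ker(a^{\omega})$ and trivial cokernel, so that $\ker(ag)=\ker(aa^{\omega})=\ker(a^{\omega+1})=\ker(a^{\omega})$ (using $a^{\omega+1}\greenH a^{\omega}$) and hence $ag=g=ge$, with the dual choice for $h$. If you want to salvage your two-step strategy through the minimal ideal, replace $\ker^{*}(a)$ by $\ker(a^{\omega})$ (and dually $\coker(a^{\omega})$); but at that point the detour through the rectangular band buys nothing over conjugating directly to $e$.
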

\begin{proof}
	Let $e = \{\{x,x'\} : x\in[n]\}$ be the identity in $\Pan$ and let $a\in \Pan$ be arbitrary. We want to find $g\in \Pan$ such that $ag = ge$. Consider $g\in \Pan$ such that $\ker(g) = \ker(a^{\ome})$, $\coker(g) = \{\{x'\} : x'\in[n']\}$, and $g$ does not have any transversal blocks. Then $\ker(ag) = \ker(aa^{\ome}) = \ker(a^{\ome+1}) = \ker(a^{\ome}) = \ker(g)$, where the last but one equality follows from the fact that $a^{\ome+1}\greenH a^{\ome}$. Since $\coker(g)$ is trivial and $g$ has no transversal blocks, $\coker(ag)$ is also trivial and $ag$ has no transversal blocks either. Thus $ag = g = ge$. Similarly, for $h\in \Pan$ such that $\coker(h) = \coker(a^{\ome})$, $\ker(h) = \{\{x\} : x\in[n]\}$,
	and $h$ does not have any transversal blocks, we have $ha = h = eh$.
	We have proved that for every $a\in\Pan$, $a\coon e$. Hence $\coon\,\,=\,\,\Pan\times \Pan$ since $\coon$ is an equivalence relation.
\end{proof}

In the case that $a\in \PBan$, the elements $g$ and $h$ constructed as above are in $\PBan$ as well. Hence we immediately obtain the following classification.

\begin{theorem}\label{tcoonu2}
	In $\PBan$, $\coon\,\,=\,\,\PBan\times\PBan$.
\end{theorem}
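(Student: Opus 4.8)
The final statement to prove is Theorem~\ref{tcoonu2}: in $\PBan$, $\coon = \PBan \times \PBan$. The plan is to adapt the proof of Theorem~\ref{tcoonu} directly, verifying that the auxiliary conjugators constructed there stay inside $\PBan$. Since $\coon$ is an equivalence relation, it suffices to show that every $a \in \PBan$ satisfies $a \coon e$, where $e = \{\{x,x'\} : x \in [n]\}$ is the identity of $\PBan$ (which coincides with the identity of $\Pan$). So the whole task reduces to producing conjugators $g,h \in \PBan$ with $ag = ge$ and $ha = eh$.

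First I would recall the construction from Theorem~\ref{tcoonu}: one takes $g \in \Pan$ with $\ker(g) = \ker(a^{\ome})$, trivial cokernel $\coker(g) = \{\{x'\} : x' \in [n']\}$, and no transversal blocks, and dually $h$ with $\coker(h) = \coker(a^{\ome})$, trivial kernel, and no transversal blocks. The key observation is that for $a \in \PBan$, the idempotent power $a^{\ome}$ also lies in $\PBan$ (since $\PBan$ is a submonoid of $\Pan$), so every block of $\ker(a^{\ome})$ has size at most $2$. Hence the partition $g$ defined above has all blocks of size at most $2$ as well: its non-transversal upper blocks are the (size $\le 2$) classes of $\ker(a^\ome)$, and its lower part consists of singletons. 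Therefore $g \in \PBan$, and symmetrically $h \in \PBan$. The computations $ag = g = ge$ and $ha = h = eh$ then go through verbatim as in Theorem~\ref{tcoonu}, using $\ker(ag) = \ker(a a^{\ome}) = \ker(a^{\ome+1}) = \ker(a^{\ome}) = \ker(g)$ (the third equality because $a^{\ome+1} \greenH a^{\ome}$), together with the fact that a trivial cokernel and no transversal blocks are preserved under left multiplication.

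I do not anticipate a serious obstacle here; the only point requiring a moment's care is confirming that multiplying $a$ on the right by a transversal-free partition with trivial cokernel cannot create transversal blocks or enlarge blocks beyond size $2$ — but in the partition product, $ag$ has the same cokernel blocks as $g$ (all singletons) and no vertex of $\bn$ can connect through to $\bn'$ because $g$ has no transversal block, so $ag$ is transversal-free with trivial cokernel; moreover $\ker(ag) = \ker(g)$ has blocks of size $\le 2$. So $ag \in \PBan$ and equals $g = ge$. After establishing $a \coon e$ for all $a$, transitivity and symmetry of $\coon$ give $a \coon b$ for all $a,b \in \PBan$, i.e.\ $\coon = \PBan \times \PBan$, completing the proof. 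A one-sentence remark noting that the construction is literally that of Theorem~\ref{tcoonu} with the conjugators confined to $\PBan$ would suffice in the written-up version.
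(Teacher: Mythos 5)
Your proposal is correct and is essentially identical to the paper's own argument: the paper proves Theorem~\ref{tcoonu2} by observing that for $a\in\PBan$ the conjugators $g,h$ built in the proof of Theorem~\ref{tcoonu} already lie in $\PBan$, which is exactly your verification that $\ker(a^{\ome})$ has blocks of size at most $2$ and the lower parts of $g$ (resp.\ upper parts of $h$) are singletons. No changes needed.
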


We now consider $\coon$ for a Brauer monoid $\Ban$. As $\ctr\,\, \subseteq\,\, \coon$, it follows from Theorem \ref{t:trPBB} that there is a partition $Q$ of the set of available cycle types, such that $a\coon b$ if and only if the cycle types of $a^{\ome+1}$ and $b^{\ome+1}$ lie in the same part of $Q$. Moreover, as $\cfn\,\, \subseteq\,\, \coon$, Theorem \ref{tcfnB} shows that $a$ has a $\coon$-conjugate $c$ in $\frn$-normal form (see Definition \ref{d:Bnorm}). We will show below that this element can be chosen as a group element.

The following lemma provides a description of such partitions, which follows directly from Theorem \ref{tcfnB} and Definition \ref{d:Bnorm}.

\begin{lemma}\label{l:n+gr}
	Suppose that $c\in \Ban$ is both a group element and in $\frn$-normal form. Then there is a partition $\bn = A\cup B$ such that $A\cup A'$ contains all transversal $b$-blocks and $B\cup B'$ contains all nontransversal $b$-blocks (where we allow $A = \emptyset$ or $B = \emptyset$).
	
	Moreover, there is a partition of $B$ into subsets $B_i$ of size $2$, such that $B_i$ and $B_i'$ are blocks for all $i$.
\end{lemma}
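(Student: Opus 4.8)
\emph{Proof plan.} The statement is essentially a direct unpacking of Definition~\ref{d:Bnorm} for an element that is simultaneously a group element, so the plan is to combine the structural description of group elements in $\Pan$ (Proposition~\ref{pgrel}) with the normal-form conditions. First I would observe that since $c\in\Ban$, every block of $c$ has size exactly $2$, so each transversal block has the form $\{x,y'\}$ with $x,y\in\bn$, and each non-transversal block is either $\{x,y\}\subseteq\bn$ or $\{x',y'\}\subseteq\bn'$. Set $A=\dom(c)$, the set of $x\in\bn$ lying in a transversal block, and $B=\bn\setminus A$. I must check that $A\cup A'$ contains \emph{all} transversal blocks and $B\cup B'$ contains all non-transversal ones; the former is immediate from the definition of $A$, and for the latter I need that the primed side of a transversal block is contained in $A'$, i.e.\ $\codom^{\wedge}(c)=\dom(c)$.

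The equality $\dom(c)=\codom^{\wedge}(c)$ is where I would invoke that $c$ is a group element. By Proposition~\ref{pgrel}, for each block $P$ of $\ker(c)\vee\coker^{\wedge}(c)$ either neither $P$ nor $P'$ meets a transversal block, or each of $P,P'$ meets exactly one. Since in $\Ban$ each block has size $2$, $\dom(c)$ is a union of $1$- or $2$-element sets that are exactly the $\bn$-sides of transversal blocks; using the group-element dichotomy one concludes that $x\in\dom(c)$ iff $x'$ lies in a transversal block, giving $\dom(c)=\codom^{\wedge}(c)=A$. (Alternatively, one can run the same argument more directly: Definition~\ref{d:Bnorm}(1) says that whenever $\{x,y\}$ is a non-transversal upper block, either $\{x',y'\}$ is a block or $x',y'$ lie in transversal blocks; combined with the group condition, the second alternative is excluded, so $\{x,y\}$ a block $\iff\{x',y'\}$ a block, which is exactly what forces $\dom=\codom^{\wedge}$ and simultaneously pairs up $B$.) From $\dom(c)=\codom^{\wedge}(c)=A$ it follows that $B\cup B'$ contains precisely the non-transversal blocks of $c$, and we allow $A=\emptyset$ or $B=\emptyset$ as degenerate cases.

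For the last sentence, I would argue as follows. Every non-transversal block is contained in $B\cup B'$, and the non-transversal blocks partition $B\cup B'$. By normality (Definition~\ref{d:Bnorm}) a block $\{x,y\}\subseteq B$ forces $\{x',y'\}$ to be a block as well (the other alternative, that $x',y'$ lie in transversal blocks, would put $x,y\in\dom(c)=A$, contradicting $x,y\in B$); symmetrically $\{x',y'\}\subseteq B'$ a block forces $\{x,y\}$ a block. Hence the non-transversal upper blocks $\{B_i\}$ form a partition of $B$ into $2$-element sets, and for each $i$ both $B_i$ and $B_i'$ are blocks of $c$, as claimed.

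\emph{Main obstacle.} The only nontrivial point is pinning down that $\dom(c)=\codom^{\wedge}(c)$, i.e.\ extracting from Proposition~\ref{pgrel} that in the size-$2$-block setting the transversal blocks occupy a ``self-mirrored'' set of coordinates; everything else is bookkeeping with the definitions. I expect this step to be short once the group-element characterization is applied, so the proof should be a few lines.
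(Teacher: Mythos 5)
Your proposal is correct in substance, and it actually supplies a proof where the paper gives none (the paper merely asserts that the lemma ``follows directly'' from the normal form definition and the surrounding structure theory). One step needs care, though. Your primary formulation of the key point --- that Proposition~\ref{pgrel} alone yields $x\in\dom(c)\iff x'$ lies in a transversal block --- is not true without the normal-form hypothesis: in $\mathcal{B}_6$ the partition with blocks $\{1,4'\},\{2,3\},\{1',2'\},\{3',6\},\{4,5\},\{5',6'\}$ is a group element (its two $\ker\vee\coker^{\wedge}$-classes are $\{1,2,3\}$ and $\{4,5,6\}$, each meeting exactly one transversal on each side) yet $\dom(c)=\{1,6\}\neq\{3,4\}=\codom^{\wedge}(c)$. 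So the dichotomy of Proposition~\ref{pgrel} by itself does not self-mirror the transversal coordinates. Your parenthetical alternative is the argument that actually works, and it deserves to be the main line: if $\{x,y\}$ is an upper block whose second alternative in Definition~\ref{d:Bnorm}(1) holds, i.e.\ $x'$ and $y'$ lie in distinct transversal blocks, then $[x]_{\coker^{\wedge}}=\{x\}$ and $[y]_{\coker^{\wedge}}=\{y\}$, so the $\ker(c)\vee\coker^{\wedge}(c)$-class of $x$ is exactly $\{x,y\}$; this class meets no transversal block while its primed copy meets two, contradicting both alternatives of Proposition~\ref{pgrel}. Hence for a group element in normal form every upper block $\{x,y\}$ has $\{x',y'\}$ as a block (and dually), which simultaneously forces $\dom(c)=\codom^{\wedge}(c)$ (an element of $\codom^{\wedge}(c)\setminus\dom(c)$ would put some $y'$ in both a transversal block and a lower block) and yields the pairing of $B$ into the sets $B_i$. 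With that exclusion step written out, the rest of your bookkeeping is fine.
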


We remark that $|B|$ is even, and that we may identify $c_A$ with a permutation in $\sym_A$.

\begin{lemma}\label{l:gr-n-norm}
	Let $a\in \Ban$ be a group element. Then there is a partition $b\in \Ban$ in $\frn$-normal form such that $b$ is a group element with the same cycle type as $a$.
\end{lemma}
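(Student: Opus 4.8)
The idea is to take a group element $a \in \Ban$ and transform it into $\frn$-normal form without changing its cycle type, using only operations that preserve both the group-element property and the cycle type. Since $a$ is a group element, Proposition~\ref{pgrel} tells us that for every block $P$ of $\ker(a)\lor\coker^{\wedge}(a)$, either neither $P$ nor $P'$ meets a transversal block, or each of $P$ and $P'$ meets exactly one transversal block. In particular, $\dom(a)=\codom^{\wedge}(a)$, and the transversal part of $a$ is, up to relabelling, a permutation $\tau_a$ whose cycle type is by definition the cycle type of $a$ (Definition~\ref{dpia}).

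First I would isolate the ``non-transversal part'' of $a$. Let $B = \bn \setminus \dom(a)$; by Proposition~\ref{pgrel} the non-transversal blocks of $a$ are exactly those contained in $B \cup B'$, and a parity/counting argument shows $|B|$ is even: indeed, within each block $P$ of $\ker(a)\lor\coker^{\wedge}(a)$ of the type in case (a), $P$ and $P'$ must be matched up among themselves by $a$'s blocks (each of size $2$, since $a \in \Ban$), but there is no constraint forcing $|P|$ even, so the parity must be argued globally from $|\bn| = |\dom(a)| + |B|$ together with $\rank(a) = |\dom(a)|$ — actually the cleaner route is: $a$ restricted to $B \cup B'$ is a transversal-free element of $\mathcal{B}_B$, and every transversal-free Brauer partition on a set forces that set to have even size, since the non-transversal blocks split $B$ and $B'$ each into $2$-element blocks together with the across-blocks. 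This gives $|B|$ even. Now construct $b$ explicitly: keep the transversal blocks of $a$ exactly as they are (so that $\ker^t(b)=\ker^t(a)$, $\coker^t(b)=\coker^t(a)$, and the induced permutation $\tau_b = \tau_a$, guaranteeing the same cycle type), enumerate $B = \{z_1, w_1, z_2, w_2, \dots, z_k, w_k\}$, and declare $\{z_i, w_i\}$ and $\{z_i', w_i'\}$ to be blocks of $b$ for each $i$. Then $b \in \Ban$ by construction.

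Next I would verify the two required properties of $b$. For the $\frn$-normal form condition (Definition~\ref{d:Bnorm}): the only non-transversal blocks of $b$ are the $\{z_i,w_i\}$ and $\{z_i',w_i'\}$, and for each upper block $\{z_i,w_i\}$ the matching lower block $\{z_i',w_i'\}$ is also a block of $b$, so condition (1) holds; symmetrically condition (2) holds. All other blocks of $b$ are transversal and vacuously satisfy the conditions. For the group-element property: apply Proposition~\ref{pgrel} to $b$. The blocks of $\ker(b)\lor\coker^{\wedge}(b)$ partition into those supported on $\dom(b) = \dom(a)$, which meet transversal blocks exactly as the corresponding blocks of $\ker(a)\lor\coker^{\wedge}(a)$ did (so case (b) holds for them), and the blocks $\{z_i,w_i\}$, for which neither $\{z_i,w_i\}$ nor $\{z_i',w_i'\}$ meets any transversal block (so case (a) holds). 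Hence $b$ is a group element. Finally, since $b$ has the same transversal blocks as $a$ arranged by the same permutation $\tau_a$, Definition~\ref{dpia} gives that $b$ has the same cycle type as $a$, completing the proof.

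\textbf{Main obstacle.} The one genuinely non-routine point is establishing that $|B| = |\bn \setminus \dom(a)|$ is even, so that the pairing into $2$-element blocks $\{z_i,w_i\}$ is actually possible — without this the construction breaks down. This should follow from the fact that a Brauer partition has the same number of upper and lower non-transversal blocks and these have size exactly $2$, forcing the non-transversal support on each side to be even; but one must phrase this carefully in terms of the block structure of $a \in \Ban$ restricted to $B \cup B'$. Everything else is a direct verification against Proposition~\ref{pgrel}, Definition~\ref{d:Bnorm}, and Definition~\ref{dpia}.
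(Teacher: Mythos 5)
There is a genuine gap. Your construction hinges on the claim that $\dom(a)=\codom^{\wedge}(a)$ for a group element $a\in\Ban$, but Proposition~\ref{pgrel} does not give this: it only guarantees that a block $P$ of $\ker(a)\lor\coker^{\wedge}(a)$ meets a transversal if and only if $P'$ does, not that the meeting points coincide. A concrete counterexample in $\mathcal{B}_3$ is $a=\{\{1,2'\},\{2,3\},\{1',3'\}\}$: here $\ker(a)\lor\coker^{\wedge}(a)$ has the single block $\{1,2,3\}$, so $a$ is a group element by Proposition~\ref{pgrel}, yet $\dom(a)=\{1\}$ while $\codom^{\wedge}(a)=\{2\}$. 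For this $a$ your recipe sets $B=\{2,3\}$ and would declare $\{2',3'\}$ a block of $b$ while also keeping the transversal $\{1,2'\}$, so $2'$ would lie in two blocks and $b$ would not be a partition. A second, related problem is the assertion that keeping the transversal blocks forces $\tau_b=\tau_a$: the permutation $\tau$ of Definition~\ref{dpia} is indexed by the blocks of the join $\ker\lor\coker^{\wedge}$, which changes when you replace the non-transversal blocks, so preserving $\ker^t$ and $\coker^t$ alone does not preserve the cycle type. (In the special case $\dom(a)=\codom^{\wedge}(a)$ your conclusion does hold, because then the domain and codomain representatives inside each join-block coincide, but that is not the general case.)

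The paper's proof avoids both issues by not retaining $a$'s transversal blocks at all: it takes $k$ to be the number of join-blocks meeting a transversal, picks an arbitrary $k$-subset $A\subseteq\bn$, builds a fresh permutation-style partition on $A\cup A'$ with the prescribed cycle type, and pads $\bn\setminus A$ with matched pairs $B_i,B_i'$. The parity of $|\bn\setminus A|$ is then obtained not from your restriction argument (the restriction of $a$ to $B\cup B'$ need not be a Brauer partition, as the example above shows, since blocks of $a$ can cross between the transversal and non-transversal parts of the lower row) but from the observation that a join-block has odd cardinality exactly when it meets a transversal, so $n-k$ is even. To salvage your approach you would have to first pass to a conjugate of $a$ whose transversal blocks satisfy $\dom=\codom^{\wedge}$ and whose join restricted to the domain consists of singletons, which is essentially what the paper's construction achieves directly.
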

\begin{proof}
	Let $k$ be the number of blocks of $\ker(a)\lor \coker^{\wedge}(a)$ that are used in the construction of the permutation corresponding to $a$ (that is, the blocks of $\ker(a)\lor \coker^{\wedge}(a)$ that intersect a transversal block of $a$). Pick a $k$-subset $A$ of $\bn$. Using only transversal blocks, we can construct a partition $b_A$ on $A\cup A'$ that has the same cycle type as $a$ (and which we might consider to be an element of $\sym_A$).
	
	In $\Ban$, a block of $\ker(a)\lor \coker^{\wedge}(a)$ that intersects one transversal of $a$ has odd cardinality, while a block of $\ker(a)\lor \coker^{\wedge}(a)$ that does not has even cardinality. It follows that $|\bn\setminus A|$ is even.
	
	Partitioning $B =\bn\setminus A$ into $2$-element sets $B_i$, we can extend $b_a$ to a partition $b\in \Ban$ by adding the blocks $B_i, B_i'$ for each $i$. The result follows.
\end{proof}

If the permutation associated with $b_A$ contains a cycle of size $l$, it is clear that we may identify a subset $C$ of $A$ such that $b_{C}$ represents this cycle. When we speak of such a representation, we will always assume that $|C|=l$ (so unlike in the standard use of ``cycle'', we do not allow any additional $1$-cycles to be represented in $C$).

\begin{lemma}\label{l:evencyc}
	Let $a\in \Ban$ be a group element in $\frn$-normal form, and suppose that $C\subseteq \bn$ is such that $a_C$ represents a cycle of even length $l$. Then there is a partition of $C$ into $2$-subsets $C_i$ and $b\in \Ban$ such $a\coon b$, $b$ contains the blocks $C_i,C_i'$ for all $i$, and $a_D = b_D$ for $D =\bn\setminus C$.
\end{lemma}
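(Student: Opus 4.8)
The plan is to prove Lemma~\ref{l:evencyc} by constructing explicit conjugators $g, h \in \Ban$ witnessing $a \coon b$, using the structure of a single even cycle. Write $C = \{c_1, \dots, c_l\}$ so that $a_C$ acts as the cycle $\{c_1, c_{2}'\}, \{c_2, c_3'\}, \dots, \{c_l, c_1'\}$ (possibly after a labelled-rotation of the $c_i$; note $l$ is even by hypothesis). The target $b$ should have blocks $C_i = \{c_{2i-1}, c_{2i}\}$ and $C_i' = \{c_{2i-1}', c_{2i}'\}$ for $i = 1, \dots, l/2$, together with $b_D = a_D$ on $D = \bn \setminus C$. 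Since $\coon$ is symmetrized, it suffices to produce $g, h \in \Ban$ with $ag = gb$ and $bh = ha$ (equations (i) and (ii) in the notation of \S\ref{Sbs:char}); unlike for $\cfn$ we do not need the extra relations (iii),(iv).

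First I would take $g$ to act as the identity $\{x, x'\}$ on $D$ and to do the appropriate matching on $C \cup C'$: the idea is that right-multiplication of $a$ by $g$ must turn the transversal cycle into the upper/lower pair-blocks of $b$. Concretely, one wants $g$ to have lower blocks $C_i'$ (matching $\coker(b)$) and upper blocks that, when stacked under $a$, glue the cycle edges of $a$ into the pairs $C_i$. Because $l$ is even, the $l$ lower vertices $c_1', \dots, c_l'$ and the $l$ upper vertices can be matched so that $g$ is a genuine Brauer partition (every block of size exactly $2$). The verification that $ag = gb$ is then a direct stacking computation: on $D$ both sides restrict to $a_D = b_D$, and on $C \cup C'$ one checks that the connected components of the stacked diagram $(a,g)^*$ after deleting the middle row are exactly the blocks $C_i$ (upper), $C_i'$ (lower) — i.e.\ $b_C$ — and this must coincide with the components of $(g,b)^*$. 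Symmetrically I would define $h$ by essentially reversing the roles (e.g.\ $h$ a mirror/transpose-type diagram), identity on $D$, and check $bh = ha$ by the same stacking argument. The parity $l$ even is exactly what guarantees all of $g, h$ stay inside $\Ban$ rather than $\PBan$: an odd cycle would force an odd number of vertices to be matched among themselves, creating either a singleton or a size-mismatch.

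The main obstacle I anticipate is bookkeeping: getting the explicit block structure of $g$ and $h$ right so that the stacked-diagram computations genuinely collapse to the claimed partitions, especially handling the "wrap-around" edge $\{c_l, c_1'\}$ of the cycle and making sure the pairing of the middle row is consistent between the $ag$ computation and the $gb$ computation. A clean way to manage this is to treat $l = 2$ as the base case (where $a_C$ is the transposition $\{c_1, c_2'\}, \{c_2, c_1'\}$, and $b_C$ has blocks $\{c_1, c_2\}, \{c_1', c_2'\}$ — essentially the content already used in Lemma~\ref{l:cpnsB} and its dual, and close to Lemma~\ref{l:bridgeB}) and then either iterate along the cycle or write the general $g, h$ by analogy. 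In fact, since $\cpn \subseteq \coon$, one could alternatively shortcut part of this: show $a \cpn a'$ for an intermediate $a'$ whose $C$-part is a product of disjoint transpositions arranged as needed, then finish with a transposition-to-pair-block step as in Lemma~\ref{l:cpnsB}. I would use whichever of these gives the shortest honest verification; the direct two-conjugator construction is the safest to state but the iterative/$\cpn$ route may shorten the diagram chasing.

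Finally I would remark that the resulting $b$ is again a group element in $\frn$-normal form on $D$ unchanged and with the new pair-blocks on $C$, so Lemma~\ref{l:evencyc} feeds directly into the intended argument that every $\Ban$-element is $\coon$-conjugate to a group element in $\frn$-normal form whose permutation part has only odd cycles, which is what one needs to pin down the partition $Q$ of cycle types governing $\coon$ on $\Ban$.
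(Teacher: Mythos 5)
Your general framework is sound --- for $\coon$ only relations (i) and (ii) need to be checked, the conjugators should act as the identity on $D$ and carry non-transversal pairings on $C\cup C'$, and the verification is a stacking computation --- but the one idea that makes that computation close up is missing, and the concrete hint you give for $g$ points the wrong way. Suppose $g$ is the identity off $C$ and, on $C\cup C'$, has upper pairing $U$ and lower pairing $L'$ (all blocks of size $2$ and non-transversal). Writing the cycle as $\{c_i,c_{i+1}'\}$ for $i\in\mathbb{Z}/l$, the stacked diagram shows that each block $\{c_j,c_k\}\in U$ contributes the upper block $\{c_{j-1},c_{k-1}\}$ to $ag$, whereas the upper blocks of $gb$ are the blocks of $U$ themselves. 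Hence $ag=gb$ forces $U$ to be invariant under the cyclic shift $c_i\mapsto c_{i+1}$, and the unique perfect matching of $\mathbb{Z}/l$ with that property is the antipodal one $\{c_i,c_{i+l/2}\}$ (this is where evenness of $l$ really enters). In particular $U$ cannot be your consecutive pairing $\{c_{2i-1},c_{2i}\}$, nor the pairing that ``glues the cycle edges into the $C_i$.'' The paper sidesteps all bookkeeping by taking the $C_i$ to \emph{be} the antipodal pairs and setting $g=h$ equal to the single idempotent with blocks $C_i$, $C_i'$ and $\{z,z'\}$ for $z\notin C$; then all four products $ag$, $gb$, $bh$, $ha$ equal $b$. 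Your consecutive-pair target is in fact also reachable (the lemma only asserts \emph{some} partition into $2$-subsets), but only by hiding the antipodal matching in the conjugators: take $g$ with antipodal upper pairing and consecutive lower pairing, and $h$ with consecutive upper pairing and antipodal lower pairing. Either way the shift-invariance constraint is the actual content of the proof and has to appear somewhere; as written, your plan defers exactly this point.

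Separately, the proposed shortcut through $\cpn$ is a dead end: by Theorem~\ref{tcpnsB} we have $\cpn\ \subseteq\ \cpns\ =\ \ctr$ in $\Ban$, and $\ctr$ preserves the cycle type of $a^{\omega+1}$. Since $a$ is a group element, no $\cpn$-chain can convert its $l$-cycle on $C$ into a product of transpositions, let alone into non-transversal pair blocks. The whole point of this lemma is that $\coon$ is strictly coarser than $\ctr$ on $\Ban$, so the conjugators must be genuine $\coon$-conjugators and cannot be assembled from $\cpn$ steps.
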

\begin{proof}
	Order the elements of $C$ as $c_1,\dots, c_l$, such that the $a$-blocks intersecting $C$ are $\{c_l,c_1'\}$ and $\{c_i,c_{i+1}'\}$ for $i=1, \dots, l-1$.
	
	Partition $C$ into blocks $C_i = \{c_i, c_{i+l/2}\}$ for $i=1, \dots l/2$, define $g\in \Ban$ with blocks $C_i, C_i'$ and $\{z,z'\}$ for $z\notin C$, and set $g = h$. It is straightforward to check that $g,h$ witness $a\coon b$.
\end{proof}

\begin{lemma}\label{l:2oddcyc}
	Let $a\in \Ban$ be a group element in $\frn$-normal form, and suppose that $C,D\subseteq \bn$, $C\neq D$ are such that $a_C, a_D$ represents cycles of the same length $l$. Then there is a partition of $C\cup D$ into $2$-subsets $G_i$ and $b\in \Ban$ such $a\coon b$, $b$ contains the blocks $G_i,G_i'$ for all $i$, and $a_L = b_L$ for $L =\bn\setminus (C\cup D)$.
\end{lemma}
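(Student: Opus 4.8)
The plan is to mimic the construction used in Lemma~\ref{l:evencyc}, but now ``interleaving'' the two odd cycles of the same length $l$ so that their combined $2l$ vertices can be paired up into $l$ two-element blocks that are simultaneously blocks of the top and bottom row. First I would fix notation: order the elements of $C$ as $c_1,\dots,c_l$ so that the $a$-blocks meeting $C$ are $\{c_l,c_1'\}$ and $\{c_i,c_{i+1}'\}$ for $i=1,\dots,l-1$, and similarly order $D$ as $d_1,\dots,d_l$ with $a$-blocks $\{d_l,d_1'\}$ and $\{d_i,d_{i+1}'\}$. The key idea is that, since $C$ and $D$ have the same length, the permutation $a_{C\cup D}$ (a product of two $l$-cycles) is conjugate in $\sym_{C\cup D}$ to itself under the transposition-like rearrangement that swaps the two cycles; more usefully, a single pairing $G_i=\{c_i,d_i\}$ for $i=1,\dots,l$ gives a partition of $C\cup D$ into $2$-subsets.

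Next I would define the conjugators. Set $g=h\in\Ban$ to be the partition with blocks $G_i=\{c_i,d_i\}$ and $G_i'=\{c_i',d_i'\}$ for $i=1,\dots,l$, together with $\{z,z'\}$ for every $z\in\bn\setminus(C\cup D)$. One checks directly that $ag=ga$ is the partition $b$ whose blocks are $G_i,G_i'$ for all $i$ and which agrees with $a$ on $L=\bn\setminus(C\cup D)$: indeed, composing $a$ (restricted to $C\cup D$, a permutation) with the ``pair the two cycles'' diagram $g$ turns the two transversal cycles into the transversal-free blocks $G_i,G_i'$, because for each $i$ the path $c_i \to c_{i+1}' \to c_{i+1} \to \cdots$ in $(a,g)^*$ gets short-circuited by the double edge identifications (this is the routine diagram chase I would not spell out in full). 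Then, by the same computation read in the other order, $gh=hg$ equals $b$ as well, so that $gh\cdot a = a\cdot gh$ need not hold, but rather we are in the situation of $a\coon b$ directly: $ag=ga$ trivially because $g=h$ and $ag=ga=b$, wait — more carefully, $\coon$ only requires $ag=gb$ and $bh=ha$, and here $ag = b = bg$ (since $g$ acts as the identity on $b$'s image after the pairing collapses, $bg=b$) and $bh=b=ha$ by symmetry, so $a\coon b$ with conjugators $g,h$.

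Finally I would verify that $b$ is indeed what the statement claims: its blocks inside $C\cup D$ are exactly $G_1,\dots,G_l$ and $G_1',\dots,G_l'$, each of size $2$, so $b\in\Ban$, and $b_L=a_L$ by construction. The main obstacle I anticipate is the diagram chase showing $ag=b$ and $bg=b$ precisely — one must track how the odd length $l$ and the ``offset'' pairing $G_i=\{c_i,d_i\}$ interact when the two cycles are glued, and confirm that no unwanted transversal block survives and that the blocks of $b$ come out as stated. This is purely a finite combinatorial check of the product in $\Pan$ (using the $(\cdot,\cdot)^*$ intermediate-partition formalism introduced before Lemma~\ref{l:2-bridge}), analogous to the verifications left to the reader in Lemmas~\ref{l:evencyc} and~\ref{l:bridgeB}, so I expect to carry it out by the same kind of path-tracing argument and then simply assert the result.
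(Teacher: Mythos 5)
Your construction is exactly the paper's: order the cycles compatibly, take $g=h$ with blocks $G_i=\{c_i,d_i\}$, $G_i'=\{c_i',d_i'\}$ and identity blocks elsewhere, and verify $ag=gb$ and $bh=ha$ by the diagram chase (which the paper also leaves as ``straightforward to check''). Despite the momentary wobble in your middle paragraph about which identities $\coon$ requires, the checks you settle on are the right ones, so the proposal is correct and takes essentially the same approach as the paper.
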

\begin{proof}
	Suppose that $C = \{c_1,c_2,\ldots, c_l\}$, $D = \{d_1,d_2,\ldots, d_l\}$ are ordered such that $\{c_l,c_1'\}$, $\{d_l,d_1'\}$, $\{c_i,c_{i+1}'\}$ and $\{d_i,d_{i+1}'\}$, $i=1,\ldots, l-1$,  are the $a$-blocks intersecting $C \cup D$.
	
	Partition $C\cup D$ into blocks $G_i=\{c_i, d_i\}$ for $i=1,\ldots l$, define $g\in \Ban$ to have blocks $G_i, G_i'$ and $\{z,z'\}$ for $z\notin C\cup D$, and set $g = h$. It is straightforward to check that $g,h$ witness $a\coon b$.
\end{proof}

\begin{theorem}\label{t:oBn}
	Let $a,b\in \Ban$, such that $a^{\ome+1}$ and $b^{\ome+1}$ have cycle types $(k_1,\ldots, k_n)$ and $(l_1,\ldots, l_n)$, respectively. Then $a\coon b$ if and only if $k_i\equiv l_i\pmod 2$ for each odd $i$.
\end{theorem}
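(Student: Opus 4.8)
The plan is to prove both directions by reduction to the case of group elements in $\frn$-normal form, exploiting the machinery already developed for $\Ban$. The starting point for both directions is the following observation: by Theorem~\ref{t:trPBB} (or rather its $\Ban$ version), $a\ctr b$ holds if and only if $a^{\ome+1}$ and $b^{\ome+1}$ have the same cycle type, and since $\ctr\subseteq\coon$, elements with the same cycle type of their idempotent-adjusted powers are automatically $\coon$-conjugate. Moreover, since $\cfn\subseteq\coon$ and by Theorem~\ref{tcfnB} together with Proposition~\ref{pacpnscB}'s technique, every element of $\Ban$ is $\coon$-conjugate (indeed $\cpns$-conjugate, hence $\coon$-conjugate) to a group element, and by Lemma~\ref{l:gr-n-norm} we may take that group element to be in $\frn$-normal form without changing the cycle type. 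So it suffices to prove the theorem when both $a$ and $b$ are group elements in $\frn$-normal form.

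For the ``if'' direction, assume $k_i\equiv l_i\pmod 2$ for each odd $i$. Using Lemma~\ref{l:evencyc} repeatedly, I would first replace $a$ by a $\coon$-conjugate in which every even-length cycle of $a_A$ has been dissolved into pairs forming non-transversal blocks; the same for $b$. After this, only odd-length cycles survive in the transversal part. Next, using Lemma~\ref{l:2oddcyc}, I pair up cycles of the same odd length: if $a$ (after the previous step) has $k_i$ cycles of odd length $i$, then $k_i$ of them can be merged two at a time into non-transversal $2$-blocks, leaving either zero cycles of length $i$ (if $k_i$ even) or exactly one (if $k_i$ odd). Since $k_i\equiv l_i\pmod 2$, after performing these reductions on both $a$ and $b$ the surviving transversal structure is the same for both: for each odd $i$, either no cycle of length $i$ or exactly one. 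The resulting partitions $a'$ and $b'$ are then group elements in $\frn$-normal form whose associated permutations have the same cycle type, hence $a'\ctr b'$ by Theorem~\ref{t:trPBB}, and therefore $a'\coon b'$. Chaining the $\coon$-conjugacies gives $a\coon b$. (One subtlety to check: all the $\coon$-conjugators produced by Lemmas~\ref{l:evencyc} and~\ref{l:2oddcyc} lie in $\Ban$, which they do by construction.)

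For the ``only if'' direction, suppose $a\coon b$ but $k_i\not\equiv l_i\pmod 2$ for some odd $i$. The plan is to extract a numerical invariant of $\coon$-classes in $\Ban$ that detects the parity of $k_i$ for odd $i$, thereby producing a contradiction. The natural candidate is $\operatorname{rank}(a^{\ome+1})\pmod{\text{something}}$ or, more precisely, a count tied to $\rank$ of powers. Concretely: for a group element $c$ in $\frn$-normal form with transversal part a permutation of cycle type $(k_1,\dots,k_n)$ and non-transversal part consisting of $2$-blocks, we have $\rank(c)=\sum_i i\,k_i$, and $\rank(c^m)=\sum_{i\mid m\text{ or }\gcd\text{-related}}$... actually the cleaner invariant is $\sum_i k_i$ versus $\rank$: note $\rank(c^{\mathrm{lcm}})$ equals the number of fixed cycles counted suitably. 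I would instead argue directly: since $\coon$ preserves $\ctr$-incomparability is false, but one knows $\coon$ need not preserve $\ctr$; so I must use a genuine $\coon$-invariant. The right one: for $x\in\Ban$, the parity of $\rank(x^k)$ summed appropriately, or better, use that in $\Ban$ the number $n-\rank(x^{\ome})$ together with finer data. I expect the cleanest route is: show that $a\coon b$ in $\Ban$ forces $\rank(a^{k})\equiv\rank(b^{k})\pmod 2$ for all $k$ — this follows because conjugators $g,h$ with $ag=gb$, $bh=ha$ restrict to maps on transversal structures whose ranks differ by controlled amounts, and in $\Ban$ ranks of corresponding elements have the same parity since $n$ is fixed and non-transversal blocks come in pairs. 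Then extracting $k_i$ for odd $i$ from the sequence $(\rank(a^k)\bmod 2)_k$ via Möbius-type inversion over odd divisors yields the claimed congruence.

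The main obstacle is this ``only if'' direction: precisely identifying and verifying the $\coon$-invariant that captures $k_i\bmod 2$ for odd $i$ and nothing finer. One must show the invariant is genuinely preserved under a single $\coon$-step (i.e. under the existence of $g,h$ with $ag=gb$ and $bh=ha$, with no further assumptions), which requires a careful analysis of how an arbitrary partition $g$ can intertwine two group elements of $\Ban$ — in particular controlling how $\ker, \coker$, and the transversal blocks of $g$ interact with those of $a$ and $b$. The ``if'' direction, by contrast, is essentially bookkeeping with Lemmas~\ref{l:evencyc} and~\ref{l:2oddcyc} and should go through routinely once one confirms the conjugators stay inside $\Ban$.
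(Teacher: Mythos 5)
Your ``if'' direction is essentially the paper's argument and is fine: reduce to group elements in $\frn$-normal form via Lemma~\ref{l:gr-n-norm} and $\ctr\subseteq\coon$, dissolve even cycles with Lemma~\ref{l:evencyc}, pair up odd cycles of equal length with Lemma~\ref{l:2oddcyc}, and observe that the parity hypothesis makes the two residual cycle types equal, whence $\ctr$ and hence $\coon$. The conjugators in those lemmas are indeed elements of $\Ban$ by construction, so that subtlety is not an issue.

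The ``only if'' direction, however, contains a genuine gap, and the specific invariant you propose cannot work. In $\Ban$ every block has size $2$, so if an element has $u$ upper non-transversal blocks and $t$ transversal blocks then $2u+t=n$; hence $\rank(x)\equiv n\pmod 2$ for \emph{every} $x\in\Ban$, and the sequence $(\rank(a^k)\bmod 2)_k$ is identically $n\bmod 2$ for all elements. It is a trivially preserved invariant that distinguishes nothing, and no M\"obius-type inversion can extract the parities $k_i\bmod 2$ from it. (Worse, for a group element $c$ in normal form with transversal part supported on $A$, one has $\rank(c^m)=|A|$ for all $m\ge 1$, so even the full rank sequence is blind to the cycle structure.) The paper's actual argument is structural rather than invariant-based: after the reductions of the ``if'' direction it further replaces each pair of blocks $\{x,y\},\{x',y'\}$ by $\{x,x'\},\{y,y'\}$, reducing to \emph{unit} elements whose permutations have only odd cycles with at most one $j$-cycle for each $j\ne 1$. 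It then shows that any witnesses $g,h$ for $\coon$ must map the fixed-point sets to each other and that every non-transversal block of $g$ lies inside the fixed-point set (using that $\{xa^{-i},ya^{-i}\}$ must remain an upper $g$-block for all $i$, which is incompatible with odd cycle lengths and with distinct cycle lengths). Consequently $g$ restricts to a unit on the complement $Y$ of the fixed points, and $a_Yg_Y=g_Yb_Y$ becomes an equation of permutations forcing equal cycle types there --- contradicting $k_i\not\equiv l_i\pmod 2$ for some odd $i\ne 1$. You would need to supply an argument of this kind; the invariant route as sketched is a dead end.
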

\begin{proof}
	Suppose that $k_i\equiv l_i\pmod 2$ for each odd $i$. Because $\ctr\,\, \subseteq\,\, \coon$, and by Lemma \ref{l:gr-n-norm}, there exist partitions $a'\coon a, b'\coon b$, such that $a',b'$ are both group elements in $\cfn$-normal form with the same cycle type as $a,b$. 
	
	By repeated applications of the constructions from Lemmas \ref{l:evencyc} and \ref{l:2oddcyc}, we obtain partitions $a'' \coon a', b'' \coon b'$, such that $a',b'$ are both group elements in $\cfn$-normal form, and such the permutations corresponding to $a'',b''$ contain no even cycles and at most one $j$-cycle for each odd $j$. Moreover, $a''$ [$b''$] contains an odd $j$-cycle exactly if $k_j$ [$l_j$] is odd. As we assumed that $k_i\equiv l_i\pmod 2$ for each odd $i$, we see that $a''$ and $b''$ have the same cycle type. It follows that $a''\ctr b''$, thus $a''\coon b''$, and hence $a\coon b$, as required.
	
	Conversely, suppose that $a\coon b$. Arguing as in the first part of the proof, we may assume that $a,b$ are group elements in $\frn$-normal form that do not contain any even cycles and at most one $j$-cycle for each odd $j$. By Lemma \ref{l:n+gr}, we see that in $a$ and $b$, $\{x,y\}$ is a block if and only if $\{x',y'\}$ is a block.
	
	Let $a',b'$ be constructed from $a$ and $b$ by replacing all pairs of blocks of the form $\{x,y\},\{x',y'\}$ with blocks $\{x,x'\}, \{y,y'\}$. Note that $a'$ and $b'$ are units of $\Ban$. As our construction introduces an even number of $1$-cycles, it follows that $a'\coon a\coon b\coon b'$ by the first part of this proof. Moreover, the cycle types of $a$ and $a'$ [$b$ and $b'$] agree everywhere except in the first entry, where they agree up to parity. 
	
	Let $g,h\in \Ban$ witness $a'\coon b'$, and let $X_1 = \dom(g)$,  $X_2 = \codom^{\wedge}(g)$. As $g\in \Ban$, $|X_1| = |X_2|$. Therefore there exists $\omega\in \sym_n$ such that $X_2\omega = X_1$. Let $u = \lambda_\omega$, the unit corresponding to $\omega$. Setting $a'' = ua'u^{-1}, g' = ug, h' = hu^{-1}$, it is straightforward to check that $g',h'$ witness  $a''\coon b'$. Moreover, $a''$ has the same cycle type as $a'$, and $\dom(g) = \codom^{\wedge}(g)$.
	
	By abuse of notation, rename $a'',b',g',h'$ as $a,b,g,h$. It suffices to show that $a$ and $b$ have the same cycle type. Let $\alpha\in \sym_n$ be such that $a = \lambda_\alpha$. 
	
	Now let $\{x,y\}$ be a nontransversal upper block of $g$. Then $\{x\lambda_\alpha^{-1}, y\lambda_\alpha^{-1}\}$ is an upper block of $ag$. As $ag = gb$, this is only possible if it also an upper block of $g$, where we note that $b$ is a unit. Therefore we have that $\{x\lambda_\alpha^{-1}, y\lambda_\alpha^{-1}\}\subseteq \bn \setminus \dom(g)$. Repeating the argument, we see that $\{x\lambda_\alpha^{-k}, y\lambda_\alpha^{-k}\}$ is an upper block of $g$ for all $k\in \mathbb{N}$. It follows that the orbits $O_x$ of $x$ and $O_y$ of $y$ under $\lambda_\alpha$ have the same size. If $O_x = O_y$, then the blocks $\{x\lambda_\alpha^{-k}, y\lambda_\alpha^{-k}\}$ would cover this orbit. However, this is impossible, as $\lambda_\alpha$ has only odd cycles. Hence $O_x$ and $O_y$ are different orbits of the same size, which implies that $|O_x| = |O_y| = 1$. It follows that $\{x,x'\}$ is an $a$-block, whenever $x\not\in \dom(g)$. A dual argument shows that $\{x,x'\}$ is a $b$-block whenever $x\not\in \codom^{\wedge}(g) = \dom(g)$.
	
	By our construction, no block of $a,b,$ or $g$ intersects both $\dom(g)\,\cup \,\dom(g)'$ and its complement. Hence the restrictions $a_{\dom(g)},b_{\dom(g)},g_{\dom(g)}$ are in $\mathcal{B}_{\dom(g)}$, and we obtain 
	\[
	a_{\dom(g)}g_{\dom(g)}=g_{\dom(g)}b_{\dom(g)}.
	\] 
	Because $g_{\dom(g)}$ has only transversal blocks, it is a unit of $\mathcal{B}_{\dom(g)}$, and we obtain
	\[
	g_{\dom(g)}^{-1}a_{\dom(g)}g_{\dom(g)}=b_{\dom(g)}.
	\]
	Replacing these units with their corresponding permutations from $S_{\dom(g)}$, we see that $a_{\dom(g)}$ and $b_{\dom(g)}$ have the same cycle type. Because $a$ and $b$ agree outside of  $\dom(g)\cup \dom(g)'$, they also have identical cycle types, as required. The result follows.
\end{proof}

Since $\con\,\,=\,\,\coon$ in any semigroup that does not have a zero, we obtain the following result. The listed exceptional cases contain a zero and can be confirmed by direct calculation. (See \eqref{econc} for the definition of~$\con$.)

\begin{theorem}
	In $\Pan, \PBan$, and $\Ban$, $\coon\,\,=\,\,\con$, except for $\mathcal{P}_1$, $\mathcal{PB}_1$, $\mathcal{B}_2$, where $\con$ is equality. That is, in $\Pan$ and $\PBan$, $\con$ is the universal relation, except for $\mathcal{P}_1$, $\mathcal{PB}_1$, where $\con$ is equality.
	
	If $a,b\in \Ban$, $n\neq 2$, such that $a^{\ome+1}$ and $b^{\ome+1}$ have cycle types $(k_1,\ldots, k_n)$ and $(l_1,\ldots , l_n)$, then $a\con b$ if and only if $k_i\equiv l_i\pmod 2$ for each odd $i$. On $\mathcal{B}_2$, $\con$ is equality.
\end{theorem}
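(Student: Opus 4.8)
The plan is to derive this theorem from the already-established comparisons $\coon\,\,=\,\,\con$ in semigroups without zero, together with the explicit characterizations of $\coon$ in $\Pan$, $\PBan$, and $\Ban$ proved in Theorems~\ref{tcoonu}, \ref{tcoonu2}, and \ref{t:oBn}, respectively. First I would recall that for any semigroup $S$ without a zero, $\coon\,\,=\,\,\con$ (noted in \S\ref{subcoon}, following the definitions in \S\ref{Sec:int}). Then I would identify which of the monoids $\Pan$, $\PBan$, $\Ban$ actually fail to have a zero: for $n\geq 2$ the partition monoid $\Pan$ has no zero, and similarly $\PBan$ has no zero for $n\geq 2$, while $\mathcal{P}_1$ and $\mathcal{PB}_1$ are two-element monoids $\{1,z\}$ with $z$ a zero; for the Brauer monoid, $\Ban$ has no zero precisely when $n\neq 2$, since $\mathcal{B}_2$ is the two-element monoid with a zero. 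So outside these three small exceptional cases, the general equality $\coon\,\,=\,\,\con$ applies verbatim, and the stated formulas for $\con$ follow immediately by substituting the known descriptions of $\coon$: universal on $\Pan$ (Theorem~\ref{tcoonu}), universal on $\PBan$ (Theorem~\ref{tcoonu2}), and the mod-$2$ cycle-type condition on $\Ban$ (Theorem~\ref{t:oBn}).

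The remaining work is to handle the three exceptional monoids $\mathcal{P}_1$, $\mathcal{PB}_1$, $\mathcal{B}_2$ directly. Each is a monoid with a zero, so $\con$ is genuinely different from the universal relation $\coon$. For $\mathcal{P}_1=\{1,z\}$ (the one-point partition monoid), one checks from the definition \eqref{econc} that $\pp(1)=\{1\}$ and $\pp(z)=\{1\}$ — here one must verify using the condition $m\cdot 1\neq 0$ for all $m$, which forces the conjugator for $1$ to preserve non-nilpotency — and then that $1\con z$ would require $1g=gz$ for $g\in\pp(1)=\{1\}$, i.e. $1=z$, which is false; hence $\con$ is equality on $\mathcal{P}_1$. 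The same two-element computation works for $\mathcal{PB}_1$, which is literally the same two-element monoid. For $\mathcal{B}_2$, which is also the two-element monoid with a zero (its non-identity idempotent squares to itself and absorbs, since every block of a Brauer partition on $\{1,2\}$ of rank $0$ multiplies to the rank-$0$ partition), the identical argument gives that $\con$ is equality. These are the "direct calculation" cases the theorem statement refers to.

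Finally I would assemble the pieces: state that for $n\geq 2$ (for $\Pan$ and $\PBan$) and $n\neq 2$ (for $\Ban$), $\con\,\,=\,\,\coon$ because there is no zero, and then quote Theorems~\ref{tcoonu}, \ref{tcoonu2}, \ref{t:oBn}; for the exceptional $\mathcal{P}_1$, $\mathcal{PB}_1$, $\mathcal{B}_2$ invoke the two-element-monoid computation. The cycle-type formulation for $\Ban$ requires a small remark: we should note that when $n\neq 2$ the restriction $i$ ranging over odd indices in the congruence $k_i\equiv l_i\pmod 2$ is exactly the condition appearing in Theorem~\ref{t:oBn}, so no translation is needed.

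I expect the main (though modest) obstacle to be the careful bookkeeping of which small monoids have a zero and the verification that $\mathcal{B}_2$ really is the two-element monoid with zero rather than something larger — one has to confirm that $\mathcal{B}_2$ has exactly two elements (the identity $\lambda_{\mathrm{id}}$ and the partition $\{\{1,2\},\{1',2'\}\}$, the latter being a zero), which is a finite check but easy to get wrong if one forgets that Brauer partitions on two points are limited. Everything else is a direct appeal to results already in the paper, so the proof should be short: a couple of sentences for the generic case and a couple more for the three exceptions.
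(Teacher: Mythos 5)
Your overall strategy is exactly the paper's: invoke the general fact that $\con\,=\,\coon$ in any semigroup without zero, quote Theorems~\ref{tcoonu}, \ref{tcoonu2}, and \ref{t:oBn} for the generic cases, and dispose of the zero-containing exceptions $\mathcal{P}_1$, $\mathcal{PB}_1$, $\mathcal{B}_2$ by direct calculation. The treatment of $\mathcal{P}_1=\mathcal{PB}_1=\{1,z\}$ is correct, and the bookkeeping of which monoids lack a zero is right.

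There is, however, one concrete error: $\mathcal{B}_2$ is \emph{not} the two-element monoid with zero. The Brauer partitions of $\{1,2,1',2'\}$ into blocks of size exactly $2$ are three in number: the identity $\{\{1,1'\},\{2,2'\}\}$, the transposition $\{\{1,2'\},\{2,1'\}\}$, and the rank-$0$ element $e=\{\{1,2\},\{1',2'\}\}$. You explicitly list only two of these and flag the count as the delicate point, but the element you omit (the transposition $\sigma$) is precisely the one that makes $\mathcal{B}_2$ nontrivial: $\mathcal{B}_2\cong\mathbb{Z}_2\cup\{0\}$. Your two-element computation therefore verifies the wrong monoid. The conclusion survives, but it must be checked on the actual three-element monoid: one computes $\mathbb{P}(1)=\mathbb{P}(\sigma)=\{1,\sigma\}$ (since $e$ kills everything) and $\mathbb{P}(e)=\{1\}$, and then sees that $1g=g\sigma$ has no solution $g\in\{1,\sigma\}$, that $1g=ge=e$ fails because $e\notin\mathbb{P}(1)$, and that $\sigma g=ge=e$ fails because $\sigma g\in\{1,\sigma\}$; hence $\con$ is equality on $\mathcal{B}_2$. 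Note also that this case genuinely must be excluded from the cycle-type formula: the identity of $\mathcal{B}_2$ has cycle type $(2,0)$ and $e^{\omega+1}=e$ has cycle type $(0,0)$, so the mod-$2$ condition on odd indices would wrongly declare them $\con$-related. With the $\mathcal{B}_2$ computation repaired, your proof is complete and coincides with the paper's.
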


\section{Conjugacy growth in polycyclic monoids}
\label{Sec:polycyclic}

The study of conjugacy in polycyclic monoids was initiated in \cite{AKKM18} by some of the authors of this article. Polycyclic monoids are inverse monoids with zero so $\coon$ is the universal relation and $\cin\ =\ \cfn$. In \cite{AKKM18} the notions of $\cpn$ \eqref{ecp}, and $\con$ \eqref{econc} were characterized. In this section we study $\cfn$ \eqref{e1dcon2}, especially in the following context.

The \emph{conjugacy growth function} of a finitely generated group $G$ counts the number of conjugacy classes intersecting the ball of radius $n$ in the Cayley graph of $G$ centered at the identity, for all $n\geq 0$. It has been studied for free groups \cite{R04,C05,R10}, hyperbolic groups \cite{CK02,CK04}, solvable groups \cite{BdC10}, linear groups in \cite{BCLM13}, acylindrically hyperbolic groups \cite{HO13,AC16h}, certain branch groups \cite{F14}, in the higher Heisenberg groups in \cite{Evetts23}, and several other classes of groups \cite{GS_10}.

Given a notion of conjugacy for monoids that is an equivalence relation, the conjugacy growth function for groups can be extended to finitely presented monoids. In this section we will present the conjugacy growth functions of the polycyclic monoids for the conjugacies $\cfn$, $\con$, and $\cpns$.

In recent years, the \emph{conjugacy growth series} (the generating series associated with the conjugacy growth functions) have been computed for several classes of groups based on the description of sets of minimal length representatives from all conjugacy classes \cite{CHHR14,CH14,AC16h,CEH20,CHM23}. The paper \cite{E19} supports the conjecture that virtually abelian groups are the only ones with rational conjugacy series. Historically, one of the initial motivations for counting conjugacy classes of a given length came from counting closed geodesics of bounded length in compact Riemannian manifolds \cite{M69}.

We first need some preliminaries.

\subsection{Characterization of the conjugacy relations in $\Poln$}

Let $n\geq 2$. Consider a set $A_n=\{p_1,\ldots, p_n\}$, and denote by
$A_n^{-1}$ a disjoint copy $\{p_1^{-1},\ldots, p_n^{-1}\}$. Let $\Sigma=A_n\cup A_n^{-1}$. The \emph{polycyclic monoid} $\Poln$ is the monoid with zero defined by the monoid presentation $\Poln=\lan \Sigma_0 \mid p^{-1}_ip_i=1\text{ and }p^{-1}_ip_j=0, i\ne j\}\ran$,
where $\Sigma_0=\Sigma\cup\{0\}$ and $0$ is a new symbol not in $\Sigma$, defined to be the zero of the monoid.

Given $x\in \Sigma$, we define $x^{-1}$ to be $p_i^{-1}$ if $x=p_i\in A_n$, and to be $p_i$ if $x=p_i^{-1}\in A_n^{-1}$.
We define $1^{-1}=1$ and $(xw)^{-1}=w^{-1}x^{-1}$, for all $x\in A_n$ and $w\in A_n^*$. It is well known (e.g., \cite[\S~9.3]{Lawson2009}) that every nonzero
element of $\Poln$ has a unique representation of the form $yx^{-1}$ with $y,x\in A_n^*$. Whenever we write $a=yx^{-1}$, it will be understood that $x,y\in A_n^*$. We identify nonzero elements of $\Poln$ with words of this form. The explicit multiplication is provided by the following lemma. We say that words $x,v\in A_n^*$ are \emph{prefix comparable} if one is a prefix of the other.

\begin{lemma}[\!\!\textbf{\cite[Lem.~3.2]{AKKM18}}]
\label{lbas}
Let $yx^{-1}$ and $vu^{-1}$ be nonzero elements of $\Poln$. Then:
\begin{itemize}
	\item[\textup{(1)}] $yx^{-1}\cdot vu^{-1}\neq 0$ if and only if $x$ and $v$ are
		prefix comparable;
	\item[\textup{(2)}] if $yx^{-1}\cdot vu^{-1}\neq 0$, then
		\[
		y x^{-1} \cdot v u^{-1} =
		\begin{cases}
			\ yzu^{-1} & \text{if } v=xz\,, \\
			\ y(uz)^{-1} & \text{if } x=vz\,.
		\end{cases}
		\]
	\item[\rm(3)] $y=v$ in $\Poln$ if and only if $y=v$ in $A_n^*$, and $x^{-1}=u^{-1}$ in $\Poln$ if and only if $x=u$ in $A_n^*$.
	\end{itemize}
\end{lemma}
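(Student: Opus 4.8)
\textbf{Proof plan for Lemma~\ref{lbas}.} The statement to prove is the multiplication lemma for $\Poln$: the three clauses describing when a product $yx^{-1}\cdot vu^{-1}$ is nonzero, what its value is, and the uniqueness of the $yx^{-1}$ normal form. The plan is to work entirely with the defining relations $p_i^{-1}p_i = 1$, $p_i^{-1}p_j = 0$ ($i\neq j$), pushing everything through a normal-form/confluence argument. First I would establish that every nonzero element has a representative $yx^{-1}$ with $y,x\in A_n^*$: starting from an arbitrary word in $\Sigma_0^*$, repeatedly apply the relations to cancel any occurrence of $p_i^{-1}p_i$ and to zero out any occurrence $p_i^{-1}p_j$; the only words surviving with no factor of the form $p^{-1}p$ are exactly those of the shape $yx^{-1}$ (a block of "positive" letters followed by a block of "negative" letters). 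This is a standard rewriting-to-normal-form argument and I would note that the rewriting system $\{p_i^{-1}p_i\to 1,\ p_i^{-1}p_j\to 0\}$ is terminating (length-decreasing) and confluent (the only overlaps are $p_i^{-1}p_ip_k^{-1}$-type, resolved trivially), so the normal form is unique — which is exactly clause (3), and incidentally gives the "it is well known" claim cited before the lemma.

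Next, for clauses (1) and (2), I would compute $yx^{-1}\cdot vu^{-1}$ directly by reducing the word $y\,x^{-1}\,v\,u^{-1}$. The reduction happens in the "middle": we must simplify $x^{-1}v$ where $x = x_1\cdots x_k$ and $v = v_1\cdots v_\ell$ are positive words, so $x^{-1}v = x_k^{-1}\cdots x_1^{-1}v_1\cdots v_\ell$. Cancelling from the inside out, at each step we meet a factor $x_j^{-1}v_j$: if $x_j = v_j$ it cancels to $1$ and we proceed; if $x_j\neq v_j$ the whole word becomes $0$. Hence the product is nonzero precisely when, for all $j$ up to $\min(k,\ell)$, $x_j = v_j$ — i.e.\ when $x$ and $v$ are prefix comparable — which is clause (1). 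When it is nonzero, the surviving middle is $z$ where $v = xz$ (if $\ell\geq k$) or $(z)^{-1}$ where $x = vz$ (if $k\geq\ell$), giving $yx^{-1}vu^{-1} = y z u^{-1}$ or $y (uz)^{-1}$ respectively after re-collecting the positive and negative blocks; this is clause (2). I would also check the edge cases where one of $x,v$ is empty (the product is already in normal form) and where $x = v$ exactly (then $z$ is empty and the product is $yu^{-1}$, consistent with both formulas).

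I expect the main obstacle to be presenting the middle-cancellation cleanly: one needs to argue that the cancellation $x^{-1}v \to z$ or $z^{-1}$ (or $0$) is \emph{forced} and does not depend on the order in which relations are applied, and that no further reduction is possible afterwards (the positive block $yz$ or $y$ and the negative block $u^{-1}$ or $(uz)^{-1}$ together form a normal-form word, since a positive block followed by a negative block has no $p^{-1}p$ factor straddling the boundary unless the blocks are nonempty on both sides — which they are, by construction). The confluence observation from the first paragraph is what makes this rigorous without case-chasing the order of rewrites. Everything else is bookkeeping: splitting on whether $|x|\leq|v|$ or $|x|\geq|v|$, and verifying the two displayed formulas match the reduced word. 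I would keep the write-up short by citing termination and confluence of the length-reducing rewriting system once and then doing the middle computation in a single paragraph.
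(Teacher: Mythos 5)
Your proposal is correct. Note that the paper itself gives no proof of this lemma: it is imported verbatim from \cite[Lem.~3.2]{AKKM18}, so there is nothing in-paper to compare against. Your rewriting-system argument is the standard route and it works: the rules $p_i^{-1}p_i\to 1$, $p_i^{-1}p_j\to 0$ are length-decreasing, the irreducible nonzero words are exactly those of the form $yx^{-1}$, and uniqueness of normal forms gives clause (3), while the inside-out cancellation of the middle block $x^{-1}v$ gives clauses (1) and (2). One small inaccuracy: there are in fact \emph{no} overlaps between left-hand sides (each has the shape (negative letter)(positive letter), and the second letter of one can never be the first letter of another), so the system is orthogonal and confluence is even more immediate than your ``$p_i^{-1}p_ip_k^{-1}$-type overlap'' remark suggests; this only strengthens your argument. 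The edge cases you flag ($x$ or $v$ empty, $x=v$) are handled correctly, and the observation that a positive block followed by a negative block is automatically irreducible closes the computation.
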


A word $w\in \Poln$ is said to be \emph{cyclically reduced} if $w=0$ or $w=yx^{-1}$, where $x$ and $y$ have no common prefix other than $1$.
Every nonzero element of $\Poln$ can be written in the form
$ryx^{-1}r^{-1}$,  with $r\in A_n^*$ and $yx^{-1}$ a cyclically reduced word. From any $a\in \Poln$, we compute a cyclically reduced word $\widetilde{a}$ as follows: if $a=0$, we let $\widetilde{a}$ equal $0$; otherwise, $a=ryx^{-1}r^{-1}$ as above, so we let $\widetilde{a}$ be the (possibly empty) cyclically reduced word $yx^{-1}$.

We now characterize conjugacy $\cfn$ in $\Poln$. Since $\Poln$ is an inverse monoid, we have $\cfn=\cin$ by Proposition \ref{Prp:inverse},
that is, for all $a,b\in \Poln$, $a\cfn b$ if and only if there exists $g\in \Poln$ such that $g^{-1}ag=b$ and $gbg^{-1}=a$.

\begin{theorem}\label{Thm:cfn_Polycyclic}
	Let $a,b\in \Poln$. Then $a\cfn b$ if and only if $a=b=0$ or $\widetilde{a}=\widetilde{b}$.
\end{theorem}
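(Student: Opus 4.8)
The plan is to prove both implications directly, using the normal form $a = r\,\widetilde a\,r^{-1}$ of nonzero elements of $\Poln$ and the fact (Proposition~\ref{Prp:inverse}) that in the inverse monoid $\Poln$ we have $\cfn\,=\,\cin$, so it suffices to produce a single conjugator $g$ with $g^{-1}ag=b$ and $gbg^{-1}=a$. I would first dispose of the zero case: since $[0]_\frn=\{0\}$ in any semigroup with zero (Remark~\ref{Rem:n_conj_class_01}), $a\cfn b$ with one of them zero forces $a=b=0$, which matches the stated condition. So from now on assume $a,b\neq 0$.

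For the ``if'' direction, suppose $\widetilde a=\widetilde b=w$ where $w=yx^{-1}$ is cyclically reduced. Write $a = r\,w\,r^{-1}$ and $b = s\,w\,s^{-1}$ with $r,s\in A_n^*$. The natural candidate conjugator is $g = r s^{-1}$. Using Lemma~\ref{lbas} for multiplication of elements in normal form, I would compute $g^{-1}a g = s r^{-1}\cdot r w r^{-1}\cdot r s^{-1}$. Here $r^{-1}r = 1$ (the word $r$ cancels with itself by the relations $p_i^{-1}p_i=1$), so this collapses to $s\,w\,s^{-1} = b$; symmetrically $g b g^{-1} = r\,w\,r^{-1} = a$. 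One has to be a little careful that the products are genuinely nonzero at each stage and that the cancellations are exactly the ones licensed by Lemma~\ref{lbas}(2) (the case $v=xz$ with $z=1$, i.e.\ $x=v$), but this is routine. Hence $a\cin b$, so $a\cfn b$.

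For the ``only if'' direction, suppose $a\cfn b$ with $a,b\neq 0$, so there is $g\in\Poln$ with $g^{-1}ag=b$ and $gbg^{-1}=a$; note $g\neq 0$ since $b\neq 0$. Write $a=r\widetilde a r^{-1}$, $b=s\widetilde b s^{-1}$, and $g=yx^{-1}$ in normal form. I would expand $g^{-1}ag = x y^{-1}\cdot r\widetilde a r^{-1}\cdot y x^{-1}$ using Lemma~\ref{lbas}, tracking the prefix-comparability conditions that must hold for the product to be nonzero (it is nonzero since it equals $b\neq 0$). The point is that conjugation by $g$ can only change the ``envelope'' word $r$ to some other envelope word $s$ while leaving the cyclically reduced core untouched: the reductions in Lemma~\ref{lbas}(2) amount to appending/deleting common prefixes, which is exactly the operation relating $r\widetilde a r^{-1}$ and $s\widetilde a s^{-1}$, and a cyclically reduced word (no common prefix between its two halves) is invariant under taking $\widetilde{\cdot}$. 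So from $g^{-1}ag=b$ I would conclude $\widetilde a = \widetilde{g^{-1}ag} = \widetilde b$.

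The step I expect to be the main obstacle is the careful bookkeeping in the ``only if'' direction: showing that for any nonzero $g=yx^{-1}$, whenever $g^{-1}ag\neq 0$ one has $\widetilde{g^{-1}ag}=\widetilde a$. This requires a short lemma (or an inductive argument on word length) establishing that multiplying a normal-form element on the left by $x$ and on the right by $x^{-1}$, then reducing via Lemma~\ref{lbas}, either annihilates it or produces an element with the same cyclically reduced word; the prefix-comparability case analysis has several subcases (depending on how $x$ compares with the left half of $a$'s envelope, and how $y$ compares after the first cancellation). I would isolate this as a preparatory claim, prove it by considering the two cases of Lemma~\ref{lbas}(2) applied twice, and then the main theorem follows by applying the claim with $g$ and separately noting that the symmetric condition $gbg^{-1}=a$ is automatically consistent. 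Everything else — the zero case, the explicit conjugator in the converse — is straightforward.
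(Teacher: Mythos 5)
Your treatment of the zero case and your ``if'' direction are fine and essentially match the paper: for $a=rwr^{-1}$ and $b=sws^{-1}$ with $w=\widetilde a=\widetilde b$ cyclically reduced, the conjugator $g=rs^{-1}$ works exactly as you compute. The gap is in the ``only if'' direction. The preparatory claim you propose to isolate --- that for nonzero $g$, $g^{-1}ag\neq 0$ implies $\widetilde{g^{-1}ag}=\widetilde a$ --- is false. Take $n=2$, $a=p_1p_2$ and $g=p_1$. Then $g^{-1}ag=p_1^{-1}\cdot p_1p_2\cdot p_1=p_2p_1\neq 0$, yet $\widetilde{p_2p_1}=p_2p_1\neq p_1p_2=\widetilde{p_1p_2}$. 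Conjugating by a single element can cyclically rotate the core word; this is precisely why $\cpn$ restricted to $A_n^*$ is cyclic permutation and is strictly larger than $\cfn$ there, as the paper observes after Corollary~\ref{Cor:cgl_con_Polycyclic}. So your remark that ``the symmetric condition $gbg^{-1}=a$ is automatically consistent'' is exactly where the argument breaks: that second equation is not a formality but the essential constraint. In the example, $gbg^{-1}=p_1\cdot p_2p_1\cdot p_1^{-1}=(p_1p_2p_1)p_1^{-1}\neq p_1p_2$, and this failure is the only reason $p_1p_2$ and $p_2p_1$ are not $\frn$-conjugate.

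To repair the direction you must run both equations simultaneously, which is what the paper does. Writing $g=g_+g_-^{-1}$, it applies the prefix-comparability analysis of Lemma~\ref{lbas} to $g^{-1}ag=b$, obtaining four cases for how $g_+$ meshes with $a_+$ and $a_-$; it then substitutes the second equation $gbg^{-1}=a$ into each case, which forces the parameters $r,s$ to collapse so that every case reduces to $a=g_+s(g_+r)^{-1}$ and $b=g_-s(g_-r)^{-1}$ for some $r,s\in A_n^*$, whence $\widetilde a=\widetilde{sr^{-1}}=\widetilde b$. No one-sided lemma about $g^{-1}ag$ alone can substitute for this interplay.
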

\begin{proof}
	Since $[0]_{\frn}=\{0\}$, it remains to establish criteria for nonzero $a,b\in \Poln$ to be $\frn$-conjugate. In the calculations below, it will be convenient to write $a=yx^{-1}$ as $a=a_+a_-^{-1}$.
	
	Let $a=a_+a_-^{-1},b=b_+b_-^{-1}\in \Poln$ with $a\cfn b$. Then there exists $g=g_+g_-^{-1}\in \Poln$ such that
	\begin{equation}\label{eqiii}
		g_-g_+^{-1}a_+a_-^{-1}g_+g_-^{-1}=b_+b_-^{-1}\quad \textup{and}\quad g_+g_-^{-1}b_+b_-^{-1}g_-g_+^{-1}=a_+a_-^{-1}.
	\end{equation}
	Since $b_+b_-^{-1}\ne 0$, it follows by Lemma~\ref{lbas} that $a_-$ and $g_+$ are prefix-comparable, $g_+$ and $a_+$ are also prefix comparable, and
	\[
	g_-g_+^{-1}a_+a_-^{-1}g_+g_-^{-1}=
	\begin{cases}
	g_-g_+^{-1}a_+rg_-^{-1} & \textup{if  }g_+=a_-r,\,=
	\begin{cases}
			g_-sg_-^{-1} & \textup{if  }a_+r=g_+s\\
			g_-(g_-s)^{-1} & \textup{if  }g_+=a_+r
	\end{cases}\\
	g_-g_+^{-1}a_+(g_-r)^{-1} & \textup{if  }a_-=g_+r,\,=
	\begin{cases}
			g_-(g_-rs)^{-1} & \textup{if  }g_+=a_+s\\
			g_-s(g_-r)^{-1} & \textup{if  }a_+=g_+s,
	\end{cases}
	\end{cases}
	\]
	where $r,s\in A_n^*$. By these calculations, the first equality in \eqref{eqiii}, and Lemma~\ref{lbas}(4), we obtain:
	\begin{align*}
		g_-s=b_+\textup{ and } g_-=b_-\quad \textup{ if }&\quad a_+r=g_+s\textup{ and }g_+=a_-r,\\
		g_-=b_+ \textup{ and }g_-s=b_- \quad\textup{ if }&\quad g_+=a_+rs\textup{ and }g_+=a_-r,\\
		g_-=b_+\textup{ and }g_-rs=b_- \quad\textup{ if }&\quad g_+=a_+s\textup{ and }a_-=g_+r,\\
		g_-s=b_+\textup{ and }g_-r=b_- \quad\textup{ if }&\quad a_+=g_+s\textup{ and }a_-=g_+r.
	\end{align*}
	Thus we have four cases to consider, and in each case we can draw conclusions using the second equality in \eqref{eqiii}
	and Lemma~\ref{lbas}(4).
	\vskip 1mm
	\noindent\textbf{Case 1.}  $g_-s=b_+$, $g_-=b_-$, $a_+r=g_+s$, $g_+=a_-r$.
	\vskip 1mm
	Then $a_+a_-^{-1}=g_+g_-^{-1}b_+b_-^{-1}g_-g_+^{-1}=g_+sg_+^{-1}$, so $r=1$, and hence $a=g_+sg_+^{-1}$ and $b=g_-sg_-^{-1}$.
	\vskip 1mm
	\noindent\textbf{Case 2.} $g_-=b_+$, $g_-s=b_-$, $g_+=a_+rs$, $g_+=a_-r$.
	\vskip 1mm
	Then $a_+a_-^{-1}=g_+g_-^{-1}b_+b_-^{-1}g_-g_+^{-1}=g_+(g_+s)^{-1}$, so $s=r=1$, and hence $a=g_+g_+^{-1}$ and $b=g_-g_-^{-1}$.
	\vskip 1mm
	\noindent\textbf{Case 3.} $g_-=b_+$, $g_-rs=b_-$, $g_+=a_+s$, $a_-=g_+r$.
	\vskip 1mm
	Then $a_+a_-^{-1}=g_+g_-^{-1}b_+b_-^{-1}g_-g_+^{-1}=g_+(g_+rs)^{-1}$, so $s=1$, and hence $a=g_+(g_+r)^{-1}$ and $b=g_-(g_-r)^{-1}$.
	\vskip 1mm
	\noindent\textbf{Case 4.} $g_-s=b_+$, $g_-r=b_-$, $a_+=g_+s$, $a_-=g_+r$.
	\vskip 1mm
	Then $a_+a_-^{-1}=g_+g_-^{-1}b_+b_-^{-1}g_-g_+^{-1}=g_+s(g_+r)^{-1}$, and hence $a=g_+s(g_+r)^{-1}$ and $b=g_-s(g_-r)^{-1}$.
	\vskip 1mm
	Note that the forms of $a$ and $b$ deduced in Cases 1--3 are special cases of the forms deduced in Case~4. Therefore, if $a\cfn b$, then $a=g_+s(g_+r)^{-1}$ and $b=g_-s(g_-r)^{-1}$ for some $g_+,g_-,r,s\in A_n^*$. Conversely,	if $a=g_+s(g_+r)^{-1}$ and $b=g_-s(g_-r)^{-1}$ for some $g_+,g_-,r,s\in A_n^*$, then it is straightforward
	to verify $g^{-1}ag=b$ and $gbg^{-1}=a$ for $g=g_+g_-$. We have proved the result.
\end{proof}

Note that for any representative $a\in \Poln$ we have $a\cfn \widetilde{a}$. This gives the following corollary.

\begin{cor}\label{Cor:cgl_cfn_Polycyclic}
	The set of cyclically reduced words is a set of representatives of minimal length of the partition $\Poln/\!\!\cfn$.
\end{cor}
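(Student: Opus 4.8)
\textbf{Proof proposal for Corollary~\ref{Cor:cgl_cfn_Polycyclic}.}
The plan is to read off the statement directly from Theorem~\ref{Thm:cfn_Polycyclic} together with the basic facts about the cyclic reduction operation $a\mapsto\widetilde{a}$. First I would verify that the set of cyclically reduced words is indeed a transversal of the partition $\Poln/\!\!\cfn$: by Theorem~\ref{Thm:cfn_Polycyclic}, for nonzero $a,b\in\Poln$ we have $a\cfn b$ if and only if $\widetilde a=\widetilde b$, and the zero forms its own class with $\widetilde 0=0$. Since every element $a$ satisfies $a\cfn\widetilde a$ (because $a=ryx^{-1}r^{-1}$ and $\widetilde a=yx^{-1}$, and one checks $g^{-1}ag=\widetilde a$, $g\widetilde a g^{-1}=a$ with $g=r^{-1}$ using Lemma~\ref{lbas}), each $\cfn$-class contains at least one cyclically reduced word, and by the ``only if'' direction it contains exactly one. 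So the cyclically reduced words form a complete and irredundant set of class representatives.

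Next I would argue the minimality-of-length claim. Here one needs a suitable notion of length on $\Poln$: for a nonzero element written in its normal form $a=yx^{-1}$ with $x,y\in A_n^*$, set $|a|=|y|+|x|$ (the length of the geodesic representative in the generators $\Sigma_0$), and $|0|$ is the length of a shortest word in $\Sigma_0^*$ equal to $0$. The point is that cyclic reduction never increases length: if $a=ryx^{-1}r^{-1}$ with $yx^{-1}$ cyclically reduced, then the normal form of $a$ is exactly $(ry)(rx)^{-1}$, so $|a|=|ry|+|rx|=2|r|+|y|+|x|\geq |y|+|x|=|\widetilde a|$, with equality precisely when $r=1$, i.e. when $a$ is already cyclically reduced. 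Hence within each $\cfn$-class the unique cyclically reduced representative attains the minimum length, and it is the only element of that class of minimal length when the class is nontrivial (and trivially so when the class is a singleton). This establishes that the cyclically reduced words are a set of minimal-length representatives.

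The step I expect to require the most care is pinning down the length function and confirming that the normal form $yx^{-1}$ really is geodesic, i.e. that $|y|+|x|$ equals the word length of $a$ with respect to $\Sigma_0$; this is where one must invoke the uniqueness of the normal form (Lemma~\ref{lbas}(3)) and a short argument that no shorter word in $\Sigma_0^*$ can represent $a$, using the rewriting rules $p_i^{-1}p_i=1$, $p_i^{-1}p_j=0$. Once that is in place, the inequality $|a|\geq|\widetilde a|$ and the case of equality are immediate from the decomposition $a=ryx^{-1}r^{-1}$. Everything else is a direct translation of Theorem~\ref{Thm:cfn_Polycyclic}.
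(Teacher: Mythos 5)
Your proposal is correct and follows the same route as the paper, which derives the corollary directly from Theorem~\ref{Thm:cfn_Polycyclic} together with the observation that $a\cfn\widetilde{a}$ for every $a$. The extra care you take with the length function (that the normal form $yx^{-1}$ is geodesic since the rewriting rules are length-non-increasing, so $|a|=2|r|+|y|+|x|\geq|\widetilde a|$) is a detail the paper leaves implicit, and your treatment of it is sound.
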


For a nonzero representative $a=yx^{-1}\in \Poln$, we denote by $\tla$ the representative word of $x^{-1}y$ in $\Poln$. We also set $\rho (0)=0$. Note that $\tla\in A_n^*\cup (A_n^{-1})^*\cup\{0\}$, for any representative $a\in \Poln$. Also note that $\tla=\widetilde{a}$ if and only if $\widetilde{a}\in  A_n^*\cup (A_n^{-1})^*\cup\{0\}$.

Let us recall the characterizations of $\con$ and $\cpn$ from  \cite{AKKM18}.

\begin{lemma}[\!\!\textbf{\cite[Thm.~3.9]{AKKM18}}]
	\label{p56}
	Let $a,b\in \Poln$. Then $a\con b$ if and only if one of the following holds:
	\begin{itemize}
		\item[\textup{(a)}] $a=b=0$;
		\item[\textup{(b)}] $\widetilde{a}=\widetilde{b}$; or
		\item[\textup{(c)}] $\widetilde{a}, \widetilde{b}\in (A_n^{-1})^*$ and $\widetilde{a}\cpn\widetilde{b}$ in the free monoid $(A_n^{-1})^*$.
	\end{itemize}
\end{lemma}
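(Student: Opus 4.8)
\emph{Proof plan.} We may assume $a,b\neq 0$, since $\pp(0)=\{1\}$ forces $0\con b$ to mean $0=0\cdot 1=1\cdot b=b$, i.e.\ case (a). For nonzero elements, the statement reduces at once to the case of cyclically reduced words: as $\widetilde{\widetilde{a}}=\widetilde{a}$, Theorem~\ref{Thm:cfn_Polycyclic} gives $a\cfn\widetilde{a}$, and since $\cfn\subseteq\con$ and $\con$ is an equivalence relation, $a\con b\iff\widetilde{a}\con\widetilde{b}$; as conditions (b) and (c) only involve $\widetilde{a},\widetilde{b}$, it suffices to prove that for cyclically reduced nonzero $a=yx^{-1}$, $b=vu^{-1}$ one has $a\con b$ iff $a=b$, or $y=v=1$ and $x^{-1}\cpn u^{-1}$ in $(A_n^{-1})^*$. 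Beyond Lemma~\ref{lbas}, the only extra ingredient is a description of $\pp$: for nonzero $a=yx^{-1}$ and $g=g_+g_-^{-1}\neq 1$, I claim $g\in\pp(a)$ iff $g_+$ is a prefix of $x$. The nontrivial implication comes from testing $g$ against the idempotents $m_w=(yw)(yw)^{-1}$ (with $m_w a=(yw)(xw)^{-1}\neq 0$): if $g_+$ is not a prefix of $x$, then, using $n\geq 2$, one produces $w$ with $xw$ and $g_+$ prefix incomparable, so $m_w a g=0$, contradicting $g\in\pp(a)$.

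\emph{Sufficiency.} If $a=b$ then $a\con b$ trivially. If $y=v=1$ and $x^{-1}\cpn u^{-1}$ in $(A_n^{-1})^*$, write $x^{-1}=pq$, $u^{-1}=qp$ with $p,q\in(A_n^{-1})^*$. Since $(A_n^{-1})^*\hookrightarrow\Poln$ is a monoid embedding, we still have $a=pq$ and $b=qp$ in $\Poln$. Put $g=p$, $h=q$. Then $ag=(pq)p=p(qp)=gb$ and $bh=(qp)q=q(pq)=ha$ by associativity, and since $p$ and $q$ (as elements of $\Poln$) have first component $1$, which is a prefix of $x$ and of $u$, we get $g\in\pp(a)$ and $h\in\pp(b)$. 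Hence $a\con b$.

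\emph{Necessity.} Let $g\in\pp(a)$, $h\in\pp(b)$ realize $ag=gb$, $bh=ha$. If $g=1$ or $h=1$ then $a=b$, so assume $g=g_+g_-^{-1}\neq 1$, $h=h_+h_-^{-1}\neq 1$, with $g_+$ a prefix of $x$ and $h_+$ a prefix of $u$. Writing $x=g_+x'$, Lemma~\ref{lbas} gives $ag=y(g_-x')^{-1}\neq 0$, hence $gb=ag\neq 0$, so $g_-$ and $v$ are prefix comparable. If $g_-$ is a prefix of $v$, computing $gb$ and matching the two canonical forms forces $g_+$ to be a common prefix of $x$ and $y$, hence $g_+=1$ by cyclic reducedness, and then $g_-$ a common prefix of $u$ and $v$, hence $g_-=1$; thus $a=b$. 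Otherwise $v$ is a proper prefix of $g_-$, say $g_-=vg'$ with $g'\neq 1$; then $gb=g_+(ug')^{-1}$, and matching with $ag$ gives $y=g_+$ and $g_-x'=ug'$. As $g_+$ is a prefix of $x$ and $a$ is cyclically reduced, $y=g_+=1$, so $a=x^{-1}\in(A_n^{-1})^*$ and $x'=x$. The symmetric analysis of $bh=ha$ yields either $a=b$ again, or $v=h_+=1$, so $b=u^{-1}\in(A_n^{-1})^*$; in the latter case $g_-=g'$ and the relation $g_-x'=ug'$ becomes $g'x=ug'$ in $A_n^*$ with $g'\neq 1$. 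Finally, by the Lyndon--Schützenberger theorem (or by induction on $|g'|$: if $|g'|\le|u|$ then $g'$ is a prefix of $u$, say $u=g'u_1$, and cancellation gives $x=u_1g'$; if $|g'|>|u|$, then $u$ is a prefix of $g'$ and $|g'|$ decreases), $x$ and $u$ are conjugate in $A_n^*$, say $x=sr$, $u=rs$; hence $x^{-1}=r^{-1}s^{-1}$, $u^{-1}=s^{-1}r^{-1}$, i.e.\ $x^{-1}\cpn u^{-1}$ in $(A_n^{-1})^*$, which is case (c). The degenerate sub-cases (one of $x$, $u$, $x'$, $v'$ empty) all collapse into $a=b$ and are checked directly.

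The hard part here is not any isolated deep fact --- Lemma~\ref{lbas}, the computation of $\pp$, and Lyndon--Schützenberger are all routine --- but the bookkeeping in the necessity direction: keeping straight which word is a prefix of which, invoking cyclic reducedness at exactly the right places so that the generic situation collapses to $a=b$, and systematically disposing of the degenerate cases where some word in play is empty.
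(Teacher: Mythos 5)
This statement is imported verbatim from \cite[Thm.~3.9]{AKKM18}; the paper gives no proof of it, so there is no in-paper argument to compare yours against. Judged on its own terms, your proof is correct and self-contained. The two load-bearing ingredients both check out: (i) the description $\pp(yx^{-1})=\{g_+g_-^{-1}:g_+\text{ a prefix of }x\}$ is right, and your test elements $m_w=(yw)(yw)^{-1}$ (with $w$ a letter differing from the first letter of the offending suffix, using $n\geq 2$) are exactly what is needed to rule out $g_+$ strictly extending $x$; (ii) the reduction to cyclically reduced words via $a\cfn\widetilde{a}$ and $\cfn\subseteq\con$ is legitimate and not circular, since Theorem~\ref{Thm:cfn_Polycyclic} is proved independently of this lemma. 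In the necessity direction the case analysis is complete: Case A and the sub-case $h_-=1$ both collapse to $a=b$ via cyclic reducedness, and the surviving case forces $y=v=1$ and the equation $g'x=ug'$ with $g'\neq 1$, whence transposition of $x$ and $u$ in $A_n^*$ and hence condition (c). The only soft spot is the final sentence waving at the degenerate sub-cases; they do all collapse as claimed (e.g.\ $x=1$ forces $u=1$ and $a=b=1$), but a referee would want at least one of them written out.
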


In particular, if an element of $\Poln$ is not in $(A_n^{-1})^*\cup\{0\}$ then it is $\con$-conjugate to a unique element $yx^{-1}$ such that $y\neq 1$ and $x$ and $y$ have no common prefix other than $1$.

For a given alphabet $X$, let $L_p(X)$ denote a set of representatives of minimal length of the partition resulting of the quotient of the free monoid $X^*$ on $X$ by the equivalence relation $\cpn$ on $X^*$.

\begin{cor}\label{Cor:cgl_con_Polycyclic}
	The set of cyclically reduced words with a prefix in $A_n\cup\{0\}$ together with the set $L_p(A_n^{-1})$ is a set of representatives of minimal length of the partition $\Poln/\!\!\con$.
\end{cor}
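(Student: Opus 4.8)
The plan is to read off everything from Lemma~\ref{p56} together with the length function on $\Poln$. Recall that a non-zero element of $\Poln$ with normal form $yx^{-1}$ has geodesic length $|y|+|x|$, and that if $a=r\widetilde{a}r^{-1}$ with $r\in A_n^*$ and $\widetilde{a}=uv^{-1}$ cyclically reduced, then the normal form of $a$ is $(ru)(rv)^{-1}$, so that the length of $a$ equals $2|r|+|\widetilde{a}|\ge|\widetilde{a}|$. I would set
$\mathcal{R}=\{0\}\cup\{\,yx^{-1}\in\Poln : yx^{-1}\text{ cyclically reduced},\ y\neq 1\,\}\cup L_p(A_n^{-1})$
and first record the bookkeeping facts: the three pieces are pairwise disjoint (the middle piece has $y\neq1$, every element of $L_p(A_n^{-1})\subseteq(A_n^{-1})^*$ has $y=1$ or is the empty word $1$, and $0$ is in neither), every word of $(A_n^{-1})^*$ is cyclically reduced (its normal form is $1\cdot x^{-1}$), and in particular the identity $1\in\Poln$ occurs once in $\mathcal{R}$, namely as the empty word of $L_p(A_n^{-1})$. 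The proof then splits into two claims: $\mathcal{R}$ is a transversal of $\Poln/\!\!\con$, and each of its elements has minimal length in its $\con$-class.

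For the transversal claim, surjectivity is easy: given $a$, if $a=0$ then $0\in\mathcal{R}$; otherwise form $\widetilde{a}=yx^{-1}$. If $y\neq1$ then $\widetilde{a}\in\mathcal{R}$ and $a\cfn\widetilde{a}$, hence $a\con\widetilde{a}$. If $y=1$ then $\widetilde{a}=x^{-1}\in(A_n^{-1})^*$; choosing $\ell\in L_p(A_n^{-1})$ with $\ell\cpn x^{-1}$ in the free monoid $(A_n^{-1})^*$ and using $\widetilde{\ell}=\ell$, Lemma~\ref{p56}(c) gives $\widetilde{a}\con\ell$, so $a\con\ell\in\mathcal{R}$. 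For injectivity, suppose $r_1\con r_2$ with $r_1,r_2\in\mathcal{R}$ and $r_1\neq r_2$. Clause (a) of Lemma~\ref{p56} excludes either being $0$. If $r_1$ lies in the middle piece, then $\widetilde{r_1}=r_1\notin(A_n^{-1})^*$, so clauses (a),(c) are impossible and clause (b) forces $\widetilde{r_1}=\widetilde{r_2}$; since every element of $\mathcal{R}\setminus\{0\}$ is cyclically reduced, $r_2=\widetilde{r_2}=r_1$, a contradiction; the same argument applies with $r_1,r_2$ interchanged, so we may assume $r_1,r_2\in L_p(A_n^{-1})$, where clause (b) gives $r_1=r_2$ outright and clause (c) gives $r_1\cpn r_2$ in $(A_n^{-1})^*$, contradicting that $L_p(A_n^{-1})$ is a $\cpn$-transversal.

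For minimality, let $r\in\mathcal{R}$; if $r=0$ its $\con$-class is $\{0\}$, so there is nothing to prove, and otherwise $r=\widetilde{r}$. Take any non-zero $b\con r$. By Lemma~\ref{p56}, either $\widetilde{b}=\widetilde{r}=r$, or $\widetilde{b},r\in(A_n^{-1})^*$ with $\widetilde{b}\cpn r$ in the free monoid $(A_n^{-1})^*$; in the latter case $|\widetilde{b}|=|r|$ because primary conjugate words in a free monoid have equal length. Either way $|\widetilde{b}|\ge|r|$, and then the length computation above gives $|b|=2|s|+|\widetilde{b}|\ge|\widetilde{b}|\ge|r|$ where $b=s\widetilde{b}s^{-1}$, $s\in A_n^*$. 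Hence $r$ is of minimal length in its $\con$-class, which completes the proof. I do not expect a genuine obstacle here; the only points demanding care are purely organisational — placing the identity and the zero correctly, checking the three parts of $\mathcal{R}$ are disjoint, and noting that an element whose cyclically reduced form begins with a positive letter is never $\con$-conjugate to a word over $A_n^{-1}$ — and each of these is immediate from Lemma~\ref{p56}.
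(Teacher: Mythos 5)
Your proposal is correct and follows exactly the route the paper intends: the corollary is left as an immediate consequence of Lemma~\ref{p56} (together with $\cfn\subseteq\con$, the identity $|r\widetilde{a}r^{-1}|=2|r|+|\widetilde{a}|$, and the fact that $\cpn$-conjugate words in a free monoid have equal length), and your write-up supplies precisely that bookkeeping. The case analysis for disjointness, surjectivity, injectivity, and minimality is complete and each step is justified by the correct clause of Lemma~\ref{p56}.
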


Any distinct $a,b\in \Poln$ such that $a,b\in A_n^*$ or $a,b\in (A_n^{-1})^*$ are never $\frn$-conjugate. This shows that in $\Poln$, conjugacy $\cfn$ is \emph{strictly} included in $\con$ and $\cpn$ (see \cite[Corollary~3.10]{AKKM18}).

\begin{lemma}[\!\!\textbf{\cite[Thm.~3.6]{AKKM18}}]
	\label{ccp}
	Let $a,b\in \Poln$. Then $a\cpn b$ if and only if one of the following holds:
	\begin{itemize}
		\item[\textup{(a)}] $a=\tlb=0$ or $\tla=b=0$;
		\item[\textup{(b)}] $\tla=\tlb=0$ and $\widetilde{a}=\widetilde{b}$;
		\item[\textup{(c)}] $\widetilde{a}, \widetilde{b}\in A_n^*$ and $\widetilde{a}\cpn\widetilde{b}$ in the free monoid $A_n^*$; or
		\item[\rm(d)] $\widetilde{a}, \widetilde{b}\in (A_n^{-1})^*$ and $\widetilde{a}\cpn\widetilde{b}$ in the free monoid $(A_n^{-1})^*$.
	\end{itemize}
\end{lemma}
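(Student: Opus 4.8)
The plan is to prove both implications by direct manipulation of the normal forms $yx^{-1}$ afforded by Lemma~\ref{lbas}, using throughout two building blocks. First, for nonzero $a=yx^{-1}$, taking $u=x^{-1}$, $v=y$ gives $uv=\tla$ and $vu=a$, so $a\cpn\tla$; and writing $a=r\widetilde a r^{-1}$ and taking $u=r$, $v=\widetilde a r^{-1}$ gives $a\cpn\widetilde a$. Second, and crucially (since $\cpn$ is not transitive), these can be amalgamated into a \emph{single} pair of conjugators: if $a=r\widetilde a r^{-1}$ and $b=s\widetilde b s^{-1}$ with $\widetilde a=\widetilde b=y'x'^{-1}$, then $u=(ry')(sx')^{-1}$ and $v=sr^{-1}$ satisfy $uv=a$ and $vu=b$ (using $s^{-1}s=r^{-1}r=1$); and if $\widetilde a=pq$, $\widetilde b=qp$ with $p,q\in A_n^*$, then $u=(rp)s^{-1}$ and $v=(sq)r^{-1}$ do the same job. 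This immediately yields the sufficiency of conditions (b), (c) and (d); for (d) one may alternatively invoke the anti-automorphism $c\mapsto c^{-1}$ of the inverse monoid $\Poln$, which interchanges $A_n^*$ and $(A_n^{-1})^*$, preserves the property of being cyclically reduced, sends $\tla$ to $\tla^{-1}$, and preserves $\cpn$ because $(uv)^{-1}=v^{-1}u^{-1}$, thereby reducing (d) to (c). The sufficiency of (a) is the special case $0\cpn b$, obtained from $u=x^{-1}$, $v=y$ when $x^{-1}y=0$.

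For necessity, assume $a=uv$ and $b=vu$ with $u,v\in\Poln$. If $u$ or $v$ equals $0$ or $1$, or if $a$ or $b$ equals $0$, a short direct check settles the conclusion: for instance, if $a=0\ne b$ one writes $u=y_1x_1^{-1}$, $v=y_2x_2^{-1}$, uses that $b=vu\ne0$ forces $x_2,y_1$ prefix comparable while $a=uv=0$ forces $x_1,y_2$ \emph{not} prefix comparable, and from the resulting closed form for $b$ and Lemma~\ref{lbas} deduces $\tlb=0$, giving (a). In the main case $a\ne0\ne b$ with $u=y_1x_1^{-1}$, $v=y_2x_2^{-1}$ both nontrivial, the hypotheses $a=uv\ne0$ and $b=vu\ne0$ make each of the pairs $(x_1,y_2)$ and $(x_2,y_1)$ prefix comparable, producing four sub-cases according to which word absorbs the other. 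In each sub-case Lemma~\ref{lbas} lets one write $a$ and $b$ explicitly, and one reads off that $\widetilde a$ and $\widetilde b$ are the cyclically reduced cores of a pair of words of the form $(pq,qp)$: in two sub-cases these lie in $A_n^*$ (case (c)), in one they lie in $(A_n^{-1})^*$ (case (d)), and in the remaining "mixed" sub-case $\widetilde a$ and $\widetilde b$ turn out to be one and the same cyclically reduced word $y'x'^{-1}$ with $y'$ and $x'$ both nonempty, whence $\tla=\tlb=0$ and $\widetilde a=\widetilde b$ (case (b)).

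I expect the main obstacle to be purely the bookkeeping in this last step: keeping the four sub-cases and their degenerate specializations (some of the auxiliary words empty, or the two halves of a core sharing a nontrivial prefix that must then be stripped) straight, and in the mixed sub-case verifying carefully that the incomparability of $x_1$ and $y_2$ forces the cores of $a$ and $b$ to coincide and to have both halves nonempty, so that $\tla$ and $\tlb$ really are $0$. Everything else is routine: recognizing the closed forms as instances of (b)--(d), and recognizing primary conjugacy inside the free submonoids $A_n^*$ and $(A_n^{-1})^*$ as primary conjugacy in $\Poln$, which is immediate since these are submonoids.
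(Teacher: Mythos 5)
The paper states this lemma as a quotation of \cite[Thm.~3.6]{AKKM18} and gives no proof of it, so there is no internal argument to compare yours against; judged on its own, your proposal is essentially correct and follows the route one would expect from the machinery the paper does set up (the normal form and multiplication rule of Lemma~\ref{lbas}, cyclic reduction, and $\rho$). The explicit amalgamated conjugators for sufficiency all check out, and the four-fold case analysis for necessity does place every configuration into one of (a)--(d). Two small corrections to the bookkeeping you yourself flag as the main burden: of the four sub-cases, one yields cores in $A_n^*$, one yields cores in $(A_n^{-1})^*$, and \emph{two} (not one) are mixed, the two mixed ones being exchanged under swapping $u$ and $v$; and in a mixed sub-case, where $a$ and $b$ acquire the common core $\widetilde{a}=\widetilde{b}$ equal to the cyclic reduction of a word $st^{-1}$ with $s,t\in A_n^*$, the two halves of that core need not both be nonempty --- if $s$ and $t$ are prefix comparable the core collapses into $A_n^*$ or $(A_n^{-1})^*$ and one lands in (c) or (d) rather than (b), while (b) occurs exactly when $s$ and $t$ are prefix incomparable, which is also exactly the condition $\tla=\tlb=0$. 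With those adjustments the sketch closes; nothing in it constitutes a genuine gap.
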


From Lemma~\ref{ccp} and other results in \cite{AKKM18}, we can deduce a characterization of $\cpns$ in $\Poln$.

\begin{prop}
	\label{pcps}
	Let $a,b\in \Poln$. Then $a\cpns b$ if and only if one of the following conditions holds:
	\begin{itemize}
		\item[\textup{(a)}] $\tla=\tlb=0$;
		\item[\textup{(b)}] $\widetilde{a}, \widetilde{b}\in A_n^*$ and $\widetilde{a}\cpn\widetilde{b}$ in the free monoid $A_n^*$; or
		\item[\textup{(c)}] $\widetilde{a}, \widetilde{b}\in (A_n^{-1})^*$ and $\widetilde{a}\cpn\widetilde{b}$ in the free monoid $(A_n^{-1})^*$.
	\end{itemize}
\end{prop}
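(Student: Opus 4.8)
The statement to be proved is Proposition~\ref{pcps}, characterizing $\cpns$ in $\Poln$. The plan is to use the characterization of $\cpn$ from Lemma~\ref{ccp} together with the observation that $\cpns$ is by definition the transitive closure of $\cpn$, and to show that the relation described on the right-hand side of Proposition~\ref{pcps} is already transitive (and that it contains $\cpn$), so that it must coincide with $\cpns$. First I would check that the right-hand side relation, call it $\theta$, is contained in $\cpns$: condition (b) and condition (c) each give $\widetilde a \cpn \widetilde b$ in the appropriate free monoid, hence $\widetilde a \cpns \widetilde b$ there, and since $\cpn$-chains in a free submonoid lift to $\cpn$-chains in $\Poln$ (using Lemma~\ref{ccp}(c),(d)), together with $a\cpn\widetilde a$ and $b\cpn\widetilde b$ (every element is primarily conjugate to its cyclically reduced form via $a=ryx^{-1}r^{-1}=(ry)(x^{-1}r^{-1})$ and $yx^{-1}=(x^{-1}r^{-1})(ry)$ — wait, more carefully, $a = (ryx^{-1})(r^{-1})$ and so on; the point is $a\cpn \widetilde a$ holds, which is recorded in \cite{AKKM18}), we get $a\cpns b$. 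For condition (a): if $\tla=\tlb=0$, then $\widetilde a,\widetilde b$ are genuinely ``two-sided'' cyclically reduced words (i.e. $\widetilde a$ has a nonempty positive part and a nonempty negative part, or $\widetilde a=0$), and I would need to show all such elements lie in a single $\cpns$-class; the key point here is that two distinct such elements are not $\cpn$-related in general, so one must produce an explicit $\cpn$-chain through them, presumably passing through $0$ or through elements with $\tla = 0$ but of a form bridging $\widetilde a$ and $\widetilde b$.

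Second I would prove $\cpns\subseteq\theta$. It suffices to show $\theta$ is transitive and contains $\cpn$, since $\cpns$ is the smallest transitive relation containing $\cpn$. Containment $\cpn\subseteq\theta$ follows by inspecting the four cases of Lemma~\ref{ccp}: case (a) forces $\tla=\tlb=0$ (since $a=0$ means $\tla=0$, and then $\tlb=0$ too), hence $\theta$(a); case (b) is $\theta$(a); cases (c),(d) are exactly $\theta$(b),(c). For transitivity of $\theta$: suppose $a\mathrel\theta b$ and $b\mathrel\theta c$. The three conditions of $\theta$ partition $\Poln$ into three classes according to whether $\widetilde a\in A_n^*\setminus\{1\}$-type with $\tla\ne 0$ living in $A_n^*$, or symmetrically in $(A_n^{-1})^*$, or $\tla=0$. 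These are mutually exclusive once one observes that $\tla\in A_n^*\cup(A_n^{-1})^*\cup\{0\}$ always, and that $\widetilde a\in A_n^*$ is equivalent to $\tla=\widetilde a\in A_n^*$, similarly for $(A_n^{-1})^*$. So $b$ determines which of the three clauses is in force for both pairs, forcing $a$ and $c$ into the same clause; within a clause, transitivity reduces to transitivity of $\cpn$ on the relevant free monoid (which holds since $\cpn$ on a free monoid is $\cpns$, an equivalence — stated in the excerpt: on free semigroups all these relations coincide) or is trivial in clause (a).

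The main obstacle I expect is the clause-(a) case: showing that \emph{all} elements $a$ with $\tla=0$ are mutually $\cpns$-related, i.e. that this really is one single $\cpns$-class. The relation $\cpn$ itself does not directly relate, say, $p_1p_2^{-1}$ and $p_2 p_1^{-1}$ or $p_1p_1p_2^{-1}$ unless one threads through intermediate elements. The cleanest route is probably: (i) show every such $a$ satisfies $a\cpn 0$ is \emph{false} in general (so $0$ is isolated, consistent with $[0]_{\frn}$ but we need $0$'s $\cpns$-class — actually $0\cpn 0$ only, and $\tla=0$ when $a=0$, so $0$ would be lumped in clause (a)!). This needs care: I would re-examine whether $0$ belongs to the clause-(a) class; since $\widetilde 0 = 0$ and $\rho(0)=0$, clause (a) as literally stated includes $a=0$. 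But $[0]_{\cpns}=\{0\}$. So either clause (a) must be read as excluding the case $a=b=0$ from forcing conjugacy with nonzero elements, or — more likely — for nonzero $a$ with $\tla=0$ one shows $a\cpn 0$ does hold: indeed if $a=ryx^{-1}r^{-1}$ with $yx^{-1}$ cyclically reduced, $y\ne 1\ne x$, $y,x$ with no common prefix, then writing $a=uv$ with $u = ry$, $v = x^{-1}r^{-1}$ gives $vu = x^{-1}r^{-1}ry = x^{-1}y$, and if $x,y$ are incomparable this product is $0$; hence $a\cpn 0$, and thus every clause-(a) nonzero element is $\cpn$-related to $0$, making the whole clause-(a) set one $\cpns$-class. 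This resolves the apparent tension and is the crux; I would write this computation out carefully using Lemma~\ref{lbas}(1),(2), and verify the ``no common prefix'' hypothesis is exactly what makes $x^{-1}y = 0$.
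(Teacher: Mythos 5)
Your proposal is correct, and it reaches the result by a route that is related to but more self-contained than the paper's. The paper's proof is four lines long: it invokes \cite[Thm.~3.7]{AKKM18} (every $\cpns$-chain in $\Poln$ collapses to $a\cpn b$ or $a\cpn 0\cpn b$) and \cite[Lem.~3.4]{AKKM18} ($a\cpn 0$ iff $\tla=0$), and then reads off the trichotomy from Lemma~\ref{ccp}. You instead sandwich the candidate relation $\theta$ between $\cpn$ and $\cpns$ and prove $\theta$ is transitive directly, which replaces the cited Theorem~3.7 with an elementary case analysis; and you re-derive the needed direction of Lemma~3.4 by the explicit decomposition $a=(ry)(x^{-1}r^{-1})$ with $vu=x^{-1}y=0$ when $x,y$ are prefix-incomparable -- exactly the right computation, and exactly why clause (a) is a single $\cpns$-class containing $0$. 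What the paper's route buys is brevity; what yours buys is independence from the structural results of \cite{AKKM18} beyond the $\cpn$-characterization itself. Two small points to tidy when writing this up: your interim assertion that $[0]_{\cpns}=\{0\}$ is false (as you yourself then establish, every nonzero $a$ with $\tla=0$ is $\cpn$-related to $0$, so $[0]_{\cpns}$ is the entire clause-(a) class -- only $[0]_{\frn}$ is a singleton); and clauses (b) and (c) are not quite mutually exclusive, since they overlap when $\widetilde{b}=1$, but that case is harmless because $\widetilde{b}=1$ forces $\widetilde{a}=\widetilde{c}=1$ and transitivity survives.
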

\begin{proof}
	Suppose $a\cpns b$. Then, by \cite[Thm.~3.7]{AKKM18}, either $a\cpn b$ or $a\cpn 0\cpn b$. In the former case, (a), (b), or (c) is satisfied by Lemma~\ref{ccp}. Suppose $a\cpn 0\cpn b$. Then $\rho(a)=\rho(b)=0$ by \cite[Lem.~3.4]{AKKM18}, and so (a) is satisfied.
	
	Conversely, suppose that one of (a), (b), (c) holds. If (b) or (c) holds, then $a\cpn b$ by Lemma~\ref{ccp}, and so $a\cpns b$. Suppose (a) is satisfied. Then, by \cite[Lem.~3.4]{AKKM18} again, $a\cpn 0\cpn b$, and so $a\cpns b$.
\end{proof}

In particular, if a representative element of $\Poln$ is not in $A_n^*\cup (A_n^{-1})^*$, then it is $\cpns$-conjugate to $0$.

\begin{cor}\label{Cor:cgl_cpns_Polycyclic}
	The set $L_p(A_n)\cup L_p(A_n^{-1})\cup \{0,1\}$ is a set of representatives of minimal length of the partition $\Poln/\!\!\cpns$.
\end{cor}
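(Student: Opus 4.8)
The plan is to read off the corollary from the description of $\cpns$ on $\Poln$ given in Proposition~\ref{pcps}, by listing the $\cpns$-classes and selecting in each one a shortest element. Throughout I would use three standard facts about $\Poln$ (the last two already implicit in Corollaries~\ref{Cor:cgl_cfn_Polycyclic} and~\ref{Cor:cgl_con_Polycyclic}): every nonzero $a$ has a unique normal form $a=yx^{-1}$ with $y,x\in A_n^*$; this normal form is geodesic, so the word length of $a$ equals $|y|+|x|$; and $a=r\widetilde a r^{-1}$ with $r\in A_n^*$ and $\widetilde a$ cyclically reduced, whence $|\widetilde a|\le|a|$ with equality iff $a=\widetilde a$. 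I would also record that $\rho(a)\ne 0$ exactly when $\widetilde a\in A_n^*\cup(A_n^{-1})^*$, and that $\widetilde a=1$ exactly when $a$ is a (nonzero) idempotent $rr^{-1}$.

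Using Proposition~\ref{pcps}, the $\cpns$-classes of $\Poln$ are then: (i) $\{a:\rho(a)=0\}$, which contains $0$; (ii) $\{a:\widetilde a=1\}$, the nonzero idempotents together with $1$; (iii) for each nonempty $w\in L_p(A_n)$, the set $\{a:\widetilde a\in A_n^*,\ \widetilde a\cpn w\text{ in }A_n^*\}$; and (iv) for each nonempty $w\in L_p(A_n^{-1})$, the analogous set inside $(A_n^{-1})^*$. I would check these are pairwise disjoint — type (i) is singled out by $\rho=0$, type (ii) by $\widetilde a=1$, and within types (iii) and (iv) the membership of $\widetilde a$ in $A_n^*$ versus $(A_n^{-1})^*$ and the choice of $\cpn$-representative keep them apart — and exhaustive: if $\rho(a)=0$ then $a$ lies in (i); otherwise $\widetilde a$ lies in $A_n^*$ or in $(A_n^{-1})^*$, and $a$ is $\cpns$-equivalent to $1$ if $\widetilde a=1$, and otherwise to the $L_p$-representative of the $\cpn$-class of $\widetilde a$.

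It remains to see that the proposed representatives are shortest. For (ii), $1$ has length $0$ while any other member $rr^{-1}$ has length $2|r|\ge 2$. For (i), $0$ is the only element of $\Poln$ of length at most $1$ (over the generating set used) with $\rho=0$, since the only such short elements are $1$, the generators $p_i^{\pm1}$, and $0$ itself. For (iii): if $a$ belongs to the class of $w$, then $\widetilde a=y'$ for some $y'\in A_n^*$ with $y'\cpn w$, so $|a|\ge|\widetilde a|=|y'|\ge|w|$ because $w$ is shortest in its $\cpn$-class of the free monoid $A_n^*$; since $w$, viewed in $\Poln$, lies in the class and has length $|w|$, it is a shortest representative. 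Case (iv) is symmetric. Assembling, $L_p(A_n)\cup L_p(A_n^{-1})\cup\{0,1\}$ meets every $\cpns$-class in exactly one shortest element (with $1\in L_p(A_n)\cap L_p(A_n^{-1})$ serving class (ii), duplications being absorbed by the union), which is the claim. The only delicate points are the bookkeeping at the degenerate elements $0$ and $1$ — in particular fixing the convention for the length of $0$ and confirming the normal form is geodesic so that $|\widetilde a|\le|a|$ is available — and verifying that classes (iii)/(iv) are not accidentally fused with the idempotent class or with one another; the quantitative content is the single prefix-stripping inequality combined with minimality of $L_p$ in the free monoid.
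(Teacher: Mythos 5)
Your proof is correct and follows essentially the route the paper intends: the corollary is stated as an immediate consequence of Proposition~\ref{pcps} (together with the remark that elements whose cyclically reduced form lies outside $A_n^*\cup(A_n^{-1})^*$ are $\cpns$-conjugate to $0$), and your write-up simply makes explicit the class enumeration, the geodesic property of the normal form $yx^{-1}$, and the inequality $|a|\ge|\widetilde a|$ that the paper leaves implicit. The bookkeeping at $0$ and $1$ is handled correctly (in particular $1\in L_p(A_n)\cap L_p(A_n^{-1})$ and $|0|=1$ over $\Sigma_0$, consistent with the growth function computed afterwards).
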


\subsection{Conjugacy growth functions in $\Poln$}

Let $M$ be a monoid generated by a finite set $X$. Then every element of $M$ can be represented as a word in $X^*$. The length of an element $a\in M$ is the minimum length of a word in $X^*$ that represents $a$, written $|a|_X$ or just $|a|$ if the context is clear.

Given an equivalence relation $\sim$ in $M$, we extend the notion  of length to equivalence classes in the quotient set $M/\sim$ by setting
\[
|[a]_{\sim}|_X=\min\{|b|_X:b\in[a]_{\sim}\}.
\]
Note that if $\sim$ is the identity relation, then $|[a]_{\sim}|_X=|a|_X$.

Since $X$ is finite, for each integer $m\geq 0$, there are only finitely many elements of $M$ of length $m$. This leads to the following definition.

\begin{defi}
	Let $M$ be a monoid with finite generating set $X$, and let $\sim$ be an equivalence relation on $M$. The \emph{strict growth function} of $M$ \emph{relative to} $\sim$ (with respect to $X$) is defined by
	\[
	\sgf_{M/\sim,X}(n)=\#\{a\in M:  |[a]_{\sim}|_X=n\}
	\]
	for each $n\in \mathbb{N}_0$.
\end{defi}
When $\sim$ is the identity relation, the function $\sgf_{M/\sim,X}$ is denoted simply by $\sgf_{M,X}$ and is known as the \emph{strict growth function} of $M$ with respect to $X$.

Regarding the characterization of representatives of the polycyclic monoid given in the previous subsection, we obtain the following result:
\begin{prop}
	The strict growth function of $\Poln$ is given by
	\[
	\sigma_{\Poln,\Sigma_0}(0)=1,\, \sigma_{\Poln,\Sigma_0}(1)=2n+1,\text{ and }\sigma_{\Poln,\Sigma_0}(m)=(m+1)n^m\text{ for }m\geq 2\,.
	\]
\end{prop}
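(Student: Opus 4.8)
The plan is to count the nonzero elements $yx^{-1}\in\Poln$ of total word length $m$, where by ``total word length'' we mean $|y|+|x|$ (this matches the generating set $\Sigma_0$, since $p_i$ has length $1$ and $p_i^{-1}$ has length $1$, and by Lemma~\ref{lbas}(3) the normal form $yx^{-1}$ is unique). First I would dispose of the small cases directly: the only element of length $0$ is $1$, so $\sigma(0)=1$; the elements of length $1$ are $0$ together with the $2n$ elements of $\Sigma=A_n\cup A_n^{-1}$, so $\sigma(1)=2n+1$. These two cases are forced because $0$ has length $1$ by fiat, while $0$ contributes nothing for $m\ge2$.

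For $m\ge 2$, the key point is that the map $(y,x)\mapsto yx^{-1}$ is a bijection from $A_n^*\times A_n^*$ onto the nonzero elements of $\Poln$, and that $|yx^{-1}|=|y|+|x|$ exactly when $yx^{-1}$ is in normal form — which it always is in the representation $yx^{-1}$ with $y,x\in A_n^*$. Here I must be slightly careful: a word in $\Sigma_0^*$ representing $yx^{-1}$ could in principle be shorter if cancellations $p_i^{-1}p_i=1$ occur, but any such cancellation only shortens the word, and the normal-form word $y\,x^{-1}$ (reading $y$ left to right, then $x^{-1}=x_k^{-1}\cdots x_1^{-1}$) has no such cancellation available and no occurrence of $p_i^{-1}p_j$ with $i\ne j$ that would send it to $0$; hence $|yx^{-1}|$ equals the minimum and equals $|y|+|x|$. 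So I would state and briefly justify this length formula, then simply count: the number of pairs $(y,x)\in A_n^*\times A_n^*$ with $|y|+|x|=m$ is $\sum_{i=0}^{m} n^i\cdot n^{m-i}=(m+1)n^m$, since there are $n^i$ words of length $i$ over the $n$-letter alphabet $A_n$.

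Finally I would observe that for $m\ge 2$ the element $0$ has length $1\ne m$, so it is not counted, and every element of length $m\ge2$ is nonzero and hence of the form $yx^{-1}$ with $|y|+|x|=m$; therefore $\sigma_{\Poln,\Sigma_0}(m)=(m+1)n^m$. The main obstacle — really the only subtle point — is the verification that the normal-form representation $yx^{-1}$ realizes the minimal word length over $\Sigma_0$, i.e.\ that no shorter expression exists and in particular that the putative reduced word does not collapse to $0$ or to something shorter; this is where one uses Lemma~\ref{lbas}(1)--(3) to confirm that $y\cdot x^{-1}$ does not vanish and is already reduced. Everything after that is the elementary convolution count above.
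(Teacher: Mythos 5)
Your proposal is correct and is essentially the argument the paper intends: the paper states the proposition as an immediate consequence of the unique normal form $yx^{-1}$ with $y,x\in A_n^*$, and your convolution count $\sum_{i=0}^{m} n^i n^{m-i}=(m+1)n^m$, together with the observation that the length-reducing rewriting rules force $|yx^{-1}|_{\Sigma_0}=|y|+|x|$, is exactly the justification that is left implicit there. The small cases $m=0,1$ are handled correctly as well.
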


In the cases where the equivalence relation $\sim$ corresponds to one of the conjugacies $\cfn$, $\con$, or $\cpns$, we refer to the strict growth function of $M$ relative to $\sim$ as the \emph{strict $\sim$-conjugacy growth function} of the monoid $M$, or simply the strict \emph{conjugacy} growth function of $M$ relative to $\sim$.

We now compute the conjugacy growth functions the polycyclic monoids   for the conjugacies  $\cfn$, $\con$, and $\cpns$.

\begin{theorem}\label{Thm:cgf_cfn_Polycyclic}
	The strict $\cfn$-conjugacy growth function of $\Poln$ is given by $\sgf_{\Poln/\cfn,\Sigma_0}(0)=1$, $\sgf_{\Poln/\cfn,\Sigma_0}(1)=2n+1$, and $\sgf_{\Poln/\cfn,\Sigma_0}(m)=2n^m+(m-1)n^{m-1}(n-1)$, for $m\geq 2$.
\end{theorem}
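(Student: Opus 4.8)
The plan is to count cyclically reduced words by length, since by Corollary~\ref{Cor:cgl_cfn_Polycyclic} these form a set of minimal-length representatives of the $\cfn$-classes, so that $\cgfcfn_{\Poln,\Sigma_0}(m)$ is exactly the number of cyclically reduced words of length $m$ (the zero $0$ contributing at length $1$, and the empty word $1$ at length $0$). Recall that a nonzero element is written uniquely as $yx^{-1}$ with $x,y\in A_n^*$, that such a word has length $|x|+|y|$, and that it is cyclically reduced precisely when $x$ and $y$ share no common nonempty prefix. First I would handle the two trivial small cases directly: at length $0$ the only element is $1$, giving $1$; at length $1$ the elements of length $\le 1$ are $1$, $0$, and the $2n$ generators $p_i, p_i^{-1}$, all cyclically reduced, giving $2n+1$.

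For $m\ge 2$, the task reduces to counting pairs $(y,x)\in A_n^*\times A_n^*$ with $|y|+|x|=m$ and such that $y$ and $x$ have no common nonempty prefix. I would split according to $k=|y|$, so $|x|=m-k$, with $k$ ranging over $0,1,\dots,m$. When $k=0$ (i.e. $y=1$) there is no common nonempty prefix automatically, so all $n^m$ words $x$ of length $m$ qualify; symmetrically $k=m$ contributes $n^m$ more. For $1\le k\le m-1$, both $y$ and $x$ are nonempty, each has a well-defined first letter, and ``no common nonempty prefix'' is equivalent to ``$y$ and $x$ have distinct first letters.'' Counting these: choose the first letter of $y$ ($n$ ways), the first letter of $x$ distinct from it ($n-1$ ways), and the remaining $k-1$ letters of $y$ and $m-k-1$ letters of $x$ freely ($n^{k-1}\cdot n^{m-k-1}=n^{m-2}$ ways), for a total of $n(n-1)n^{m-2}=(n-1)n^{m-1}$ pairs per value of $k$. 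Summing over the $m-1$ values $k=1,\dots,m-1$ gives $(m-1)(n-1)n^{m-1}$, and adding the two boundary terms yields
\[
\cgfcfn_{\Poln,\Sigma_0}(m)=2n^m+(m-1)(n-1)n^{m-1},
\]
as claimed.

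The only genuinely delicate point, and the one I would write carefully, is the equivalence ``$x,y$ share no nonempty common prefix $\iff$ $x,y$ have distinct first letters'' in the range $1\le k\le m-1$: the forward direction is immediate (a common first letter is a nonempty common prefix), and the reverse direction holds because any common prefix of length $\ge 1$ forces equal first letters. One must also be careful that the uniqueness of the normal form $yx^{-1}$ (Lemma~\ref{lbas}(3)) means distinct pairs $(y,x)$ give distinct monoid elements, so there is no overcounting, and that the length $|yx^{-1}|_{\Sigma_0}$ really equals $|y|+|x|$ for a cyclically reduced word — this last fact is implicit in Corollary~\ref{Cor:cgl_cfn_Polycyclic} asserting these are minimal-length representatives, but I would note that no shorter word over $\Sigma_0$ can represent a nonzero reduced word $yx^{-1}$ since any spelling reduces (via the relations $p_i^{-1}p_i=1$, $p_i^{-1}p_j=0$) to the same normal form without increasing length. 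With these observations in place the computation above is routine. I expect the main obstacle to be purely expository: making the case analysis on $k$ airtight and correctly incorporating the $k=0$ and $k=m$ boundary cases, rather than any conceptual difficulty.
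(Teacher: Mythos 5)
Your proposal is correct and follows essentially the same route as the paper: both count cyclically reduced words of length $m$ via Corollary~\ref{Cor:cgl_cfn_Polycyclic}, splitting into the purely positive/negative words (contributing $2n^m$) and the words $yx^{-1}$ with $x,y$ both nonempty and differing in their first letter (contributing $(m-1)(n-1)n^{m-1}$). The paper's proof is only a brief sketch of this count, so your more detailed case analysis on $k=|y|$ simply fills in the same argument.
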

\begin{proof}
	We use Corollary~\ref{Cor:cgl_cfn_Polycyclic} to deduce the result. The cases for $m=0$ and $m=1$ are easy. For $m\geq 2$, we can distinguish the case when the cyclically reduced word is in $A_n^*\cup (A_n^{-1})^*$, for which we get $2n^m$ cyclically reduced words of length $m$, from the cases where the cyclically reduced word of length $m$ has the form  $yx^{-1}$, with $x$ and $y$ nonempty and with no common prefix.
\end{proof}

To be able to compute the conjugacy growth functions of $\con$ and $\cpns$, we need to compute the $\cpns$-conjugacy growth function of the free monoid on a given alphabet $X$.

\begin{theorem}\label{Thm:cgf_cpn_free_monoid}
	Let $X$ be an alphabet with $|X|=n$. The $\cpns$-conjugacy growth function of the free monoid $X^*$ on $X$ is
	\[
	\sgf_{X^*/\cpns,X}(m)=\sum_{d|m}\sum_{e|d}\mu\left(\frac{d}{e}\right)	\frac{n^e}{d},\qquad m\geq 1,
	\]
	where $\mu$ is the M\"{o}bius function.
\end{theorem}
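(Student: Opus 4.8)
The plan is to count, for each length $m \geq 1$, the number of $\cpn$-conjugacy classes in the free monoid $X^*$ that have a representative of length exactly $m$, where $|X| = n$. Recall that in a free monoid, primary conjugacy $\cpn$ is already transitive (so $\cpn\,=\,\cpns$), and two words are $\cpn$-conjugate if and only if they are cyclic rotations (conjugates in the combinatorial sense) of one another; this is the classical fact that $u \cpn v$ iff $u = pq$, $v = qp$ for some $p, q \in X^*$, and iterating rotations partitions words of length $m$ into \emph{necklace} classes. Since every rotation of a word of length $m$ again has length $m$, the length of a $\cpn$-class is simply the common length of its members, so $\cgfcp_{X^*,X}(m)$ equals the number of equivalence classes of length-$m$ words under cyclic rotation, i.e.\ the number of $n$-ary necklaces of length $m$.

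First I would make precise the statement that $\cpn$ on $X^*$ is exactly the cyclic-rotation equivalence. One direction is immediate from the definition ($u = pq \mapsto qp = v$ is a single rotation by $|p|$). For the converse, if $u = u_1 v_1$, $v_1 u_1 = u_2 v_2, \ldots, v_k u_k = v$ is a $\cpns$-chain, an induction on $k$ shows each step is a cyclic rotation, and the composition of cyclic rotations of a fixed-length word is again a cyclic rotation; hence $u$ and $v$ lie in the same necklace class. (This is essentially Lallement's result cited in the introduction as \cite{La79}; I would either invoke it or give the short argument.) Consequently $\cgfcp_{X^*,X}(m) = \frac{1}{m}\sum_{e \mid m} \varphi(m/e) n^{e}$ by Burnside's lemma applied to the cyclic group $\Z/m\Z$ acting on $X^m$, since a rotation by $j$ fixes exactly those words that are $(\gcd(j,m))$-periodic, of which there are $n^{\gcd(j,m)}$, and summing over $j$ and grouping by $e = \gcd(j,m)$ (with $\varphi(m/e)$ values of $j$ giving that gcd) yields the stated Euler-$\varphi$ formula.

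The last step is to convert the Euler-$\varphi$ form into the M\"obius form claimed in the theorem. Here I would use the standard identity $\varphi(d/e) = \sum_{c \mid (d/e)} \mu(c)\,\frac{d/e}{c}$, or more directly the pair of M\"obius relations between $\sum_{d \mid m}(\text{necklaces of length }d)\cdot d$-type sums; the cleanest route is: the number of \emph{aperiodic} necklaces (Lyndon words) of length $d$ is $\frac{1}{d}\sum_{e \mid d}\mu(d/e) n^{e}$ by M\"obius inversion of $n^{d} = \sum_{e \mid d} e \cdot (\text{number of Lyndon words of length } e)$, and every necklace of length $m$ arises uniquely from a primitive (aperiodic) word whose period $d$ divides $m$, giving
\[
\cgfcp_{X^*,X}(m) = \sum_{d \mid m} \frac{1}{d}\sum_{e \mid d}\mu\!\left(\frac{d}{e}\right) n^{e},
\]
which is exactly the asserted formula. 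I would spell out the M\"obius inversion and the primitive-root decomposition of necklaces carefully, as that is where sign and divisibility bookkeeping can go wrong.

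\textbf{Main obstacle.} The genuinely substantive point is the first one: identifying $\cpn$ (equivalently $\cpns$) on the free monoid with cyclic rotation and noting that all members of a $\cpn$-class share the same length, so that the growth function is literally a necklace count. Once that identification is in hand, the remainder is the classical necklace-counting computation (Burnside plus M\"obius inversion), which is routine; the only care needed is the arithmetic manipulation translating between the $\varphi$-form and the $\mu$-form. I do not expect any difficulty with $m = 0$ or small cases, but I would state the formula for $m \geq 1$ as in the theorem and treat the empty word separately if needed.
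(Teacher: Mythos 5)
Your proposal is correct and, in its final form, takes essentially the same route as the paper: identify $\cpn$ on $X^*$ with cyclic rotation, count classes of length $m$ by grouping words according to the length $d$ of their primitive root (so each class of such words has size $d$ and there are $f(d)/d$ classes for each $d\mid m$), and obtain $f$ by M\"obius inversion of $n^{d}=\sum_{e\mid d}f(e)$. The intermediate Burnside/Euler-$\varphi$ computation you mention is a harmless detour the paper skips, since the primitive-root decomposition already yields the $\mu$-form directly.
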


\begin{proof}
	The number of words in $X^*$ of length $m$ is $n^m$. Given a word $a$ in $X$ of length $m$, a $\cpn$-conjugate word to $a$ will be a cyclic permutation of $a$, that is, it will be some $b\in X^*$ with $a=uv$ and $b=vu$, for some $u, v\in X^*$. Thus we need to know how many distinct cyclic permutations $a$ may have. We know that  $a=uv=vu$ with $u,v\neq 1$ if and only if $a=w^k$ for some $w\neq 1$ and $k>1$ \cite[Corollary~5.3]{La79}.
	
	A word $p$ is called primitive if whenever $p=w^k$ for some $w\in X^*$, it follows that $k=1$. The root of a word $a$, denoted $\sqrt{a}$, is the unique primitive word $p$ such that $a=p^k$. Hence a word $a$ has $|\sqrt{a}|_X$ distinct cyclic permutations.
	
	Denote by $f(d)$ the number of primitive words in $X$ of length $d$. Then the number $a_m$ of $\cpn$-conjugate elements in $X^*$ of length $m$ is
	\[
	a_m=\sum_{d|m}\frac{f(d)}{d}\,.
	\]
	Now the number of words in $X^*$ of length $m$ is given by
	\[
	n^m=\sum_{d|m} f(d)\,.
	\]
	Therefore by the M\"obius inversion formula,
	\[f(m)=\sum_{d|m}\mu\left(\frac{m}{d}\right)n^d,\]
	where $\mu$ is the M\"obius function.
	The result follows.
\end{proof}

\begin{theorem}
	The strict $\con$-conjugacy growth function of $\Poln$ is given by $\sgf_{\Poln/\con,\Sigma_0}(0)=1$, $\sgf_{\Poln/\con,\Sigma_0}(1)=2n+1$, and $\sgf_{\Poln/\con,\Sigma_0}(m)=n^m+(m-1)n^{m-1}(n-1)+\sgf_{A_n^*/\cpns, A_n}(m)$, for $m\geq 2$.
\end{theorem}
\begin{proof}
	We use Corollary~\ref{Cor:cgl_con_Polycyclic} and the previous theorem to deduce the result. The proof follows the same reasoning as the proof of Theorem~\ref{Thm:cgf_cfn_Polycyclic}.
\end{proof}

\begin{theorem}
	The strict $\cpns$-conjugacy growth function of $\Poln$ is given by $\sgf_{\Poln/\cpns,\Sigma_0}(0)=1$, $\sgf_{\Poln/\cpns,\Sigma_0}(1)=2n+1$, and $\sgf_{\Poln/\cpns,\Sigma_0}   (m)=2\sgf_{A_n^*/\cpns, A_n}(m)$, for $m\geq 2$.
\end{theorem}
\begin{proof}
	This follows from Corollary~\ref{Cor:cgl_cpns_Polycyclic} and Theorem~\ref{Thm:cgf_cpn_free_monoid}.
\end{proof}

\subsection{Conjugacy growth series of $\Poln$}

In this subsection we describe the different growth series of the polycyclic monoids. We begin by introducing the concepts.

\begin{defi}
	Let $M$ be a monoid generated by a finite set $X$, and let $\sim$ be an equivalence relation on $M$.
	The \emph{growth series} of $M$ \emph{relative to} $\sim$ is the following power series with indeterminate $z$:
	\[
	\sgs_{M/\sim,X}(z)=\sum_{m\geq 0}\sgf_{M/\sim,X}(m) z^m,
	\]
	where $\sgf_{M/\sim,X}$ is the strict growth function of $M$ relative to $\sim$ with respect to $X$.
\end{defi}

When the relation $\sim$ is one of the conjugacies  $\cfn$, $\con$, or  $\cpns$, we refer to the growth series of $M$ relative to $\sim$ as the \emph{$\sim$-conjugacy growth series} of $M$, or simply as the  \emph{conjugacy} growth series of $M$ relative to $\sim$.

Note that even if one cannot define a growth function for infinitely generated groups, it is possible to make sense of conjugacy growth series for some such groups; see \cite{BdlH18}.

From Theorem~\ref{Thm:cgf_cpn_free_monoid} we deduce the following:

\begin{theorem}
	Let $X$ be an alphabet with $|X|=n$. The $\cpn$-conjugacy growth series of the free monoid on $X$ is
	\[
	\sgs_{X^*/\cpn, X}(z)=\sum_{r,s\geq 1}\frac{n^r}{rs} \varphi\left(s\right)z^{rs},
	\]
	where $\varphi$ is the Euler totient Euler function.
\end{theorem}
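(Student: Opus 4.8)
The plan is to derive the $\cpn$-conjugacy growth series directly from the conjugacy growth function established in Theorem~\ref{Thm:cgf_cpn_free_monoid}, namely
\[
\cgf_{X^*,X}^{\cpn}(m)=\sum_{d\mid m}\sum_{e\mid d}\mu\!\left(\tfrac{d}{e}\right)\frac{n^e}{d},\qquad m\ge 1,
\]
and then reorganize the resulting double (really triple) sum so that the summation is indexed by the pair $(r,s)$ with $rs=m$, where $r$ will play the role of the exponent $e$ in the primitive-word count and $s=m/d$. First I would substitute this expression into the definition $\cgs_{X^*,X}^{\cpn}(z)=\sum_{m\ge 1}\cgf_{X^*,X}^{\cpn}(m)\,z^m$, interchange the (absolutely convergent, as a formal power series) order of summation, and rewrite the inner Möbius sum. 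The key algebraic identity is the standard one $\sum_{e\mid d}\mu(d/e)\,n^e=f(d)$, the number of primitive words of length $d$ over $X$; so $\cgf_{X^*,X}^{\cpn}(m)=\sum_{d\mid m} f(d)/d$, exactly as in the proof of Theorem~\ref{Thm:cgf_cpn_free_monoid}. Thus $\cgs^{\cpn}(z)=\sum_{m\ge1}z^m\sum_{d\mid m} f(d)/d = \sum_{d\ge1}\frac{f(d)}{d}\sum_{k\ge1} z^{dk} = \sum_{d\ge1}\frac{f(d)}{d}\cdot\frac{z^d}{1-z^d}$.

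The second step is to re-expand $f(d)$ back in terms of $n^r$ and collect. Writing $f(d)=\sum_{r\mid d}\mu(d/r)\,n^r$ and $d=rt$, we get
\[
\cgs^{\cpn}(z)=\sum_{r\ge1} n^r \sum_{t\ge1}\frac{\mu(t)}{rt}\sum_{k\ge1} z^{rtk}.
\]
Now the inner double sum over $t,k$ needs to be recognized. Setting $s=tk$, the coefficient of $z^{rs}$ from $\sum_{t\mid s}\mu(t)/(rt)\cdot(\text{contribution with }k=s/t)$ is $\frac{1}{r}\sum_{t\mid s}\frac{\mu(t)}{t}\cdot\frac{1}{1}$... here I need to be careful: $\sum_{t\ge1}\mu(t)\sum_{k\ge1}z^{rtk}$ regrouped by $s=tk$ gives $\sum_{s\ge1}z^{rs}\sum_{t\mid s}\mu(t)$, but there is also the factor $1/(rt)$ which does not telescope as cleanly. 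The correct bookkeeping is: the $1/t$ must be carried, so the coefficient of $z^{rs}$ is $\frac{n^r}{r}\sum_{t\mid s}\frac{\mu(t)}{t}$. The final identity I would invoke is the Möbius-type formula $\sum_{t\mid s}\frac{\mu(t)}{t}=\frac{\varphi(s)}{s}$ (equivalently $s\sum_{t\mid s}\mu(t)/t=\varphi(s)$, a classical fact about the Euler totient). Substituting gives the coefficient of $z^{rs}$ equal to $\frac{n^r}{r}\cdot\frac{\varphi(s)}{s}=\frac{n^r}{rs}\varphi(s)$, whence $\cgs^{\cpn}_{X^*,X}(z)=\sum_{r,s\ge1}\frac{n^r}{rs}\varphi(s)\,z^{rs}$, as claimed.

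I expect the main obstacle to be purely the index-juggling and making sure the $1/t$ weight is tracked correctly through the two reorganizations — first grouping $\sum_m$ by divisors $d$ of $m$ to get the clean $z^d/(1-z^d)$ form, then re-expanding $f(d)$ and grouping by $s=d/r$ — so that the two classical number-theoretic identities ($\sum_{e\mid d}\mu(d/e)n^e=f(d)$ and $\sum_{t\mid s}\mu(t)/t=\varphi(s)/s$) can be applied at exactly the right moments. There are no analytic subtleties: everything is a manipulation of formal power series (each coefficient is a finite sum), and absolute convergence for $|z|<n^{-1}$ can be noted in passing if one wants an analytic statement. The only thing to double-check is the degenerate behaviour at low degree, i.e. that the formula is consistent with $\cgf^{\cpn}_{X^*,X}(m)$ for small $m$ (e.g. $m=1$ gives $n$, matching $r=s=1$), which I would verify as a sanity check rather than include as a formal step.
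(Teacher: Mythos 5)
Your proposal is correct and follows exactly the route the paper intends: the paper gives no explicit proof of this theorem, stating only that it is deduced from Theorem~\ref{Thm:cgf_cpn_free_monoid}, and your derivation supplies precisely that deduction (regroup the sum over $m$ by the root length $d$, re-expand $f(d)$ via M\"obius inversion, and apply $\sum_{t\mid s}\mu(t)/t=\varphi(s)/s$ to arrive at the coefficient $\tfrac{n^r}{rs}\varphi(s)$ of $z^{rs}$). The bookkeeping of the $1/t$ weight that you flag as the delicate point is handled correctly, and the resulting coefficient of $z^m$, namely $\tfrac{1}{m}\sum_{r\mid m}n^r\varphi(m/r)$, is the classical necklace count, consistent with the growth function.
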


We now give an explicit formula for the conjugacy growth series of $\Poln$ for the conjugacies $\cfn$,  $\con$ and $\cpns$.

\begin{theorem}
	The $\cfn$-conjugacy growth series of $\Poln$ is
	\[
	\sgs_{\Poln/\cfn,\Sigma_0}(z) = z + \frac{1-nz^2}{{(1-nz^2)}^2}.
	\]
\end{theorem}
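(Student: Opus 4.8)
The plan is to compute the series directly from the enumerative data already established. By definition, $\cgscfn_{\Poln,\Sigma_0}(z)=\sum_{m\ge 0}\cgfcfn_{\Poln,\Sigma_0}(m)\,z^m$, and Theorem~\ref{Thm:cgf_cfn_Polycyclic} hands us the coefficients in three regimes: the $m=0$ term is $1$, the $m=1$ term is $(2n+1)z$, and for $m\ge 2$ the term is $\bigl(2n^m+(m-1)(n-1)n^{m-1}\bigr)z^m$. So the first step is to split the tail $\sum_{m\ge 2}(\cdots)z^m$ into a purely geometric part $2\sum_{m\ge 2}n^m z^m$ and a ``derivative'' part $(n-1)\sum_{m\ge 2}(m-1)n^{m-1}z^m$, evaluate each using the standard identities $\sum_{k\ge 0}x^k=\tfrac1{1-x}$ and $\sum_{k\ge 0}kx^k=\tfrac{x}{(1-x)^2}$ with $x=nz$, and then reassemble over the common denominator $(1-nz)^2$, folding back in the $m=0,1$ corrections.

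Alternatively, and this is the route I would actually present, one can bypass the closed form of $\cgfcfn$ and build the series directly from Corollary~\ref{Cor:cgl_cfn_Polycyclic}, which identifies the cyclically reduced words as a minimal-length transversal for $\Poln/\!\!\cfn$. These words split into four elementary families, each with an obvious generating function in which $z$ tracks the length $|yx^{-1}|=|y|+|x|$: the identity $1$ (contributing $1$); the zero $0$, which is a generator and hence has length $1$ (contributing $z$); the nonempty positive words $y\in A_n^*\setminus\{1\}$ and, dually, the nonempty inverse words $x^{-1}$ (each contributing $\tfrac{nz}{1-nz}$); and the genuinely two-sided cyclically reduced words $yx^{-1}$ with $y,x\ne 1$ and distinct initial letters, where picking the two distinct leading letters gives a factor $n(n-1)z^2$ and the suffixes of $y$ and of $x$ range freely over $A_n^*$, giving $\tfrac{n(n-1)z^2}{(1-nz)^2}$. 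Summing yields $\cgscfn_{\Poln,\Sigma_0}(z)=1+z+\tfrac{2nz}{1-nz}+\tfrac{n(n-1)z^2}{(1-nz)^2}$, and clearing denominators reduces this to the stated rational function.

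The only nontrivial work is the final algebraic collapse: after putting everything over $(1-nz)^2$ one expands $(1+z)(1-nz)^2+2nz(1-nz)+n(n-1)z^2$ and checks the cancellations degree by degree, then recognizes the outcome as the asserted expression. I do not expect a conceptual obstacle; the one thing that needs care is the bookkeeping of the short exceptional elements, since $1$ has length $0$, $0$ has length $1$, and there are no two-sided cyclically reduced words of length at most $1$ — these facts are exactly what produce the additive $z$ and account for the discrepancy between the generic coefficient formula and the values $\cgfcfn_{\Poln,\Sigma_0}(0),\cgfcfn_{\Poln,\Sigma_0}(1)$. I would finish by verifying the first few Taylor coefficients of the closed form against those values from Theorem~\ref{Thm:cgf_cfn_Polycyclic} as a consistency check.
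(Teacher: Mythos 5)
Your proposal is correct and follows essentially the same route as the paper: both enumerate the minimal-length transversal of cyclically reduced words from Corollary~\ref{Cor:cgl_cfn_Polycyclic} by length, the only difference being that you partition them into disjoint families while the paper counts all pairs $yx^{-1}$ and subtracts those sharing a first letter — both collapse to $\frac{1-nz^2}{(1-nz)^2}+z$, and your consistency check against Theorem~\ref{Thm:cgf_cfn_Polycyclic} goes through. Note that the displayed formula in the theorem statement has a typo — the denominator should be $(1-nz)^2$, not $(1-nz^2)^2$ — so your computation actually recovers the corrected form rather than the one as printed.
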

\begin{proof}
	Following Corollary~\ref{Cor:cgl_cfn_Polycyclic}, we count the number of words $sr^{-1}$, where $r$ and $s$ do not have a common prefix other than the empty word. We do this by counting all words $yx^{-1}\in \Poln$ and then removing those for which $x$ and $y$ have at least one common beginning letter from $A_n$. Finally, we count the element $0$, whose conjugacy class contributes $z$.  This gives
	\[
	z+ \frac{1}{{(1-nz)}^2}-nz^2\frac{1}{{(1-nz)}^2},
	\]
	which completes the proof.
\end{proof}

\begin{theorem}
	The $\con$-conjugacy growth series of $\Poln$ is given by
	\[
	\sgs_{\Poln/\con,\Sigma_0}(z)=\frac{1}{1-nz}+z+\frac{(n^2-n)z^2}{{(1-nz)}^2}+\sgs_{A_n^*/\cpn, A_n}(z).
	\]
\end{theorem}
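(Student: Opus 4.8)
The plan is to read the series off from a set of minimal-length representatives of $\Poln/\!\!\con$. For a monoid $M$ with finite generating set $X$ and an equivalence conjugacy $\sim_j$, the series $\cgs_{M,X}(z)$ records, for each $m$, the number of $\sim_j$-classes whose shortest element has length $m$; equivalently it is the length-generating series of any minimal-length transversal of $M/\!\!\sim_j$. By Corollary~\ref{Cor:cgl_con_Polycyclic} (which rests on the description of $\con$ in Lemma~\ref{p56}), such a transversal for $\Poln/\!\!\con$ is the disjoint union of four pieces: the singleton $\{0\}$; the free monoid $A_n^*$ (realized as the words $y\cdot 1^{-1}=y$, the empty one of which represents the class $[1]_{\con}$); the set $C$ of cyclically reduced words $yx^{-1}$ with $x\neq 1$ and $y\neq 1$; and the set $L_p(A_n^{-1})\subseteq (A_n^{-1})^+$ of minimal-length representatives of $(A_n^{-1})^*/\!\!\cpn$. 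That these four pieces are pairwise disjoint is immediate once one notes that every word of $L_p(A_n^{-1})$ begins with a letter of $A_n^{-1}$, hence is neither $0$ nor of the form $yx^{-1}$ with $y\neq 1$. Throughout one uses the standard facts that $|0|=1$, $|1|=0$, and $|yx^{-1}|=|y|+|x|$ for a cyclically reduced word.

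The computation then proceeds piece by piece. The piece $\{0\}$ contributes $z$. The piece $A_n^*$ has $n^m$ words of length $m$, contributing $\sum_{m\ge 0}n^m z^m=\tfrac{1}{1-nz}$. For the piece $C$: a cyclically reduced word $yx^{-1}$ with $x,y$ both nonempty is, because $x$ and $y$ have no common prefix, the same data as an ordered pair of distinct initial letters (there are $n(n-1)=n^2-n$ such pairs) together with an arbitrary word of $A_n^*$ finishing $y$ and an arbitrary word of $A_n^*$ finishing $x^{-1}$; this yields the contribution $(n^2-n)z^2\cdot\tfrac{1}{(1-nz)^2}=\tfrac{(n^2-n)z^2}{(1-nz)^2}$. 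Finally, the length-generating series of $L_p(A_n^{-1})$ is, by definition of that set, the $\cpn$-conjugacy growth series of the free monoid $(A_n^{-1})^*$ on the $n$-letter alphabet $A_n^{-1}$; since $(A_n^{-1})^*$ is isomorphic as a generated monoid to $A_n^*$, this series equals $\cgscp_{A_n^*,A_n}(z)$ (whose closed form was computed above, although it is not needed here). Adding the four contributions gives precisely $\tfrac{1}{1-nz}+z+\tfrac{(n^2-n)z^2}{(1-nz)^2}+\cgscp_{A_n^*,A_n}(z)$.

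The only delicate part is the combinatorial description of the transversal: one must check that the empty word and the zero $0$ are each counted exactly once, that the words of $A_n^*$ and the reduced words $yx^{-1}$ with $x,y\neq 1$ are disjoint and jointly account for all cyclically reduced words having a prefix in $A_n\cup\{0\}$ (together with the trivial word), and that these are disjoint from $L_p(A_n^{-1})$; all of this follows from Corollary~\ref{Cor:cgl_con_Polycyclic} and Lemma~\ref{p56}. Everything after that is a routine manipulation of rational generating functions.
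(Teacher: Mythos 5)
Your proposal is correct and follows essentially the same route as the paper: both read the series off the minimal-length transversal from Corollary~\ref{Cor:cgl_con_Polycyclic} and sum the contributions of the four pieces, the only (immaterial) difference being that you count the $yx^{-1}$ piece directly via the $n(n-1)$ choices of distinct initial letters, whereas the paper obtains the same $\frac{(n^2-n)z^2}{(1-nz)^2}$ by subtracting $\frac{nz^2}{(1-nz)^2}$ from $\frac{(nz)^2}{(1-nz)^2}$.
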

\begin{proof}
	Following Corollary~\ref{Cor:cgl_con_Polycyclic}, we count the number of cyclically reduced words with a prefix in $A_n\cup\{0\}$ and the words in the set $L_p(A_n^{-1})$.
	The conjugacy classes of the elements of $A_n^*$ contribute $\frac{1}{1-nz}$ to the series, and
	the conjugacy class of 0 contributes $z$. Further, the conjugacy classes of the elements $yx^{-1}$ such that both $x$ and $y$ are not empty and have no common prefix other than $1$ contribute $\frac{{(nz)}^2}{{(1-nz)}^2}-\frac{nz^2}{{(1-nz)}^2}$ to the series. Finally, the conjugacy classes of the elements in $(A_n^{-1})^*\setminus\{1\}$ contribute $\sgs_{A_n^*/\cpn, A_n}(z)$.
\end{proof}

For completeness, we present the analogous result for $\cpns$-conjugacy.

\begin{theorem}
	The $\cpns$-conjugacy growth series of $\Poln$ is given by
	\[
	\sgs_{\Poln/\cpns,\Sigma_0}(z)=1+z+2\sgs_{A_n^*/\cpn, A_n}(z).
	\]
\end{theorem}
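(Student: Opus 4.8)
The plan is to derive the formula directly from the combinatorial description of $\cpns$-conjugacy classes in $\Poln$ established earlier. By Corollary~\ref{Cor:cgl_cpns_Polycyclic}, the set $L_p(A_n)\cup L_p(A_n^{-1})\cup\{0,1\}$ is a set of minimal-length representatives of the classes $\Poln/\!\!\cpns$. So the $\cpns$-conjugacy growth series $\cgscp_{\Poln,\Sigma_0}(z)$ is, by definition, $\sum_{m\geq0}\cgfcp_{\Poln,\Sigma_0}(m)z^m$, and the count $\cgfcp_{\Poln,\Sigma_0}(m)$ is exactly the number of elements of that representative set whose minimal length equals $m$.

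First I would separate the contributions of the four pieces of the representative set. The elements $0$ and $1$ have length $0$ and $1$ respectively, contributing $1+z$. The set $L_p(A_n)$ is, by its definition just before Corollary~\ref{Cor:cgl_con_Polycyclic}, a transversal of minimal-length representatives for the $\cpn$-classes of the free monoid $A_n^*$; hence its generating function (counting by length) is precisely $\cgscp_{A_n^*,A_n}(z)$. Since $|A_n^{-1}|=|A_n|=n$, the set $L_p(A_n^{-1})$ contributes an identical series $\cgscp_{A_n^*,A_n}(z)$; one should note that the empty word $1$ lies in $A_n^*$ and in $(A_n^{-1})^*$, but in Corollary~\ref{Cor:cgl_cpns_Polycyclic} the identity $1$ is listed separately as its own representative, so neither $L_p(A_n)$ nor $L_p(A_n^{-1})$ should be taken to include the empty word — equivalently, $\cgscp_{A_n^*,A_n}(z)$ as used here has no constant term (it counts nonempty $\cpn$-classes). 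Putting the four pieces together gives
\[
\cgscp_{\Poln,\Sigma_0}(z)=1+z+2\,\cgscp_{A_n^*,A_n}(z),
\]
as claimed.

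Alternatively, and perhaps more cleanly for the writeup, I would simply invoke the already-proved Theorem giving $\cgfcp_{\Poln,\Sigma_0}(0)=1$, $\cgfcp_{\Poln,\Sigma_0}(1)=2n+1$, and $\cgfcp_{\Poln,\Sigma_0}(m)=2\,\cgfcp_{A_n^*,A_n}(m)$ for $m\geq2$, and then check that these coefficients match $1+z+2\cgscp_{A_n^*,A_n}(z)$ coefficientwise. For this one needs $\cgfcp_{A_n^*,A_n}(0)=0$ and $\cgfcp_{A_n^*,A_n}(1)=n$: the first holds since the empty word is not counted among nonempty $\cpn$-classes, and the second since the $n$ single letters form $n$ distinct singleton $\cpn$-classes of length $1$. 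Then the constant term of $1+z+2\cgscp_{A_n^*,A_n}(z)$ is $1$, the $z^1$-coefficient is $1+2n=2n+1$, and the $z^m$-coefficient for $m\geq2$ is $2\,\cgfcp_{A_n^*,A_n}(m)$, matching the stated conjugacy growth function exactly.

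The only genuinely delicate point — really a bookkeeping obstacle rather than a mathematical one — is the treatment of the empty word: one must be careful that $1$ is counted once (as its own $\cpns$-class, contributing $z$) and not doubly via both free submonoids, and correspondingly that the series $\cgscp_{A_n^*,A_n}(z)$ appearing in the formula is normalized to have zero constant term. Once this convention is fixed consistently with how $\cgscp_{A_n^*,A_n}(z)$ was defined in the preceding theorem, the proof is a one-line consequence of Corollary~\ref{Cor:cgl_cpns_Polycyclic} together with the earlier $\cpns$-conjugacy growth function computation for $\Poln$ and for $A_n^*$.
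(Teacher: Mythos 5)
Your proof is correct and follows essentially the same route as the paper's: decompose the minimal-length representative set from Corollary~\ref{Cor:cgl_cpns_Polycyclic} into $\{0\}$, $\{1\}$, $L_p(A_n)$ and $L_p(A_n^{-1})$, with the two latter pieces each contributing $\cgscp_{A_n^*, A_n}(z)$. Your explicit check that $\cgscp_{A_n^*, A_n}(z)$ has zero constant term (so the empty word is counted once, via the separate representative $1$) is left implicit in the paper but is consistent with the series as defined there, since the sum runs over $r,s\geq 1$.
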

\begin{proof}
	The conjugacy class of the empty word contributes 1 to the series, and the conjugacy class of 0 contributes $z$. Further,
	the conjugacy classes of the elements of $A_n^*\setminus\{1\}$ and the conjugacy classes of the elements in $(A_n^{-1})^*\setminus\{1\}$ both contribute $\sgs_{A_n^*/\cpn, A_n}(z)$.
\end{proof}

\section{Problems}
\label{Sec:questions}

Let $G$ be a group of permutations on a finite set $X$, and let $t\in T(X)$ be a transformation. Denote by $\langle G,t\rangle$ the semigroup generated by $G$ and $t$. When $G$ has special properties (as defined in the classes of groups and classification theorems in
\cite{AABCS21,ABC21,ABC21b,ABC19,ACS17,AACDHL17,ABCRS16,AAC16,AC16,AC14,ABC13}), the semigroups  $\langle G,t\rangle$, for all non-permutations $t\in T(X)$, have a rich structure and deep interconnections with permutation groups, automata theory, combinatorics, and geometry. Therefore it seems instructive to solve the following problem.

\begin{que}
Let $G$ be a group of permutations on a finite set $X$ and assume $G$ belongs to an interesting class of permutation groups or appears in the classification theorems of the papers mentioned above. Let $t\in T(X)$ and let $\sim$ be any conjugacy considered in this paper. Characterize $\sim$ in $\langle G,t\rangle$.
\end{que}

\begin{que}
Characterize the permutation groups $G$ on a finite set $X$ such that $\sim_p$ is transitive in $\langle G,t\rangle$ for all $t\in T(X)$.
\end{que}

These two problems will certainly require delicate considerations of primitive permutation groups.

We have characterized several conjugacies in the partition monoid and two of its friends.

\begin{que}
Characterize the conjugacy relations for other friends of the partition monoid (such as Planar, Jones, Kauffman, Martin, Temperley and Lieb, etc).
\end{que}

Regarding $G$-sets, we assumed that $G$ is abelian.
\begin{que}
Extend the results in this paper on the endomorphism monoid of a $G$-sets to the case of non-abelian groups $G$.
\end{que}

Let $\sim$ be an equivalence relation. Define ${\sim}^{\greenJ}$ and ${\sim}^{\greenD}$ as follows:
\begin{align*}
a\,\,{\sim}^{\greenJ}\, b\iff (\ a\sim b\quad\text{and}\quad a^k\greenJ b^k\text{ for every integer }k\geq 1\ )\,, \\
a\,\,{\sim}^{\greenD}\, b\iff (\ a\sim b\quad\text{and}\quad a^k\greenD b^k\text{ for every integer }k\geq 1\ )\,.
\end{align*}
Clearly ${\sim}^{\greenD}\,\subseteq\,{\sim}^{\greenJ}$ and ${\sim}^{\greenD}\,=\,{\sim}^{\greenJ}$ in epigroups. By Proposition \ref{Prp:D}, $\cfn\,=\,{\cfn}^{\greenD}\,=\,{\cfn}^{\greenJ}$. 
Also, $\cli\,\,=\,{\ctr}^{\greenD}\,=\,{\ctr}^{\greenJ}$. Both ${\cwn}^{\greenD}$ and ${\cwn}^{\greenJ}$ can be viewed as natural generalizations of $\cli$ to arbitrary semigroups.

\begin{que}
Let $\sim$ be any conjugacy considered in this paper (except for $\cfn$, $\cli$ or $\ctr$). Study ${\sim}^{\greenD}$ and ${\sim}^{\greenJ}$, and describe their conjugacy classes in transformation semigroups and in the partition monoid and its friends.
\end{que}

We know that there exist finitely generated groups for which the word problem is solvable, but the conjugacy problem is not. Hence there exist semigroups for which the word problem is solvable, while (for various notions of conjugacy) the conjugacy problem is not. This leads us to the following problem.
\begin{que}
 Is there a finitely generated semigroup with solvable $\frn$-conjugacy problem and with unsolvable word problem?
\end{que}
Regarding the above problem, we note that, because of Remark \ref{Rem:n_conj_class_01}, given a monoid $M$ with some non-idempotent elements, we cannot embed it into a larger monoid $M_1$ such that all elements of $M$ become $\frn$-conjugate in $M_1$. Hence, the construction in the proof of \cite[Thm.~5.2]{AKKM18} for conjugacy $\con$ will not work for $\frn$-conjugacy.

\begin{que}
 Can we identify the set of $\frn$-normal forms as a species in the sense of \cite{BLL98} in such a way as to count the number of $\frn$-conjugacy classes in the partition monoid by counting the isomorphism type series of this species?
\end{que}


More generally, we have the following problem.

\begin{que}
Let $X$ be a finite set, and let $\sim$ be any of conjugacy considered in this paper. Find a closed formula that gives the number of conjugacy classes of $\sim$ for transformation monoids, and for
the partition monoids and its friends.
\end{que}

The number of conjugacy classes of $\con$ and $\cfn$ in $T(X)$, where $X$ is infinite, were found in \cite[Thm.~6.4]{AKM14} and \cite[Thm.~6.1]{Ko18}, respectively.

\begin{que}
Let $X$ be an infinite set. Find the number of conjugacy classes of other conjugacies in the full transformation monoid $T(X)$.
\end{que}

The conjugacies discussed here have been characterized in various transformation semigroups (see \cite{ArKiKo_Inv,ArKiKoMaTA,AKM14,GaMa10,Ko18,KuMa07,Steinberg15,St19} and the results in \S\ref{Sec:tra}).
\begin{que}
Extend these characterizations to other transformation semigroups, for example, those appearing in the problem list of \cite[\S6]{AK13} or
those appearing in the large list of transformation semigroups included in \cite{F02}.
\end{que}

The theorems and problems in this paper have natural linear counterparts.
\begin{que} Let $\sim$ be any conjugacy considered in this paper.
Characterize $\sim$ in the endomorphism monoid of a finite dimensional vector space over a field.
This problem was solved for $\cpns$ in \cite{KuMa08}.
\end{que}

When results hold for both finite sets and finite dimensional vector spaces, it is natural to try to extend the results to independence algebras \cite{G95,G07,CS00,ABCKK22}.

\begin{que} Let $\sim$ be any conjugacy considered in this paper.
Characterize $\sim$ in the endomorphism monoid of a finite dimensional independence algebra (over a finite or infinite field). The classification of independence algebras \cite{ABCKK22} may be useful for this. \cite{ABCKK22}.
\end{que}

\begin{que} Let $\sim$ be any conjugacy considered in this paper.
Characterize $\sim$ in the endomorphism monoids of free objects \cite{AMS03} or in the endomorphism monoid of algebras admitting some general notion of independence \cite{AW09}. Regarding the latter, we propose the problem of finding the conjugacy classes in the endomorphisms of $\mathit{MC}$-algebras, $M S$-algebras, $S C$-algebras, and $S C$-ranked algebras \cite[Ch.~8]{AW09}. The first step would be to solve the conjugacy problem for the endomorphism monoid of an $S C$-ranked free $M$-act \cite[Ch.~9]{AW09}, and for an $S C$-ranked free module over an $\aleph_1$-Noetherian ring \cite[Ch.~10]{AW09}.
\end{que}

Related to the previous problem, we have the following.
\begin{que}
Since all varieties of bands are known, describe the conjugacy classes of the endomorphism monoid of the free objects of each variety of bands (for details and references, see \cite{AK07}).
\end{que}

\begin{que} Let $\sim$ be any conjugacy considered in this paper.
Characterize the semigroups in which $\sim$ is a congruence [the identity relation, the universal relation].
\end{que}

Trevor Jack \cite{TJack} recently found an infinite chain, with respect to inclusion, of first-order definable notions of conjugacy, answering a question from \cite{ArKiKoMaTA} and an early draft of this work.
\begin{que}
Is it possible to find an infinite set of first-order definable notions of conjugacy for semigroups such that these notions form an infinite anti-chain?
\end{que}

Many results provide partial characterizations, but it would be good to have complete necessary and sufficient conditions.

\begin{que}
Complete the following results: Corollary \ref{Cor:band}, Proposition \ref{Prp:bisimple}, Proposition \ref{Prp:trivial} and Proposition \ref{Prp:w-univ}.
\end{que}

There exist unstable semigroups which do not contain isomorphic copies of the bicyclic monoid, but we do not know the answer to the following.

\begin{que}
Regarding Theorem~\ref{thm:stable}, do either of the converse implications (2)$\implies$(1) or (3)$\implies$(2) hold? (They cannot both hold.)
\end{que}

Group conjugacy and normal subgroups are linked by the fact that a subgroup of a group is normal precisely when it is closed under group conjugacy. For each semigroup conjugacy $\sim$, we can define the \emph{normal subsemigroups} of a semigroup $S$ with respect to $\sim$
as those subsemigroups of $S$ that are closed under $\sim$ \cite{KoTASF}.
The normal subsemigroups of $P_n$, $T_n$, and $\mi_n$, with respect to
conjugacies $\cfn$, $\cpns$, $\coon$, and $\con$, were described in \cite{KoTASF}.

\begin{que}
Let $\sim$ be any conjugacy considered in this paper. Describe the normal subsemigroups of various semigroups, with respect to $\sim$.
\end{que}

Finally, we should mention that some of the problems stated in the earlier paper \cite{ArKiKoMaTA} have been solved recently by T. Jack \cite{TJack2}.

\section*{Acknowledgements}
We thank Laura Ciobanu and Susan Hermiller for the idea of studying conjugacy growth in finitely generated groups, which extends naturally to finitely generated monoids. We also thank Trevor Jack for pointing out an error in a previous version and thank the referee for their thorough
and insightful report.

MK thanks Nic Ormes for discussions regarding strong shift equivalence and shift equivalence in dynamical systems.
\smallskip

This work is funded by national funds through the FCT - Fundaç\~{a}o para a Ci\^{e}ncia e a Tecnologia, I.P., under the scope of the projects UIDB/00297/2020 and UIDP/00297/2020 (Center for Mathematics and Applications).


\end{document}